\title{Generating functions in symplectic and contact geometry}
\renewcommand{\thetable}{\Roman{table}}
\theoremstyle{definition}\newmdtheoremenv[
  linewidth=2,
  hidealllines=true,
  leftline=true,
  innerleftmargin=7pt,
  innerrightmargin=0,
  innertopmargin=-7pt,
  innerbottommargin=-1pt,
  splittopskip=0,
]{definition}{Definition}[chapter]
\newtheorem{remark}[definition]{}
\newmdtheoremenv[
  linewidth=2,
  hidealllines=true,
  leftline=true,
  innerleftmargin=7pt,
  innerrightmargin=0,
  innertopmargin=-7pt,
  innerbottommargin=-1pt,
  splittopskip=0,
]{proposition}[definition]{Proposition}
\newmdtheoremenv[
  linewidth=2,
  hidealllines=true,
  leftline=true,
  innerleftmargin=7pt,
  innerrightmargin=0,
  innertopmargin=-7pt,
  innerbottommargin=-1pt,
  splittopskip=0,
]{corollary}[definition]{Corollary}
\newmdtheoremenv[
  linewidth=2,
  hidealllines=true,
  leftline=true,
  innerleftmargin=7pt,
  innerrightmargin=0,
  innertopmargin=-7pt,
  innerbottommargin=-1pt,
  splittopskip=0,
]{theorem}[definition]{Theorem}
\newmdtheoremenv[
  linewidth=2,
  hidealllines=true,
  leftline=true,
  innerleftmargin=7pt,
  innerrightmargin=0,
  innertopmargin=-7pt,
  innerbottommargin=-1pt,
  splittopskip=0,
]{lemma}[definition]{Lemma}
\newmdtheoremenv[
  linewidth=2,
  hidealllines=true,
  leftline=true,
  innerleftmargin=7pt,
  innerrightmargin=0,
  innertopmargin=-7pt,
  innerbottommargin=-1pt,
  splittopskip=0,
]{definiprop}[definition]{Def. \& Prop}
\newmdtheoremenv[
  linewidth=2,
  hidealllines=true,
  leftline=true,
  innerleftmargin=7pt,
  innerrightmargin=0,
  innertopmargin=-7pt,
  innerbottommargin=-1pt,
  splittopskip=0,
]{conjecture}[definition]{Conjecture}
\theoremstyle{definition}
\newcommand*{\I}{\mathrm{i}}
\newcommand{\R}{\mathbb{R}}
\newcommand{\N}{\mathbb{N}}
\newcommand{\Z}{\mathbb{Z}}
\newcommand{\C}{\mathbb{C}}
\newcommand{\ind}{\operatorname{ind}}
\newcommand{\Id}{\operatorname{Id}}
\newcommand{\startsubpart}[1]{\noindent\textbf{#1.}}
\newcommand{\symp}{\mathcal{S}}
\newcommand{\lift}{\tilde{\mathcal{S}}}
\titleformat{\section}{\normalfont\sffamily\large\bfseries}{\thesection}{1.0em}{}
\titleformat{\subsection}{\normalfont\sffamily}{\thesubsection}{1.0em}{}
\titleformat{\subsubsection}{\sffamily\itshape}{\thesubsubsection}{1em}{}
\renewcommand\@memfront@floats{}
\renewcommand\@memmain@floats{}
\renewcommand\@memback@floats{}
\begin{document}

\pagestyle{empty}
\begin{adjustwidth}{-0.5cm}{0.7cm}
\begin{center}
	\includegraphics[height=1.7cm]{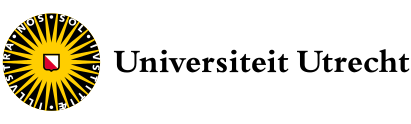}\\[35mm]
  {\sffamily
  {\fontsize{18pt}{18pt}\selectfont Generating functions in\\[3mm] symplectic and contact geometry \medskip}
	\\[20mm]

	\textit{Master's thesis by}\\[4mm]
	{\large Aaron Gootjes-Dreesbach}\\[4mm]
	\vfill

	\setlength{\tabcolsep}{0.01\linewidth}
	\begin{tabular}{@{} >{\raggedleft}p{0.47\linewidth} p{0.48\linewidth} @{}}
	{ \textbf{submitted to}} & Department of Mathematics \\
	& Universiteit Utrecht\\[3mm]
	{\textbf{supervised by}} & Dr. Fabian Ziltener \\[3mm]
	{\textbf{second reader}} & Prof. Dr. Marius Crainic \\
	\end{tabular}\\[7mm]
{{September 2020}}}
  \setlength{\tabcolsep}{5pt}
\end{center}
\end{adjustwidth}
\newpage
\ 
\vfill
  \textit{\small{ Updated version from \today.}}

\frontmatter
\begin{center}
	\bfseries\normalsize \textsf{Abstract}
\end{center}

\begin{abstract}
\footnotesize
A \textit{translated point} of a contactomorphism $\phi$ on a contact manifold with contact form $\alpha$ is a point $p$ where $\alpha$ is preserved under $\phi$ and whose image under $\phi$ lies in the same Reeb trajectory. They were introduced as a contact analogon for fixed points of Hamiltonian diffeomorphisms by Sheila Sandon in~\cite{San12} and can be understood as a special case of leafwise fixed points. A contact version of the non-degenerate Arnol'd conjecture on spheres was established in~\cite{San13} using a generating function approach. It turns out  that Sandon's proof only works under the assumption that there exists a generating function whose sublevel set at zero has nontrivial homology. This thesis proves the result under this additional assumption and fills gaps in other parts of Sandon's argument.
\end{abstract}
\ \\

\begin{center}
	\bfseries\normalsize \textsf{Contents}
\end{center}
\renewcommand{\contentsname}{}
\vspace{-3.5cm}
\tableofcontents*
\pagestyle{empty}
\mainmatter
\renewcommand{\thetable}{\Roman{table}}
\pagestyle{plain}

\chapter{Introduction and Main Result}

A critical development in 19th century theoretical physics was the reformulation of classical mechanics as \textit{Hamiltonian mechanics}. This more abstract approach allows for a deeper structural understanding of physical systems and paved the way for most of modern physics. Here, a system is encoded in terms of two objects, a \textit{configuration space} and an \textit{energy function.} The configuration space $N$ contains all possible configurations the system can be in at any given instant in time. It gives rise to the larger \textit{phase space} $M$ of all possible states: A point in phase space is a pair of a point in configuration space (\textit{generalized position}) and the infinitesimal change of the position that is in progress at that time (\textit{generalized momentum})\footnote{We only need to look at first-order derivatives here because Newton's equations of motion are differential equations of second order: All higher derivatives are redundant since they are determined by the configuration itself and first derivatives.}.
As an example, consider the pendulum depicted in Fig.~\ref{fig:pendulum}. An energy function $H:M\times\R\to\R$ yields for every state $x$ at a given time $t$ the total energy $H_t(x)$. A crucial physical insight is that $H$ suffices to completely determine the dynamics of the system.\medskip

\begin{figure}[t]
\centering
\includegraphics[width=0.5\textwidth]{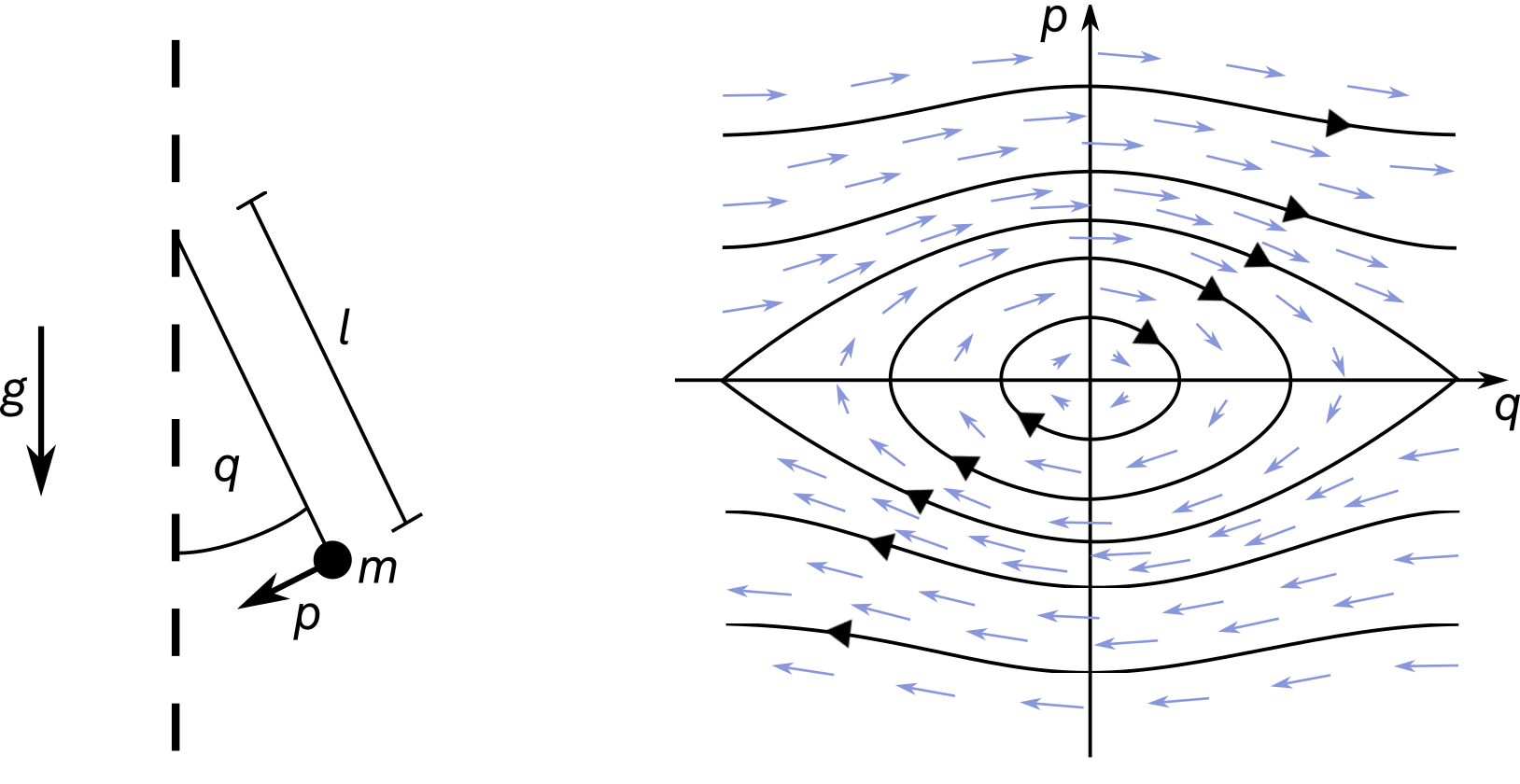}
	\caption{Consider a pendulum of length $l$ and mass $m$ in a gravitational field $g$. Its configuration space is given by the set $N=S^1$ of all angles $q$ to the vertical. The phase space is $M=S^1\times\R\simeq T^*N$, and its elements $(q,p)$ completely determine the state by also specifying the momentum $p$ of the pendulum. The total energy $H(p,q)=p^2/2m-lg\sin q$ generates the Hamiltonian vector field depicted on the right. Following its flow gives exactly the trajectories of the system, which are indicated for some initial conditions.}\label{fig:pendulum}
\end{figure}

To understand how this works mathematically, we need the framework of \textit{symplectic geometry.} Phase space is represented as a differentiable manifold $M$ that carries as additional structure a closed and non-degenerate 2-form $\omega$. This \textit{symplectic form} allows a function $H:M\times\R\to\R$ to uniquely generate a time-dependent vector field $X_t$ via the \textit{Hamilton equations}
$$\omega(X_t, \cdot) = -dH_t.$$
This vector field can be integrated to the \textit{Hamiltonian flow} $\phi^H_t$ on the phase space $M$. In physics, this flow describes precisely how a system with the energy function $H$ will evolve over time. After starting in the state $x\in M$, it will be found in the state $\phi^H_t(x)$ once the time $t$ has passed.

If we start from a configuration space described as a manifold $N$, then phase space is just the cotangent bundle $M=T^*N$ together with the exterior derivative of the \textit{Liouville form} $\lambda_{p}:= p\circ d_p\pi$ as the canonical choice for the symplectic form, where $\pi:T^*N\to N$ is the canonical projection.

Consider now an energy function $H:T^*N\to\R$ that does not depend on time. It turns out that this symmetry under translation in time implies that energy is a conserved quantity over time. So if $E$ is a regular value of $H$, then $H^{-1}(E)$ is a hypersurface containing all states of a fixed energy $E$ that is preserved by the Hamiltonian flow. \textit{Contact geometry} can be seen as the study of the naturally induced structure on such constant energy surfaces. In general, a \textit{contact form} on a $2n-1$ dimensional manifold is a 1-form $\alpha$ such that $\alpha\wedge(d\alpha)^{n-1}\neq0.$ This means geometrically that $d\alpha$ has a unique null direction that is transverse to the kernel of $\alpha$. Appropriately normalized, we call this direction the \textit{Reeb vector field} of $\alpha$, generating the \textit{Reeb flow}. In the case of the constant energy surfaces, restricting the Liouville form~$\lambda$ to them indeed yields such a contact form\footnote{Technically, $H^{-1}(E)$ needs to also be transverse to the \textit{Liouville vector field} $X_\text{can}$, which in induced coordinates $(q_i,p_i)$ on $T^*N$ is $\sum_i p_i\frac{\partial}{\partial p_i}$. This is however a physically reasonable assumption: Otherwise, scaling the momentum of a state would not lead to a change in energy.}. The Reeb flow is then nothing but the Hamiltonian flow restricted to the hypersurface.\medskip

In physics, one often considers systems that are described using a time-independent energy function together with a short time-dependent perturbation, say on the interval $[0,1]$. For some states $x\in M$, we might observe what looks like a huge coincident: After the perturbation, the system might be in a state that is different from $x$ only by a shift in time in its original orbit instead of being fundamentally changed. Mathematically, this is an example of a \textit{leafwise fixed point} of the Hamiltonian flow. A similar situation can occur when we limit our attention to a constant energy surface and perturb its dynamics for a short while. A $\textit{translated point}$ is a state that the perturbation affects only by shifting it in time.

Formally, a \textit{contactomorphism} on a manifold $M$ with contact form $\alpha$ is a diffeomorphism $\phi:M\to M$ that preserves the contact form up to scaling with a positive function. A \textit{translated point} $x\in M$ of $\phi$ is a point where $\alpha$ is preserved and whose image lies in the same orbit of the Reeb flow as $x$. Translated points can be seen as special cases either of leafwise fixed points or Reeb chords between Lagrangian submanifolds.
\medskip

Translated points were introduced by Sheila Sandon in~\cite{San12} as contact analogues to fixed points of Hamiltonian diffeomorphisms. She gave the following existence theorem in~\cite{San13}:

\begin{theorem}\label{thm:sandon}
\cite{San13}
	\begin{enumerate}
		\item Every contactomorphism of $S^{2n-1}$ which is contact isotopic to the identity and generic in the sense that all its translated points are non-degenerate has at least two translated points.
		\item Every contactomorphism of $\R P^{2n-1}$ which is contact isotopic to the identity has at least $2n$ translated points.
	\end{enumerate}
\end{theorem}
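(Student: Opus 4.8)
The plan is to lift contactomorphisms of $S^{2n-1}$ and $\mathbb{R}P^{2n-1}$ to equivariant objects in a standard model — realizing $S^{2n-1}$ as the prequantization bundle over $\mathbb{C}P^{n-1}$, so that a contactomorphism contact-isotopic to the identity lifts to a Hamiltonian-type isotopy of $\mathbb{C}P^{n-1}$ (or, following Sandon/Givental, to an equivariant Lagrangian in $\mathbb{C}^{2n}$ or a suitable jet space) — and then count translated points via a generating function. Concretely, I would first recall from the earlier chapters the construction associating to a contactomorphism $\phi$ contact-isotopic to the identity a generating function $F\colon S^{2n-1}\times\mathbb{R}^k\to\mathbb{R}$ (quadratic at infinity) whose fiberwise critical points correspond to translated points of $\phi$, with the Reeb-shift parameter appearing as the critical value. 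The key point established there should be: critical points of $F$ with critical value $0$ correspond to genuine translated points (images lying on the same Reeb orbit with the correct normalization), and non-degeneracy of a translated point corresponds to non-degeneracy of the associated critical point of $F$.

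Next I would set up Morse theory for $F$ relative to infinity. Since $F$ is quadratic at infinity, one has the sublevel pair $(F^{\le a}, F^{\le -C})$ for $C\gg 0$, and the relative homology $H_*(F^{\le a}, F^{\le -C})$ is well-defined and, for $a$ large, computes the homology of the base $S^{2n-1}$ (up to a shift by the index of the quadratic form), which is that of a sphere: nontrivial in two degrees. The count then proceeds by a minimax/Lusternik–Schnirelmann argument: the two nonzero homology classes of $S^{2n-1}$ produce two distinct critical values $c_1 \le c_2$ of $F$, hence two critical points; if $c_1 = c_2$ one invokes the standard fact that a degenerate critical level carrying two independent classes still forces infinitely many (in particular $\ge 2$) critical points, while in the non-degenerate case $c_1\ne c_2$ automatically. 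For part (ii), the same machinery applied $\mathbb{Z}/2$-equivariantly on $\mathbb{R}P^{2n-1}$ — using the $\mathbb{Z}/2$-cohomology of $\mathbb{R}P^{2n-1}$, which is nontrivial in $2n$ consecutive degrees, together with the cup-length estimate for equivariant generating functions — yields $2n$ critical points and hence $2n$ translated points.

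The genuine obstacle, and the gap the thesis is meant to address, is the passage from "critical points of $F$" to "translated points with critical value exactly $0$." The minimax argument above produces critical points at the values $c_1, c_2$ selected by the homology of the whole sublevel set, but a translated point in Sandon's sense requires the Reeb-shift/critical value to vanish (or to be controlled), so one must instead run the argument on the zero-sublevel set $F^{\le 0}$, or on the pair $(F^{\le 0}, F^{\le -C})$, and argue that this pair still has nontrivial homology. This is precisely the extra hypothesis in the thesis: that there exists a generating function for $\phi$ whose sublevel set at zero has nontrivial homology. Under that assumption, I would select minimax values within $(-C, 0]$ using classes of $H_*(F^{\le 0}, F^{\le -C})$, obtaining critical points with critical value in that range; combined with the fact that the associated contactomorphism equals the identity outside a compact/controlled region (so that the "trivial" translated points coming from the quadratic part sit at value $0$), this pins down the count. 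I would also need to verify carefully that the correspondence between critical points and translated points is bijective and index-preserving — a point where Sandon's original argument is incomplete — by a direct computation in the prequantization model.
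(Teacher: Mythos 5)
First, a framing caveat: the paper does not prove Theorem~\ref{thm:sandon} as stated. It is cited to~\cite{San13}, and the thesis explicitly identifies a gap in Sandon's argument that the author was unable to close; what is actually proved is Theorem~\ref{main-result-proven}, where ``contact isotopic to the identity'' is replaced by the hypothesis that $\phi$ has a generating function $F$ whose sublevel set $\{F\#0\leq 0\}$ is empty or an embedded submanifold with non-trivial homology. You correctly identify that this extra homological hypothesis is exactly the crux of the matter, which is the right diagnosis.

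Beyond that framing, however, your counting mechanism differs from the paper's in a way that creates a genuine gap. You propose a minimax/Lusternik--Schnirelmann argument on the sublevel pair of a single generating function $F$, under the identification ``critical value equals Reeb shift.'' That identification is natural in the Legendrian / 1-jet picture of~\cite{San12}, but it is not the framework the paper uses for $S^{2n-1}$. Here, $\phi$ is lifted to $\R^{2n}$, and critical points of $F$ with \emph{value zero} correspond only to \emph{discriminant} points of $\phi$, not to translated points in general (Proposition~\ref{prop-discr-sphere}); translated points require composing with the Reeb flow. Consequently the paper's proof is inherently parametric: one forms $G_t := A_t\#F$ where $A_t$ generates the negative Reeb flow, uses $\partial_t \hat A_t < 0$ on $\Sigma_{\hat A_t}\setminus\{0\}$ to make the parametric Morse theory of Proposition~\ref{morse-main-stuff} applicable, and counts how the zero sublevel set $\{G_t\leq 0\}$ changes by cell attachments as $t$ sweeps a full Reeb period. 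Each cell attachment is a $t$ at which $G_t$ has a non-degenerate critical point of value exactly zero, hence a translated point, and Propositions~\ref{prop:betti} and~\ref{homology-sublevelsets}(iii) force at least two such attachments. A static minimax on $F$ (or on the pair $(F^{\le 0}, F^{\le -C})$) only produces critical levels somewhere in an interval; it cannot pin any of them to the single value zero, and it cannot exploit the Reeb-periodicity that makes $[0,1]$ a complete parameter range. To repair the argument you would need to replace the minimax by this parametric cell-attachment count, which is precisely the step where the homological condition on $\{F\#0\leq 0\}$ is used and where the unresolved gap in Sandon's original proof is located.
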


For a precise definition of non-degeneracy of translated points, see Section~2.4. This theorem can be seen as a contact version of the Arnol'd conjecture for the sphere and real projective space. In~\cite{San13}, Sandon also gives proofs for the $C^0$ and $C^1$-small cases of this conjecture, and in~\cite{GKPS} for contactomorphisms on lens spaces. The proof of this theorem essentially follows a path laid out by earlier work of Th\'eret~\cite{The96, The98} and Givental~\cite{Giv90}: It proceeds by explicitly lifting the contactomorphism to a symplectic setting in $\R^{2n}$, constructing \textit{generating functions} and then using Morse theoretic arguments. In the simplest case, a symplectomorphism on a manifold $M$ is said to be generated by a smooth function $f:M\to\R$ if there is an identification of the cotangent bundle $T^*M$ with $M\times M$ such that the 0-section coincides with the diagonal and the graph of the 1-form $df$ coincides with the graph of the symplectomorphism. The proof requires a more general notion due to H\"ormander where the generating function is defined on a fiber bundle over $M$~\cite{Hoe71}.\medskip

This thesis is meant to recap the proof of part~(i) of Theorem~\ref{thm:sandon} and to provide additional detail beyond the original publication. In this process, I uncovered a gap in the proof that Sandon and I have not been able to close so far (for details consider Remark~\ref{remark-missing-assumption}). We will consider the following statement instead:

\begin{theorem}\label{main-result-proven-intro}
Let $\phi$ be a contactomorphism on $S^{2n-1}$ without degenerate translated points. Assume that $\phi$ has a generating function $F:S^{2n+k-1}\to\R$ such that the sublevel set $\{F\#0\leq0\}$ is either empty or an embedded submanifold with non-trivial homology.

Then $\phi$ has at least two translated points.
\end{theorem}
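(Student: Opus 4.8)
The plan is to follow the Givental--Th\'eret--Sandon strategy of reducing the existence of translated points to a Morse-theoretic count of critical points of an auxiliary function built from the generating function $F$. First I would recall the basic correspondence: after lifting $\phi$ to a $1$-homogeneous (or suitably equivariant) Hamiltonian symplectomorphism $\widetilde\phi$ on $\R^{2n}\setminus\{0\}$ — equivalently, working with the prequantization $\R^{2n}\times\R$ and the lift to $\R^{2n}\times S^1$ used by Sandon — the translated points of $\phi$ correspond to critical points of the generating function $F$ with the specific critical value $0$, or more precisely to critical points of the \emph{difference function} $F\#0$ obtained by formally subtracting the generating function of the identity. The key point is that a translated point with nonzero Reeb shift is an honest critical point of $F\#0$ at level $0$ (or some discrete translate thereof), while a translated point with zero shift corresponds to a fixed point and hence to a critical point of $F$ itself; non-degeneracy of the translated point is exactly non-degeneracy of the corresponding critical point in the Morse sense. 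So the problem becomes: show that $F\#0$ has at least two critical points at the relevant level, given that $\{F\#0\le 0\}$ is empty or a manifold with nontrivial homology.

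Next I would set up the Morse theory on the domain sphere $S^{2n+k-1}$ (or, more carefully, on a large ball / sphere after the standard quadratic-at-infinity normalization that makes the generating function behave like a nondegenerate quadratic form outside a compact set, so that sublevel sets are controlled and Lusternik--Schnirelmann / Morse-inequality arguments apply). The heart of the argument is a comparison of sublevel sets: one studies the pair $(\{F\#0\le a\},\{F\#0\le -a\})$ for $a$ large, whose homology is computed by the quadratic-at-infinity behavior (giving the homology of a sphere, up to a shift by the index of the negative quadratic part), against the filtration by the value $0$. If $F\#0$ had no critical point at level $0$, then $\{F\#0\le 0\}$ would be a deformation retract of a slightly higher sublevel set, and one pushes this to derive that the sublevel set at $0$ must carry the "expected" topology; the hypothesis that $\{F\#0\le 0\}$ is empty or has nontrivial homology is precisely what is needed to run this dichotomy. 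Concretely: if it is empty, all critical values lie above $0$, and since the total space has the homology of a (shifted) sphere there must be critical points accounting for both the bottom and the top class — but emptiness below $0$ forces a contradiction with the existence of a global minimum unless there is a critical point exactly at $0$, and then a standard minimax argument over the generator of $H_*$ produces a second one; if it is nonempty with nontrivial homology, the long exact sequence of the pair $(\{F\#0\le \epsilon\},\{F\#0\le 0\})$ together with the nontriviality of $H_*(\{F\#0\le 0\})$ and of $H_*$ of the total sublevel set forces at least two distinct critical levels, or a single level with multiplicity $\ge 2$ by a cup-length / Lusternik--Schnirelmann estimate.

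I would then translate the two critical points found back into two translated points of $\phi$, taking care of two technical points: (1) critical points differing by the $\Z$-action coming from the discretization of the Reeb flow (the "$\#0$" and its integer translates) must be shown to give \emph{geometrically distinct} translated points, which is handled by the same $\Z$-equivariance bookkeeping Sandon uses — a translated point is a $\Z$-orbit of critical points, and one checks the two minimax classes live in distinct orbits, essentially because one is a genuine $0$-level critical point and the interpolating argument keeps them separated; (2) the case distinction between translated points with zero and nonzero Reeb shift, i.e. making sure that at least two \emph{translated} points (not just two critical points, one of which might be a spurious fixed point of the lift) are produced — here one uses that if $\phi$ itself had a fixed point that is not a translated point it would not show up at level $0$ at all, so every critical point at the relevant level is genuinely a translated point.

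The main obstacle I expect is exactly the step that the hypothesis is designed to bypass in Sandon's original argument: controlling the homology of $\{F\#0\le 0\}$ and relating it correctly to the homology of the large sublevel set through the quadratic-at-infinity normalization, including getting the index shift and the $\Z$-equivariance right simultaneously. Within the scope of this theorem that difficulty is assumed away, so the real work is (a) the careful Morse-theoretic extraction of \emph{two} critical points from "nontrivial homology of a sublevel set inside a space with the homology of a sphere", for which a Lusternik--Schnirelmann cup-length argument on the appropriate quotient, or a two-class minimax, is the cleanest route; and (b) the bookkeeping that turns two $\Z$-orbits of critical points into two honest translated points of $\phi$, where the subtlety is ruling out that the two classes coincide after the $\Z$-action.
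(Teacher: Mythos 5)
Your plan misidentifies the variational problem, and this is a genuine gap rather than a presentational difference. Critical points of the single function $F\#0$ at level $0$ only detect \emph{discriminant} points of $\phi$ (translated points with zero Reeb shift): $F\#0$ generates $\phi$ itself, so a translated point whose Reeb shift $t$ is nonzero corresponds to a critical point with value $0$ of the generating function of $a_t\circ\phi$ for that particular $t$, not of $F\#0$. The ``discrete translates'' and ``$\Z$-orbit'' bookkeeping you invoke belongs to the $\R^{2n}\times S^1$ setting of Sandon's earlier work, where the Reeb flow is quotiented to a circle and translated points show up at integer critical values of one function; on $S^{2n-1}$ in the framework of this theorem the Reeb parameter is continuous, and no single function (nor any discrete family of its level sets) captures all translated points. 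Your dichotomy visibly fails in the case the theorem explicitly allows: if $\{F\#0\le 0\}=\emptyset$ then $F\#0>0$ everywhere, there need be no critical point of $F\#0$ at level $0$ at all ($\phi$ may have no discriminant points), and yet the theorem still asserts two translated points --- they occur at nonzero Reeb times and are invisible to your minimax argument on $F\#0$.

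The missing idea is the one-parameter family. The paper takes the explicit family $A_t$ of quadratic-form generating functions of the negative Reeb flow, with $\partial_t\hat A_t<0$ on $\Sigma_{\hat A_t}\setminus\{0\}$ and $\ind(\hat A_1)-\ind(\hat A_0)=2n$, sets $G_t:=A_t\#F$ (which generates $a_t\circ\phi$), and tracks the fixed-level sublevel sets $N_t=\{G_t\le 0\}$ as $t$ runs through $[0,1]$: by a parametric Morse theorem, $N_1$ is obtained from $N_0$ by attaching one cell per critical point of some $G_t$ with value $0$, and each attachment changes exactly one Betti number. The hypothesis on $\{F\#0\le 0\}$ enters through the join formula $\{A_t\#F\le 0\}\simeq S^{\ind(\hat A_t)-1}*\{F\#0\le 0\}$, which together with the index jump $2n$ shows that $N_0$ and $N_1$ differ in at least two Betti numbers, forcing at least two critical crossings and hence two translated points. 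Your quadratic-at-infinity comparison of $(\{F\#0\le a\},\{F\#0\le -a\})$ and the Lusternik--Schnirelmann/minimax step do not substitute for this: they produce critical points of the wrong function and cannot see the index shift between the two ends of the Reeb family, which is the quantitative input that yields the count of two.
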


Here we have replaced the assumption that $\phi$ is contact isotopic to the identity with the existence of a generating function with a particular associated sublevel set. While an exact understanding of this new assumption requires the definitions of Chapter~3, note that the critical difference here is the homological condition: It will turn out that if this was not required, existence of a contact isotopy would suffice to satisfy the assumptions of Theorem~\ref{main-result-proven-intro}.\medskip

This thesis adds to Sandon's proof by providing more details throughout: It closes gaps and fixes minor errors in the parametric Morse theory arguments, homological considerations, the composition formula and existence of simple generating functions of lifts. More care is taken to state and prove many results more generally than done in~\cite{San13} in order to make them applicable as stated to lifts of contactomorphisms. This is strictly speaking not the case in the original proof due to irregularity of the lift at the origin.
We also modify the definition of non-degenerate translated points to be more natural. While coinciding with Sandon's definition in the context of her theorem, our Definition~\ref{def:translated} is more restrictive in general and motivated by the connection to leafwise fixed-points (compare Remark~\ref{comparison-sandon-def}).
Finally, our Definition~\ref{def-gen-on-sphere} of generating functions \textit{on the sphere} allows for a cleaner presentation of the proof.\medskip

The remainder of this thesis is divided into three chapters: The second chapter provides background and context to the main result. In particular, we introduce basic symplectic and contact geometry as well as translated points and the Arnol'd conjecture. The third chapter explains the method of generating functions of contactomorphisms on the sphere in several successive steps. Finally, we provide  a form of parametric Morse theory and investigate the homology of sublevel sets of generating functions in order to prove the main result in the fourth chapter.
The appendix lists all differences between this thesis and Sandon's original paper and discusses how contactomorphisms on real projective space can be lifted to Euclidean space. The latter is necessary for a similar generating function approach of real projective space, e.g. as in part~(ii) of Theorem~\ref{thm:sandon}.\medskip

I would like to express deep gratitude to my advisor Fabian for his patience and countless helpful discussions. I am also very grateful to Sheila Sandon and Alexander Givental for kindly answering my questions on \cite{San12} and \cite{Giv90}. The support of my parents, my sister and friends has been invaluable while writing this thesis.

\chapter{Background and Context} \label{chap:background}

In this chapter, we will recap essential concepts necessary to understand the statement of the main theorem and place it in a broader context. For the sake of brevity we will omit some proofs of standard results, instead referring to more thorough expositions in the literature.\medskip

The first two sections start by introducing basic notions of symplectic and contact geometry. In particular, we define symplectic reduction and the standard contact forms on spheres and real projective space that Sandon's result assumes. In the third section, we define symplectization as a way to associate a symplectic manifold to a given contact structure.
This allows us to introduce translated points and discuss their interpretations and known existence results in the fourth section. Finally, we discuss the main theorem from the perspective of a contact version of the Arnol'd conjecture.

\section{Symplectic Geometry}

Symplectic geometry concerns itself with \textit{symplectic manifolds:}

\begin{definition}
	A \textbf{symplectic manifold} is a manifold $M$ together with a closed and non-degenerate 2-form $\omega$. A \textbf{symplectomorphism} between symplectic manifolds $(M,\omega)$ and $(M',\omega')$ is a diffeomorphism $\phi: M\to M'$ such that $\phi^*\omega' = \omega$.
\end{definition}

Purely by linear algebra, the existence of a non-degenerate 2-form already implies that $M$ is even-dimensional. In fact, all symplectic manifolds locally look alike:

\begin{remark}[Standard symplectic structure on $\R^{2n}$ as a local model]\label{standard-symp-struct}
	Using global coordinates $z_j=x_j+\I y_j$ on $\R^{2n}\simeq \C^n$, we can define a symplectic 2-form $$\omega_{\text{std}}=\sum\limits_{j=1}^n dx_j\wedge dy_j = \frac{1}{2\I}\sum\limits_{j=0}^n d\bar{z}_j\wedge dz_j.$$
	This is not just an example, but the unique local model: By a celebrated theorem due to Darboux (see e.g. Theorem~3.2.2 in~\cite{MS17}), around every point of any given symplectic manifold there exists a neighbourhood that is symplectomorphic to an open subset of $(\R^{2n},\omega_{\text{std}})$.
	Note also that this standard symplectic structure generalizes to the cotangent bundles of arbitrary manifolds, which we will come back to in Remark~\ref{cotangent-bundle}.
\end{remark}

A central phenomenon in symplectic geometry is that a time-dependent function on $M$ (e.g., the total energy of states of a mechanical system) induces a flow:

\begin{definiprop}
	Consider a symplectic manifold $(M,\omega)$. A \textbf{Hamiltonian function} is a smooth map $H:M\times\R\to\R, (x,t)\mapsto H_t(x)$. It generates a unique time-dependent vector field $X_t$ via the relation $\omega(X_t,\cdot)=-dH_t$. The flow $\phi^H_t$ of $X_t$ starting at the identity is called \textbf{Hamiltonian flow} and preserves $\omega$ at every time. If it is additionally always surjective, it is called a \textbf{Hamiltonian isotopy.} A symplectomorphism $\phi:M\to M$ is a \textbf{Hamiltonian diffeomorphism} if it is the time-1 map of a Hamiltonian isotopy, i.e. $\phi=\phi^H_1$ for some $H.$
\end{definiprop}

\begin{proof}
	The unique existence of $X_t$ is an immediate consequence of non-degeneracy of $\omega$ while the flow preserves it due to Cartan's magic formula and closedness of $\omega.$ For more details see e.g. Section 3.1 of~\cite{MS17}.
\end{proof}

The symplectic structure on a manifold allows us to distinguish between different kinds of submanifolds:

\begin{definition}
	Given a symplectic form $\omega$ on a manifold $M$ and a subspace $W$ of the tangent space in $p\in M,$ we can define the \textbf{$\omega$-orthogonal} of $W$ as $$W^\omega := \{v\in T_pM \;|\; \forall w\in W: \omega(v,w)=0 \}$$
	and say that the subspace $W$ is
	\begin{itemize}
	\item \textbf{symplectic} if $\omega_p|_W$ is non-degenerate,
	\item \textbf{isotropic} if $W\subseteq W^\omega$,
	\item \textbf{coisotropic} if $W^\omega\subseteq W$,
	\item \textbf{Lagrangian} if $W^\omega = W$.
	\end{itemize}

	Applying the definition of $\omega$-orthogonal pointwise yields an operation on vector subbundles of $TM$. We similarly say that a submanifold of $M$ is symplectic, (co-)isotropic or Lagrangian if this holds for its tangent spaces in every point, respectively.
\end{definition}

These definitions are clearly invariant under symplectomorphisms. The following properties and alternative characterizations are also often convenient:

\begin{lemma}\label{lemma:submfd-types}
For a symplectic form $\omega$ on a $2n$-dimensional manifold $M$, point $p\in M$ and vector subspace $W\subseteq T_pM$, the following hold:
	\begin{enumerate}
		\item $W= (W^\omega)^\omega$
		\item $\dim W + \dim W^\omega = 2n$
		\item $W$ symplectic $\iff W^\omega$ symplectic $\iff W\cap W^\omega=0 \implies \dim W$ even
		\item $W$ isotropic $\iff W^\omega$ coisotropic $\iff \omega|_W=0 \implies \dim W \leq n$
		\item $W$ Lagrangian $\iff W$ isotropic and coisotropic $\iff \dim W=n$ and $W$ either isotropic or coisotropic
	\end{enumerate}
\end{lemma}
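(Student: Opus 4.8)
The plan is to reduce everything to linear algebra in the single symplectic vector space $(V,\omega_p):=(T_pM,\omega_p)$; the manifold plays no role beyond providing this data. The only input is that non-degeneracy of $\omega_p$ makes the contraction map
$$\iota\colon V\to V^*,\qquad v\mapsto \omega_p(v,\cdot)$$
a linear isomorphism. I would first establish (ii). By definition $W^\omega=\iota^{-1}\!\big(\operatorname{Ann}(W)\big)$, where $\operatorname{Ann}(W)=\{\xi\in V^*:\xi|_W=0\}$ is the annihilator; since $\iota$ is an isomorphism and $\dim\operatorname{Ann}(W)=2n-\dim W$, we get $\dim W^\omega=2n-\dim W$, which is (ii).

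Next, (i): the inclusion $W\subseteq(W^\omega)^\omega$ is immediate from the definition of $W^\omega$ together with skew-symmetry of $\omega_p$, since every $w\in W$ pairs to zero with every element of $W^\omega$. Applying (ii) twice gives $\dim(W^\omega)^\omega=2n-\dim W^\omega=\dim W$, so the inclusion is an equality. I would also record the two elementary facts used repeatedly below: that $A\subseteq B$ implies $B^\omega\subseteq A^\omega$, and that a non-degenerate alternating bilinear form exists only on an even-dimensional space (its Gram matrix $A$ in any basis is skew-symmetric, so $\det A=(-1)^{\dim}\det A$ up to the appropriate sign, forcing $\det A=0$ in odd dimension).

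With (i) and (ii) in hand, (iii)--(v) are formal. For (iii): $W$ is symplectic iff no nonzero vector of $W$ is $\omega_p$-orthogonal to all of $W$, i.e.\ iff $W\cap W^\omega=0$; replacing $W$ by $W^\omega$ and using (i) shows this is equivalent to $W^\omega$ being symplectic; and if $W\cap W^\omega=0$ then $\omega_p|_W$ is a non-degenerate alternating form, so $\dim W$ is even. For (iv): $W$ isotropic means $W\subseteq W^\omega$, which is the same as $\omega_p|_W=0$, and --- taking $\omega$-orthogonals and using (i) --- the same as $(W^\omega)^\omega\subseteq W^\omega$, i.e.\ $W^\omega$ coisotropic; the bound follows from $\dim W\le\dim W^\omega=2n-\dim W$. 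For (v): $W=W^\omega$ clearly forces both inclusions and conversely; by (ii), $W=W^\omega$ is equivalent to $\dim W=n$; and if $\dim W=n$ with $W$ isotropic (resp.\ coisotropic), then $W\subseteq W^\omega$ (resp.\ $W^\omega\subseteq W$) between spaces of equal dimension $n$ forces equality.

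There is no serious obstacle here, as the content is entirely standard; the only real care is organisational. One must set up (i) and (ii) before anything else, since every equivalence in (iii)--(v) that swaps $W$ with $W^\omega$ relies on the reflexivity $W=(W^\omega)^\omega$, and one must keep the genuine implications (evenness of $\dim W$, the bound $\dim W\le n$) clearly separated from the chains of purely definitional rephrasings.
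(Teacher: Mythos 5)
Your proposal is correct and is exactly the standard linear-algebra argument (via the isomorphism $v\mapsto\omega_p(v,\cdot)$ onto the annihilator) that the paper itself does not spell out but delegates to Section~2.1 of McDuff--Salamon. One cosmetic point: in (v) the clause ``$W=W^\omega$ is equivalent to $\dim W=n$'' is only an implication on its own, but this does not matter since your next clause supplies the correct converse using the isotropy/coisotropy hypothesis.
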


\begin{proof}
Follows immediately by linear algebra, for details see e.g. Section 2.1 of~\cite{MS17}.
\end{proof}

Weinstein's approach~\cite{Wei81} to symplectic geometry focuses on the central role of Lagrangian submanifolds in particular. We will see one example of this in Proposition~\ref{symplecto-to-lagrangian}, which tells us that we can decide whether a diffeomorphism is a symplectomorphism by checking whether its graph is Lagrangian.

Weinstein established a crucial normal form theorem for Lagrangian submanifolds. Our main argument does not require it, but we will refer to it in contextual remarks:

\begin{theorem}[Weinstein Lagrangian neighbourhood theorem]\label{thm:weinstein-lagrangian}
Let $M$ be a manifold with symplectic form $\omega$ and a closed Lagrangian submanifold $L$. Then there exists a neighbourhood of $L$ that is symplectomorphic to a neighbourhood of the 0-section in $T^*L$ via a symplectomorphism that extends the map $L\ni x\mapsto(x,0)\in T^*L$.
\end{theorem}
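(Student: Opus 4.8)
The plan is to prove this by Moser's homotopy method (the ``Moser trick''), after first putting a collar neighbourhood of $L$ into a standard form.

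\emph{Step 1: model the normal bundle.} Since $L$ is Lagrangian, $(T_xL)^\omega=T_xL$, so $\omega$ induces a nondegenerate pairing between $N_xL:=T_xM/T_xL$ and $T_xL$; combined with the dimension count of Lemma~\ref{lemma:submfd-types} this gives a vector bundle isomorphism $NL\cong T^*L$, $[v]\mapsto\omega(v,\cdot)|_{TL}$. Fixing an almost complex structure $J$ compatible with $\omega$ (such structures always exist) and the associated metric $g=\omega(\cdot,J\cdot)$, the subbundle $J(TL)\subset TM|_L$ is a complement to $TL$ which we identify with $NL\cong T^*L$; the $g$-exponential map then yields a diffeomorphism $\psi_0$ from a neighbourhood $U_0$ of the zero section of $T^*L$ onto a neighbourhood $U$ of $L$ in $M$, restricting to the identity $L\to L$ on the zero section.

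\emph{Step 2: the forms agree along $L$.} Put $\omega_1:=\psi_0^*\omega$ and let $\omega_0:=d\lambda$ be the canonical symplectic form of $T^*L$. Along the zero section $T(T^*L)=TL\oplus T^*L$, and by construction $d\psi_0$ is the inclusion on the first summand and the inverse of the identification of Step~1 on the second. Using that $L$ is Lagrangian for $\omega$, that $J(TL)$ is $\omega$-isotropic (because $\omega(Ju,Ju')=\omega(u,u')=0$ for $u,u'\in TL$), and the defining property of the identification, one checks that at every point of the zero section both $\omega_1$ and $\omega_0$ restrict to the canonical pairing of $TL$ with $T^*L$ and vanish on the blocks $TL\times TL$ and $T^*L\times T^*L$; hence $\omega_1=\omega_0$ along $L$.

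\emph{Step 3: relative Poincaré lemma and Moser.} The closed $2$-form $\tau:=\omega_1-\omega_0$ vanishes along the zero section, so applying the homotopy operator of the fibrewise scaling retraction $(x,\xi)\mapsto(x,t\xi)$ of $U_0$ onto $L$ produces a primitive $\sigma$, $d\sigma=\tau$, that again vanishes along the zero section. Shrinking $U_0$ (using compactness of $L$) so that $\omega_t:=\omega_0+t\tau$ is symplectic for all $t\in[0,1]$, define $X_t$ by $\iota_{X_t}\omega_t=-\sigma$; since $\sigma$ vanishes on $L$ so does $X_t$, hence the flow $\psi_t$ of $X_t$ fixes $L$ pointwise and exists on a neighbourhood of $L$ for $t\in[0,1]$. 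Then
$$\tfrac{d}{dt}\,\psi_t^*\omega_t=\psi_t^*\!\left(d\iota_{X_t}\omega_t+\iota_{X_t}d\omega_t+\dot\omega_t\right)=\psi_t^*\!\left(-d\sigma+\tau\right)=0,$$
so $\psi_1^*\omega_1=\omega_0$. Consequently $\psi_0\circ\psi_1$ is a symplectomorphism from a neighbourhood of the zero section in $(T^*L,\omega_0)$ onto a neighbourhood of $L$ in $(M,\omega)$ that is the identity on the zero section, and its inverse is the symplectomorphism asserted by the theorem. The main obstacle is Step~2: unless the collar identifies $\psi_0^*\omega$ with the canonical form \emph{along} $L$, the path $\omega_t$ may fail to be nondegenerate near $L$ and the Moser flow need not fix $L$, so the resulting symplectomorphism would not extend $x\mapsto(x,0)$; the compatible-$J$ choice is precisely what forces the vertical block of $\psi_0^*\omega|_L$ to vanish, after which the argument is the standard Moser/relative-Poincaré routine.
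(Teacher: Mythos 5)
Your argument is correct and is essentially the proof the paper points to: the paper itself only cites Theorem~3.4.13 of \cite{MS17}, whose proof is exactly this route (Lagrangian complement via a compatible $J$, exponential-map tubular neighbourhood, agreement of the two forms along the zero section, relative Poincar\'e lemma, Moser trick using closedness of $L$). The only point to watch is the sign convention for the canonical form on $T^*L$ (the paper uses $\omega_{can}=-d\lambda_{can}$), which you can absorb by flipping the sign of the identification $[v]\mapsto\omega(v,\cdot)|_{TL}$.
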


\begin{proof}
See e.g. Theorem~3.4.13 in~\cite{MS17}.
\end{proof}

Finally, we will require the notion of \textit{symplectic reduction} of coisotropic submanifolds to define generating functions. Some of the directions tangent to the submanifold might need to be paired with transverse vectors in order for the symplectic form to not evaluate to zero. In the sense of Lemma~\ref{lemma:submfd-types}~(iv), these directions can be considered isotropic. To make this notion precise, consider the following:

\begin{proposition}\label{iso-leaves}
	For a symplectic manifold $(M,\omega)$ and a coisotropic submanifold $N$, $N^\omega:=(TN)^\omega$ is an integrable distribution on $N$. By Frobenius' theorem, it determines a foliation $\mathcal{F}_{N,\omega}$ of $N$.\\

	\startsubpart{Definition} We call $\mathcal{F}_{N,\omega}$ the \textbf{characteristic foliation} with the \textbf{isotropic leaves}.
\end{proposition}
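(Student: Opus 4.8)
The plan is to reduce the statement to the classical fact that the kernel distribution of a closed $2$-form of locally constant rank is involutive, and then to quote Frobenius' theorem exactly as in the statement. Write $\iota\colon N\hookrightarrow M$ for the inclusion and put $\beta := \iota^*\omega$; this is a closed $2$-form on $N$ since $d\beta = \iota^*(d\omega) = 0$. First I would record that $N^\omega$ really is a distribution on $N$: coisotropy is precisely the hypothesis $N^\omega\subseteq TN$, Lemma~\ref{lemma:submfd-types}~(ii) gives $\dim(T_pN)^\omega = 2n - \dim N$ (a constant), and, identifying $TN$ with its image in $TM$, a vector $v\in T_pN$ lies in $(T_pN)^\omega$ exactly when $\beta_p(v,\cdot)$ vanishes on $T_pN$; hence $N^\omega = \ker\beta$ as a subbundle of $TN$, which is smooth by the standard fact that the kernel of a locally-constant-rank bundle map (here $TN\to T^*N$, $v\mapsto \iota_v\beta$) is a smooth subbundle.

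Next I would verify involutivity using the intrinsic Cartan formula for the exterior derivative. Taking $X, Y$ to be local sections of $N^\omega = \ker\beta$ and $Z$ an arbitrary vector field on $N$, expand
\begin{align*}
0 = d\beta(X,Y,Z) = {}& X\,\beta(Y,Z) - Y\,\beta(X,Z) + Z\,\beta(X,Y) \\
{}& - \beta([X,Y],Z) + \beta([X,Z],Y) - \beta([Y,Z],X).
\end{align*}
The three differentiation terms vanish because $\beta(X,\cdot)$ and $\beta(Y,\cdot)$ are identically zero on $TN$; the terms $\beta([X,Z],Y)$ and $\beta([Y,Z],X)$ vanish for the same reason, since $[X,Z]$ and $[Y,Z]$ are again tangent to $N$ while $X$ and $Y$ annihilate all of $TN$ under $\beta$. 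What remains is $\beta([X,Y],Z) = 0$, and as $Z$ was arbitrary this says $[X,Y]$ is a section of $\ker\beta = N^\omega$. Hence $N^\omega$ is involutive, and Frobenius' theorem yields the foliation $\mathcal{F}_{N,\omega}$ with tangent distribution $N^\omega$.

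I do not expect a genuine obstacle here: once $d\omega = 0$ is invoked the computation is purely formal. The only point that asks for a little care is the smoothness and constant rank of $N^\omega$, settled by Lemma~\ref{lemma:submfd-types}~(ii) together with the kernel-subbundle fact above (or by writing down explicit local frames). Working with $\beta = \iota^*\omega$ on $N$ rather than with extensions of $X, Y, Z$ to all of $M$ is what keeps the bracket terms clean, as it sidesteps the question of whether Lie brackets of such extensions restrict correctly to $N$; the alternative route via $d\omega = 0$ on $M$ works too, choosing the fields tangent to $N$ along $N$.
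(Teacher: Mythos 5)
Your proposal is correct and follows essentially the same route as the paper: both verify involutivity by expanding the vanishing exterior derivative of the (closed) two-form on two sections of $N^\omega$ and an arbitrary field tangent to $N$, and then invoke Frobenius. Working with the pullback $\iota^*\omega$ and explicitly noting the constant-rank/smooth-subbundle point is a slightly tidier packaging of the same computation (and avoids the extension-of-vector-fields issue the paper glosses over), but it is not a different argument.
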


\begin{proof}
	Following a standard calculation (e.g. \cite[Lemma~5.4.1]{MS17}) we check integrability by showing that $(TN)^\omega$ is closed under the Lie bracket. This means that for all vector fields $X,Y$ with values in $(TN)^\omega$, $p\in N,$ and $Z_p\in T_pN$ it must follow that $$\omega([X_p,Y_p],Z_p)=0.$$ To see this, continue $Z_p$ to a vector field $Z$ on $N$ and compute
	\begin{align*}
		0 &= d\omega(X,Y,Z) \\ &= \mathcal{L}_X(\omega(Z,Y)) + \mathcal{L}_Y(\omega(X,Z)) + \mathcal{L}_Z(\omega(Y,X))  \\ &\qquad+ \omega([Y,Z],X) + \omega([Y,X],Z)  + \omega([X,Y],Z)  \\
		&= \omega([X,Y], Z)
	\end{align*}

\end{proof}

Symplectic reduction is, philosophically speaking, a way to quotient out these directions that are unpaired within the submanifold $N$. Under suitable assumptions, this reduces $N$ to a manifold $N_\omega$ with a canonical symplectic structure:

\begin{definition}\label{def:regularity}
	Let $\iota:N\hookrightarrow M$ be a coisotropic submanifold of a symplectic manifold $(M,\omega)$. The \textbf{symplectic reduction} $N_\omega$ of $N$ in $M$ is the topological space $N/\sim$, where the quotient identifies points in $N$ that lie in the same leaf of the foliation $\mathcal{F}_{N,\omega}$.\\
	
	$N$ is called \textbf{regular} if the following holds:
	\begin{itemize}
		\item For all $p\in N$, there is a submanifold $S\subseteq N$ containing $p$ that intersects every isotropic leaf at most once and satisfies $T_qN=T_qS\oplus T_q N^\omega$ for all $q\in S.$
		\item $N_\omega$ is a Hausdorff space.
	\end{itemize}

\startsubpart{Proposition}
	For a regular coisotropic submanifold $N$, the symplectic reduction carries...
	\begin{itemize}
		\item a unique smooth manifold structure such that the projection $\pi: N\to N_\omega$ is a submersion, and
		\item a unique symplectic form $\bar{\omega}$ such that $\pi^*\bar{\omega} = \iota^*\omega$.
	\end{itemize}
\end{definition}

\begin{proof}
	See e.g. Proposition~5.4.5 of~\cite{MS17}.
\end{proof}

We again want to consider how Lagrangian submanifolds behave under this construction:

\begin{lemma}\label{lem:lag-symp-red}
Let $(M,\omega)$ be a symplectic manifold with a regular coisotropic submanifold $N$ and Lagrangian submanifold $L$. If $L$ is transverse to $N$, then $\pi(L\cap N)$ is an immersed Lagrangian along the quotient map $\pi$ of the symplectic reduction $N_\omega$.
\end{lemma}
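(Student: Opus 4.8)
The statement is a standard fact about symplectic reduction, and the proof is essentially a dimension count plus an orthogonality computation, carried out pointwise and then globalized. First I would establish that $L \cap N$ is an embedded submanifold: since $L$ is transverse to $N$ inside $M$, the intersection is a submanifold of dimension $\dim L + \dim N - \dim M = n + \dim N - 2n = \dim N - n$. Next I would show that $\pi|_{L\cap N}$ has constant rank, or rather identify its kernel, so that the image $\pi(L\cap N)$ — a priori only an immersed submanifold along $\pi$ — carries a well-defined tangent space at each point. The natural guess, which the dimension count below confirms, is that $\ker(d\pi|_{T_p(L\cap N)}) = T_p L \cap N^\omega_p$, i.e. the leafwise directions that happen to lie tangent to $L$.

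The heart of the argument is the pointwise linear algebra. Fix $p \in L \cap N$ and write $V = T_pM$, $W = T_pN$ (coisotropic, so $W^\omega \subseteq W$), $\Lambda = T_pL$ (Lagrangian). Transversality gives $W + \Lambda = V$, hence $\dim(W \cap \Lambda) = \dim W + n - 2n = \dim W - n$. I would then show two things. (a) $\pi_*(W \cap \Lambda)$ is isotropic in $(N_\omega, \bar\omega)$: for $v, w \in W \cap \Lambda$, $\bar\omega(\pi_* v, \pi_* w) = \omega(v,w) = 0$ since $v, w \in \Lambda$ and $\Lambda$ is Lagrangian — here I use the defining relation $\pi^*\bar\omega = \iota^*\omega$ from Definition~\ref{def:regularity}. (b) $\pi_*(W \cap \Lambda)$ has dimension exactly $n - \frac{1}{2}(2n - \dim W) \cdot$ — more precisely, I claim the kernel of $\pi_*$ restricted to $W \cap \Lambda$ is $W^\omega \cap \Lambda$, and $\dim(W^\omega \cap \Lambda) = \dim(2n - \dim W)$ halved; let me instead argue directly: $W^\omega$ is isotropic of dimension $2n - \dim W$, and since $W^\omega \subseteq W$ and $\Lambda$ is Lagrangian with $\Lambda + W = V$, one gets $W^\omega \subseteq \Lambda^\omega = \Lambda$... which is false in general. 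So the cleaner route is: compute $\dim(\Lambda \cap W^\omega)$ by the same transversality/orthogonality trick — $(\Lambda \cap W^\omega) = (\Lambda^\omega + W)^\omega = (\Lambda + W)^\omega = V^\omega = 0$ using Lemma~\ref{lemma:submfd-types}(i) and the transversality $\Lambda + W = V$. Hence $\ker \pi_*|_{W\cap\Lambda} = (W\cap\Lambda)\cap W^\omega = \Lambda \cap W^\omega = 0$, so $\pi_*$ is injective on $W \cap \Lambda$, giving $\dim \pi_*(W\cap\Lambda) = \dim W - n = \dim N_\omega - n$ (using $\dim N_\omega = 2\dim W - 2n$ from the reduction). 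Since $\pi_*(W\cap\Lambda)$ is isotropic of half-dimension, it is Lagrangian in $N_\omega$ by Lemma~\ref{lemma:submfd-types}(v).

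**Globalization and expected obstacle.** Having done the pointwise computation, I would note that $d\pi$ restricted to $T_p(L\cap N)$ has constant rank $\dim W - n$ (kernel is always the trivial intersection $\Lambda \cap W^\omega = 0$ — wait, that forces $\pi|_{L\cap N}$ to be an immersion, so $\pi(L\cap N)$ is genuinely an immersed submanifold and the phrase "immersed Lagrangian along the quotient map" just means the immersion $L\cap N \to N_\omega$ realizes it). The constant rank theorem then gives that $\pi(L\cap N)$ is an immersed submanifold with the tangent spaces computed above, each Lagrangian, which is the claim. The main obstacle I anticipate is bookkeeping the dimension counts and, in particular, being careful that transversality $L \pitchfork N$ in $M$ is exactly what kills the bad intersection $\Lambda \cap W^\omega$ — this is the one place the hypothesis is used, and getting the $\omega$-orthogonal identities $(\Lambda + W)^\omega = \Lambda^\omega \cap W^\omega = \Lambda \cap W^\omega$ lined up correctly via Lemma~\ref{lemma:submfd-types} is the crux. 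A secondary subtlety is making sure "immersed along $\pi$" is interpreted correctly so that no properness or embeddedness of the image is claimed; I would only assert the immersion statement and the Lagrangian condition on tangent spaces.
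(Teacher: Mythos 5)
Your argument is correct and is the standard proof of this fact; the paper itself gives no proof but simply cites Proposition~5.4.7 of~\cite{MS17}, whose argument is the same pointwise linear algebra you carry out. The crux — that transversality $\Lambda + W = V$ together with the $\omega$-orthogonal identities forces $\Lambda \cap W^\omega = (\Lambda + W)^\omega = V^\omega = 0$ — is exactly right, and it correctly does double duty: it gives injectivity of $d\pi$ on $T_p(L\cap N)$ (so $\pi|_{L\cap N}$ is an immersion) and the dimension count at once.

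One arithmetic slip worth fixing: you write $\dim \pi_*(W\cap\Lambda) = \dim W - n = \dim N_\omega - n$, but the second equality is false. With $\dim N_\omega = 2\dim W - 2n$ (as you correctly note in the parenthetical), the right identity is $\dim W - n = \tfrac12\dim N_\omega$, which is exactly the half-dimension you need to invoke Lemma~\ref{lemma:submfd-types}(v). The conclusion you draw is the right one; only the intermediate equality is wrong. Your self-correction around the false claim $W^\omega \subseteq \Lambda$, abandoning it in favour of the $\omega$-orthogonal computation, is the right move and is the point where transversality genuinely enters.
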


\begin{proof}
See e.g. Proposition 5.4.7 of \cite{MS17}.
\end{proof}

\section{Contact Geometry}

\textit{Contact structures} are defined as maximally non-integrable hyperplanes and are the natural odd-dimensional analogues to the even-dimensional symplectic structures. We will require the slightly stronger notion of a \textit{contact form}:

\begin{definition}\label{def:contact-basics}
	A \textbf{contact form} on a manifold $M$ of dimension $2n-1$ is a 1-form $\alpha$ such that $\alpha\wedge(d\alpha)^{n-1}\neq 0$. A \textbf{contactomorphism} between manifolds with contact forms $(M_i,\alpha_i)$ is a diffeomorphism $\phi: M_1\to M_2$ such that $\phi^*\alpha_2 = e^g\alpha_1$ for some $g:M_1\to\R$. If $g$ vanishes, $\phi$ is called \textbf{strict}. A \textbf{contact isotopy} on $(M,\alpha)$ is an isotopy $\phi$ such that every $\phi_t$ is a contactomorphism.
\end{definition}

There are various ways to associate contact forms to some symplectic structures and vice versa, see for example the discussion in the introduction for constant energy surfaces or Section 3.5 of~\cite{MS17} for \textit{prequantization}. We will consider \textit{symplectization} in detail in the next section.\medskip

The contact condition on $\alpha$ can geometrically be understood as a 'maximal' amount of twisting of the hyperplanes $\ker\alpha$ while moving along them, preventing the existence of integral submanifolds of dimension larger than $n$ (see e.g. Section 5.1 of~\cite{Bla10}). In particular it implies that $d\alpha$ has a unique null direction transverse to $\ker\alpha$, which gives rise to a canonical flow associated with a contact form:

\begin{definiprop}
	The exterior derivative $d\alpha$ of a contact form has a unique null direction that is transverse to $\ker \alpha$. The \textbf{Reeb vector field} of $\alpha$ is the normalized vector field along this direction, i.e. the unique $X^\alpha\in\mathfrak{X}(M)$ such that $\iota_{X^\alpha}d\alpha=0$ and $\alpha(X^\alpha)=1$.
	The \textbf{Reeb flow} $R^\alpha$ generated by $X^\alpha$ is a contact isotopy.
\end{definiprop}
\begin{proof}
	See e.g. 1.1.9 and 2.3.2 of \cite{Gei08}.
\end{proof}

Similarly to the symplectic case, time-dependent functions on a contact manifold generate flows:

\begin{definiprop}\label{contact-hamiltonian}
	For every smooth time-dependent function $H_t: M\to \mathbb{R}$, there exists a unique contact isotopy starting at the identity and generated by a vector field $X_t$ that satisfies $\alpha(X_t)=H_t$. The function $H_t$ is then referred to as the \textbf{(contact) Hamiltonian} that generates the isotopy, and every contact isotopy starting at the identity arises this way. We can uniquely characterize the vector field $X_t$ by $\iota_{X_t}d\alpha=dH_t(R_a)\alpha - dH_t$ and $\alpha(X_t)=H_t$.
\end{definiprop}

\begin{proof}
	See e.g. Section 2.3 of~\cite{Gei08}.
\end{proof}

We again have a standard contact form on Euclidean space as a local model:

\begin{remark}[Standard contact structure on $\R^{2n+1}$ as local model]\label{darboux}
	Using global coordinates $x_j, y_j$ and $z$ on $\R^{2n+1}$, we can define a contact form $$\alpha_{\text{std}}= dz- \sum_{j=1}^n y_j\wedge dx_j.$$ It is straightforward to check that the unit vector field $\partial/\partial z$ is the Reeb vector field and translation in that direction is the Reeb flow. By a contact version of Darboux' Theorem~(e.g.~2.5.1 in \cite{Gei08}), every point of any given contact manifold has a neighbourhood that is strictly contactomorphic to a subset of $(\R^{2n+1}, \alpha_{\text{std}})$.
\end{remark}

This standard symplectic form furthermore induces contact forms on odd-dimensional spheres and real projective spaces that are the topic of Theorem~\ref{thm:sandon}:

\begin{remark}[Contact form on $S^{2n-1}$]\label{standard-struct-sphere}
	Note that the standard symplectic structure on $\R^{2n}\simeq\C^n$ from \ref{standard-symp-struct} can be written as $d\lambda$, where $$\lambda = \sum_{j=1}^n (x_j dy_j - y_j dx_j) = \sum_{j=1}^n\frac{\bar{z}_jdz_j-z_jd\bar{z}_j}{2\I}$$ is the Liouville form expressed in complex coordinates $z_j=x_j+\I y_j$. Restricting $\lambda$ to the unit $(2n-1)$-sphere yields a contact form $\alpha_\text{std}$ on it. It is straightforward to verify that the Reeb vector field is given by $(-y_1, ..., -y_n, x_1, ..., x_n)$ and its flow is $z\mapsto \exp(\I t) z$, which in particular is $2\pi$-periodic.
\end{remark}

\begin{remark}[Contact form on $\R P^{2n-1}$]\label{standard-struct-proj}
	Note that $\R P^{2n-1}$ is the result of quotienting out the antipodal discrete $\Z_2$ action on the sphere, i.e. there is a double cover $$\pi: S^{2n-1}\to S^{2n-1}/\Z_2 \simeq \R P^{2n-1}.$$ The standard contact form $\tilde{\alpha}$ on $\R P^{2n-1}$ arises by pushing the standard contact form of $S^{2n-1}$ along this map, as the latter is invariant under the $\Z_2$ action. The Reeb vector field and flow are correspondingly also given by the pushforward along $\pi$ and the induced map on the quotient, respectively.
\end{remark}

\section{Symplectization of Manifolds with a Contact Form}

\textit{Symplectization} is a way to relate a symplectic manifold to any given manifold with contact form. This will allow us to use techniques of symplectic geometry in the search for translated points. The statements in this section are based on \cite{MS17} and \cite{San14}, but reformulated here as a functor.

\begin{definition}[Symplectization functor]\label{def:symplectization}
	Let $(M,\alpha)$ and $(N,\beta)$ be manifolds with contact forms, and $\phi:(M,\alpha)\to (N,\beta)$ a contactomorphism such that $\phi^*\alpha=e^g\alpha$ for some $g:M\to\R$.

	\begin{itemize}
		\item The \textbf{symplectization $\symp (M,\alpha)$ of $(M,\alpha)$} is the manifold $M\times\R$ equipped with the 2-form $\omega_{\alpha}:=d(e^\theta \alpha),$ where $\theta$ is the coordinate on $\R.$
		\item The \textbf{symplectization of $\phi$} is the map $\symp \phi: M\times\R\to N\times\R$ defined by $$(p,\theta)\mapsto (\phi(p), \theta-g(p)).$$
	\end{itemize}

\startsubpart{Proposition}
	$\symp$ forms a functor from the category of manifolds with contact forms and contactomorphisms to the category of symplectic manifolds with symplectomorphisms. This means that $\symp (M,\alpha)$ is a symplectic manifold, $\symp\phi$ is a symplectomorphism, symplectization of contactomorphisms and composition commute and $\symp \Id_{(M,\alpha)} = \Id_{\symp (M,\alpha)}$.
\end{definition}

\begin{proof}

	We first show that $\symp (M,\alpha)$ is a symplectic manifold. Since $\omega_\alpha$ is clearly exact, we only need to show non-degeneracy. To this end, let $2n-1=\dim M$ and check that $$(d(e^\theta \alpha))^n = (e^\theta d\theta\wedge\alpha + e^\theta d\alpha)^n = ne^{n\theta} d\theta\wedge\alpha\wedge(d\alpha)^{n-1}$$ does not vanish, which follows immediately by the contact condition.

	To see that $\symp \phi$ is a symplectomorphism, we calculate $$(\symp\phi)^* \omega = d \left((\symp\phi)^*e^\theta \alpha\right) = d(e^{\theta-g}\phi^*\alpha) = d(e^{\theta-g}e^g\alpha) = \omega.$$

	Symplectization and composition commute immediately by construction and the identity on $M$ is clearly mapped to the identity on $\symp M$.
\end{proof}

Note that one can already define a symplectization for the weaker contact structure, independently of a contact form. This is often referred to as \textit{intrinsic symplectization}, see e.g. \cite{MS17}.\medskip

We can lift a contact Hamiltonian to obtain a symplectic Hamiltonian:

\begin{definition}\label{def:symplectization-ham}
	Let $(M,\alpha)$ be a manifold with contact form and $H_t\in\mathcal{C}^\infty(M)$ a time-dependent function. We define its \textbf{lift} to the symplectization $\symp (M,\alpha)$ to be the function
	\begin{align*}
		\symp H_t : M\times\R&\to\R\\
		(p,\theta) &\mapsto e^\theta H_t(p).
	\end{align*}

\startsubpart{Proposition} Let $\phi_t$ be a contact isotopy generated by a contact Hamiltonian $H_t$. Then the symplectization $\symp \phi_t$ is generated by the symplectic Hamiltonian $\symp H_t$.

\end{definition}

\begin{proof}

	The fact that $H_t$ generates $\phi_t$ means that there is a unique vector field $X_t$ on $M$ generating $\phi_t$ such that $\alpha(X_t)=H_t.$ If $g_t$ are chosen such that $\phi_t^* \alpha = e^{g_t}\alpha$, then the symplectizations of $\phi_t$ are given by $$\symp \phi_t (p,\theta) = (\phi_t(p), \theta-g_t(p)).$$ Differentiating yields the vector field $$\tilde{X}_t := \left(X_t, -\frac{d}{dt}g_t\right).$$ We are done if we show that this is the vector field generated by $\symp H_t$, i.e. $\omega_\alpha(\tilde{X}_t,\cdot) = -d(\symp H_t).$ Using the definitions of $\omega_\alpha$ and $\symp H_t$ and dividing by $e^\theta$, this is equivalent to
	$$ (d\theta\wedge\alpha + d\alpha) (\tilde{X}_t,\cdot) = -H_t d\theta - dH_t . $$ Evaluating the left hand vector field insertion and using $\alpha(X_t)=H_t$, we see that this is equivalent to
	\begin{equation} d\alpha(X_t,\cdot) - \frac{d}{dt}g_t \;\alpha = - dH_t .\label{eq:ziel}\end{equation}
To verify that this equation holds, we first compute:
	\begin{align*}
		\left(\frac{d}{dt}g_t\right) \phi_t^*\alpha &= \frac{d}{dt} (e^{g_t}\alpha)
		= \frac{d}{dt} \phi_t^* \alpha \\
		&= \phi^*_t \left( \mathcal{L}_{X_t} \alpha \right) \\
		&= \phi^*_t \big( d(\alpha(X_t)) +d\alpha(X_t,\cdot) \big) \\
		&= \phi^*_t \big( dH_t +d\alpha(X_t,\cdot) \big)
	\end{align*}
	For the first two equalities, we use $\phi^*_t\alpha = e^{g_t}\alpha$. The third equality follows from the following general expression for the time derivative of a pullback of a family of forms $\beta_t$ along a flow $\phi_t$:
	\begin{equation}\label{eq:timederivflow}
	\frac{d}{dt} \phi_t^* \beta_t = \phi_t^*\left( \mathcal{L}_{X_t}\beta_t + \frac{d}{dt}\beta_t  \right).
	\end{equation}
	Finally, we use Cartan's magic formula and $\alpha(X_t)=H_t$ for the last two equalities.
	
	Pushing forward and canceling the pull-back now yields Eq.~\eqref{eq:ziel}.

\end{proof}

Note that this analogy between symplectic and contact isotopies is not perfect: All contact isotopies are generated by Hamiltonians, while this is not true for all symplectic isotopies (see e.g.~\cite{Gei08}).

\section{Translated Points of Contactomorphisms}\label{sec:translated}

Sandon introduced a notion of \textit{translated points} in~\cite{San12} as a contact analogue for fixed points in the symplectic setting:

\begin{definition}[Discriminant and translated points]\label{def:translated}
	Consider a contactomorphism $\phi$ on a manifold $M$ with contact form $\alpha$ and a function $g$ such that $\phi^*\alpha = e^g \alpha$.\medskip
	
	\begin{itemize}
	\item A \textbf{discriminant point} of $\phi$ is a fixed point $p=\phi(p)$ for which $g(p)=0$ holds. A discriminant point $p$ is \textbf{non-degenerate} if $\not\exists X\in T_pM: d\phi (X) = X$ and $X(g)=0$.
	
	\item A \textbf{translated point} is a point $p$ such that there exists \textit{at least one} $t\in\R$ such that $p$ is a discriminant point of $R^\alpha_{t}\phi$, where $R^\alpha$ is the Reeb flow on $M$. A translated point $p$ is \textbf{non-degenerate} if, for \textit{any} $t$ such that $p$ is a discriminant point of $R^\alpha_{t}\phi$, $p$ is a non-degenerate discriminant point.

	\end{itemize}
\end{definition}

\begin{remark}[Comparison to Sandon's definition]\label{comparison-sandon-def}
Our definitions match those of Sandon except in one point: For non-degeneracy of a translated point, Sandon only requires non-degeneracy as a discriminant point of $R^\alpha_{-t}\phi$ for a single $t$, namely the smallest one such that $p$ is a discriminant point of $R^\alpha_{-t}\phi$. Our definition is stronger and assumes non-degeneracy for all such $t$. We consider our definition more natural and consistent with the notion of non-degeneracy of leafwise fixed points in~\cite{Zil10} (compare Proposition~\ref{char-pts-via-symp}).

For the purposes of our main result, this makes no difference: If $t_1$ and $t_2$ are two times where the definition for a translated point $p$ applies, then the maps under consideration differ by $\Phi:=R^\alpha_{(t_2-t_1)}$ and we must have $\Phi(p)=p.$ It is clear that the differential of the Reeb flow on the sphere $z\mapsto\exp(it)z$ is the identity at all its fixed points $p$. For this reason, the degeneracy conditions at any suitable $t$ are equivalent. The same holds for real projective space, the other subject of~\cite{San13}.

\end{remark}

Sandon also points out that we can view translated points as special cases of \textit{leafwise fixed points} or interpret them in terms of \textit{Reeb chords}:

\begin{definition}[Leafwise fixed points]
Let $N$ be a coisotropic submanifold of a symplectic manifold $(M,\omega).$ For $x\in N$ write $N_x\subseteq N$ for the isotropic leaf through $x$. A \textbf{leafwise fixed point} of a symplectomorphism $\phi:M\to M$ is a point $x\in N$ such that $\phi(x)\in N_x.$
\end{definition}

\begin{proposition}[Characterization of discriminant and translated points by symplectization]\label{char-pts-via-symp}
	Let $\phi$ be a contactomorphism on a manifold $M$ with contact form $\alpha$.

	\begin{enumerate}
	\item $p\in M$ is a discriminant point of $\phi$ if and only if $(p,\theta)$ is a fixed point of $\symp \phi$ for any (equivalently all) $\theta\in\R.$ Non-degeneracy of the former corresponds to non-degeneracy of the latter\footnotemark\ in the directions tangent to $M$.

	\item $p\in M$ is a translated point of $\phi$ if and only if $(p,\theta)$ is a leafwise fixed point of $\symp\phi$ with regard to the coisotropic submanifold $M\times\{\theta\}$ of $\symp(M,\alpha)$ for any (equivalently all) $\theta\in\R$. Non-degeneracy of the former corresponds to non-degeneracy of the latter\footnotemark\ in the directions tangent to $M$.
	\end{enumerate}
\end{proposition}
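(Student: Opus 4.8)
The plan is to prove both parts by a direct computation with the explicit formula $\symp\phi(p,\theta) = (\phi(p),\,\theta - g(p))$ from Definition~\ref{def:symplectization}, reducing the geometric statements about $\symp\phi$ to the defining conditions of Definition~\ref{def:translated}. For part~(i), the formula gives $\symp\phi(p,\theta) = (p,\theta)$ for some (equivalently every) $\theta$ exactly when $\phi(p) = p$ and $g(p) = 0$, i.e.\ when $p$ is a discriminant point. For non-degeneracy, differentiate: $d(\symp\phi)_{(p,\theta)}(X,s) = \big(d\phi_p(X),\, s - dg_p(X)\big)$, so the space of fixed vectors is $W\times\R$ with $W = \{X\in T_pM : d\phi_p X = X,\ X(g) = 0\}$. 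The free $\partial_\theta$-summand is precisely why $(p,\theta)$ is never a non-degenerate fixed point of the symplectomorphism $\symp\phi$ in the naive sense; intersecting the fixed-vector space with $T_pM\times\{0\}$ recovers $W$, whose triviality is by definition non-degeneracy of the discriminant point. This yields the claim "in the directions tangent to $M$."

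For part~(ii), first note that $M\times\{\theta\}$ is a hypersurface in the $2n$-dimensional manifold $\symp(M,\alpha)$, hence coisotropic by Lemma~\ref{lemma:submfd-types}. I would then identify its characteristic foliation: at a point of level $\theta$ one has $\omega_\alpha = e^\theta(d\theta\wedge\alpha + d\alpha)$, and a short computation using $\alpha(X^\alpha) = 1$ and $\iota_{X^\alpha}d\alpha = 0$ shows $\big(T_pM\times\{0\}\big)^{\omega_\alpha} = \R\,(X^\alpha_p,0)$; hence by Proposition~\ref{iso-leaves} the isotropic leaf through $(p,\theta)$ is the Reeb orbit of $p$ sitting at level $\theta$. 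Consequently $(p,\theta)$ is a leafwise fixed point of $\symp\phi$ for the coisotropic submanifold $M\times\{\theta\}$ if and only if $(\phi(p),\,\theta - g(p))$ lies on that orbit, i.e.\ iff $g(p) = 0$ and $\phi(p) = R^\alpha_{-t}(p)$ for some $t$. Since $R^\alpha_t$ is strict, $(R^\alpha_t\phi)^*\alpha = \phi^*\alpha = e^g\alpha$, so $R^\alpha_t\phi$ has the same conformal factor $g$; the displayed condition is then exactly "$p$ is a discriminant point of $R^\alpha_t\phi$ for some $t$," which is the definition of a translated point.

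For the non-degeneracy statement in~(ii), I would invoke functoriality of $\symp$: the leaf-flow of $M\times\{\theta\}$ is $(q,\theta')\mapsto(R^\alpha_s(q),\theta') = \symp(R^\alpha_s)(q,\theta')$, so the linearized leafwise return associated with a witnessing time $t$ is governed by $\symp(R^\alpha_t\phi) = \symp(R^\alpha_t)\circ\symp\phi$. By the computation in part~(i) applied to $R^\alpha_t\phi$, its fixed-vector space at $(p,\theta)$ is $W_t\times\R$ with $W_t = \{X\in T_pM : d(R^\alpha_t\phi)_p X = X,\ X(g) = 0\}$. Both the non-degeneracy condition for the translated point $p$ from Definition~\ref{def:translated} (triviality of $W_t$) and the non-degeneracy condition for the leafwise fixed point $(p,\theta)$ in the sense of~\cite{Zil10} (triviality of the linearized leafwise return transverse to the leaf direction and the fiber) unwind to the same statement, with the caveat "in the directions tangent to $M$" accounting on each side for the always-present $\partial_\theta$-direction together with the leaf (Reeb) direction that is quotiented out in the leafwise setting.

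I expect the genuinely delicate step to be this last non-degeneracy matching in~(ii): carefully aligning our Definition~\ref{def:translated} with the definition of a non-degenerate leafwise fixed point in~\cite{Zil10} while tracking exactly which degenerate directions (the $\theta$-fiber, the Reeb/leaf direction) are discarded on each side. The only other point requiring real input is the identification of the characteristic foliation of $M\times\{\theta\}$ with the Reeb orbits, which uses the contact condition via the defining properties of $X^\alpha$; everything else is bookkeeping with the formula for $\symp\phi$.
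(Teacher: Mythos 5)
Your proposal is correct and follows essentially the same route as the paper: compute $d_{(p,\theta)}\symp\phi$ directly from the formula for $\symp\phi$ to handle part~(i), then identify the characteristic foliation of $M\times\{\theta\}$ with the Reeb foliation and reduce part~(ii) to part~(i) applied to $R^\alpha_t\phi$. The alignment with the linear-holonomy notion of non-degenerate leafwise fixed points in~\cite{Zil10}, which you correctly flag as the delicate step, is also only sketched in the paper, which identifies the holonomy with the restriction of $d(\symp R^\alpha_t)$ to $(X^\alpha,0)^\omega$ and then defers to Eq.~(2.7) of~\cite{Zil10} without unwinding it.
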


\setcounter{footnote}{\value{footnote}-1}
\footnotetext{Note that by a non-degenerate fixed point of $\psi$ we mean a fixed point $x=\psi(x)$ such that the graph of $\psi$ intersects the diagonal transversally in $(x,\psi(x))$, or equivalently such that one is not an eigenvalue of $d_x\psi$.}

\setcounter{footnote}{\value{footnote}+1}
\footnotetext{Refer to Section 2.4 of~\cite{Zil10} for the definition of non-degenerate leafwise fixed points based on the linear holonomy of a foliation. Note that part (ii) of this proposition has no bearing on the proof of our main result.}

\begin{proof}
Recall that $\symp\phi:M\times\R\to M\times\R$ is defined by $$
\symp\phi(p,\theta) = (\phi(p), \theta-g(p)),
$$
where $g:M\to\R$ is the map such that $\phi^*\alpha=e^g\alpha.$
Compute the derivative for $X\in T_pM, Y\in T_\theta\R$:
$$
d_{(p,\theta)}(\symp\phi)(X,Y) =
(d_p\phi X, Y-d_p g X).
$$

Regarding (i): By the definition of $\symp\phi,$ $(p,\theta)$ is a fixed point if and only if $\phi(p)=p$ and $g(p)=0,$ irrespective of $\theta.$ This is exactly the condition for $p$ to be a discriminant point. This is non-degenerate by definition if there exists no $X\in T_pM\setminus\{0\}$ such that $(d_p\phi,d_pg)(X)=(X,0).$ By our computation of the derivative of $\symp\phi,$ this is equivalent to $$
d_{(p,\theta)}(\symp\phi)(X,0) = (X,0)
$$ or in other words that there is no eigenvector $(X,0)$ of $d_{(p,\theta)}(\symp\phi)$ with eigenvalue one. This is nondegeneracy of $(p,\theta)$ along $M$. Note that every fixed point of $\symp\phi$ is automatically degenerate in the $\R-$direction.\medskip

Regarding (ii):
For this part of the proof we will be working in the context of Section~2 of~\cite{Zil10}. Pick any $\theta\in\R.$\medskip

Note first that if $X$ is the Reeb vector field on $(M,\alpha),$ then the Reeb vector field on $(M\times\{\theta\}, e^\theta\alpha)$ is $(e^{-\theta}X,0).$ Pairing this with any vector $(Y,0)$ tangent to $M\times\{\theta\}$ pairs to zero under the symplectic form on $M\times\R,$ i.e. $d(e^\theta\alpha)((X,0),(Y,0)) = 0.$ It follows that $(X,0)$ lies in the symplectic complement of the tangent space of the coisotropic submanifold. In other words, the characteristic foliation of the coisotropic submanifold $M\times\{\theta\}$ coincides with the Reeb foliation on the contact manifold $(M\times\{\theta\}, e^\theta\alpha).$\medskip

Since $\symp R^\alpha_t(p,\theta)=(R^\alpha_t(p),\theta)$ for the Reeb flow $R^\alpha_t$ on $M$, the last paragraph implies that two points $(p_i,\theta)\in M\times\{\theta\}$ lie in the same leaf of the characteristic foliation exactly when there exists a $t$ such that $R^\alpha_t(p_1)=p_2$.\medskip

By definition and (i), $p\in M$ is a translated point of $\phi$ on $M$ if and only if there is a $t$ such that $(p,\theta)$ is a fixed point of $\symp R^\alpha_{t} \symp\phi$. By the previous paragraph, this is the case if $\symp\phi(p,\theta)$ lies in the same isotropic leaf as $(p,\theta),$ i.e. if $(p,\theta)$ is a leafwise fixed point.\medskip

The only thing left to show is that the notions of non-degeneracy match. Note first that the spaces $N_x\mathcal{F}$ from~\cite{Zil10} can here be identified with the symplectic complement $(X,0)_{(p,\theta)}^\omega$ of the vector field $(X,0)$ at $x=(p,\theta)\in N$ with respect to the symplectic form $\omega:=d(e^\theta\alpha)$. For any path within an isotropic leaf connecting two points $(p,\theta)$ and $(R^\alpha_{t}(p),\theta),$ the linear holonomy map $\operatorname{hol}_x^\mathcal{F}$ from~\cite{Zil10} is in this context just the map on $(X,0)_{(p,\theta)}^\omega$ induced by $d_{(p,\theta)}(\symp R^\alpha_{t}).$ This induced map is well-defined since the Reeb flow preserves the Reeb vector field and $\omega.$

A translated point $p\in M$ is non-degenerate by definition and (i) if and only if the map $d_{(p,\theta)}\symp(R^\alpha_{t}\phi)$ has no eigenvalue one. $d_{(p,\theta)}\symp\phi$ preserves the Reeb vector field since $g(p)=0,$ and the Reeb flow always preserves it. It follows that the above condition need only be checked on the subspace $(X,0)_{(p,\theta)}^\omega.$ This is now precisely equivalent to Eq.~(2.7) of~\cite{Zil10}.
\end{proof}

\begin{remark}[Translated points as Reeb chords]
A \textbf{Reeb chord} on a manifold equipped with a contact form is a section of an integral curve of the Reeb vector field that starts and ends on Legendrian submanifolds.

	Now fix a contactomorphism $\phi$ on a manifold $M$ equipped with contact form $\alpha$ such that $\phi^*\alpha = e^g\alpha$. Following Sandon~\cite{San12}, consider the contact product, i.e. the manifold $M\times M\times\R$ with contact form $A=e^\theta \alpha_1 - \alpha_2$, where $\theta$ is the coordinate on $\R$ and $\alpha_i$ are $\alpha$ pulled back along the first and second projection, respectively. A translated point now corresponds to a Reeb chord between the diagonal $\Delta := \{ (q,q,0) \;|\; q\in M \}$ and the graph $\operatorname{gr}_\phi := \{ (q,\phi(q),g(g)) \;|\; q\in M \}$. To see this, note that a translated point corresponds precisely to a point $(q,R^\alpha_t,0)\in \operatorname{gr}_\phi$ for some time $t$, and the Reeb flow on $M\times M\times\R$ can be expressed by the flow $R^\alpha$ on $M$ as $R^A=(0,-R^\alpha,0)$.
\end{remark}

We want to give a short overview of related existence results in the literature:

\begin{remark}[Existence results of translated points]
We first consider a number of results known before the introduction of translated points that could be used to prove their existence due to their interpretation as either Reeb chords and leafwise fixed points:

While many theorems give existence of \textbf{Reeb chords} connecting a Legendrian submanifold with itself, for translated points we require a chord between a Legendrian submanifold and a contact deformation of it. One such theorem for the 0-section in the 1-jet bundle is due to Chekanov~\cite{Ch96}, on which some of Sandon's results in~\cite{San12} build.

The problem of finding \textbf{leafwise fixed points} was introduced by Moser~\cite{Mos78}. The special case of translated points fixes the codimension of the coisotropic submanifold to one, so we will not consider the numerous results for minimal and maximal codimension here.

Moser's original result was generalized by Banyaga~\cite{Mos78,Ban80} to show existence of leafwise fixed points for closed coisotropic submanifolds provided that the symplectomorphism is $C^1$-small. A decade later, a fruitful line of research was opened with Hofer's introduction of his metric on the group of Hamiltonian symplectomorphisms~\cite{Hof90}. Assuming smallness of the symplectomorphism with regard to this metric instead, he originally showed existence for hypersurfaces of restricted contact type in $\R^{2n}$. This result was subsequently extended to hold under weaker assumptions on the manifolds~\cite{Gin07, Dra08, Ker08, Gue10, Zil10, AF10, AM10, Kan12}, often giving a homological lower bound on the number of leafwise fixed points. Instead of $C^1$ or Hofer-smallness, \cite{Zil17} considers Hamiltonian symplectomorphisms that are $C^0$-close to the identity for general closed coisotropic submanifolds. Some theorems consider very concrete manifolds~\cite{Mer11, AF12, AF10b, AM11} or use a symmetry of the Hamiltonian isotopy~\cite{EH89, AF12b}.

Since they were introduced, a number of results have appeared that are specific to \textbf{translated points}. In her original paper~\cite{San12}, Sandon uses~\cite{Ch96} to establish existence of translated points for compactly supported contactomorphisms that are contact isotopic to the identity and defined on either $\R^{2n+1}$ or $\R^{2n}\times S^1$, and then shows a contact analogue of a result due to Viterbo~\cite{Vit92} for \textit{iterated} translated points.
Like the main theorem of this thesis, most existence results of translated points can be understood as contact versions of the Arnol'd conjecture in special cases (compare the next section and in particular Remark~\ref{progress-contact-arn}). \cite{San13} also gives proofs of the contact Arnol'd conjecture for the $C^0$ and $C^1$ cases. A very similar approach to this paper is taken in \cite{GKPS}, yielding the conjecture by defining an analogue to the non-linear Maslov index for lens spaces.
In \cite{AM13,AFM15,Ter18,MN18,MU19}, bounds on the number of translated points are derived by making assumptions on the contact manifold. \cite{She17} introduces a contact version of the Hofer metric and proves that contactomorphisms that are small with respect to this metric implies bounds on the number of translated points, given that some assumptions on the manifold are met.
\end{remark}

\section{The Symplectic and Contact Arnol'd Conjectures}\label{sec:carnold}

For time-independent Hamiltonian functions, critical points correspond to fixed points of the generated Hamiltonian diffeomorphism. Arnol'd~\cite{Arn65} conjectured that this link between fixed points and functions on the underlying manifold holds more generally:

\begin{conjecture}[Arnol'd]
	Any Hamiltonian symplectomorphism $\phi:M\to M$ on a closed symplectic manifold $M$ has at least as many fixed points as the minimal number of critical points of smooth functions on it: $$\#\text{Fix}(\phi)\geq \text{Crit}(M) := \min \{ \#\text{Crit}(f)\;|\; f\in C^\infty(M,\R)\}$$
\end{conjecture}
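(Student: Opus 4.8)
The plan is to reduce the fixed-point problem to a critical-point problem and then apply Lusternik--Schnirelmann theory, which gives $\#\mathrm{Crit}(f)\geq\operatorname{cat}(M)+1$ for any smooth $f$ on a closed $M$; since $\operatorname{Crit}(M)$ is itself governed by $\operatorname{cat}(M)+1$ (and $\operatorname{cuplength}(M)+1$), such a reduction would suffice. First I would dispose of the classical cases. If $\phi=\phi^H_1$ for a time-independent $H$, then $\operatorname{Crit}(H)\subseteq\mathrm{Fix}(\phi)$ and the estimate is immediate. If $\phi$ is merely $C^1$-close to the identity, its graph is a Lagrangian $C^1$-close to the diagonal, so by the Weinstein neighbourhood theorem (Theorem~\ref{thm:weinstein-lagrangian}) it is the graph of $df$ for some $f\in C^\infty(M,\R)$ whose critical points are exactly $\mathrm{Fix}(\phi)$; applying Lusternik--Schnirelmann to $f$ finishes this case.

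For a general Hamiltonian symplectomorphism no such function lives on $M$ itself, and the step is to pass to an auxiliary space. On manifolds admitting generating functions quadratic at infinity (such as $T^*N$, following Th\'eret and the machinery developed in Chapter~3), one builds a generating function $F$ on a bundle $E\to M$ whose fibrewise critical points biject with $\mathrm{Fix}(\phi)$, stabilises $F$ to be quadratic at infinity, and runs the relative Lusternik--Schnirelmann argument on the sublevel sets of $F$ -- exactly the strategy carried out for the contact problem in Chapter~4. In full generality the substitute for $F$ is the symplectic action functional on the space of contractible loops: its critical points are the contractible $1$-periodic orbits of $H$, each yielding a fixed point of $\phi$, and Floer's construction identifies the homology of its Morse-type complex with $H_\ast(M)$. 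This gives at once $\#\mathrm{Fix}(\phi)\geq\sum_i\dim H_i(M)$ when all fixed points are non-degenerate, and a cup-length refinement (the module structure over $H^\ast(M)$, or quantum cohomology) pushes this toward the full $\operatorname{Crit}(M)$ bound.

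The hard part is twofold. Analytically, the action functional carries no honest Morse theory without compactness and transversality of the spaces of Floer trajectories; holomorphic-sphere bubbling obstructs this unless $M$ is monotone or semi-positive, or one develops virtual perturbation techniques, so the general closed case is much harder than the manifolds treated here. Algebraically -- and this is the genuinely open point -- improving the Betti-number estimate (the non-degenerate Arnol'd conjecture, now known for all closed symplectic manifolds) to the stated $\operatorname{Crit}(M)$ estimate needs a Lusternik--Schnirelmann theory for a possibly highly degenerate functional, and the sharp bound $\operatorname{Crit}(M)$, as opposed to $\operatorname{cuplength}(M)+1$, is not known for arbitrary closed $M$. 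I would therefore claim the conjecture only in the cases where $M$ admits generating functions of the required type -- precisely the setting this thesis develops, and in which the contact analogue Theorem~\ref{main-result-proven-intro} lives.
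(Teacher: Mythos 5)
This statement is a conjecture, not a theorem: the paper offers no proof and explicitly says it remains open in full generality. Your proposal correctly recognizes this and, rather than pretending to prove it, surveys the known partial results --- the trivial time-independent case, the $C^1$-small case via the Weinstein Lagrangian neighbourhood theorem, and the weak non-degenerate version via Floer homology --- together with an honest account of the obstructions (bubbling, the gap between Betti-number bounds and $\operatorname{Crit}(M)$). This matches precisely the discussion the paper gives in the remarks immediately following the conjecture, so there is no gap to flag; the appropriate response to this statement is exactly the kind of status report you gave, not a proof. One small terminological note: what is actually known for all closed symplectic manifolds (via virtual techniques) is the \emph{weak} non-degenerate Arnol'd conjecture, i.e.\ the bound by the sum of Betti numbers; the full non-degenerate version with $\widetilde{\operatorname{Crit}}(M)$ remains open, as your closing paragraph in fact goes on to say.
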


Since its inception, this conjecture has been central to many developments in symplectic topology such as Floer homology. While significant progress has been made on it, in full generality it remains open.

\begin{remark}[Versions of the Arnol'd conjecture]
	Note that using Lyusternik-Schnirelman theory, one can establish that $\text{Crit}(M)$ is larger than the \textit{cup length} $\text{cup}(M)$, i.e. the maximal number of cohomology classes of 1-forms on $M$ such that their cup product does not vanish (see e.g.~\cite{CLOT03}). So the Arnol'd conjecture in particular implies the \textit{weak Arnol'd conjecture} that
	$$\#\text{Fix}(\phi)\geq \text{cup}(M).$$

	Since the link between fixed points and critical points in the time-independent case preserves degeneracy, it is natural to also consider the \textit{non-degenerate Arnol'd conjecture}. It states that if $\phi$ is generic in the sense that it has only non-degenerate fixed points, there exist at least as many of them as a Morse function on the manifold must have critical points:
	$$\#\text{Fix}(\phi)\geq \widetilde{\text{Crit}}(M) := \min \{ \#\text{Crit}(f)\;|\; f\in C^\infty(M,\R), \; f \text{ is Morse} \}$$

	Note that a Morse function is just a smooth map $M\to\R$ such that every critical point is non-degenerate. By the Morse inequality (see e.g.~\cite{milnor-morse}), this would in particular imply a version of the \textit{weak non-degenerate Arnol'd conjecture}. This posits that, if $\phi$ has only non-degenerate fixed points, we have
	$$\#\text{Fix}(\phi)\geq \sum\limits_{k=0}^{\dim M} \beta_k(M,\mathbb{F}),$$
	where the sum on the right side is taken over the $k$-th Betti numbers of the manifold with respect to some field $\mathbb{F}$.

	Note that if we consider a non-compact manifold in the (strong) Arnol'd conjecture instead, it holds trivially: Every non-compact smooth manifold admits a smooth real-valued function without critical points, see e.g. Theorem~4.8 of~\cite{Hir61}.
\end{remark}

\begin{remark}[Progress on the Arnol'd conjectures]
A generating function approach implies the Arnol'd conjecture in the case that $\phi$ is sufficiently close to the identity in the $C^1-$topology (see e.g. Chapter~9 of~\cite{MS17}). After being proven by Eliashberg for Riemannian surfaces and by Conley and Zehnder for tori, the weak non-degenerate Arnol'd conjecture was established using \textit{Floer homology}.

For overviews and references on these and more results, consider Chapter~11 of~\cite{MS17}, page~153 of~\cite{AD14} and \cite{Sal99}.

\end{remark}

Based on Section~\ref{sec:translated}, translated points can be seen as an analogue to fixed points of Hamiltonian diffeomorphisms when moving from the symplectic to the contact setting. This prompted Sheila Sandon to formulate the following contact version of the Arnol'd conjecture~\cite{San12}:

\begin{conjecture}[Contact Arnol'd]\label{carnold}
The number of translated points of a contactomorphism on a compact manifold that is contact isotopic to the identity is at least the minimal number of critical points of functions on $M$.
\end{conjecture}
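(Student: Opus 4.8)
The plan is to push the generating-function strategy that works for $S^{2n-1}$ and $\R P^{2n-1}$ to an arbitrary compact $(M,\alpha)$, and — since the conjecture is open — to be explicit about the point at which that strategy ceases to be valid. \textbf{Step 1 (reduction to a Legendrian problem).} First I would pass, as in the Reeb-chord description, to the contact product $(M\times M\times\R,\,A=e^\theta\alpha_1-\alpha_2)$, in which translated points of $\phi$ are exactly Reeb chords between the diagonal $\Delta$ and the graph $\operatorname{gr}_\phi$, both Legendrian; since $\phi$ is contact isotopic to the identity, $\operatorname{gr}_\phi$ is Legendrian isotopic to $\Delta$. Equivalently, by Proposition~\ref{char-pts-via-symp} one works in the symplectization $\symp(M,\alpha)=M\times\R$ and seeks leafwise fixed points of $\symp\phi$ — which by Def.\ \& Prop.~\ref{contact-hamiltonian} and~\ref{def:symplectization-ham} is generated by the lift $\symp H_t$ of a contact Hamiltonian — with respect to the coisotropic hypersurface $M\times\{0\}$, whose characteristic foliation is the Reeb foliation.

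\textbf{Step 2 (constructing a generating function).} Using Theorem~\ref{thm:weinstein-lagrangian} together with the contact Darboux theorem (Remark~\ref{darboux}), identify a tubular neighbourhood of $\Delta$ with the $1$-jet bundle $J^1M=T^*M\times\R$, with $\Delta$ sent to the zero section. Decompose the generating contact isotopy into finitely many $C^1$-small pieces $\phi=\psi_N\circ\cdots\circ\psi_1$, so each $\operatorname{gr}_{\psi_i}$ lies in this neighbourhood and carries a ``simple'' generating function (no auxiliary fibre variables), then splice these by the composition formula for generating functions into a generating function $F\colon E\to\R$ over a vector bundle $E\to M$ for $\operatorname{gr}_\phi$. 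Its critical points would then be in bijection with the Reeb chords $\Delta\to\operatorname{gr}_\phi$, i.e.\ with the translated points of $\phi$, the critical values recording chord lengths.

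\textbf{Step 3 (Morse / Lyusternik--Schnirelmann theory).} Show that $F$ is quadratic at infinity \emph{relative to} $M$, so that for $b\gg0\gg a$ the pair $(\{F\le b\},\{F\le a\})$ has the homotopy type of the Thom space of the negative bundle over $M$; hence the ``effective'' homology of $F$ is that of $M$ up to a shift. Lyusternik--Schnirelmann theory then gives $\#\{\text{translated points}\}=\#\mathrm{Crit}(F)\ge\mathrm{cat}(M)$, and a cup-length estimate yields the weak form $\#\{\text{translated points}\}\ge\mathrm{cl}(M)+1$; in the non-degenerate case the Morse inequalities for $F$ give instead the sum of the Betti numbers of $M$. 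Upgrading $\mathrm{cat}(M)$ or $\mathrm{cl}(M)+1$ to the sharp count $\mathrm{Crit}(M)$ is the final step, and here one can do no better than in the classical Arnol'd conjecture.

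\textbf{Expected main obstacle.} The genuine difficulty — the reason the conjecture is open and the reason this thesis treats only $S^{2n-1}$ — lies in Step 2 in the \emph{non-compact} direction. For a general compact $(M,\alpha)$ the Reeb flow need not be periodic, so $\operatorname{gr}_\phi$ escapes every tubular neighbourhood of $\Delta$ along the $\R$-factor and there is no compact ambient space to work in; for $S^{2n-1}$ the $2\pi$-periodicity of the Reeb flow (Remark~\ref{standard-struct-sphere}) compactifies the contact product to $S^{2n-1}\times S^{2n-1}\times S^1$ and rescues the construction. Consequently the spliced $F$ need not be quadratic at infinity: it can acquire critical points escaping to infinity, and the truncation arguments and the uniqueness-up-to-stabilization statements underpinning Step 3 break down. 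Controlling this — via an \emph{a priori} length bound on the relevant Reeb chords, or by replacing generating functions with a Floer-theoretic count of leafwise fixed points on the symplectization (Rabinowitz--Floer homology of $M\times\R$) — is where essentially all of the work would go. A secondary, and already classically open, obstacle is closing the gap between the Lyusternik--Schnirelmann bounds and the exact number $\mathrm{Crit}(M)$.
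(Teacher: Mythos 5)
There is a fundamental mismatch here: the statement you are addressing is Conjecture~\ref{carnold}, which the paper states as an \emph{open conjecture} and does not prove; the thesis only establishes a special case for $S^{2n-1}$ (Theorem~\ref{main-result-proven}), and even that only under an additional homological hypothesis on a generating function, precisely because Sandon's original argument has a gap (Remark~\ref{remark-missing-assumption}). So there is no proof in the paper to compare against, and your text, by your own admission in the final paragraph, is a research programme with an identified breaking point rather than a proof. It cannot be accepted as a proof of the statement, and to your credit you do not really claim otherwise.

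On the substance of the programme: your diagnosis of Step~2 (non-periodic Reeb flow, non-compactness of the contact product in the $\R$-direction, loss of any quadratic-at-infinity or conicity control) is consistent with why the literature only handles $S^{2n-1}$, $\R P^{2n-1}$ and lens spaces, where periodicity and the lift to $\R^{2n}$ (Section~3.4) make the construction via Proposition~\ref{prop:existence-generating-sphere} work. But you underestimate the failure in Step~3: the thesis shows that even when a generating function \emph{does} exist, the Morse/LS count is not automatic. The detection of critical values goes through the homology of the parametric sublevel sets $\{A_t\#F\le 0\}$ (Propositions~\ref{gen-reeb-props} and~\ref{homology-sublevelsets}), and this only produces translated points when $\{F\#0\le 0\}$ is empty or has non-trivial homology; if that sublevel set is, say, contractible, both $N_0$ and $N_1$ are contractible and the argument yields nothing --- this is exactly the gap that forced the reformulation of the main theorem. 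Your claim that ``the effective homology of $F$ is that of $M$ up to a shift'' is therefore precisely the unproven point, not a routine step. Finally, as you note, upgrading cup-length or Betti-number bounds to $\mathrm{Crit}(M)$ is a further open problem already in the symplectic case, so even the intended conclusion of Step~3 would fall short of the conjectured count.
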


\begin{remark}[Relation to the main theorem]
	We will now argue that $\widetilde{\text{Crit}}(S^n)=2$ and $\text{Crit}(\R P^{n}) = n+1.$ It follows that Theorem~\ref{thm:sandon} is just the non-degenerate contact Arnol'd conjecture for the sphere and the full contact Arnol'd conjecture for real projective space.

	For the $n$-sphere with $n>0$, we have $\text{Crit}(S^{n})=\widetilde{\text{Crit}}(S^n)=2$: Any function on it must have at least a maximum and minimum by compactness, and the projection on any axis is a function with no more critical points than that.

	For $\R P^{n},$ we need to put in slightly more work that we only outline here. The \textit{Lyusternik-Schnirelman category} of real projective space is $$LS(\R P^n) = n,$$ see e.g. Example 1.8(2) of~\cite{CLOT03}.
	Combining the Lyusternik-Schnirelman Theorem and Takens' Theorem (Theorem~1.15 and Proposition~7.26 of~\cite{CLOT03}, respectively) yields $$1+LS(M)\leq \text{Crit}(M)\leq 1+ \dim (M)$$
	for a smooth connected manifold $M$.
	For $M=\R P^n$, this indeed implies $\widetilde{\text{Crit}}(\R P^n)\geq \text{Crit}(\R P^n) = n+1$.
	For the other direction in the non-degenerate case or to avoid Takens' theorem altogether, we could also follow Example 3.8 in~\cite{Mat02}. It shows that for $a_i\in\R$ pairwise distinct but otherwise arbitrary, $$(x_0:...:x_n)\mapsto \frac{\sum a_i x_i^2}{|x|^2}$$ is a Morse function on $\R P^n$ with exactly $n+1$ critical points (the equivalence classes of the standard unit vectors).

\end{remark}

\begin{remark}[Progress on the contact Arnol'd conjecture]\label{progress-contact-arn}
	Completely analogously to the symplectic case, Sandon also proves the $C^0$ and $C^1$-small version of this conjecture in~\cite{San13} using simple generating functions. \cite{GKPS} builds on the methods of~\cite{San13} to show the conjecture for lens spaces. A Rabinowitz-Floer homology approach to translated points is taken in \cite{AM13}. Building on this, \cite{AFM15} shows a non-degenerate weak contact Arnol'd conjectures under the assumption that the contact manifold has no contractible closed Reeb orbits. This is generalized to all \textit{hypertight} contact manifolds in~\cite{MN18}. \cite{She17} establishes the weak Arnol'd conjecture given smallness under a Hofer-norm for contactomorphisms. \cite{Ter18} shows cup-length estimates without non-degeneracy assumptions for certain prequantization spaces.

\end{remark}

\chapter{Generating Functions in Symplectic and Contact Geometry} \label{chap:techniques}

This chapter introduces the theory of generating functions of contactomorphisms on the sphere. This notion is based on a generalization of classical generating functions of symplectic geometry to fiber bundles due to H\"ormander~\cite{Hoe71} which was subsequently applied to the contact setting among others by Sheila Sandon. In the generating function approach, \textit{some} differentiable functions $F:S^{2n+k-1}$ determine maps $\phi:S^{2n-1}\to S^{2n-1}.$ Philosophically, the idea is that the graph of the differential $dF$ is a set that, after some identification and reduction steps, equals the graph of the map $\phi.$
\medskip

The four central features of this theory that we need for our proof can be summarized as follows:

\begin{enumerate}
	\item The Reeb flow on the sphere has a particularly nice family of generating functions that are restrictions of quadratic forms to the unit sphere.
	\item For functions $(F_{j}:S^{2n+k_j-1}\to\mathbb{R})_{j\in\{1,2\}}$ that generate $\phi_{j},$ there is a composition operation '$\#$' such that $F_1\# F_2:S^{2n+(4n+k_1+k_2)-1}\to\mathbb{R}$ generates $\phi_1\circ\phi_2$.
	\item If $\phi$ is contact isotopic to the identity, then it has a generating function.
	\item Critical points $(\zeta,\nu)\in S^{2n+k-1}\subseteq\mathbb{R}^{2n+k}$ of $F$ with value 0 correspond one-to-one to discriminant points $\zeta/|\zeta|\in S^{2n-1}$ of $\phi$.
\end{enumerate}

We build up to a definition of generating functions of contactomorphisms in several steps throughout this chapter. We start by discussing \textit{exact} symplectic structures due to their close connection to \textit{simple} generating functions. In the second section, we introduce generating functions of subsets of cotangent bundles. An identification of $T^*\R^{2n}\simeq\R^{2n}\times\R^{2n}$ then allows us to extend this notion to maps on $\R^{2n}$ in the following section. Section~4 describes a method of lifting contactomorphisms on the sphere to symplectomorphisms in Euclidean space. We conclude by defining generating functions of contactomorphisms and proving the four key results.

\section{Exact Symplectic Structures}

We now introduce a stronger type of symplectic structure that assumes the existence of a primitive of the symplectic form, generalizing the Liouville 1-form on the cotangent bundle. Exact symplectic structures are of particular interest in the context of \textit{simple} generating functions, which will be introduced in the next section. In the larger context of our proof of the main result, we will crucially need Lemma~\ref{prop:hamiso} to construct generating functions for small contact isotopies. This short exposition is based on~\cite{San14} and~\cite{MS17}.

\begin{definition}
	An \textbf{exact symplectic manifold} is a manifold $M$ with a \textbf{Liouville form} $\lambda$ such that $\omega = -d\lambda$ is a symplectic form on $M$. A symplectomorphism $\phi$ between exact symplectic manifolds $(M_1,\lambda_1)$ and $(M_2,\lambda_2)$ is \textbf{exact} if $(\phi^*\lambda_2-\lambda_1)$ is exact.
\end{definition}

Note that for a symplectomorphism $\phi$ between exact symplectic manifolds, $(\phi^*\lambda_2-\lambda_1)$ is already automatically closed.

On an exact symplectic manifold, symplectic isotopies are Hamiltonian precisely if the change in the pullback of the Liouville form is given by the exterior derivative of an action integral:

\begin{lemma}[Characterization of Hamiltonian isotopies on exact symplectic manifolds]\label{prop:hamiso}
	Let $(M,\omega=-d\lambda)$ be an exact symplectic manifold. A symplectic isotopy $(\phi_t)_{t\in[0,1]}$ starting at the identity is a Hamiltonian isotopy if and only if there exists a smooth family of functions $S_t\in C^\infty(M)$ such that $\phi_t^*\lambda-\lambda=dS_t.$ In this case, the $S_t$ are (up to a constant) given by $$S_t= \int\limits_0^t (\lambda(X_s)+H_s)\circ \phi_s ds,$$ where $X_t$ and $H_t$ are the vector field and the Hamiltonian function that generate $\phi_t$, respectively.
\end{lemma}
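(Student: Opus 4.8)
The statement is an "if and only if" with an explicit formula, so I would organize the argument into three pieces: (1) assuming $(\phi_t)$ is Hamiltonian, derive that $\phi_t^*\lambda - \lambda$ is exact and equals $dS_t$ for the claimed $S_t$; (2) conversely, assuming $\phi_t^*\lambda - \lambda = dS_t$ for some smooth family $S_t$, show that $(\phi_t)$ is Hamiltonian; (3) note the uniqueness of $S_t$ up to a constant. Direction (2) is actually the quicker half: if $X_t$ generates the symplectic isotopy $\phi_t$, then $\phi_t^*\mathcal{L}_{X_t}\lambda = \tfrac{d}{dt}\phi_t^*\lambda = \tfrac{d}{dt}dS_t = d\tfrac{d}{dt}S_t$, and by Cartan's formula $\mathcal{L}_{X_t}\lambda = d(\iota_{X_t}\lambda) + \iota_{X_t}d\lambda = d(\lambda(X_t)) - \iota_{X_t}\omega$. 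Pushing forward by $\phi_t$ and rearranging gives $\iota_{X_t}\omega = d(\lambda(X_t)) - (\phi_t)_*d(\tfrac{d}{dt}S_t)$, which is exact; calling a primitive $H_t$ (so $\iota_{X_t}\omega = dH_t$ up to sign conventions) exhibits $X_t$ as a Hamiltonian vector field, hence $(\phi_t)$ as a Hamiltonian isotopy. The surjectivity/isotopy condition is automatic since we already assumed $(\phi_t)$ is a symplectic isotopy starting at the identity.

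For direction (1), I would run essentially the same computation in reverse. Starting from a Hamiltonian $H_t$ with generating vector field $X_t$ (so $\iota_{X_t}\omega = -dH_t$), compute $\tfrac{d}{dt}(\phi_t^*\lambda) = \phi_t^*\mathcal{L}_{X_t}\lambda = \phi_t^*\big(d(\lambda(X_t)) + \iota_{X_t}d\lambda\big) = \phi_t^*\big(d(\lambda(X_t)) - \iota_{X_t}\omega\big) = \phi_t^*\big(d(\lambda(X_t)) + dH_t\big) = \phi_t^* d\big(\lambda(X_t) + H_t\big) = d\big((\lambda(X_t) + H_t)\circ\phi_t\big)$. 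Since $\phi_0 = \mathrm{id}$ gives $\phi_0^*\lambda - \lambda = 0$, integrating in $t$ from $0$ yields
$$\phi_t^*\lambda - \lambda = d\int_0^t (\lambda(X_s) + H_s)\circ\phi_s\, ds,$$
which is exactly the claimed formula for $S_t$ and shows the difference is exact. The key identity being used repeatedly is Eq.~\eqref{eq:timederivflow} for the time derivative of a pullback along a flow (here the forms $\beta_t = \lambda$ are $t$-independent, so only the $\mathcal{L}_{X_t}$ term survives), together with Cartan's magic formula; I should be careful that interchanging $d$ with the $t$-integral is legitimate, which holds since everything is smooth and we integrate over a compact interval.

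Finally, uniqueness: if $dS_t = dS_t'$ for all $t$ then $S_t - S_t'$ is locally constant in the $M$-variable; assuming $M$ connected this is a function of $t$ alone, giving the "up to a constant" clause (or up to a smooth function of $t$ if one does not fix the value at a basepoint). I expect the main obstacle to be bookkeeping rather than conceptual — specifically, keeping the sign conventions consistent between $\omega = -d\lambda$, the Hamilton equation $\iota_{X_t}\omega = -dH_t$, and Cartan's formula, and making sure the primitive extracted in direction (2) is globally defined (it is, since $d(\lambda(X_t)) - (\phi_t)_* d(\tfrac{d}{dt}S_t)$ is manifestly exact, being a sum of two exact forms). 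No hard analysis or geometry is needed beyond what is already assembled in the excerpt.
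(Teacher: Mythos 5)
Your proposal is correct and follows essentially the same route as the paper: in direction (1) both compute $\tfrac{d}{dt}\phi_t^*\lambda$ via $\mathcal{L}_{X_t}$, Cartan's formula, and the Hamilton equation and then integrate from $t=0$; in direction (2) both extract the Hamiltonian $H_t = -\lambda(X_t) + \bigl(\tfrac{d}{dt}S_t\bigr)\circ\phi_t^{-1}$ (the paper writes it down directly and verifies, you derive it by rearranging, which amounts to the same thing). The only loose end is the clause ``up to sign conventions'': with $\omega=-d\lambda$ and $\iota_{X_t}\omega=-dH_t$ the sign is forced, so you should pin it down rather than leave it floating.
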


\begin{proof}
	We follow \cite[Lemma 2.5]{San14} and \cite[Proposition 9.3.1]{MS17}. Assume first that $\phi_t$ is a Hamiltonian isotopy generated by $X_t$, which is in turn generated by $H_t$ via $\omega(X_t,\cdot) = -dH_t.$ We calculate the change in $\phi_t^*\lambda-\lambda:$

	\begin{align*}
		\frac{d}{dt} \left(\phi_t^*\lambda - \lambda \right) &= \frac{d}{dt}\phi_t^* \lambda = \phi_t^*(\mathcal{L}_{X_t}\lambda) \\
		&= \phi_t^*\Big( d(\lambda(X_t)) + \iota_{X_t}d\lambda  \Big) \\
		&= \phi_t^* \Big( d(\lambda(X_t)) - \omega(X_t,\cdot) \Big) \\
		&= \phi_t^* \Big( d(\lambda(X_t)) + d H_t \Big) = \frac{d}{dt} dS_t.
	\end{align*}

	Here we have used the general expression for change in pullbacks along isotopies again (compare Eq.~\eqref{eq:timederivflow}), as well as Cartan's magic formula, the definition of $\omega$, the fact that $H_t$ generates $X_t$ and the definition of $S_t$ from the statement of the lemma. Since $\phi_t^*\lambda-\lambda$ and $d S_t$ both vanish for $t=0$, they must be equal at all times.

	Now assume conversely that $\phi_t^*\lambda-\lambda = d S_t$ for a symplectic isotopy $\phi_t$ generated by the vector field $X_t$ and any smooth $S_t$. Define the map $$H_t :=-\lambda(X_t) +  \Big( \frac{d}{dt} S_t \Big)\circ \phi_t^{-1}.$$ We claim that this generates $X_t$ and thereby makes $\phi_t$ a Hamiltonian isotopy. Indeed

	\begin{equation*}
		-dH_t = d\Big(\lambda(X_t)\Big) - (\phi_t^{-1})^* \frac{d}{dt} dS_t = d\Big(\lambda(X_t)\Big) - (\phi_t^{-1})^* \frac{d}{dt} \Big( \phi_t^*\lambda -\lambda \Big) =  -\iota_{X_t} d\lambda  =  \omega(X_t,\cdot),
	\end{equation*}

	where we just computed the exterior derivative, used the assumption, and then the formula for pullbacks along isotopies and Cartan's magic formulas. We can then apply the argument of the first half of the proof to show that $S_t$ must be given by the action integral, up to a constant.

\end{proof}

This means in particular:

\begin{corollary}\label{exactcoroll}
Every Hamiltonian symplectomorphism on an exact symplectic manifold is also an exact symplectomorphism.
\end{corollary}

The notion of exactness can also be extended to Lagrangian submanifolds:

\begin{definition}\label{exact-lagrangian}
	A Lagrangian submanifold $\iota:L\hookrightarrow M$ of an exact symplectic manifold $(M,\omega=-d\lambda)$ is \textbf{exact} if $\iota^*\lambda$ is exact (it is always closed).\ \\

\startsubpart{Proposition} Exactness of a Lagrangian submanifold is preserved under exact symplectomorphisms.

\end{definition}

\begin{proof}
	Assume $\phi:(M_1,\lambda_1)\to(M_2,\lambda_2)$ is an exact symplectomorphism such that $\phi^*\lambda_2-\lambda_1 = d\eta,$ and $\iota:L\hookrightarrow M_1$ is an exact Lagrangian submanifold. Its image is then embedded via $\phi\iota:L\hookrightarrow M_2$. Now $$ (\phi\iota)^*\lambda_2 = \iota^* \Big( \phi^*\lambda_2 \Big) = \iota^* \Big(  \lambda_1+d\eta  \Big) = \iota^*\lambda_1 + d\eta,$$
	so $\iota:L\hookrightarrow M_1$ is exact if and only if $\phi\iota:L\hookrightarrow M_2$ is.
\end{proof}

The canonical example of exact symplectic structures is the cotangent bundle with the Liouville form:

\begin{remark}[Exact symplectic structure on the cotangent bundle]\label{cotangent-bundle}
	Given a smooth manifold $M$, define on the cotangent bundle $\pi:T^*M\to M$ the Liouville form $$\lambda_{can}(X)=\alpha(\pi_* X)$$ for a tangent vector $X$ based at $\alpha\in T^*M$. This is the unique 1-form such that for all $\alpha\in \Omega^1(M)$,
	\begin{equation}\label{eq:liouvilleprop}
	\alpha^*\lambda_{can} = \alpha,
	\end{equation}
	where we regard $\alpha$ as a map $M\to T^*M$ when taking the pullback (see e.g. Lemma~1.4.1 of~\cite{Gei08}). This induces an exact symplectic structure $\omega_{can}=-d\lambda_{can}$ on the cotangent bundle. Note that the zero section and fibers are exact Lagrangian submanifolds. Moreover, every deformation of the zero section by a Hamiltonian diffeomorphism is also an exact Lagrangian by \ref{prop:hamiso} and \ref{exact-lagrangian}. The converse of this statement is known as the Nearby Lagrangian Conjecture.
\end{remark}

\section{Generating Functions of Subsets of $T^*M$}

Generally speaking, a generating function on some smooth manifold $M$ is a function that characterizes a subset of the cotangent bundle $T^*M$. Under sufficiently nice circumstances, this subset is automatically a Lagrangian submanifold. The aim is to have a correspondence of critical points of the generating function to intersections of the Lagrangian with the zero section, allowing us to study its geometry through Morse theory. Here, we will first consider \textit{simple generating functions} defined on $M$ itself, and then a more general framework due to H\"ormander~\cite{Hoe71} where the function is defined on a fiber bundle over $M$.\medskip

This and the following section are mostly based on expositions in \cite{San13, San14, The98, GKPS}. However, definitions and propositions have been significantly reformulated for a careful separation of generating functions of subsets of $T^*M$ and maps on $\R^{2n}$, and much more care was taken to make the statements applicable as stated to where they are used. This is necessary\footnote{Sandon acknowledges this, but only argues by analogy to the regular case.} since the lift of the contactomorphism to which this theory will be applied fails to be smooth at 0. We also add a number of proofs that were omitted in these sources.\medskip

The crucial observation to generate Lagrangian submanifolds from functions is that graphs of a closed differential form are Lagrangian. To be precise:

\begin{definition}[Simple generating forms and functions]\label{simple-gen-funs}
	We say a submanifold of $T^*M$ arises from the \textbf{simple generating form} $\alpha$ if it is given by the graph $\Gamma_\alpha$ of $\alpha$, and a submanifold arises from the \textbf{simple generating function} $f\in C^\infty(M)$ if it is given by the graph $\Gamma_{df}$ of $df$.

\ \\
\startsubpart{Proposition}
	Consider the submanifolds $\Gamma_\alpha$ generated by the 1-form $\alpha\in \Omega^1(M)$ and $\Gamma_{df}$ generated by $f\in C^\infty(M)$ and equip $T^*M$ with the canonical exact symplectic structure from Remark~\ref{cotangent-bundle}. Then the following holds:
\begin{enumerate}
	\item $\Gamma_\alpha$ is Lagrangian if and only if $\alpha$ is closed,
	\item $\Gamma_\alpha$ is exact Lagrangian if and only if $\alpha$ is exact,
	\item critical points $x$ of $f$ correspond precisely to intersections of $\Gamma_{df}$ with the zero section in $T^*M$. $x$ is non-degenerate if and only if the intersection is transverse.
\end{enumerate}

\end{definition}

\begin{proof}

	Regarding $\alpha$ as as an inclusion $M\hookrightarrow T^*M$ with image $\Gamma_\alpha$, we can use Eq.~\eqref{eq:liouvilleprop} to compute the pullback of the symplectic form along $\alpha:$
	$$\alpha^*\omega = \alpha^* d\lambda = d (\alpha^*\lambda) = d\alpha$$
	 Since the pullback of $\omega$ along an immersion vanishes if and only if the restriction to its image is zero and since $\dim\Gamma_\alpha=\frac12 \dim T^*M$, Lemma~\ref{lemma:submfd-types}~(iv,v) allow us to conclude part (i) of the statement. Similarly, $\alpha^*\lambda = \alpha$ immediately establishes part (ii).\medskip

	To show (iii), assume $f\in C^\infty(M)$ is a simple generating function, i.e. $\alpha=df.$ A critical point is a point $x\in M$ where $df$ does not have full rank, which is equivalent to $d_xf=0$ since $f$ is scalar. But this just means that $\alpha$ and the zero section intersect at $x$.

	Consider local coordinates $q\in\R^n$ around $x$ and induced coordinates $(q,p)\in\R^n\times(R^n)^*$ of $T^*M.$ The graph $\Gamma_{df}$ is given by $(q,p)$ with $p=\partial f/\partial q$ in these coordinates. The intersection with the zero section at $x$ is transverse if the tangent directions $\partial^2 f/\partial^2 q\in (R^n)^*\times(R^n)^*$ of the graph span all directions in $(R^n)^*$ when inserting arbitrary $v\in\R^n$. This is exactly the same as the Hessian matrix of $f$ having full rank, i.e. $x$ being non-degenerate.
\end{proof}

We want to consider a more general situation and enlarge the domain of our generating functions to a fiber bundle $p:E\to M.$ To relate this to the cotangent space of $M$, we consider a submanifold $N_E$ of $T^*E$ that philosophically adds an artificial parameter to the bundle $T^*M:$

\begin{definition} \label{fiber-normal-bundle}
Define for a fiber bundle $p:E\to M$ the \textbf{fiber conormal bundle} $$N_E:=\{(e,\alpha)\in T^*E\; |\; \alpha|_{\ker d_ep}=0\}.$$

\startsubpart{Proposition}
	$N_E$ is a regular coisotropic submanifold of the cotangent bundle $T^*E$ equipped with the canonical symplectic form. The symplectic reduction of $N_E$ is symplectomorphic to the cotangent bundle $T^*M$ of the base space via the map $$\Psi_E:T^*M \to (N_E)_\omega $$ that is defined by $\Psi_E(\beta)=[\beta(dp|_{TE}\cdot)]$ for all $\beta\in T^*M.$\smallskip

\startsubpart{Definition} Define the symplectic map$$
	\pi_E: N_E\to T^*M
	$$
	by setting $\pi_E:=\Psi_E^{-1}\circ\pi_0$,
where $\pi_0:N_E\to (N_E)_\omega$ is the quotient map of the symplectic reduction.
\end{definition}

\begin{proof}
It is straightforward to check in canonical coordinates of $T^*E$ that $N_E$ is a coisotropic submanifold. In particular, the distribution $N_E^\omega$ is integrable. Note that two forms $\alpha_e\in (N_E)_e$ and $\alpha'_{e'}\in (N_E)_{e'}$ lie in the same leaf if and only if $p(e)=p(e')$ and for all $X\in T_eE, X'\in T_{e'}E$ with $dpX= dpX'$ we have that $\alpha_e(X)=\alpha'_{e'}(X')$. This means leaves contain forms that differ only in their position along the artificial fiber but otherwise match each other on lifts of vectors from $TM.$

For regularity of $N_E$, we need to check the two conditions of Definition~\ref{def:regularity}. Given any $\alpha_e\in (N_E)_e$, we can take for $S$ a section $\alpha$ of $N_E$ that extends $\alpha_e.$ Transversality and the Hausdorff property of the quotient space follow in canonical coordinates by our characterization of the leaves.

By the Proposition in~\ref{def:regularity}, we can conclude that $(N_E)_\omega$ carries a smooth manifold structure and symplectic form $\bar\omega$ that is induced by the smooth quotient map. This also makes the map $\Psi_E$ smooth and symplectic. To see that it also is a diffeomorphism, one can check explicitly that it is a bijection and, in canonical coordinates, that it is a local diffeomorphism.
\end{proof}

\begin{definition}\label{def:gen-func-submnfd}
	Consider the fiber conormal bundle $N_E$ of $p:E\to M$ and let $F:E\to\R$ be differentiable. We define the set of \textbf{fiber critical points} $$\Sigma_F:= \big\{e\in E \;\big|\; e\text{ critical point of } F|_{p^{-1}(p(e))}\big\}$$ and the map\footnotemark\ 
	\begin{align*}
	i_F:\Sigma_F&\to T^*M\\
	e&\mapsto\pi_E(d_eF).
	\end{align*}
	We now say that $F$ is a \textbf{(H\"ormander) generating function} of the image $i_F(\Sigma_F).$

\end{definition}
\footnotetext{This is a well-definition since $d_eF\in N_E$ by construction of $\Sigma_F.$}

\begin{remark}[Relation to simple generating functions]
	Note that simple generating functions of Definition~\ref{simple-gen-funs} are a special case of H\"ormander generating functions: By setting $E=M$ and $p=\Id_M$, we have that $N_E=T^*M, \Sigma_F=M,$ $\pi_E=\operatorname{Id}_{T^*M}$ such that $i_F(\Sigma_F)$ is the graph of $dF.$
\end{remark}

Note that $\big\{d_eF \;\big|\; e\in\Sigma_F \big\} = dF\cap N_E.$ If this intersection is transverse, this generalized setup still yields immersed Lagrangian submanifolds:

\begin{lemma}
\label{prop:func-generates-lagrangian}
Let $F:E\to M$ be a generating function on the fiber bundle $p:E\to M.$ If $F$ is smooth and $dF$ is transverse to $N_E$ around a given point $e\in \Sigma_F$, then $i_F$ is a Lagrangian immersion around $e.$
\end{lemma}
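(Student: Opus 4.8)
The plan is to reduce the statement to a transversality computation inside the cotangent bundle $T^*E$, and then transport everything through the symplectic reduction $\pi_E$ of Definition~\ref{fiber-normal-bundle}. First I would observe that $dF\colon E\to T^*E$ is a section of the cotangent bundle, hence an embedding onto its image $\Gamma_{dF}$, which is a Lagrangian submanifold of $T^*E$ by the simple generating form part of Definition~\ref{simple-gen-funs}(i) (applied with $M$ replaced by $E$, noting $dF$ is exact and in particular closed). The set $\{d_eF \mid e\in\Sigma_F\}$ equals $\Gamma_{dF}\cap N_E$ as noted just before the statement, so the hypothesis that $dF$ is transverse to $N_E$ near $e$ says precisely that the Lagrangian $\Gamma_{dF}$ meets the regular coisotropic submanifold $N_E$ transversally near the point $d_eF$.

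Next I would apply Lemma~\ref{lem:lag-symp-red} to the triple $(T^*E,\,N_E,\,\Gamma_{dF})$: since $N_E$ is regular coisotropic (by the Proposition in Definition~\ref{fiber-normal-bundle}) and $\Gamma_{dF}$ is Lagrangian and transverse to $N_E$ near $d_eF$, the image $\pi_0(\Gamma_{dF}\cap N_E)$ is an immersed Lagrangian along the quotient map $\pi_0$ of the symplectic reduction, at least locally near the leaf through $d_eF$. Composing with the symplectomorphism $\Psi_E^{-1}\colon (N_E)_\omega\to T^*M$ from Definition~\ref{fiber-normal-bundle} preserves the immersed-Lagrangian property, and by definition $\pi_E=\Psi_E^{-1}\circ\pi_0$. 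Finally I would identify $\Sigma_F$ with $\Gamma_{dF}\cap N_E$ via the diffeomorphism $e\mapsto d_eF$ (this is a diffeomorphism onto its image since $dF$ is a section; here transversality of $dF$ to $N_E$ guarantees $\Sigma_F$ is a manifold of the right dimension near $e$), under which $i_F=\pi_E\circ(e\mapsto d_eF)$ becomes exactly $\pi_0$ followed by $\Psi_E^{-1}$ restricted to $\Gamma_{dF}\cap N_E$. Hence $i_F$ is a Lagrangian immersion near $e$.

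I should be slightly careful about two bookkeeping points. One is dimension count: transversality of $\Gamma_{dF}$ and $N_E$ in $T^*E$, together with $\dim\Gamma_{dF}=\dim E$ and $\dim N_E = \dim E + \dim M$, forces $\Gamma_{dF}\cap N_E$ to have dimension $\dim M$, which matches $\dim\Sigma_F$ and is the correct dimension for a Lagrangian in $T^*M$ — so I would spell this out to confirm the map lands in the right place. The other is that Lemma~\ref{lem:lag-symp-red} is stated for a globally transverse Lagrangian, whereas we only have transversality near $e$; this is purely local, so I would either restrict to a $\pi_0$-saturated neighbourhood of the leaf through $d_eF$ on which transversality holds, or simply note the proof of Lemma~\ref{lem:lag-symp-red} is local in nature.

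The main obstacle I anticipate is the identification of $i_F$ with the reduced map in Lemma~\ref{lem:lag-symp-red}: one must check that the manifold structure on $\Sigma_F$ (coming from $dF\pitchfork N_E$) agrees with the manifold structure on $\Gamma_{dF}\cap N_E$ pulled back along $e\mapsto d_eF$, and that $\pi_E\circ dF$ genuinely equals $\pi_0$ composed with $\Psi_E^{-1}$ on this locus — i.e. that no smoothness is lost at the level of $\Sigma_F$. This is essentially a diagram-chase through the definitions, but it is where the hypotheses are actually used, so it deserves the most attention.
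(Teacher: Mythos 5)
Your proposal follows exactly the same route as the paper's proof: use Proposition~\ref{simple-gen-funs}(i) to see $\Gamma_{dF}$ is Lagrangian in $T^*E$, apply Lemma~\ref{lem:lag-symp-red} to the transverse intersection with the regular coisotropic $N_E$, compose with the symplectomorphism $\Psi_E^{-1}$, and identify $\Sigma_F$ with $\Gamma_{dF}\cap N_E$ via $e\mapsto d_eF$. The extra care you take over the dimension count and the local nature of Lemma~\ref{lem:lag-symp-red} is sound and fills in bookkeeping that the paper leaves implicit, but the argument is the same.
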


\begin{proof}
By Proposition~\ref{simple-gen-funs} part (i), $dF$ is a Lagrangian submanifold of $T^*E$. By assumption, a neighbourhood of $d_eF$ in $dF$ intersects $N_E$ transversally. By Lemma~\ref{lem:lag-symp-red}, the image of that neighbourhood intersected with $N_E$ under the quotient map $\pi_0:N_E\to (N_E)_\omega$ is an immersed Lagrangian. Since $i_F$ is, up to identification of $e$ with $d_eF$, given by the composition of $\pi_0|_{\Sigma_F}$ with the symplectomorphism $\Psi_E^{-1},$ we are done.
\end{proof}

\begin{proposition}[Critical points correspond to intersections with the zero section]\label{prop:crit-points-correspond-to-zero-intersections}
Let $F:E\to M$ be a generating function on the fiber bundle $p:E\to M$.

\begin{enumerate}

\item $e\in E$ is a critical point of $F$ if and only if $e\in\Sigma_F$ and $i_F(e)=0,$ i.e. if $i_F(\Sigma_F)$ intersects the $0$-section of $T^*M$ at $p(e)$. If $i_F$ is injective, every intersection at $x\in M$ corresponds to exactly one critical point of $F$ in $p^{-1}(x).$

\item If $F$ is smooth around a critical point $e\in E$, $dF$ intersects $N_E$ transversally around $e$ and $U\subseteq \Sigma_F$ is a neighbourhood of $e$ small enough such that $i_F(U)\subseteq T^*M$ is a submanifold, then the following holds: The corresponding intersection of $i_F(U)$ with the zero section in $T^*_{p(e)}M$ is transverse if and only if the critical point $e$ is non-degenerate. This equivalence also holds when considering transversality and non-degeneracy along any subspace of $T^*_{p(e)}M.$
\end{enumerate}
\end{proposition}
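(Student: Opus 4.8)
For part~(i) I would simply unwind Definitions~\ref{fiber-normal-bundle} and~\ref{def:gen-func-submnfd}. If $e$ is a critical point of $F$ then $d_eF=0$ annihilates $\ker d_ep$, so $d_eF\in N_E$ and $e\in\Sigma_F$; conversely $i_F(e)$ is only defined when $e\in\Sigma_F$. It thus remains to show that for $e\in\Sigma_F$ one has $i_F(e)=0$ if and only if $d_eF=0$. The point is that $\pi_E$ is injective on each cotangent fibre $N_E\cap T^*_eE$: by the description of the leaves of the characteristic foliation $N_E^\omega$ in the proof of the Proposition in~\ref{fiber-normal-bundle}, two covectors of $N_E$ lying over the \emph{same} point of $E$ and in a common leaf agree on all tangent vectors, hence coincide. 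Since $\pi_E$ sends the zero covector $0_e$ to $0\in T^*_{p(e)}M$ (because $\Psi_E(0)$ is the leaf through $0_e$), injectivity on the fibre gives $i_F(e)=\pi_E(d_eF)=0 \iff d_eF=0$, which is the first assertion; and if $i_F$ is injective, each intersection point $0_x$ of $i_F(\Sigma_F)$ with the zero section has a unique preimage in $\Sigma_F$, namely the unique critical point of $F$ in $p^{-1}(x)$.

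For part~(ii) the plan is to reduce to symplectic linear algebra by linearizing the construction of $i_F$ at $e$. By Lemma~\ref{prop:func-generates-lagrangian}, after shrinking $U$ so that $L:=i_F(U)$ is embedded, $L$ is a Lagrangian submanifold of $T^*M$ through the point $0_x$ of the zero section, where $x:=p(e)$. In the canonical splitting $T_{0_x}(T^*M)\cong T_xM\oplus T^*_xM$ the tangent space of the zero section is the Lagrangian subspace $T_xM\oplus 0$, so $L$ meets the zero section transversally at $0_x$ precisely when $T_{0_x}L\cap(T_xM\oplus 0)=0$, and it remains to identify $T_{0_x}L$. Here I would linearize everything in sight: the graph of $dF$ in $T^*E$ is Lagrangian (Proposition~\ref{simple-gen-funs} part~(i)) with tangent space at $d_eF$ equal to the graph of the Hessian $H$ of $F$ at $e$, viewed as a symmetric map $T_eE\to T^*_eE$; the hypothesis that $dF$ is transverse to $N_E$ at $e$ says exactly that the \emph{vertical part} of $H$, i.e.\ the map $T_eE\to(\ker d_ep)^{*}$ sending $X$ to $(HX)|_{\ker d_ep}$, is surjective, and this also makes $\Sigma_F$ a submanifold with $T_e\Sigma_F=\ker(\text{vertical part of }H)$; finally $d_ei_F$ is the linear symplectic reduction of the graph of $H$ by $T_{d_eF}N_E$. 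Carried out in fibred coordinates $(q,v)$ on $E$ with $p(q,v)=q$ (so that $H$ has blocks $H_{qq},H_{qv},H_{vv}$), this identifies $T_{0_x}L$ with the linear Lagrangian $\{(\dot q,\ H_{qq}\dot q+H_{qv}\dot v)\mid H_{vq}\dot q+H_{vv}\dot v=0\}$ — the one generated by the quadratic form $H$ in the sense of Definition~\ref{def:gen-func-submnfd} — and exhibits $T_{0_x}L\cap(T_xM\oplus 0)$ as the image of $\ker H$ under $d_ep$.

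To conclude, note that this image is zero exactly when $\ker H\subseteq\ker d_ep$. But the vertical part of $H$ is surjective by hypothesis, so its adjoint (using symmetry of $H$), which is the restriction of $X\mapsto HX$ to $\ker d_ep$ as a map $\ker d_ep\to T^*_eE$, is injective; hence $\ker H\cap\ker d_ep=0$, and therefore $\ker H=0$. Since non-degeneracy of the critical point $e$ means non-degeneracy of $H$, transversality of the intersection is equivalent to non-degeneracy of $e$. The ``along a subspace'' refinement follows by the analogous computation performed relative to a chosen subspace of $T^*_{p(e)}M$ and the corresponding subspace of $T_eE$. I expect the main difficulty to lie in this linearization of the symplectic reduction — verifying that forming tangent spaces commutes with the quotient by the characteristic foliation, which is precisely where transversality of $dF$ with $N_E$ enters — together with, in the subspace version, the bookkeeping of exactly which subspace of $T_eE$ corresponds to a given subspace of $T^*_{p(e)}M$ and the re-checking of the vertical surjectivity in that relativized setting.
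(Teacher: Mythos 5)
Your proposal is correct and follows essentially the same route as the paper: part (i) is the same unwinding of the definitions (fiber-criticality kills the vertical part of $d_eF$, and the reduction map is injective on each fibre of $N_E$, so $i_F(e)=0$ forces $d_eF=0$), and part (ii) is the same reduction to fibred coordinates and the block structure of the Hessian, with transversality of $dF$ and $N_E$ appearing as surjectivity of the vertical block and transversality with the zero section appearing as a condition on the reduced tangent space. Your final bookkeeping via $d_ep(\ker H)$, the injectivity of $H|_{\ker d_ep}$, and the Lagrangian dimension count is, if anything, a slightly tighter way of stating the equivalence than the paper's "both partial-derivative maps are submersions" phrasing (which implicitly needs the restriction to $T_e\Sigma_F$), and your identification of where the linearization of the reduction must be justified matches where the paper's coordinate computation does that work.
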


\begin{proof} \ 

Regarding (i): Since we have $d_eF=0$ by definition for a critical point $e\in E,$ we immediately get $e\in\Sigma_F$ and $i_F(e)=0.$ If conversely $e\in\Sigma_F$ and $i_F(e)=0,$ the first condition implies that $d_eF$ vanishes in the fiber direction, while the second guarantees that $\pi_0(d_eF)=0$ for the quotient map $\pi_0:N_E\to (N_E)_\omega$. This means $d_eF$ coincides with a zero form on lifts from the base space $M$, i.e. it vanishes horizontally too and $d_eF=0$.

If $i_F$ is injective, then there can just be one $e\in\Sigma_F$ with $i_F(e)=0$ for every intersection at $p(e)=x.$\smallskip

Regarding (ii): Consider the case where $E=\R^a\times\R^b$ and write $(\zeta,\nu)\in \R^a\times\R^b.$ This case implies (ii) by imitating the following argument in local coordinates induced by coordinates on $M$:

In this situation, we have
\begin{align*}
dF&=\{(\zeta,\nu,\partial_\zeta F(\zeta,\nu), \partial_\nu F(\zeta,\nu))\in E\times E\;|\; (\zeta,\nu)\in E\},\\
N_E&=\R^a\times\R^b\times\R^a\times\{0\}\subseteq E\times E,\\
\Sigma_F &= dF\cap N_E,\\
i_F(\Sigma_F) &= \{(\zeta,\partial_\zeta F(\zeta,\nu))\in\R^a\times\R^a \;|\; (\zeta,\nu)\in\Sigma_F\}.
\end{align*}
At a critical point $(\zeta,\nu)$ of $F$, we have $\partial F/\partial(\zeta,\nu)=0$ such that $i_F(\zeta,\nu,0,0)=(\zeta,0).$ Note that this is consistent with part (i) of this proposition. The critical point is non-degenerate precisely if the maps $$
\frac{\partial F}{\partial\zeta}:\R^{a}\times\R^b\to (\R^a)^*
\qquad\text{and}\qquad
\frac{\partial F}{\partial\nu}:\R^{a}\times\R^b\to (\R^b)^*
$$
are submersions at that point. In our concrete situation, the second of these being a submersion is exactly transversality of the intersection of $dF$ and $N_E$ (compare Lemma~\ref{euclidean-formulae}), which holds by assumption. The first being a submersion is precisely transversality of the intersection of $i_F(\Sigma_F)$ and the zero section. This argument also works when considering transversality and non-degeneracy along any subspace of $T^*_{p(e)}M$.
\end{proof}

\section{Generating Functions of Maps on $\R^{2n}$}

In this section, we expand our discussion of generating functions to maps on Euclidean space. Under sufficiently nice circumstances, this map is automatically a symplectomorphism with regard to the standard symplectic structure. We will in particular see that we can relate critical points of the generating function to fixed points of the symplectomorphism and discuss a composition formula approach to proving the existence of generating functions for a given symplectomorphism.\medskip

Like the previous section, this is mostly based on expositions in \cite{San13, San14, The98, GKPS}, but definitions and propositions have been significantly reformulated and omitted proofs were added.\medskip

The central observation to extend our theory to symplectomorphisms is that some Lagrangian submanifolds of signed symplectic product spaces $\overline{M}_1\times M_2$ correspond to symplectomorphisms $\phi:M_1\to M_2$:

\begin{lemma}[Symplectomorphisms as Lagrangian submanifolds]\label{symplecto-to-lagrangian}
	For two symplectic manifolds $(M_i,\omega_i)$, $i\in\{1,2\}$, let $\overline{M}_1\times M_2$ be the product space $M_1\times M_2$ equipped with the 2-form $$\tilde{\omega}=-p_1^*\omega_1+p_2^*\omega_2,$$ where $p_i:M_1\times M_2\to M_i$ are the projection maps.

	\begin{enumerate}
	\item $\tilde{\omega}$ is a symplectic form.
	\item The graph $\Gamma_\phi$ of a smooth map $\phi:M_1\to M_2$ is an embedded submanifold of $\overline{M}_1\times M_2.$ It is a Lagrangian submanifold if and only if $\phi$ is a symplectomorphism.

	\item If $\phi$ is a symplectomorphism on $(M_1,\omega_1)=(M_2,\omega_2)$, then the diagonal $\Delta$ in $\overline{M}_1\times M_2$ is another Lagrangian submanifold. Fixpoints of $\phi$ then correspond precisely to points of the Lagrangian intersection $\Gamma_\phi\cap\Delta$. This intersection is transverse exactly when the fixed point is non-degenerate.
	\end{enumerate}
\end{lemma}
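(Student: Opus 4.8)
The three parts are essentially independent linear-algebra-and-pullback computations, and the plan is to treat them in order.

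For part (i), I would check non-degeneracy of $\tilde\omega = -p_1^*\omega_1 + p_2^*\omega_2$ pointwise. At a point $(x_1,x_2)$ the tangent space splits as $T_{x_1}M_1\oplus T_{x_2}M_2$, and $\tilde\omega$ is block-diagonal with blocks $-\omega_1$ and $\omega_2$; since each $\omega_i$ is non-degenerate, so is the direct sum. Closedness is immediate because pullback commutes with $d$ and each $\omega_i$ is closed. (One could also invoke that $(-\omega_1)$ is a symplectic form and that a direct sum of symplectic forms is symplectic.)

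For part (ii), the graph $\Gamma_\phi = \{(x,\phi(x)) : x\in M_1\}$ is the image of the embedding $\iota_\phi : M_1 \to M_1\times M_2$, $x\mapsto (x,\phi(x))$, so it is an embedded submanifold of dimension $\dim M_1 = \tfrac12\dim(\overline M_1\times M_2)$. By Lemma~\ref{lemma:submfd-types}(v) it is Lagrangian if and only if $\iota_\phi^*\tilde\omega = 0$. But $p_1\circ\iota_\phi = \Id_{M_1}$ and $p_2\circ\iota_\phi = \phi$, so $\iota_\phi^*\tilde\omega = -\omega_1 + \phi^*\omega_2$, which vanishes precisely when $\phi^*\omega_2 = \omega_1$, i.e. when $\phi$ is a symplectomorphism.

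For part (iii), when $(M_1,\omega_1)=(M_2,\omega_2)=(M,\omega)$ the diagonal $\Delta$ is the graph of $\Id_M$, which is a symplectomorphism, so it is Lagrangian by part (ii). A point $(x,\phi(x))$ lies in $\Gamma_\phi\cap\Delta$ exactly when $\phi(x)=x$, giving the bijection with $\mathrm{Fix}(\phi)$. For the transversality statement, at a fixed point $x$ we have $T_{(x,x)}\Gamma_\phi = \{(v,d_x\phi\, v) : v\in T_xM\}$ and $T_{(x,x)}\Delta = \{(v,v): v\in T_xM\}$, each of dimension $n:=\dim M$; their sum fills all of $T_{(x,x)}(\overline M\times M)$ (dimension $2n$) if and only if their intersection is $0$, i.e. if and only if $d_x\phi\, v = v$ forces $v=0$, which is exactly the condition that $1$ is not an eigenvalue of $d_x\phi$ — the definition of non-degeneracy recalled in the footnote. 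I would spell out the dimension count $\dim T\Gamma_\phi + \dim T\Delta - \dim(T\Gamma_\phi\cap T\Delta) = \dim T_{(x,x)}(\overline M\times M)$ to make the equivalence precise.

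None of the steps presents a real obstacle; the only thing requiring slight care is being explicit about which characterization of Lagrangian (half-dimensional plus isotropic, via Lemma~\ref{lemma:submfd-types}(v)) is being used in (ii) and (iii), and making sure the transversality argument in (iii) is phrased as an equivalence rather than just one direction.
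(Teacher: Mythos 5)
Your proposal is correct and follows essentially the same route as the paper: closedness and pointwise non-degeneracy of $\tilde\omega$, the pullback computation $\iota_\phi^*\tilde\omega=-\omega_1+\phi^*\omega_2$ along the graph embedding for (ii), and the diagonal/graph intersection for (iii). The paper is merely terser in (iii), noting that the transversality claim is nearly definitional given the footnote's characterization of non-degenerate fixed points, whereas you spell out the tangent-space and eigenvalue argument explicitly — which is fine.
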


\begin{proof}
We recall standard arguments, see e.g. Proposition 3.8 of~\cite{Can03}.

Regarding (i): $\tilde{\omega}$ is a smooth 2-form by construction, closed by compatibility of pullback and exterior derivative, and non-degenerate by a straightforward computation.

Regarding (ii): A smooth submanifold embedding is given by the map $\iota: M_1\to \overline{M}_1\times M_2$ defined as $p\mapsto (p,\phi(p)).$ Note that since $p_1\iota=\Id_{M_1}$ and $p_2\iota=\phi,$ we have
$$\iota^*\tilde{\omega} = -\omega_1 + \phi^* \omega_2.$$

Now $\Gamma_\phi$ is Lagrangian if and only if $\iota^*\tilde{\omega}=0$, which by the above is exactly the case when $\omega_1=\phi^*\omega_2.$

Regarding (iii): The diagonal is Lagrangian: For the smooth embedding $\iota':M_1\to\overline{M}_1\times M_1$ defined by $p\mapsto(p,p)$ we have $p_1\iota'=p_2\iota'=\Id_{M_1},$ so in particular $$\iota'^*\tilde{\omega} = -\omega_1 + \omega_1 = 0.$$ The rest of the statement follows immediately by definition of (non-degenerate) fixed points.

\end{proof}

Say we have a symplectic identification of a neighbourhood of the graph of $\phi$ with an open set in the cotangent bundle in a way that maps the diagonal onto the zero section. If the image of the graph is a submanifold generated by some generating function $F$, then we can say that $F$ also generates $\phi$. In the compact case, the Weinstein Lagrangian neighbourhood Theorem~\ref{thm:weinstein-lagrangian} can provide such an identification for $\phi$ $C^1$-small. We instead follow~\cite{GKPS} and choose an explicit global identification specifically for Euclidean space:

\begin{remark}[Symplectic identification for Euclidean space]\label{rem:tau}
We can define a symplectomorphism $\tau:\overline{\R^{2n}}\times\R^{2n}\to T^*\R^{2n}$ from the signed symplectic product in the sense of Lemma~\ref{symplecto-to-lagrangian} to the cotangent space of $\R^{2n}$ with the canonical symplectic form by setting
$$\tau(x,y,X,Y)=\left( \frac{x+X}{2}, \frac{y+Y}{2}, Y-y, X-x \right).$$
In complex notation, this reads
$$ \tau(z,Z)=\left( \frac{z+Z}{2}, i(z-Z)  \right).$$
The diagonal in $\overline{\R^{2n}}\times\R^{2n}$ is mapped to the zero section in $T^*\R^{2n}.$
\end{remark}

\begin{definition}
Let $\phi$ be a map on $\R^{2n}.$ We say a differentiable function $F:\R^{2n}\times\R^k\to\R$ defined on the trivial vector bundle $p:\R^{2n}\times\R^k\to\R^{2n}$ is a \textbf{(H\"ormander) generating function} of $\phi$ if $$i_F(\Sigma_F) = \tau(\Gamma_\phi) \subseteq T^*\R^{2n},$$ i.e. it generates the graph $\Gamma_\phi\subseteq\overline{\R^{2n}}\times\R^{2n}$ of $\phi$ up to identification via $\tau$. If $k=0,$ we say $F$ is a \textbf{simple} generating function of $\phi.$
\end{definition}

\begin{remark}[Relation to other definitions in the literature]\label{remark-def-gen-euclidean}

Note that our \textit{simple} generating functions are essentially a coordinate-free version of the \textit{generating functions of type V} in~\cite{MS17}. They were used in Chaperon's proof of the Conley-Zehnder theorem, and using a Weinstein neighbourhood instead of our identification $\tau$ allows one to use analogous generating functions for the $C^1-$small versions of the Arnol'd conjecture and its contact version (see e.g. \cite{San13}).\smallskip

Our definition of \textit{H\"ormander} generating functions is somewhat more general than those typically found in the literature (e.g. \cite{The98,San13,GKPS}). These also assume for generating functions of subsets of $T^*M$ that $F$ is smooth and $dF$ intersects $N_E$ transversally. We have seen in Lemma~\ref{prop:func-generates-lagrangian} that this turns $i_F$ into a Lagrangian immersion. For generating functions of maps on Euclidean space, the literature commonly requires that $i_F$ is even an embedding\footnote{Strictly speaking, \cite{San13} only requires the image of $i_F$ to be an embedded submanifold.}. The reason for our more general definition is that we need to apply results to generating functions of lifted contactomorphisms, which do not satisfy any of these strong assumptions. We can therefore only speak of generated \textit{subsets} and \textit{maps} instead of \textit{Lagrangian submanifolds} and \textit{symplectomorphisms}, respectively. While \cite{The98} and \cite{San13} argue mostly by analogy that necessary results essentially continue to hold, we separate granularly which additional assumptions are needed for any given part of the argument.\smallskip

In particular, we will see that injectivity of $i_F$ is needed for a one-to-one correspondence of critical points of $F$ to fixed points of $\phi$ (Prop.~\ref{prop:crits-are-fixed}(i)). The transversality of $dF$ and $N_E$ is required both for notions of non-degeneracy to match under this correspondence (Prop.~\ref{prop:crits-are-fixed}(iii)) and for the generated map to preserve the symplectic form (Lemma~\ref{prop:preserve-symplecto}).

\end{remark}

\begin{lemma}\label{prop:preserve-symplecto}
Let $F:\R^{2n}\times\R^k\to\R$ be a generating function of a map $\phi:\R^{2n}\to\R^{2n}.$ If $F$ is smooth and $dF$ transverse to $N_{\R^{2n}\times\R^k}$ around some point $(\zeta,\nu)\in\R^{2n}\times\R^k,$ then $\phi$ is smooth and preserves the standard symplectic form around $\zeta$.
\end{lemma}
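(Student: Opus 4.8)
The plan is to reduce everything to the local coordinate computation in Euclidean space and invoke the results already established for generating functions of subsets of $T^*M$. First I would unwind the definitions: since $F$ generates $\phi$ in the sense that $i_F(\Sigma_F) = \tau(\Gamma_\phi)$, and $dF$ is transverse to $N_E$ near $(\zeta,\nu)$ (writing $E=\R^{2n}\times\R^k$), Lemma~\ref{prop:func-generates-lagrangian} tells us that $i_F$ is a Lagrangian immersion near $(\zeta,\nu)$. Hence a neighbourhood of $i_F(\zeta,\nu)$ inside $i_F(\Sigma_F)$ is an embedded Lagrangian submanifold of $T^*\R^{2n}$; pulling back along the symplectomorphism $\tau$ from Remark~\ref{rem:tau}, a neighbourhood of the corresponding point on $\Gamma_\phi$ is an embedded Lagrangian submanifold of $\overline{\R^{2n}}\times\R^{2n}$.

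Next I would extract smoothness and the symplectomorphism property of $\phi$ from the fact that this piece of $\Gamma_\phi$ is a smooth Lagrangian submanifold. Concretely, work in the explicit coordinates of Proposition~\ref{prop:crit-points-correspond-to-zero-intersections}(ii): with $E=\R^{2n}\times\R^k$, transversality of $dF$ and $N_E$ at $(\zeta,\nu)$ says precisely that $\partial^2 F/\partial\nu^2$ is nondegenerate there (the map $\partial F/\partial\nu$ is a submersion onto $(\R^k)^*$). By the implicit function theorem, the fiber-critical set $\Sigma_F = \{\partial F/\partial\nu = 0\}$ is, near $(\zeta,\nu)$, the graph of a smooth function $\nu = \nu(\zeta)$, so $\Sigma_F$ is a smooth $2n$-dimensional submanifold on which $i_F$ restricts to the smooth map $\zeta \mapsto (\zeta, \partial_\zeta F(\zeta,\nu(\zeta)))$. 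Composing with $\tau^{-1}$ and projecting to the second factor expresses $\phi$ near $\zeta$ as an explicit smooth function of $\zeta$ — this is where smoothness of $\phi$ comes from. One subtlety: a priori $i_F$ might not be injective near $(\zeta,\nu)$, but since we only need a local statement around $\zeta$ and $i_F$ is an immersion, we can shrink to a neighbourhood where it is an embedding, so the recovered $\phi$ agrees with the given one there.

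Finally, for the symplectic form: having exhibited a neighbourhood of the relevant point of $\Gamma_\phi$ as a Lagrangian submanifold of $\overline{\R^{2n}}\times\R^{2n}$, I would invoke Lemma~\ref{symplecto-to-lagrangian}(ii). Strictly, that lemma is stated for $\phi$ globally a smooth map; but its proof is pointwise — the pullback of $\tilde\omega$ along $p \mapsto (p,\phi(p))$ equals $-\omega_1 + \phi^*\omega_2$, which vanishes exactly where the graph is Lagrangian — so on the neighbourhood of $\zeta$ where $\Gamma_\phi$ is Lagrangian we get $\phi^*\omega_{\mathrm{std}} = \omega_{\mathrm{std}}$, i.e.\ $\phi$ preserves the standard symplectic form near $\zeta$. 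I expect the main obstacle to be bookkeeping rather than a genuine difficulty: carefully tracking that "generating function of $\phi$" plus "$dF \pitchfork N_E$" localizes correctly, that injectivity can be arranged locally so that the locally reconstructed Lagrangian really is (a piece of) $\Gamma_\phi$ and not some other immersed branch, and that the cited lemmas — stated globally — apply in the purely local form we need. Once those identifications are in place, both conclusions fall out of Lemma~\ref{prop:func-generates-lagrangian} and Lemma~\ref{symplecto-to-lagrangian} essentially immediately.
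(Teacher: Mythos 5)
Your overall route is the same as the paper's: Lemma~\ref{prop:func-generates-lagrangian} makes $i_F$ a Lagrangian immersion near $(\zeta,\nu)$, the symplectomorphism $\tau$ of Remark~\ref{rem:tau} transports this to a Lagrangian piece of $\Gamma_\phi$, and Lemma~\ref{symplecto-to-lagrangian}(ii), applied locally, yields the symplectomorphism property. However, your middle step contains a genuine error. By Lemma~\ref{euclidean-formulae}, transversality of $dF$ and $N_{\R^{2n}\times\R^k}$ at $(\zeta,\nu)$ means that $\frac{\partial F}{\partial\nu}\colon\R^{2n}\times\R^k\to(\R^k)^*$ is a submersion there, i.e.\ that its \emph{total} derivative $\bigl(\frac{\partial^2F}{\partial\zeta\partial\nu},\,\frac{\partial^2F}{\partial\nu^2}\bigr)$ is surjective. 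This is strictly weaker than nondegeneracy of the fiber Hessian $\frac{\partial^2F}{\partial\nu^2}$, which is what your implicit-function-theorem step requires. For instance, with $k=1$ and $F(\zeta,\nu)=\nu\,\ell(\zeta)$ for a nonzero linear functional $\ell$, the fiber Hessian vanishes identically while $d\bigl(\frac{\partial F}{\partial\nu}\bigr)=\ell$ is surjective; here $\Sigma_F=\{\ell(\zeta)=0\}\times\R$ is a smooth $2n$-dimensional submanifold but certainly not a graph $\nu=\nu(\zeta)$. So under the stated hypotheses you cannot solve $\frac{\partial F}{\partial\nu}=0$ for $\nu$ as a function of $\zeta$, and the explicit formula $\zeta\mapsto\bigl(\zeta,\partial_\zeta F(\zeta,\nu(\zeta))\bigr)$ on which you base the smoothness of $\phi$ is not available. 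The paper's proof avoids this entirely: it never parametrizes $\Sigma_F$ over the base, but only uses that $\tau^{-1}\circ i_F$ is a Lagrangian immersion near $(\zeta,\nu)$ whose image lies in $\Gamma_\phi$, and then invokes Lemma~\ref{symplecto-to-lagrangian}(ii) locally.

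A secondary point: even at points where $\Sigma_F$ does happen to be a graph over $\zeta$, your parametrization is by the midpoint variable $\zeta=\frac{z+\phi(z)}{2}$ coming from $\tau$, not by the domain variable $z$ of $\phi$; to read off smoothness of $\phi$ near the relevant point one still needs the first component of $\tau^{-1}\circ i_F|_{\Sigma_F}$ to be a local diffeomorphism onto a neighbourhood of $z$. (This reparametrization issue is treated only implicitly in the paper as well, via the immersion statement; in your write-up it is compounded with the unjustified graph structure.) The correct repair is to drop the coordinate parametrization and argue as the paper does, keeping from your computation only the fact that transversality makes $\Sigma_F$ a smooth $2n$-dimensional level set of the submersion $\frac{\partial F}{\partial\nu}$.
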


\begin{proof}
By applying Lemma~\ref{symplecto-to-lagrangian}~(ii) locally, we only need to show that the graph $\zeta\mapsto(\zeta,\phi(\zeta))$ is a Lagrangian immersion. Since $\tau$ is a symplectomorphism and $i_F(\Sigma_F)=\tau(\Gamma_\phi),$ this immediately follows from Lemma~\ref{prop:func-generates-lagrangian}.
\end{proof}

Now that we restrict ourselves to the Euclidean setting, we can write down concrete expressions for the objects used to define generating functions:

\begin{lemma}[Formulae for the Euclidean setting]\label{euclidean-formulae}
Let $F:\R^{2n}\times\R^k\to\R$ be a generating function of a map $\phi$ on $\R^{2n}$ with respect to the trivial fiber bundle  $p:\R^{2n}\times\R^k\to\R^{2n}.$ We then have
\begin{align*}
\Sigma_F &= \left\{(\zeta,\nu)\in\R^{2n}\times\R^k \;|\; {\textstyle\frac{\partial F}{\partial\nu}}(\zeta,\nu)=0\right\},\\
i_F(\zeta,\nu) &= \left(\zeta,\; {\textstyle\frac{\partial F}{\partial\zeta}}(\zeta,\nu)\right),
\end{align*}
and the intersection of $dF$ and $N_{\R^{2n}\times\R^k}$ at $(\zeta,\nu)\in\Sigma_F$ is transverse if and only if $\frac{\partial F}{\partial\nu}:\R^{2n}\times\R^k\to (\R^k)^*$ is a submersion at $(\zeta,\nu).$
\end{lemma}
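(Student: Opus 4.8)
The plan is to work entirely in explicit coordinates, since the fiber bundle here is the trivial bundle $p:\R^{2n}\times\R^k\to\R^{2n}$. First I would unwind the definition of the fiber conormal bundle $N_{\R^{2n}\times\R^k}$: writing $(\zeta,\nu)\in\R^{2n}\times\R^k$ for the base- and fiber-coordinates, the kernel of $d_{(\zeta,\nu)}p$ is precisely the $\R^k$-direction, so a covector $d_{(\zeta,\nu)}F = (\partial_\zeta F, \partial_\nu F)$ lies in $N_{\R^{2n}\times\R^k}$ exactly when its $\nu$-component vanishes. This immediately gives that the set of fiber critical points is $\Sigma_F = \{(\zeta,\nu) \mid \partial_\nu F(\zeta,\nu)=0\}$, which is the first formula. (This also reproduces the local description of $N_E$ and $\Sigma_F$ already recorded inside the proof of Proposition~\ref{prop:crit-points-correspond-to-zero-intersections}, specialized to $a=2n$, $b=k$.)

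\textbf{The map $i_F$.} Next I would compute $i_F$. By Definition~\ref{def:gen-func-submnfd}, $i_F(\zeta,\nu) = \pi_E(d_{(\zeta,\nu)}F)$, and $\pi_E = \Psi_E^{-1}\circ\pi_0$ where $\Psi_E:T^*\R^{2n}\to (N_E)_\omega$ sends $\beta\mapsto [\beta(dp|_{TE}\cdot)]$. Concretely, in the coordinate description used in the proof of Proposition~\ref{prop:crit-points-correspond-to-zero-intersections}, the quotient of $N_E$ by its characteristic foliation just forgets the $\nu$-coordinate and records the horizontal part of the covector; composing with $\Psi_E^{-1}$ identifies this with the element $(\zeta, \partial_\zeta F(\zeta,\nu))\in T^*\R^{2n}$. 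So $i_F(\zeta,\nu) = (\zeta, \partial_\zeta F(\zeta,\nu))$, the second formula. I would either cite the explicit computation already carried out in Proposition~\ref{prop:crit-points-correspond-to-zero-intersections}(ii) or briefly redo it, since the work is identical.

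\textbf{The transversality criterion.} For the last assertion, the intersection $dF \cap N_{\R^{2n}\times\R^k}$ inside $T^*(\R^{2n}\times\R^k)$ is transverse at a point of $\Sigma_F$ iff, at that point, $T(dF) + T(N_E)$ is the whole tangent space of $T^*E$. Parametrizing $dF$ by the base $(\zeta,\nu)$ and $N_E$ by $(\zeta,\nu,\text{horizontal covector})$, this condition reduces, after the obvious cancellations, to surjectivity of the differential of the map $(\zeta,\nu)\mapsto \partial_\nu F(\zeta,\nu)$ onto $(\R^k)^*$ — i.e. that $\partial_\nu F:\R^{2n}\times\R^k\to(\R^k)^*$ is a submersion at $(\zeta,\nu)$. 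Equivalently one notes that $\Sigma_F$ is the zero set of $\partial_\nu F$, and $dF$ meets $N_E$ cleanly/transversally precisely when $\partial_\nu F$ is a submersion along its zero set; this is exactly the statement already invoked in the proof of Proposition~\ref{prop:crit-points-correspond-to-zero-intersections}(ii).

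\textbf{Main obstacle.} None of the three claims is deep; the only real care needed is bookkeeping in the identification $\pi_E = \Psi_E^{-1}\circ\pi_0$, i.e. making sure the characteristic leaves of $N_E$ really are the "$\nu$-fibers" and that $\Psi_E$ has the claimed coordinate form, so that $i_F$ comes out as $(\zeta,\partial_\zeta F(\zeta,\nu))$ with no stray signs or factors. Since exactly this computation appears in the proof of Proposition~\ref{prop:crit-points-correspond-to-zero-intersections}, the cleanest write-up simply specializes that argument to $a=2n$, $b=k$, and reads off all three formulae at once.
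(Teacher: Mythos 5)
Your proposal is correct and takes essentially the same route as the paper's own proof: a direct coordinate computation identifying $\Sigma_F$ with the zero set of $\frac{\partial F}{\partial\nu}$, reading off $i_F=\Psi_E^{-1}\circ\pi_0(dF)$ as $(\zeta,\frac{\partial F}{\partial\zeta})$, and reducing transversality of $dF$ and $N_{\R^{2n}\times\R^k}$ to surjectivity of $d\bigl(\frac{\partial F}{\partial\nu}\bigr)$. One small caution: do not \emph{cite} Proposition~\ref{prop:crit-points-correspond-to-zero-intersections}(ii) for the transversality step, since in the paper that step of the proposition's proof is itself justified by reference to this lemma; the short direct computation you sketch is the right thing to write out.
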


\begin{proof}
The first equation follows by the definition of $\Sigma_F$ since
the total derivative of $F|_{p^{-1}(p(\zeta,\nu))}$ can be canonically identified with $\frac{\partial F}{\partial\nu}.$

By definition, $i_F(e):=\Psi_E^{-1}\pi_0(d_eF),$ where $\pi_0:N_E\to (N_E)_\omega$ is the quotient map of the symplectic reduction. For $E=\R^{2n}\times\R^k$ and $e=(\zeta,\nu)\in \Sigma_F,$ it follows that $d_e F = (\zeta,\nu,\frac{\partial F}{\partial\zeta}, 0)\in T^*E=\R^{2n}\times\R^k\times\R^{2n}\times\R^k$.  The second equation of the lemma follows since $\pi_0$ projects out the last factor and $\Psi_E^{-1}$ the second.

The intersection of $dF$ and $N_E=\R^{2n}\times\R^k\times\R^{2n}\times\{0\}$ at $e\in\Sigma_F$ is transverse iff these spaces together span all directions of $T^*E.$ $N_E$ clearly spans all but the last factor. So we have transversality iff the tangent spaces of $dF$ restricted to that factor, i.e. the image of $d(\frac{\partial F}{\partial\nu})$, span all of $\R^k.$ This is exactly regularity of $\frac{\partial F}{\partial\nu}.$
\end{proof}

\begin{proposition}[Critical points correspond to fixed points]\label{prop:crits-are-fixed}
Let $F:\R^{2n}\times\R^k\to\R$ be a generating function of a map $\phi$ on $\R^{2n}$.
\begin{enumerate}
\item $(\zeta,\nu)$ is a critical point of $F$ if and only if $\zeta$ is a fixed point of $\phi$. If $i_F$ is injective, then this is a one-to-one correspondence.
\item If $F$ is smooth around a critical point $(\zeta,\nu)\in\R^{2n}\times\R^k$ of $F$, and $dF$ intersects $N_{\R^{2n}\times\R^k}$ transversally at $d_{(\zeta,\nu)} F$, then $(\zeta,\nu)$ is a non-degenerate critical point if and only if $\zeta$ is a non-degenerate fixed point of $\phi$. This equivalence also holds along any subspace of $T^*_{p(e)}M$.
\end{enumerate}
\end{proposition}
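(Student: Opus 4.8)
The plan is to reduce both parts to the corresponding statements about generating functions of subsets of $T^*\R^{2n}$, namely Proposition~\ref{prop:crit-points-correspond-to-zero-intersections}, by passing through the symplectomorphism $\tau$ of Remark~\ref{rem:tau} and the characterization of fixed points via Lagrangian intersections in Lemma~\ref{symplecto-to-lagrangian}~(iii). The key observation is that the defining condition $i_F(\Sigma_F)=\tau(\Gamma_\phi)$ turns statements about $\Gamma_\phi$ intersecting the diagonal $\Delta$ into statements about $i_F(\Sigma_F)$ intersecting $\tau(\Delta)$, and by Remark~\ref{rem:tau} we have $\tau(\Delta)=$ the zero section of $T^*\R^{2n}$. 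So fixed points of $\phi$ correspond to points of $\Gamma_\phi\cap\Delta$, hence to points of $i_F(\Sigma_F)$ lying on the zero section, which by Proposition~\ref{prop:crit-points-correspond-to-zero-intersections}~(i) correspond to critical points of $F$.

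For part (i), I would first invoke Lemma~\ref{symplecto-to-lagrangian}~(iii) to note that $\zeta$ is a fixed point of $\phi$ exactly when $(\zeta,\phi(\zeta))\in\Gamma_\phi\cap\Delta$, equivalently (applying $\tau$) when $\tau(\zeta,\phi(\zeta))=(\zeta,0)$ lies in $i_F(\Sigma_F)\cap\{0\text{-section}\}$; here I should be slightly careful, since Lemma~\ref{symplecto-to-lagrangian}~(iii) is phrased for symplectomorphisms, but the set-theoretic statement that $\Gamma_\phi\cap\Delta$ corresponds to fixed points of an arbitrary map $\phi$ is immediate and does not need the Lagrangian property. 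Then Proposition~\ref{prop:crit-points-correspond-to-zero-intersections}~(i) gives that such a point of $i_F(\Sigma_F)$ on the zero section, sitting over $\zeta\in\R^{2n}$, corresponds to a critical point $(\zeta,\nu)$ of $F$ in the fiber $p^{-1}(\zeta)$, and is a one-to-one correspondence when $i_F$ is injective. Using the explicit formulae of Lemma~\ref{euclidean-formulae}, one can also give a direct verification: $(\zeta,\nu)$ is critical iff $\frac{\partial F}{\partial\nu}(\zeta,\nu)=0$ and $\frac{\partial F}{\partial\zeta}(\zeta,\nu)=0$, i.e. iff $(\zeta,\nu)\in\Sigma_F$ and $i_F(\zeta,\nu)=(\zeta,0)$, i.e. iff $\tau^{-1}(\zeta,0)=(\zeta,\zeta)\in\Gamma_\phi$, i.e. iff $\phi(\zeta)=\zeta$.

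For part (ii), I would transport the transversality statement through $\tau$: since $\tau$ is a symplectomorphism (indeed a linear diffeomorphism), $\Gamma_\phi$ meets $\Delta$ transversally at $(\zeta,\phi(\zeta))$ if and only if $i_F(\Sigma_F)=\tau(\Gamma_\phi)$ meets the zero section transversally at $\tau(\zeta,\phi(\zeta))=(\zeta,0)$, and the same equivalence holds restricted to any linear subspace, as $\tau$ carries subspaces to subspaces. By Lemma~\ref{symplecto-to-lagrangian}~(iii), the left-hand transversality is exactly non-degeneracy of the fixed point $\zeta$; by Proposition~\ref{prop:crit-points-correspond-to-zero-intersections}~(ii) — whose hypotheses ($F$ smooth near the critical point, $dF$ transverse to $N_{\R^{2n}\times\R^k}$) are precisely those assumed here — the right-hand transversality is exactly non-degeneracy of the critical point $(\zeta,\nu)$ of $F$. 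Composing the equivalences, and noting that the subspace-refinement is preserved at every stage, yields the claim.

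The main obstacle I anticipate is bookkeeping rather than conceptual: one must check that a neighbourhood $U\subseteq\Sigma_F$ on which $i_F(U)$ is an embedded submanifold actually exists under the transversality hypothesis — this is supplied by Lemma~\ref{prop:func-generates-lagrangian}, which makes $i_F$ a Lagrangian immersion near $(\zeta,\nu)$, so shrinking $U$ gives an embedded image — and that the transversality notions genuinely match after passing through $\tau$ and $\Psi_E$. The subspace-refinement (``along any subspace of $T^*_{p(e)}M$'') has to be tracked carefully but is straightforward since $\tau$, $\Psi_E$, and all identifications in sight are linear on the relevant tangent spaces. A minor point of care: the statement mentions $T^*_{p(e)}M$ with $M=\R^{2n}$, so I should make sure the identifications of $T_{(\zeta,0)}(T^*\R^{2n})$ with $\R^{2n}\oplus(\R^{2n})^*$ used in Lemma~\ref{euclidean-formulae} and in Proposition~\ref{prop:crit-points-correspond-to-zero-intersections} are consistent, which they are by construction.
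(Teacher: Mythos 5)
Your proposal is correct and follows essentially the same route as the paper's proof: fixed points of $\phi$ are identified with intersections of $\Gamma_\phi$ with the diagonal, carried by $\tau$ to intersections of $i_F(\Sigma_F)$ with the zero section, and then Proposition~\ref{prop:crit-points-correspond-to-zero-intersections} is invoked for both the correspondence and the matching of non-degeneracy/transversality (preserved under the linear identifications). The extra details you supply — the direct check via Lemma~\ref{euclidean-formulae} and the remark that Lemma~\ref{prop:func-generates-lagrangian} furnishes the embedded neighbourhood needed in part (ii) — are consistent with, and slightly more explicit than, the paper's argument.
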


\begin{proof}

Regarding (i): Fixed points $\zeta\in\R^{2n}$ of $\phi$ correspond one-to-one to points $(\zeta,\zeta)$ where the graph $\Gamma_\phi$ intersects the diagonal. Under the identification map $\tau,$ this corresponds to points $(\zeta,0)$ in $T^*\R^{2n},$ i.e. intersections with the zero section. The statement now follows by Proposition~\ref{prop:crit-points-correspond-to-zero-intersections}~(i), which tells us that these intersections correspond to critical points of $F$ and that this correspondence is one-to-one if $i_F$ is injective.

Regarding (ii): Under the identifications via $\tau$ and $i_F$, transversality of an intersection is preserved. The statement follows by Proposition~\ref{prop:crit-points-correspond-to-zero-intersections}~(iii).
\end{proof}

A central tool to construct generating functions is the following:

\begin{proposition}[Composition formula]\label{theret-comp}
Let $F_1:\R^{2n}\times\R^{k_1}\to\R$ and $F_2:\R^{2n}\times\R^{k_2}\to\R$ be generating functions of the maps $\phi_1$ and $\phi_2$ on $(\R^{2n},\omega_\text{std})$ and define
\begin{align*}
F_1\# F_2:\R^{2n+2n+2n+k_1+k_2}&\to\R \\
(q;\zeta_1,\zeta_2,\nu_1, \nu_2) &\mapsto F_1(\zeta_1,\nu_1) + F_2(\zeta_2,\nu_2)-2\;\omega_\text{std}(\zeta_1-q, \zeta_2-q).
\end{align*}

\begin{enumerate}
\item $F_1\# F_2$ is a generating function for the composition $\phi:=\phi_2\phi_1.$
\item If $i_{F_1}$ and $i_{F_2}$ are injective, then $i_{F_1\#F_2}$ is as well.
\item Let $(q,\zeta_1,\zeta_2,\nu_1, \nu_2)\in\Sigma_{F_1\#F_2}$ and assume for $j=1,2$ that around $(\zeta_j,\nu_j),$ $F_j$ is smooth, $dF_j$ intersects $N_{\R^{2n}\times\R^{N_j}}$ transversally. Then around $(q,\zeta_1,\zeta_2,\nu_1, \nu_2)\in\Sigma_{F_1\#F_2},$ $F_1\#F_2$ is smooth and $d(F_1\#F_2)$ intersects $N_{\R^{2n}\times\R^{2n}\times\R^{2n}\times\R^{k_1}\times\R^{k_2}}$ transversally.
\end{enumerate}
\end{proposition}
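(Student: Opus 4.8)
The plan is to prove all three parts by unwinding the explicit formulae of Lemma~\ref{euclidean-formulae} for the generating function $G:=F_1\#F_2$, viewed over the base $\R^{2n}$ with base coordinate $q$ and fibre coordinate $(\zeta_1,\zeta_2,\nu_1,\nu_2)$. The first step is to record the fibre derivatives of $G$: one has $\partial_{\nu_j}G=\tfrac{\partial F_j}{\partial\nu_j}(\zeta_j,\nu_j)$, while $\partial_{\zeta_1}G$, $\partial_{\zeta_2}G$ and $\partial_q G$ are obtained by differentiating the bilinear term $-2\omega_{\mathrm{std}}(\zeta_1-q,\zeta_2-q)$ and using the identification of covectors with vectors built into $\tau$ (Remark~\ref{rem:tau}), under which $\omega_{\mathrm{std}}(v,\cdot)$ corresponds to $\I v$ in complex notation.

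For part~(i) I would prove $i_G(\Sigma_G)=\tau(\Gamma_{\phi_2\phi_1})$ by double inclusion. Take a fibre-critical point of $G$. From $\partial_{\nu_j}G=0$ we get $(\zeta_j,\nu_j)\in\Sigma_{F_j}$, so since $F_j$ generates $\phi_j$ there are $u_1,u_2\in\R^{2n}$ with $\zeta_j=\tfrac12(u_j+\phi_j(u_j))$ and $\partial_{\zeta_j}F_j$ equal to the corresponding cotangent component. Substituting this into $\partial_{\zeta_1}G=0$ and $\partial_{\zeta_2}G=0$ and solving the two resulting linear equations (together with the midpoint relations) yields $u_1=\zeta_1+q-\zeta_2$, $u_2=\zeta_1+\zeta_2-q$, and---the decisive identity---$\phi_1(u_1)=u_2$. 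A short computation then gives that $i_G$ of the point equals $(q,\partial_qG)=\bigl(\tfrac12(u_1+\phi_2\phi_1(u_1)),\,\I(u_1-\phi_2\phi_1(u_1))\bigr)=\tau(u_1,\phi_2\phi_1(u_1))\in\tau(\Gamma_{\phi_2\phi_1})$. Conversely, given $x\in\R^{2n}$, one sets $u_1=x$, $u_2=\phi_1(x)$, $q=\tfrac12(x+\phi_2\phi_1(x))$, chooses $(\zeta_j,\nu_j)\in\Sigma_{F_j}$ with $i_{F_j}(\zeta_j,\nu_j)=\tau(u_j,\phi_j(u_j))$, and checks directly that all four fibre derivatives of $G$ vanish there and that $i_G$ sends this point to $\tau(x,\phi_2\phi_1(x))$.

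Part~(ii) drops out of the same bookkeeping: at a fibre-critical point, $i_G$ of it equals $\tau(u_1,\phi_2\phi_1(u_1))$, and since $\tau$ is a bijection onto its image this recovers $u_1$, hence $u_2=\phi_1(u_1)$; then $(\zeta_j,\nu_j)$ is forced as the unique $i_{F_j}$-preimage of $\tau(u_j,\phi_j(u_j))$ by injectivity of $i_{F_j}$, and $q$ is the base component of $i_G$. So $i_G$ separates points of $\Sigma_G$.

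For part~(iii), smoothness of $G$ near the point is immediate, as $G$ is the sum of $F_1$, $F_2$ and a polynomial. The content is the transversality statement, which by Lemma~\ref{euclidean-formulae} is equivalent to $\partial_{(\zeta_1,\zeta_2,\nu_1,\nu_2)}G$ being a submersion at the point, i.e. to surjectivity of the differential of the fibre-gradient of $G$. I would write this differential as a block matrix, in which the blocks coupling the $\zeta_1$-, $\zeta_2$- and $q$-directions are (up to sign) the invertible matrix of $\omega_{\mathrm{std}}$ while the remaining blocks are the Hessian blocks of $F_1$ and $F_2$ at $(\zeta_j,\nu_j)$; passing to the transpose reduces the claim to showing that a certain homogeneous linear system has only the zero solution. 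Here one uses that the hypothesised transversality of $dF_j$ with $N_{\R^{2n}\times\R^{k_j}}$ means (again by Lemma~\ref{euclidean-formulae}) that $\tfrac{\partial F_j}{\partial\nu_j}$ is a submersion at $(\zeta_j,\nu_j)$, together with the fact from part~(i) and Lemma~\ref{prop:preserve-symplecto} that $\phi_1,\phi_2$ are genuine symplectomorphisms near $u_1,u_2$, which is what makes the relevant ``vertical Hessian plus $\omega_{\mathrm{std}}$'' combinations invertible. This linear-algebra verification---tracking the numerous blocks, the covector identification in $\tau$, and the exact place each hypothesis enters---is the main obstacle; parts~(i) and~(ii), by contrast, are essentially forced once the fibre derivatives of $G$ are in hand.
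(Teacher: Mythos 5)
Your proposal takes essentially the same route as the paper's proof for all three parts: computing the fiber derivatives of $G=F_1\#F_2$, using the generating-function property of $F_1,F_2$ to recover the base point $u_1=z_1$ and the crucial identity $u_2=\phi_1(u_1)$, establishing $i_G(\Sigma_G)=\tau(\Gamma_{\phi_2\phi_1})$ by double inclusion, chasing injectivity of $i_{F_1},i_{F_2}$ through this parametrization for (ii), and reducing (iii) to surjectivity of the fiber-Hessian of $G$. The one step you explicitly leave open in (iii) is closed in the paper by introducing the auxiliary maps $H_j(\zeta_j,\nu_j)=\zeta_j+\tfrac{1}{2i}\partial_{\zeta_j}F_j$, proving each is a submersion via Lemma~\ref{prop:func-generates-lagrangian} together with a dimension count, and then performing row/column operations to exhibit the surjective blocks $M_j$ built from the derivative of $H_j$ and of $\partial_{\nu_j}F_j$; your transpose/kernel formulation is a dual packaging of the same linear algebra, though the paper finds the Lagrangian-immersion statement Lemma~\ref{prop:func-generates-lagrangian} a more direct hook than the symplectomorphism statement Lemma~\ref{prop:preserve-symplecto} you cite.
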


\begin{proof}
We roughly follow the proof of Proposition~2.2 in~\cite{GKPS}. For brevity, we write $F:=F_1\# F_2.$ Note that $F$ is differentiable since $F_1$ and $F_2$ are.\medskip

Before proving parts (i)-(iii), we first find more convenient characterizations of the elements in $\Sigma_F$ and the map $i_F$.
Compute the components of the vertical derivative of $F$ at $(q,\zeta_1,\zeta_2,\nu_1, \nu_2)$:
\begin{align*}
\frac{\partial F}{\partial\zeta_1} &= \frac{\partial F_1}{\partial\zeta_1}+ 2i(\zeta_2-q), & \frac{\partial F}{\partial \nu_1} &= \frac{\partial F_1}{\partial \nu_1}, \\
\frac{\partial F}{\partial\zeta_2} &= \frac{\partial F_2}{\partial\zeta_2}- 2i(\zeta_1-q), & \frac{\partial F}{\partial \nu_2} &= \frac{\partial F_2}{\partial \nu_2}.
\end{align*}
By Lemma~\ref{euclidean-formulae}, we therefore have
\begin{equation}\label{eq:fiber-critical-criterion1}
(q,\zeta_1,\zeta_2,\nu_1, \nu_2)\in \Sigma_F \quad\iff\quad
\begin{cases}
(\zeta_1,\nu_1)\in\Sigma_{F_1} \text{ and } (\zeta_2,\nu_2)\in\Sigma_{F_2},\\
\frac{\partial F_1}{\partial\zeta_1}= -2i(\zeta_2-q),\\
\frac{\partial F_2}{\partial\zeta_2}= 2i(\zeta_1-q).
\end{cases}
\end{equation}
Since the $F_j$ are generating functions of $\phi_j$, $(\zeta_j,\nu_j)\in\Sigma_F$ is equivalent to the existence of $z_j\in \R^{2n}$ such that $(\zeta_j,\nu_j)\in i_{F_j}^{-1}(\tau(z_j,\phi_j(z_j))).$ By Lemma~\ref{euclidean-formulae} and definition of $\tau$, the latter is equivalent to
\begin{equation}\label{eq:helperle}
\left(\zeta_j,{\textstyle\frac{\partial F_j}{\zeta_j}(\zeta_j,\nu_j)}\right)
=
\left(\frac{z_j+\phi_j(z_j)}{2}, i(z_j-\phi_j(z_j))  \right).
\end{equation}
Using this equation, we can reformulate the remaining conditions on the right hand side of Eq.~\eqref{eq:fiber-critical-criterion1} in a straightforward computation to get\footnote{Note that in the original proof from \cite{GKPS}, $(\zeta_j,\nu_j)$ and $z_j$ are related via a diffeomorphism. We have this weaker relationship because we do not require that $i_F$ has to be an embedding in the definition of generating functions.}
\begin{equation}\label{eq:fiber-critical-criterion2}
(q,\zeta_1,\zeta_2,\nu_1, \nu_2)\in \Sigma_F \;\iff\;
 \exists z_1,z_2\in \R^{2n}:
\begin{cases}
\forall j\in\{1,2\}:\; (\zeta_j,\nu_j)\in i_{F_j}^{-1}\big(\tau(z_j,\phi_j(z_j))\big),\\
q= (z_1+\phi(z_1))/2,\\
z_2= \phi_1(z_1).
\end{cases}
\end{equation}
We can also use Eq.~\eqref{eq:helperle} and eliminate $z_2$ to restate this in a more convenient but less suggestive way:
\begin{equation}\label{eq:fiber-critical-criterion3}
(q,\zeta_1,\zeta_2,\nu_1, \nu_2)\in \Sigma_F \quad\iff\quad
 \exists z_1\in \R^{2n}:
\begin{cases}
q= (z_1+\phi(z_1))/2,\\
\zeta_1 = (z_1+\phi_1(z_1))/2,\\
\zeta_2 = (\phi_1(z_1)+\phi(z_1))/2,\\
{\textstyle\frac{\partial F_1}{\partial\zeta_1}(\zeta_1,\nu_1)} = i(z_1-\phi_1(z_1)),\\
{\textstyle\frac{\partial F_2}{\partial\zeta_2}(\zeta_2,\nu_2)} = i(\phi_1(z_1)-\phi(z_1)).
\end{cases}
\end{equation}

To express $i_F$ more conveniently, we first compute the horizontal derivative of $F$:
$$
\frac{\partial F}{\partial q}(q,\zeta_1,\zeta_2,\nu_1, \nu_2) = 2i(\zeta_1-\zeta_2).
$$
For $(q,\zeta_1,\zeta_2,\nu_1,\nu_2)\in\Sigma_F$, Eq.~\eqref{eq:fiber-critical-criterion3} with $z_1:= i_{F_1}(\zeta_1,\nu_1)$ and Lemma~\ref{euclidean-formulae} then give
\begin{equation}\label{eq:iF-nicer}
i_F(q,\zeta_1,\zeta_2,\nu_1,\nu_2) = \left(\frac{z_1+\phi(z_1)}{2},\; i(z_1-\phi(z_1)) \right) = \tau(z_1,\phi(z_1)).
\end{equation}

Regarding (i): We need to show that $i_F(\Sigma_F) = \tau(\Gamma_\phi).$ Eq.~\eqref{eq:iF-nicer} gives $i_F(\Sigma_F) \subseteq \tau(\Gamma_\phi)$ immediately. In the opposite direction, we can write any element of $\tau(\Gamma_\phi)$ as $\tau(z_1,\phi(z_1))$ for some $z_1$ and set $z_2:=\phi_1(z_1)$. Since the $F_j$ are generating functions, we can then find $(\zeta_j,\nu_j)$ and $q$ that satisfy the right hand side of Eq.~\eqref{eq:fiber-critical-criterion2}. Again by Eq.~\eqref{eq:iF-nicer}, this yields a preimage $(q,\zeta_1,\zeta_2,\nu_1,\nu_2)$ of $\tau(z_1,\phi(z_1))$ under $i_F.$\medskip

Regarding (ii):
Assume $i_F(q,\zeta_1,\zeta_2,\nu_1,\nu_2)=i_F(q',\zeta'_1,\zeta'_2,\nu'_1,\nu'_2).$ $i_F$ preserves fibers by construction, so we have $q=q'.$ Considering Eq.~\eqref{eq:iF-nicer}, injectivity of $i_{F_1}$ gives $(\zeta_1,\nu_1)=(\zeta'_1,\nu'_1)$. By Eq.~\eqref{eq:fiber-critical-criterion2}, $$i_{F_2}(\zeta_2,\nu_2)=\tau(z_2,\phi_2(z_2))=i_{F_2}(\zeta'_2,\nu'_2)$$ must then hold for $z_2=\phi_1(z_1).$ Injectivity of $i_{F_2}$ thus yields $(\zeta_2,\nu_2)=(\zeta'_2,\nu'_2)$.\medskip

Regarding (iii): Smoothness at $(q,\zeta_1,\zeta_2,\nu_1, \nu_2)$ follows immediately. To check the remaining properties, consider for $j=1,2$ the map $$
H_j:
\Sigma_{F_j} \xrightarrow{i_{F_j}}
i_{F_j}(\Sigma_{F_j}) \xrightarrow{\tau^{-1}|_{i_{F_j}(\Sigma_{F_j})}}
\Gamma_{\phi_j} \xrightarrow{(z,z')\mapsto z}
\R^{2n}.
$$
Intuitively, it gives the point $z_j$ whose image under $\phi_j$ is determined by the fiber critical point $(\zeta_j,\nu_j).$ By Lemma~\ref{euclidean-formulae} and the definition of $\tau$, this map is given by
$$(\zeta_j,\nu_j)\mapsto \zeta_j + \frac{1}{2i}\frac{F_j}{\partial \zeta_j}(\zeta_j,\nu_j).$$

The map $H_j$ is a submersion for dimensional reasons as each of the maps we used to define it is an immersion (e.g. by Lemma~\ref{prop:func-generates-lagrangian} for $i_{F_j}$).
It follows that the derivative of this map, defined on $T_{((\zeta_j,\nu_j))}\Sigma_{F_j}=\ker(d_{(\zeta_j,\nu_j)}(\frac{\partial F_j}{\partial \nu_j}))$, is surjective. By transversality and Lemma~\ref{euclidean-formulae}, we furthermore have that the derivative of $\frac{\partial F_j}{\partial \nu_j}$ is surjective.

Together, this implies that the matrices $$
M_j :=
\begin{pmatrix}
\frac{1}{2i} \frac{\partial^2 F_j}{\partial\zeta_j^2} + \Id_{\R^{2n}}
& \frac{1}{2i} \frac{\partial^2 F_j}{\partial\nu_j\partial\zeta_j} \\
\frac{\partial^2 F_j}{\partial\zeta_j\partial\nu_j}
& \frac{\partial^2 F_j}{\partial\nu_j^2} \\
\end{pmatrix}
$$
are surjective: The second row is surjective as the derivative of $\frac{\partial F_j}{\partial \nu_j},$ and the first row is the derivative of $H_j$ extended to all of $\R^{2n+k_1}$, which we know to be surjective on the kernel of the second row.

To show that we have a transverse intersection of $d(F_1\#F_2)$ and $N_{\R^{2n}\times\R^{2n}\times\R^{2n}\times\R^{k_1}\times\R^{k_2}}$ at $(q,\zeta_1,\zeta_2,\nu_1, \nu_2),$ by Lemma~\ref{euclidean-formulae} we need to show that the derivative
$$
\frac{\partial (F_1\# F_2)}{\partial(q,\zeta_1,\zeta_2,\nu_1,\nu_2)}
=
\left(
\frac{\partial F_1}{\partial\zeta_1} + 2i(\zeta_2-q),
\frac{\partial F_2}{\partial\zeta_2} - 2i(\zeta_1-q),
\frac{\partial F_1}{\partial\nu_1},
\frac{\partial F_2}{\partial\nu_2},
\right)
$$
is a submersion, i.e.
$$
\left(
\begin{array}{c|cc|cc}
-2\Id_{\R^{2n}} & \frac{\partial^2F_1}{\partial\zeta_1^2} & 2i\Id_{\R^{2n}} & \frac{\partial^2 F_1}{\partial\nu_1\partial\zeta_1} & 0
\\
2\Id_{\R^{2n}} & -2i\Id_{\R^{2n}} & \frac{\partial^2F_2}{\partial\zeta_2^2} & 0 & \frac{\partial^2 F_2}{\partial\nu_2\partial\zeta_2}
\\
\hline
0 & \frac{\partial^2 F_1}{\partial\nu_1\partial\zeta_1} & 0 & \frac{\partial^2 F_1}{\partial\nu_1^2} & 0
\\
0 & 0 & \frac{\partial^2 F_2}{\partial\nu_2\partial\zeta_2} & 0 & \frac{\partial^2 F_2}{\partial\nu_2^2}
\end{array}
\right)
$$
is surjective. This now follows since we can use elementary row and column operations to bring it into the form
$$
\left(
\begin{array}{c|cc}
* & M_1 & 0 \\
* & * & M_2
\end{array}
\right).
$$

\end{proof}

\begin{remark}[Other composition formulas]
Note that other composition formulas can be obtained for different choices of the identification $\tau$ and with different dimensions of the domain of $F_1\# F_2$, compare e.g.~\cite{The98, San13, San14}. The formula above, which was first introduced in~\cite{GKPS}, has the advantage of being almost symmetric in $F_1$ and $F_2$ and not requiring one of the two to be a simple generating function in order for Proposition~\ref{theret-comp}(iii) to hold. However, it does exhibit a faster growth of the dimension of the domain than other choices.\\[-5mm]
\end{remark}

\section{Lifting Contactomorphisms from $S^{2n-1}$ to $\R^{2n}$}\label{sec:lifting}

We want to pull the notion of generating function on Euclidean space back to the sphere in order to detect translated points. To achieve this, we follow~\cite{San13} and consider a \textit{lift} of contactomorphisms on the sphere. This essentially glues in one additional point into the symplectization of the sphere at the cost of smoothness in the origin:\\[-5mm]

\begin{definition}[Lifting contactomorphisms and contact isotopies from $S^{2n-1}$ to $\R^{2n}$]\label{lift-sphere}
	Let $\phi$ be a contactomorphism on $(S^{2n-1},\alpha)$ and $\phi_t$ a contact isotopy generated by a Hamiltonian function $H:S^{2n-1}\times\R\to\R$. We define the \textbf{lifts} of $\phi$ and $H$, respectively, as
\begin{align*}
	\lift \phi:\R^{2n}&\to\R^{2n}, & \lift H: \R^{2n}\times\R&\to\R. \\
	z&\mapsto \begin{cases}
		|z| e^{-\frac12 g(z/|z|)} \phi\left(\frac{z}{|z|}\right) & z \neq 0 \\
		0 & z = 0
	\end{cases}
		& (z,t) &\mapsto |z|^2 H_t(z/|z|)
\end{align*}

\startsubpart{Proposition}
\begin{enumerate}
	\item $\lift\phi$ is continuous and $\R_+$-equivariant. Outside of 0, it is a smooth symplectomorphism with respect to the standard symplectic structure $\omega_\text{std}$ on $	\R^{2n}$.

	\item $\lift H_t(z)$ is homogeneous of degree 2 in $z$, i.e. $\lift H_t(\lambda z)=\lambda^2\, \lift H_t(z)$ for $\lambda\geq0$, and continuous everywhere. Outside of $z=0$ it is smooth and generates the lifts of $\phi_t$, i.e. $$\omega_\text{std}\left( \frac{d}{dt}\lift \phi_t ,\cdot\right) = -d(\lift H_t).$$

	\item Discriminant points $q$ of $\phi$ correspond to radial lines $\R_+ q$ of fixed points of the lift $\lift\phi$. $q$ is non-degenerate precisely when $p$ is non-degenerate in the directions tangent to the sphere of radius $|p|$ for any (equivalently all) points $p\in\R_+q.$

\end{enumerate}
\end{definition}

\begin{proof}

	We will show that outside of zero, these maps are the symplectization from~\ref{def:symplectization} and~\ref{def:symplectization-ham}
 under the identification along
	\begin{align*}
		\psi: \symp(S^{2n-1},\alpha) &\to (\R^{2n}\setminus \{0\},\omega_\text{std}) \\
		(q,\theta) &\mapsto \sqrt{2} e^{\theta/2}q.
	\end{align*}
	This is clearly a diffeomorphism. To see that it is a symplectomorphism, we need to verify that $$\psi^* \omega_\text{std}=d (e^\theta \alpha).$$ Since $\alpha$ is the restriction $\iota^*\lambda$ of the Liouville form $\lambda$ along $\iota:S^{2n-1}\hookrightarrow \C^n$ and $\omega_\text{std} = d\lambda,$ it suffices to verify $$e^\theta \iota^*\lambda = \psi^*\lambda$$ by inserting $$\lambda = -i|z|\left( \bar{z}d\frac{z}{|z|} - zd\frac{\bar{z}}{|z|} \right)$$
	in complex coordinates of $\R^{2n}\simeq\C^n.$ We indeed have
	\begin{align*}
		\psi\big(\symp \phi(\psi^{-1}(z))\big) &= \psi\big(\symp\phi(z/|z|,\, 2\ln |z|)\big) \\
		&= \psi\big(\phi(z/|z|),\, \ln |z| \,- g(z/|z|)/2\big) \\
		&= |z|\; e^{-g(z/|z|)/2}\,\phi\big(z/|z|\big) \;=\; \lift\phi(z)
	\end{align*}
	and $$\lift H_t(\psi(q,\theta))=e^\theta H_t(q) = \symp H_t(q,\theta).$$

	Regarding (i): $\R_+$-invariance is immediate by the definition. We only need to check continuity in 0: Since $\phi$ and $g$ are defined on a compact set, they have a minimal and maximal value, so that $\lift\phi(z)$ tends to zero as $|z|$ does.
	Outside zero, it is a smooth symplectomorphism as it is a composition $\psi\circ \symp\phi\circ \psi^{-1}$ of symplectomorphisms.

	Regarding (ii): Homogeneity is immediate, and continuity in zero again follows since $H_t$ is defined on a compact set. To conclude that $\lift H_t$ generates $\lift \phi_t$ outside zero, apply Def.~\& Prop.~\ref{def:symplectization-ham} after identifying the symplectization along $\psi$ as above.

	Regarding (iii): Immediate by Proposition~\ref{char-pts-via-symp}~(i) and the identification along~$\psi.$
\end{proof}

\section{Generating Functions of Contactomorphisms on $S^{2n-1}$}

We extend our notion of generating functions to contactomorphisms on spheres via the lifts from Section~\ref{sec:lifting}. The key observation is the following: As we will see in more detail in this section, applying a general approach to proving existence using the composition formula to lifts $\mathcal{\tilde S}\phi$ of contactomorphisms $\phi$ always yields generating functions $G:\R^{2n+k}\to\R$ that are homogeneous of degree two, i.e. $G(\lambda z)=\lambda^2\, G(z)$ for $\lambda\geq0.$ Such functions are in particular determined by their restriction to the unit sphere, so it stands to reason to call this restriction $F=G|_{S^{2n+k-1}}$ the generating function of the contactomorphism $\phi$ on the sphere.\medskip

\begin{definition}[Generating functions on $S^{2n-1}$]\label{def-gen-on-sphere}
Let $\phi$ be a contactomorphism on $(S^{2n-1},\alpha_\text{std}).$ For every function $F:S^{2n+k-1}\to \R,$ we define the \textbf{extension} $\hat{F}:\R^{2n}\times\R^k\to\R$ by $\hat{F}(\lambda x)=\lambda^2 F(x)$ for $\lambda>0, x\in S^{2n+k-1}$ and $\hat{F}(0)=0$.

We say $F$ is a \textbf{generating function} of $\phi$ if
\begin{enumerate}
\item $\hat F$ is differentiable and a generating function of the lift $\lift\phi:\R^{2n}\to\R^{2n}$ of $\phi$,
\item $i_{\hat F}$ is injective and $i_{\hat F}(0)=0$.
\item Around any point $(\zeta,\nu)\in\Sigma_{\hat F}\setminus\{0\},$ $\hat F$ is smooth and the intersection of $d\hat F$ and $N_{\R^{2n}\times\R^k}$ is transverse.
\item $\hat F$ has a Lipschitz differential everywhere.
\end{enumerate}
$F$ is a \textbf{simple} generating function if additionally $k=0.$

\end{definition}

\begin{remark}[Remarks on Definition~\ref{def-gen-on-sphere}]\ 
\begin{enumerate}
\item The typical approach in the literature is to show that the lift of a contactomorphism on a sphere or lens space can be generated by an appropriate notion of \textit{conical} generating function~\cite{The98,San13,GKPS}. We instead put the focus on the restriction of this function to the sphere. We do not lose any information by this as it still determines the conical generating function of the lifted symplectomorphism by homogeneous extension.
I prefer this approach as it allows for a simpler and more lucid formulation of the results leading up to the main theorem.

\item We took a bare-minimum approach in defining generating functions of maps on \textit{Euclidean} space for the reasons outlined in Remark~\ref{remark-def-gen-euclidean}. Definition~\ref{def-gen-on-sphere} on the other hand is fitted to our specific situation by incorporating the stronger assumptions \ref{def-gen-on-sphere}.(ii)-(iv). This allows for a simpler formulation of the upcoming propositions.

\end{enumerate}
\end{remark}

In order to show existence, we first translate the composition formula to the spherical setting and subsequently consider the $C^2-$small case:

\begin{proposition}[Composition formula]\label{comp-form-sphere}
Let $F_1:S^{2n+k_1-1}\to\R$ and $F_2:S^{2n+k_2-1}\to\R$ be generating functions of the contactomorphisms $\phi_1$ and $\phi_2$ on $(S^{2n-1},\alpha_\text{std}).$ Then the function $F_1\#F_2:S^{6n+k_1+k_2-1}\to\R$ defined by $$F_1\#F_2(x):=(\hat F_1 \# \hat F_2)|_{S^{6n+k_1+k_2-1}}(x),$$ i.e. the restriction of the Euclidean composition formula from Proposition~\ref{theret-comp} applied to the extensions $\hat F_1$ and $\hat F_2$, is a generating function for the composition $\phi:=\phi_2\phi_1.$
\end{proposition}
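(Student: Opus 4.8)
The plan is to reduce everything to the Euclidean composition formula, Proposition~\ref{theret-comp}, applied to the homogeneous extensions $\hat F_1,\hat F_2$. The key preliminary observation is that the Euclidean composite $\hat F_1\#\hat F_2:\R^{6n+k_1+k_2}\to\R$ is itself homogeneous of degree two: scaling $(q;\zeta_1,\zeta_2,\nu_1,\nu_2)$ by $\lambda\ge 0$ multiplies $\hat F_j(\zeta_j,\nu_j)$ by $\lambda^2$ (degree-two homogeneity of $\hat F_j$) and $\omega_\text{std}(\zeta_1-q,\zeta_2-q)$ by $\lambda^2$ (bilinearity), and $\hat F_1\#\hat F_2$ vanishes at the origin. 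Hence the degree-two homogeneous extension of $F_1\#F_2:=(\hat F_1\#\hat F_2)|_{S^{6n+k_1+k_2-1}}$ is again $\hat F_1\#\hat F_2$, that is, $\widehat{F_1\#F_2}=\hat F_1\#\hat F_2$. Since moreover each $\hat F_j$ is a Euclidean generating function of $\lift\phi_j$ by condition~(i) of Definition~\ref{def-gen-on-sphere} for $F_j$, it suffices to verify conditions (i)--(iv) of Definition~\ref{def-gen-on-sphere} for the Euclidean function $\hat F_1\#\hat F_2$, and for this we may freely invoke Proposition~\ref{theret-comp}.

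Conditions (i), (ii) and (iv) are then quick. For~(i): $\hat F_1\#\hat F_2$ is differentiable as a sum of the differentiable $\hat F_j$ and a quadratic form, and by Proposition~\ref{theret-comp}(i) it generates $\lift\phi_2\circ\lift\phi_1$; this equals $\lift{(\phi_2\phi_1)}$ because $\symp$ is a functor (Definition~\ref{def:symplectization}), so that, under the identification $\psi$ from the proof of Definition~\ref{lift-sphere}, $\lift\phi_2\circ\lift\phi_1=\psi\circ\symp\phi_2\circ\symp\phi_1\circ\psi^{-1}=\psi\circ\symp(\phi_2\phi_1)\circ\psi^{-1}=\lift{(\phi_2\phi_1)}$ away from the origin, while both sides fix the origin (in particular $\phi_2\phi_1$ is again a contactomorphism of $S^{2n-1}$). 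For~(iv): the differential of $\hat F_1\#\hat F_2$ is the sum of the pullbacks of $d\hat F_1$ and $d\hat F_2$ along linear coordinate projections and the differential of a quadratic form; as $d\hat F_1,d\hat F_2$ are Lipschitz by condition~(iv) for $F_1,F_2$ and the last summand is linear, the total is Lipschitz. For~(ii): injectivity of $i_{\hat F_1\#\hat F_2}$ follows from Proposition~\ref{theret-comp}(ii) together with injectivity of $i_{\hat F_1},i_{\hat F_2}$; and as $\hat F_1\#\hat F_2$ is differentiable and homogeneous of degree two we have $d_0(\hat F_1\#\hat F_2)=0$, so by Proposition~\ref{prop:crit-points-correspond-to-zero-intersections}(i) the origin is a fiber-critical point with $i_{\hat F_1\#\hat F_2}(0)=0$.

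The substantive point is condition~(iii): around each $w=(q,\zeta_1,\zeta_2,\nu_1,\nu_2)\in\Sigma_{\hat F_1\#\hat F_2}\setminus\{0\}$ we must show $\hat F_1\#\hat F_2$ is smooth with $d(\hat F_1\#\hat F_2)$ transverse to the fiber conormal bundle. By Proposition~\ref{theret-comp}(iii) this holds as soon as each $\hat F_j$ is smooth with $d\hat F_j$ transverse near $(\zeta_j,\nu_j)$, and condition~(iii) for $F_j$ supplies exactly this whenever $(\zeta_j,\nu_j)\neq 0$. Hence the crux is to rule out, for $w\neq 0$, the possibility that $(\zeta_1,\nu_1)=0$ or $(\zeta_2,\nu_2)=0$. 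For this I would use the description of $\Sigma_{\hat F_1\#\hat F_2}$ via a point $z_1\in\R^{2n}$ from the proof of Proposition~\ref{theret-comp} (Eq.~\eqref{eq:fiber-critical-criterion3}), where $z_1=\zeta_1+\frac{1}{2i}\frac{\partial\hat F_1}{\partial\zeta_1}(\zeta_1,\nu_1)$. If $(\zeta_1,\nu_1)=0$, then $z_1=\frac{1}{2i}\frac{\partial\hat F_1}{\partial\zeta_1}(0,0)=0$ since $d_0\hat F_1=0$; consequently $\lift\phi_1(z_1)=\lift\phi(z_1)=0$, so $q=\zeta_2=0$ and $\frac{\partial\hat F_2}{\partial\zeta_2}(0,\nu_2)=i(\lift\phi_1(z_1)-\lift\phi(z_1))=0$, whence $i_{\hat F_2}(0,\nu_2)=0=i_{\hat F_2}(0)$ forces $\nu_2=0$ by injectivity of $i_{\hat F_2}$, so $w=0$ --- a contradiction. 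Symmetrically, if $(\zeta_2,\nu_2)=0$, then $\tau\big(\lift\phi_1(z_1),\lift\phi_2(\lift\phi_1(z_1))\big)=i_{\hat F_2}(0)=0$, so bijectivity of $\tau$ forces $\lift\phi_1(z_1)=0$, hence $z_1=0$ by injectivity of $\lift\phi_1$, reducing to the previous case. Thus no nonzero fiber-critical point of $\hat F_1\#\hat F_2$ has a vanishing component, Proposition~\ref{theret-comp}(iii) applies, and condition~(iii) follows.

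I expect this last step to be the main obstacle: the only point where $\hat F_1\#\hat F_2$ can fail to be smooth is the origin, so the whole argument rests on showing that a nonzero fiber-critical point cannot have either of its $\hat F_j$-components equal to zero --- which is precisely where the homogeneity of the extensions and the injectivity hypotheses~(ii) of Definition~\ref{def-gen-on-sphere} are genuinely used. Everything else is routine bookkeeping on top of Proposition~\ref{theret-comp}.
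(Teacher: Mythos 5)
Your proposal is correct and follows essentially the same route as the paper: identify $\widehat{F_1\#F_2}=\hat F_1\#\hat F_2$ by homogeneity, verify conditions (i)--(iv) of Definition~\ref{def-gen-on-sphere} via Proposition~\ref{theret-comp}, and handle (iii) by using the fiber-critical characterization (Eqs.~\eqref{eq:fiber-critical-criterion2}--\eqref{eq:fiber-critical-criterion3}) to show a nonzero point of $\Sigma_{\hat F_1\#\hat F_2}$ cannot have a vanishing $(\zeta_j,\nu_j)$-component. The only cosmetic difference is that in the contradiction you invoke injectivity of the individual $i_{\hat F_j}$ (and criticality of the origin for $i_{\hat F}(0)=0$), whereas the paper concludes via $i_{\hat F}(p)=0$ and injectivity of the composite $i_{\hat F}$; both are equally valid.
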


\begin{proof}
For brevity, we write $F:=F_1\# F_2$. First note that $\hat F=\hat F_1\# \hat F_2.$ This follows since both sides coincide on the unit sphere and are homogeneous functions of degree 2 for a positive real factor: The former by construction, the latter since $\hat F_1$ and $\hat F_2$ are and the Euclidean composition formula preserves this property.\medskip

We now need to check each of the four conditions in Definition~\ref{def-gen-on-sphere} for $F$:\medskip

Regarding (i): $\hat F = \hat F_1\# \hat F_2$ is a generating function of $\lift \phi_2 \lift \phi_1$ by part (i) of Proposition~\ref{theret-comp}. By functoriality of the lift this is just $\lift (\phi_2\phi_1) = \lift \phi.$\medskip

Regarding (ii): Injectivity of $i_{\hat F} = i_{\hat F_1\# \hat F_2}$ follows by part (ii) of Proposition~\ref{theret-comp}.
Adapting Eq.~\eqref{eq:iF-nicer} from the proof of Proposition~\ref{theret-comp} to our situation gives
$$
i_{\hat F}(q,\zeta_1,\zeta_2,\nu_1,\nu_2) = \tau(z_1,\lift \phi(z_1)) \qquad \text{ where }
z_1=i_{\hat F_1}(\zeta_1,\nu_1).
$$ As $i_{\hat F_1}(0)=0,$ $\lift\phi(0)=0$ and $\tau(0,0)=0,$ this indeed yields $i_{\hat F}(0)=0.$\medskip

Regarding (iii):
Let $p=(q,\zeta_1,\zeta_2,\nu_1,\nu_2)\in\Sigma_{\hat F}\setminus\{0\}.$ By Eq.~\eqref{eq:fiber-critical-criterion1}, $(\zeta_j,\nu_j)\in\Sigma_{\hat F_j}$ for $j=1,2.$ If $(\zeta_j,\nu_j)\neq(0,0)$ for both $j$ then the desired properties at $p$ follow immediately by part (iii) of Proposition~\ref{theret-comp} since $F_j$ are generating functions on the sphere. 

Assume instead towards contradiction that $\exists j_0\in\{1,2\}: (\zeta_{j_0},\nu_{j_0})=(0,0)$. Then by Eq.~\eqref{eq:fiber-critical-criterion2} and the fact that $F_{j_0}$ is a generating function on the sphere, there exist $z_1,z_2\in\R^{2n}$ such that $$0=i_{\hat F_{j_0}}(0,0)=\tau(z_{j_0},\lift\phi_{j_0}(z_{j_0}))\qquad\text{and}\qquad z_2=\lift\phi_1(z_1).$$ In particular $z_j=0,$ and since $\lift\phi_1$ is a bijection that preserves zero, this implies that both $z_1$ and $z_2$ are zero. Again by Eq.~\eqref{eq:iF-nicer}, this yields $i_{\hat F}(p)=0.$ The contradiction $p=0$ follows by part (ii) of this proof.\medskip

Regarding (iv): The differential of $\hat F$ is made up of the differentials of $\hat F_1$ and $\hat F_2,$ which are Lipschitz by assumption, and that of the bilinear map $\omega_\text{std}(\zeta_1-q,\zeta_2-q),$ so that it is Lipschitz itself.

\end{proof}

\begin{lemma}\label{lemma-c2-small-existence}
There is a neighbourhood $U$ of the identity in the $C^2$-topology on smooth maps on $S^{2n-1}$ such that any time-1 map $\phi$ of a contact isotopy starting at the identity on $(S^{2n-1},\alpha_\text{std})$ and remaining within $U$ has a simple generating function $F:S^{2n-1}\to \R$.
\end{lemma}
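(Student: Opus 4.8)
The plan is to lift the contactomorphism to Euclidean space, show that its lift is $C^1$-close to the identity, apply the standard generating-function construction for $C^1$-small symplectomorphisms, and verify that the resulting generating function is homogeneous of degree two so that its restriction to the sphere is a generating function in the sense of Definition~\ref{def-gen-on-sphere}. Concretely, let $\phi$ be the time-$1$ map of a contact isotopy $\phi_t$ generated by a Hamiltonian $H_t$ staying in a small $C^2$-neighbourhood $U$ of the identity. First I would use the lifting construction of Definition~\ref{lift-sphere} to pass to $\lift\phi:\R^{2n}\to\R^{2n}$, which is a symplectomorphism away from the origin, continuous everywhere, homogeneous of degree one, and (by restricting $U$) uniformly $C^1$-close to the identity on the unit sphere. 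By homogeneity, $C^1$-closeness on the unit sphere upgrades to a global bound: $\lift\phi$ is $C^1$-close to the identity on all of $\R^{2n}\setminus\{0\}$, and its differential extends Lipschitz-continuously across $0$.

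The heart of the argument is the classical construction of a simple generating function for a $C^1$-small symplectomorphism, adapted via the identification $\tau$ from Remark~\ref{rem:tau}. When $\lift\phi$ is $C^1$-close to the identity, its graph $\Gamma_{\lift\phi}$ is close to the diagonal, so $\tau(\Gamma_{\lift\phi})$ is a $C^1$-small section of $T^*\R^{2n}\to\R^{2n}$: in coordinates, $\tau(\Gamma_{\lift\phi})=\{(x,\eta(x))\}$ for a $C^1$-small closed $1$-form $\eta$ on $\R^{2n}$ (closedness because $\lift\phi$ is a symplectomorphism, via Lemma~\ref{symplecto-to-lagrangian} and Lemma~\ref{prop:func-generates-lagrangian}; the projection $\tau(\Gamma_{\lift\phi})\to\R^{2n}$ being a diffeomorphism follows from the $C^1$-smallness). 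Since $\R^{2n}$ is simply connected, $\eta=d\hat F$ for some $\hat F:\R^{2n}\to\R$, unique up to a constant, which we normalise by $\hat F(0)=0$. This $\hat F$ is a simple ($k=0$) generating function of $\lift\phi$ by the very definition in Section~3.3, with $i_{\hat F}=\mathrm{Id}$ so conditions (ii) and (iii) of Definition~\ref{def-gen-on-sphere} are automatic ($i_{\hat F}(0)=0$, injectivity, and transversality of $d\hat F$ with $N_E=T^*\R^{2n}$ is vacuous). The differential $d\hat F=\eta$ is Lipschitz since $d\lift\phi$ is, giving condition (iv).

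It remains to check homogeneity of $\hat F$, i.e. that $\hat F(\lambda z)=\lambda^2\hat F(z)$ for $\lambda\ge0$, so that $F:=\hat F|_{S^{2n-1}}$ makes sense and $\hat F$ is its extension. The key input is $\R_+$-equivariance of $\lift\phi$ from Proposition in Definition~\ref{lift-sphere}(i): writing $m_\lambda$ for scaling by $\lambda$, we have $\lift\phi\circ m_\lambda = m_\lambda\circ\lift\phi$. Under $\tau$ and the coordinates above, this forces $\eta(\lambda x)=\lambda\,m_\lambda^*$-transforms of $\eta(x)$ in the precise way that makes $m_\lambda^*\eta=\lambda^2\eta$ as $1$-forms (one checks this directly from the explicit formula for $\tau$ in Remark~\ref{rem:tau}, using that $\tau$ intertwines $m_\lambda$ on $\overline{\R^{2n}}\times\R^{2n}$ with the corresponding scaling on $T^*\R^{2n}$). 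Hence $d(\hat F\circ m_\lambda)=m_\lambda^*d\hat F=\lambda^2 d\hat F=d(\lambda^2\hat F)$, and since both sides vanish at $0$ we get $\hat F\circ m_\lambda=\lambda^2\hat F$. Therefore $F=\hat F|_{S^{2n-1}}$ is a simple generating function of $\phi$ on the sphere.

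The main obstacle I anticipate is not any single step but the careful bookkeeping at the origin: $\lift\phi$ is only $C^0$ (not $C^1$) at $0$, so the "graph is a $C^1$ section" argument genuinely lives on $\R^{2n}\setminus\{0\}$, and one must argue that the resulting $1$-form $\eta$ extends continuously (Lipschitz) across $0$ — which is exactly what Definition~\ref{lift-sphere} and the degree-one homogeneity give, since $d\lift\phi$ is bounded and Lipschitz near $0$ — and then that $\hat F$, obtained by integration, is $C^1$ with $d\hat F=\eta$ everywhere including at $0$. A secondary point requiring care is quantifying "$C^2$-small on the sphere implies $C^1$-small lift": one must track how the factor $e^{-g/2}$ and the reparametrisation by $z/|z|$ in Definition~\ref{lift-sphere} affect derivatives, using that $g$ and $\phi$ (hence $g$ via the isotopy) are controlled in $C^1$ by the $C^2$-size of the generating Hamiltonian, shrinking $U$ as needed.
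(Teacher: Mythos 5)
Your proposal is correct in outline and produces the same simple generating function, but it takes a partly different route from the paper. The paper first proves that the projection $\Phi(z)=\tfrac12\bigl(z+\lift\phi(z)\bigr)$ of the graph onto the diagonal is a global homeomorphism (a diffeomorphism away from $0$) by a properness and mapping-degree argument, precisely because $\lift\phi$ fails to be $C^1$ at the origin; it then obtains a primitive of the resulting $1$-form not from the topology of the base but from the fact that the lifted isotopy is Hamiltonian (Proposition~\ref{lift-sphere}~(ii), Corollary~\ref{exactcoroll}, Proposition~\ref{exact-lagrangian}), so that the primitive $S_1$ is given by the action integral of Lemma~\ref{prop:hamiso}, from which homogeneity of degree two is read off. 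You instead invoke $C^1$-smallness to make the projection a diffeomorphism, get a primitive from closedness plus simple connectedness, and derive homogeneity from the $\R_+$-equivariance of $\lift\phi$ transported through $\tau$. The scaling/equivariance argument for homogeneity is a valid and arguably cleaner alternative to the paper's action-integral computation, and closed-implies-exact works here (for $n=1$ the form is only closed on $\R^2\setminus\{0\}$, but continuity at $0$ forces the period around the origin to vanish; or one can simply quote the paper's exactness argument).

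Two points need repair. First, ``the projection being a diffeomorphism follows from the $C^1$-smallness'' hides the main technical step: since $\Phi$ is merely continuous at $0$, the usual Hadamard/Banach fixed-point argument for maps uniformly $C^1$-close to the identity on $\R^{2n}$ does not apply verbatim and needs either a detour-around-the-origin contraction argument or the paper's degree-theoretic argument (properness of $\Phi|_{\R^{2n}\setminus\{0\}}$, degree one via the homotopy through the lifted isotopy, then bijectivity); this global bijectivity is what guarantees that $\tau(\Gamma_{\lift\phi})$ is a section over all of $\R^{2n}$ and accounts for a substantial part of the paper's proof. Second, your justification of the Lipschitz condition is mis-attributed: $d\lift\phi$ is $\R_{>0}$-invariant, hence in general not even continuous at $0$, so it is not Lipschitz near the origin. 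The correct argument (the one the paper uses for $dS_1$) is that the coefficients of $\eta=d\hat F$ are homogeneous of degree one and smooth away from $0$ with scale-invariant derivatives, hence bounded derivatives, and therefore extend to a globally Lipschitz differential. Neither issue changes the architecture of your argument, but both must be argued rather than asserted.
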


\begin{proof}
Denote the contact isotopy $\phi_t.$ The graph $\Gamma_{\lift\phi}\subseteq \R^{2n}\times\R^{2n}$ of the lift $\lift\phi$ and the diagonal in $\R^{2n}\times\R^{2n}$ can both be canonically identified with $\R^{2n}.$ The projection of the graph onto the diagonal is then the map on $\R^{2n}$ defined by $\Phi(0)=0$ and
\begin{equation}\label{eq:defPhi}
\Phi(z):= \frac{z+\lift\phi(z)}{2} = \frac{|z|}{2}\left(
\frac{z}{|z|}
+e^{-\frac12 g(z/|z|)}\phi\left(\frac{z}{|z|}\right)
\right) \qquad\text{for }z\neq0.
\end{equation}

We first construct the neighbourhood $U$ such that $\Phi$ is a homeomorphism and outside of zero even a diffeomorphism.
To do this, choose $U$ small enough that the determinant of the differential of $\Phi$ is positive everywhere and the term in parentheses on the right-hand side of Eq.~\eqref{eq:defPhi} is bounded away from zero for all $\frac{z}{|z|}\in S^{2n-1}.$ Note that we need to choose a neighbourhood in the $C^2$ topology instead of the $C^1$ topology for this as the derivative of $g$ incorporates second derivatives of $\phi$.

The restriction $\Phi|_M$ on $M:={\R^{2n}\setminus\{0\}}$ is a proper map: Using e.g. the Bolzano-Weierstrass Theorem, it is straightforward to see that the compact sets of $M$ are the closed sets bounded away from both zero and infinity. This boundedness is preserved by $\Phi$ in both directions by our choice of $U$, so that the preimage of a compact set is compact. This argument also works for the homotopy $(z,t)\mapsto(z+\lift \phi_t(z))/2$ connecting $\Phi|_M$ to the identity. It follows that the mapping degree of $\Phi|_M$ is one (see Chapter III of~\cite{ruiz} and in particular  Proposition~III.2.5).

Since $\Phi_M$ is regular everywhere by our choice of $U$, we can now apply Theorem~III.2.3 of~\cite{ruiz} to conclude that $\Phi$ indeed is a bijection on $M,$ and by $\Phi(0)=0$ also on all of $\R^{2n}.$ It follows by invariance of domain and the inverse function theorem that the graph of $\lift\phi$ projects homeomorphically onto the diagonal, and diffeomorphically outside of zero.\medskip

Applying the identification $\tau:\overline{\R^{2n}}\times\R^{2n}\to T^*\R^{2n}$ from Remark~\ref{rem:tau} yields that $\tau(\Gamma_{\lift\phi})\subseteq T^*\R^{2n}$ projects homeomorphically onto the zero section and thereby is the graph of some (not necessarily smooth at zero) 1-form $\alpha.$\medskip

Restricting $\lift\phi$ and $\alpha$ to $M=\R^{2n}\setminus\{0\},$ we see that $\tau(\Gamma_{\lift\phi|_M})$ is the time-1 image of the zero section of $T^*M$ under the isotopy $\Psi_t:=(\tau\circ(\Id, \lift\phi_t)\circ\tau^{-1})|_{T^*M}$.
This isotopy is Hamiltonian since every contact isotopy is generated by a contact Hamiltonian that lifts to a symplectic Hamiltonian by Proposition \ref{lift-sphere}~(ii). It follows by Corollary~\ref{exactcoroll} that the isotopy consists of exact symplectomorphisms. By Proposition~\ref{exact-lagrangian}, this means that $\tau(\Gamma_{\lift\phi|_M})$ is an exact Lagrangian submanifold. $\alpha|_M$ is then an exact form $d S_1$ for the smooth $S_1:M\to\R$ of Proposition~\ref{simple-gen-funs}~(ii).\medskip

The lift is homogeneous of degree 2 by construction. As $S_1$ arises from the formula for $S_t$ in Proposition~\ref{prop:hamiso}, it is homogeneous of degree 2 as well. This means its first derivatives are homogeneous of degree one and thereby extend continuously from $M$ to $\R^{2n}$. The second derivatives are $\R_{>0}-$invariant, so that this extension is Lipschitz. By continuity of the first derivatives and $\alpha,$ we have $\alpha=d\tilde S_1$ on all of $\R^{2n}$ for this extension $\tilde S_1$ of $S_1.$ \medskip

A (simple) generating function of $\phi$ on the sphere is now given by $F:=\tilde S_1|_{S^{2n-1}}$. We have just seen that $\hat F=\tilde S_1$ is $C^1$ with Lipschitz differential, generates the lift $\lift \phi$ and is smooth outside of zero. The remaining conditions for generating functions on a sphere from Definition~\ref{def-gen-on-sphere} are all satisfied trivially since we have constructed a \textit{simple} generating function.
\end{proof}

We are now ready to show the existence of generating functions of all time-1 maps of contact isotopies on the sphere:

\begin{proposition}\label{prop:existence-generating-sphere}
Let $\phi$ be the time-1 map of a contact isotopy starting at the identity on $(S^{2n-1},\alpha_\text{std})$. Then there exists a generating function $F:S^{2n+k-1}\to \R$ of $\phi.$
\end{proposition}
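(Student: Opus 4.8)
The plan is to cut the given contact isotopy into finitely many $C^2$-small pieces, generate each piece by a simple generating function on $S^{2n-1}$ via Lemma~\ref{lemma-c2-small-existence}, and then assemble these with the composition formula of Proposition~\ref{comp-form-sphere}. So the argument has three parts: a subdivision step, an application of the small-case lemma, and an inductive application of the composition formula.

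\textbf{Subdivision.}
Write $\phi_t$ for the contact isotopy with $\phi_0=\Id$ and $\phi_1=\phi$, generated by a contact Hamiltonian $H_t$, with generating vector field $X_t$ (so $\alpha(X_t)=H_t$). For $N\in\N$ and $j\in\{1,\dots,N\}$ set
$$\psi^{(j)}_t:=\phi_{(j-1)/N+t/N}\circ\phi_{(j-1)/N}^{-1},\qquad t\in[0,1].$$
Each $\psi^{(j)}$ is a contact isotopy starting at the identity, and a short computation using $\tfrac{d}{ds}\phi_s=X_s\circ\phi_s$ shows that $\tfrac{d}{dt}\psi^{(j)}_t=\tfrac1N X_{(j-1)/N+t/N}\circ\psi^{(j)}_t$, so $\psi^{(j)}$ is generated by the contact Hamiltonian $\tfrac1N H_{(j-1)/N+t/N}$ in the sense of Def.\ \& Prop.~\ref{contact-hamiltonian}. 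Its time-$1$ map $\phi^{(j)}:=\psi^{(j)}_1=\phi_{j/N}\circ\phi_{(j-1)/N}^{-1}$ satisfies $\phi^{(N)}\circ\dots\circ\phi^{(1)}=\phi_1\circ\phi_0^{-1}=\phi$. Since $S^{2n-1}\times[0,1]$ is compact, the family $\{X_s\}_{s\in[0,1]}$ is bounded in every $C^m$-norm; by the standard estimates for flows of time-dependent vector fields over a short time interval (here applied to $\tfrac1N X$ over $t\in[0,1]$), the isotopies $t\mapsto\psi^{(j)}_t$ converge to the constant isotopy at the identity in the $C^2$-topology as $N\to\infty$, uniformly in $j$. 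Hence there is an $N$ for which every $\psi^{(j)}$ remains inside the $C^2$-neighbourhood $U$ of Lemma~\ref{lemma-c2-small-existence}.

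\textbf{Gluing.}
Fix such an $N$. Lemma~\ref{lemma-c2-small-existence} then yields simple generating functions $F_j:S^{2n-1}\to\R$ of $\phi^{(j)}$ for $j=1,\dots,N$. Put $G_1:=F_1$, a generating function of $\phi^{(1)}$, and inductively $G_j:=G_{j-1}\#F_j$. By Proposition~\ref{comp-form-sphere}, $G_j$ is a generating function of $\phi^{(j)}\circ\dots\circ\phi^{(1)}$; in particular $F:=G_N:S^{2n+k-1}\to\R$ with $k=4n(N-1)$ is a generating function of $\phi$, which is the assertion.

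\textbf{Main obstacle.}
Everything except the subdivision step is formal given the machinery already established. The one point that genuinely needs care is the uniform $C^2$-smallness of the sub-isotopies $\psi^{(j)}$: one must know that a sufficiently fine partition forces \emph{all} $N$ pieces simultaneously into $U$. This is a routine compactness-and-ODE argument — the generating vector fields form a bounded family and their $\tfrac1N$-multiples generate flows that are $C^2$-close to the identity with a bound depending only on $N$ and on $C^2$-bounds of $H$ — but it has to be set up with some attention, because membership in $U$ is governed (via the conformal factor $g$, cf.\ the proof of Lemma~\ref{lemma-c2-small-existence}) by second derivatives of the contactomorphism, and these must be controlled uniformly in $j$.
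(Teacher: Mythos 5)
Your proof follows the same strategy as the paper's: subdivide the isotopy into $N$ pieces, apply the $C^2$-small existence lemma (Lemma~\ref{lemma-c2-small-existence}) to each piece, and then apply the composition formula (Proposition~\ref{comp-form-sphere}) inductively. The only differences are cosmetic — you use the equal-spacing partition $t_j=j/N$ rather than an arbitrary one, you spell out the compactness argument that makes the pieces uniformly $C^2$-small (which the paper takes for granted), and your left-associated bracketing $G_j:=G_{j-1}\#F_j$ correctly realizes $\psi_{N,1}\circ\dots\circ\psi_{1,1}$, whereas the paper's displayed expression $F_N\#(\dots\#(F_1\#F_0)\dots)$ contains an indexing typo — so this is the paper's proof with slightly more care in the details.
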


\begin{proof}
For the contact isotopy $\phi_t$, we can always choose a sufficiently large $N$ and numbers $$0=t_0 < t_1 < ... < t_N = 1$$ such that each $\psi_{j,t}:=\phi_{t_{j-1}+(t_{j}-t_{j-1})t}\circ\phi^{-1}_{t_{j-1}}$ for $j=1...N, t\in[0,1]$ is a contact isotopy starting at the identity that satisfies the $C^2$-smallness assumption of Lemma~\ref{lemma-c2-small-existence}. Thereby each $\psi_{j,1}$ is generated by some simple generating function $F_j$. Inductively applying the composition formula then yields $$F = F_N \# ( ... \# (F_1 \# F_0) ...)$$ as a generating function of
\begin{align*}
\phi_1 &= (\phi_{t_{N}}\circ\phi^{-1}_{t_{N-1}}) \circ ... \circ (\phi_{t_{1}}\circ\phi^{-1}_{t_{0}}) \\
&= \psi_{N,1} \circ ... \circ \psi_{1,1}.
\end{align*}
\end{proof}

We have a correspondence of critical points and discriminant points:

\begin{proposition}\label{prop-discr-sphere}
Let $\phi$ be a contactomorphism on $(S^{2n-1},\alpha_\text{std})$ with a generating function $F:S^{2n+k-1}\to \R$.

For every critical point $(\zeta,\nu)\in S^{2n+k-1}\subseteq\R^{2n+k}$ of $F$ with value 0, $\zeta/|\zeta|\in S^{2n-1}$ is a discriminant point of $\phi.$ This correspondence is one-to-one and the notions of degeneracy match.
\end{proposition}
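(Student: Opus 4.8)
The strategy is to translate the statement about the sphere $S^{2n+k-1}$ and the contactomorphism $\phi$ into a statement about the homogeneous extension $\hat F$ on $\R^{2n+k}$ and the lift $\lift\phi$ on $\R^{2n}$, where the tools of Sections 3.2--3.4 apply directly. The dictionary is: $F$ generates $\phi$ means (by Definition~\ref{def-gen-on-sphere}) that $\hat F$ is a generating function of $\lift\phi$ in the sense of Section~3.3, with $i_{\hat F}$ injective and $i_{\hat F}(0)=0$. Proposition~\ref{prop:crits-are-fixed}(i) then already gives a one-to-one correspondence between critical points $(\zeta,\nu)\in\R^{2n}\times\R^k$ of $\hat F$ and fixed points $\zeta$ of $\lift\phi$, and Proposition~\ref{lift-sphere}(iii) identifies fixed points of $\lift\phi$ with radial lines through discriminant points of $\phi$. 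So the real work is purely bookkeeping: pass from critical points of $\hat F$ on all of $\R^{2n+k}$ to critical points of $F$ on the unit sphere with the \emph{value $0$} constraint, and keep track of degeneracy.

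First I would record the elementary fact that since $\hat F(\lambda x)=\lambda^2 F(x)$, Euler's identity gives $d_x\hat F(x) = 2\hat F(x)$ for $x\neq 0$ (using the Lipschitz-differential hypothesis \ref{def-gen-on-sphere}(iv) so that $\hat F$ is genuinely $C^1$ and the computation is valid). Consequently, if $x=(\zeta,\nu)\neq 0$ is a critical point of $\hat F$, then automatically $\hat F(x)=0$; conversely the critical points of $\hat F$ away from the origin are exactly the nonzero points where $d\hat F$ vanishes, and these come in radial lines $\R_{>0}\cdot x$ by homogeneity of $d\hat F$ (degree one). I would then observe that, for $x\in S^{2n+k-1}$, being a critical point of the restriction $F=\hat F|_{S^{2n+k-1}}$ with value $F(x)=0$ is equivalent to $x$ being a critical point of $\hat F$: indeed, decomposing $d_x\hat F$ into its radial part (which equals $2\hat F(x)$ by Euler) and its spherical part (which is $d_xF$), the full differential $d_x\hat F$ vanishes iff both parts do, i.e. iff $\hat F(x)=0$ and $d_xF=0$. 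This establishes the bijection between $\{$critical points of $F$ with value $0\}$ and $\{$radial lines of nonzero critical points of $\hat F\}$.

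Combining the above with Proposition~\ref{prop:crits-are-fixed}(i) (critical points of $\hat F$ $\leftrightarrow$ fixed points of $\lift\phi$, one-to-one because $i_{\hat F}$ is injective) and Proposition~\ref{lift-sphere}(iii) (fixed points of $\lift\phi$ $\leftrightarrow$ radial lines $\R_+q$ with $q$ a discriminant point of $\phi$), I get the claimed one-to-one correspondence between critical points of $F$ with value $0$ and discriminant points $\zeta/|\zeta|$ of $\phi$. The one point needing a small remark is that the origin $0\in\R^{2n+k}$ is excluded on both sides consistently: $i_{\hat F}(0)=0$ corresponds to the fixed point $0$ of $\lift\phi$, which is \emph{not} on any sphere of positive radius and hence does not correspond to a discriminant point; on the sphere side, $0$ is simply not a point of $S^{2n+k-1}$, so nothing is lost.

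For the degeneracy claim I would argue along the same chain. By Definition~\ref{def-gen-on-sphere}(iii), around any nonzero fiber-critical point $\hat F$ is smooth and $d\hat F$ is transverse to $N_{\R^{2n}\times\R^k}$, so Proposition~\ref{prop:crits-are-fixed}(ii) applies and non-degeneracy of the critical point $(\zeta,\nu)$ of $\hat F$ matches non-degeneracy of the fixed point $\zeta$ of $\lift\phi$ --- crucially, the proposition also gives this equivalence restricted to any subspace of the tangent space, and I would invoke it on the subspace tangent to the sphere of radius $|\zeta|$, which matches the formulation in Proposition~\ref{lift-sphere}(iii). Then Proposition~\ref{lift-sphere}(iii) identifies that with non-degeneracy of the discriminant point $\zeta/|\zeta|$ of $\phi$. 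The only thing left is to relate non-degeneracy of $(\zeta,\nu)$ as a critical point of $\hat F$ to non-degeneracy of the same point as a critical point of $F=\hat F|_{S^{2n+k-1}}$: here I would note that because $d\hat F$ is homogeneous of degree one, the radial direction is always in the kernel of the Hessian of $\hat F$ at $(\zeta,\nu)$ (differentiate $d\hat F(\lambda x)=\lambda\, d\hat F(x)$ in $\lambda$ and evaluate), so the Hessian of $\hat F$ is non-degenerate \emph{on the tangent space to the sphere} iff the Hessian of $F$ is non-degenerate --- and the sphere tangent space is exactly the subspace on which we already had the matching. I expect this last identification --- carefully checking that "non-degenerate critical point of $F$ on the sphere" lines up with "non-degenerate in the sphere-tangent directions" on the Euclidean side, across the two homogeneity reductions --- to be the only genuinely fiddly step; everything else is a direct application of the propositions already proved.
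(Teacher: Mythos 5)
Your proposal is correct and follows essentially the same route as the paper's proof: the correspondence between value-zero critical points of $F$ and radial rays of critical points of $\hat F$ (which the paper dispatches via homogeneity in a footnote and you justify via Euler's identity), then Proposition~\ref{prop:crits-are-fixed}~(i) with injectivity of $i_{\hat F}$, then Proposition~\ref{lift-sphere}~(iii), and for degeneracy the application of Proposition~\ref{prop:crits-are-fixed}~(ii) along the directions perpendicular to the radial one using Definition~\ref{def-gen-on-sphere}~(iii). Your extra care with the Hessian bookkeeping (radial direction in the kernel, restriction to the sphere-tangent subspace) simply fills in details the paper leaves implicit.
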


\begin{proof}

By construction of $\hat F$, each critical point $(\zeta,\nu)\in S^{2n+k-1}$ of $F$ with value zero\footnote{Any other critical points of $F$ fail to be a critical point of $\hat F$ due to homogeneity of degree 2 in the radial direction.} corresponds one-to-one to a radial ray of critical points of $\hat F$ starting at zero and passing $(\zeta,\nu)$. Since $F$ is a generating function on the sphere, $i_{\hat F}$ is injective so that according to Proposition~\ref{prop:crits-are-fixed}~(i), these rays correspond one-to-one to rays of fixed points of $\lift \phi$ starting at 0 and passing $\zeta$. By part (iii) of Proposition~\ref{lift-sphere}, each of these corresponds one-to-one to a discriminant point $\zeta/|\zeta|$ of $\phi.$

By part (iii) of Definition~\ref{def-gen-on-sphere}, we can also apply part (ii) of Proposition~\ref{prop:crits-are-fixed} along the subspace perpendicular to the radial direction. It follows that each of the steps above preserves (non-)degeneracy of critical points, fixed points and discriminant points (outside of the radial direction, where applicable).\medskip

\end{proof}

For the Reeb flow, we will require generating functions with a number of particularly nice properties:

\begin{proposition}\label{gen-reeb-props}
Consider the negative Reeb flow $a_t(z):=e^{-2\pi i t}z$ on $S^{2n-1}$. Then there is an $m\in\N$ and a smooth family of generating functions $(A_t:S^{2n+m-1}\to\R)_{t\in[0,1]}$ of $a_t$ such that:
\begin{enumerate}
\item The extensions $\hat A_t:\R^{2n+m}\to\R$ form a smooth family of quadratic forms.
\item $\partial \hat A_t/\partial t < 0$ on $\Sigma_{\hat A_t}\setminus\{0\}\subseteq \R^{2n+m}$.
\item $\ind(\hat A_1)-\ind(\hat A_0)=2n$ holds\footnotemark. 
\end{enumerate}
\end{proposition}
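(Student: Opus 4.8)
The plan is to build the family $(A_t)$ very explicitly by "quantizing" the Reeb flow $a_t(z) = e^{-2\pi i t}z$ in the sense of a generating function of type V, chopping $[0,1]$ into small pieces so that the Euclidean construction of Lemma~\ref{lemma-c2-small-existence} applies on each piece, and then composing. First I would note that $a_t$ is itself a contact isotopy (the negative Reeb flow), so its lift $\lift a_t$ is the linear symplectomorphism $z\mapsto e^{-2\pi i t}z$ of $\R^{2n}$, which is a rotation in each complex coordinate. For $t$ in a small interval where $e^{-2\pi i t}$ has no eigenvalue $1$ — i.e. $t\notin\Z$ — the graph projects diffeomorphically onto the diagonal and $\lift a_t$ has a \emph{simple} quadratic generating function $\hat A_t^{\mathrm{simple}}(z) = \tfrac12 z^\top S_t z$ obtained from $\tau$ as in Remark~\ref{rem:tau}: concretely $\tau(\Gamma_{\lift a_t})$ is the graph of $z\mapsto S_t z$ where $S_t$ is the symmetric matrix representing $i(\Id - e^{-2\pi i t})(\Id + e^{-2\pi i t})^{-1}$ (a Cayley-type transform; this is exactly the $S_t$ from Lemma~\ref{prop:hamiso} specialized to the linear case, and it is manifestly homogeneous of degree $2$). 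When $t$ crosses an integer this formula blows up, so the composition formula is unavoidable: write $a_1 = a_{1/N}\circ\cdots\circ a_{1/N}$ ($N$ factors) with $N$ large enough that each $a_{1/N}$ lies in the $C^2$-neighbourhood of Lemma~\ref{lemma-c2-small-existence}, get a simple quadratic generating function for each factor, and set $A_t$ to be the $\#$-composition of the rescaled pieces, interpolated smoothly in $t$. By Proposition~\ref{comp-form-sphere} and the fact (already used in its proof) that $\#$ preserves homogeneity of degree $2$, the extension $\hat A_t$ is again a quadratic form, smooth in $t$; this gives (i) with $m = 2n(N-1) + (\text{fiber dimensions})$.

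For (ii), the key input is the formula for $S_t$ in Lemma~\ref{prop:hamiso}: for the lifted flow, $\hat A_t$ restricted to its fiber-critical set computes the generating function of the symplectomorphism $\lift a_t$, and $\partial\hat A_t/\partial t$ along $\Sigma_{\hat A_t}$ equals $(\lambda(X_t) + H_t)\circ\lift a_t$ where $H_t = \symp(\text{Reeb Hamiltonian})$ is the lift of the contact Hamiltonian generating $a_t$. The negative Reeb flow is generated by the contact Hamiltonian $H\equiv -1$ on $(S^{2n-1},\alpha_{\mathrm{std}})$ (since $\alpha_{\mathrm{std}}(X^{\alpha}) = 1$ and we want the flow $e^{-2\pi i t}$, up to the $2\pi$ normalization), so its lift is $\lift H_t(z) = -|z|^2$, and $\lambda(X_t)$ is correspondingly a negative-definite expression; one checks $\lambda(X_t) + H_t < 0$ away from the origin. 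Since $\#$-composition adds up such action terms (each strictly negative on the respective fiber-critical set, away from $0$), the total $\partial\hat A_t/\partial t$ is strictly negative on $\Sigma_{\hat A_t}\setminus\{0\}$. I would do this computation once for the model linear flow rather than abstractly.

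For (iii), I would use that $\hat A_0$ and $\hat A_1$ generate $\lift a_0 = \Id$ and $\lift a_1 = \Id$ respectively, so both are quadratic forms whose fiber-critical reduction is the identity map; the index is then governed purely by the "fiber" part of the quadratic form, and the jump $\ind(\hat A_1) - \ind(\hat A_0)$ is the difference in Morse indices as $t$ runs from $0$ to $1$. This difference is a Maslov-type quantity: each time $t$ passes a value where $e^{-2\pi i t}$ acquires eigenvalue $1$ — which on $[0,1]$ happens only at the endpoints, but the composition splits it into $N$ small rotations each contributing a controlled index change — the index of the quadratic form jumps by $2$ per complex coordinate that rotates through a full loop, i.e. by $2n$ in total over $t\in[0,1]$. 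Concretely, for the linear flow $e^{-2\pi i t}$ in $\C^n$, the crossing form is negative definite of rank $2n$ at each integer crossing, giving exactly the jump $2n$; this is the standard computation of the Conley--Zehnder / non-linear Maslov index of the Reeb rotation. I expect \textbf{this last step to be the main obstacle}: one must carefully track how the Morse index of the generating quadratic form behaves under the $\#$-composition (which introduces extra $\pm$ directions from the $-2\,\omega_{\mathrm{std}}(\zeta_1-q,\zeta_2-q)$ term, exactly balanced so as not to change the \emph{net} index beyond the geometric contribution) and relate the resulting integer to the spectral flow of $S_t$; getting the bookkeeping to land on precisely $2n$ rather than $2n$ shifted by some composition-artifact constant requires either an explicit diagonalization of the $N$-fold composed quadratic form or an appeal to the additivity and normalization properties of the index under $\#$ established (or to be established) alongside the composition formula.
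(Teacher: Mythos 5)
Your overall strategy (explicit quadratic generating function for the lifted rotation on a parameter range where it exists, then $\#$-composition of rescaled pieces) is the same skeleton as the paper's proof, but as written it has genuine gaps, and one factual error. First, the degeneration of the simple generating function is located wrongly: under the identification $\tau$ of Remark~\ref{rem:tau}, the graph of $\lift a_t$ projects onto the zero section iff $-1$ is \emph{not} an eigenvalue of $e^{-2\pi i t}$, so the Cayley-type form (which is exactly $Q_t(z)=-\tan(\pi t)\|z\|^2$) blows up at $t\equiv 1/2$, not at integers; eigenvalue $1$ is irrelevant here. Consequently no $C^{2}$-smallness argument or large $N$ is needed at all: the paper simply takes the threefold composition $\hat A_t:=Q_{t/3}\#(Q_{t/3}\#Q_{t/3})$, so that $t/3\le 1/3<1/2$ keeps every factor an explicit finite quadratic form, which immediately gives (i) with $m=8n$.

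For (ii), your argument rests on a Hamilton--Jacobi-type identity ($\partial_t\hat A_t$ on $\Sigma_{\hat A_t}$ equals the lifted Hamiltonian along the flow) and on ``additivity of the action terms under $\#$''; neither is established in the paper or in your sketch, and they are not needed. More importantly, the actual content of (ii) is precisely the point you pass over with ``away from $0$'': the time derivative of the composed form is $\partial_t Q_{t/3}$ evaluated at the three rotation variables, each term being only $\le 0$, so strict negativity on $\Sigma_{\hat A_t}\setminus\{0\}$ requires showing that a point of $\Sigma_{\hat A_t}$ at which all three of these variables vanish must be the origin of the total space. The paper proves exactly this by computing the fiber derivative from the explicit formula (this is also the correction of Sandon's Lemma~3.6 discussed in the appendix), and your proposal supplies no substitute. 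Finally, for (iii) you yourself flag the step as unresolved: the Maslov/Conley--Zehnder bookkeeping, including the effect of the extra $\pm$ directions introduced by the $\omega_{\text{std}}$ cross-terms and the normalization under $\#$, is nowhere carried out, whereas the paper sidesteps all of it by evaluating the coefficient matrices of the concrete forms at $t=0,1$ and computing $\ind(\hat A_0)=5n$, $\ind(\hat A_1)=7n$ directly. As it stands, parts (ii) and (iii) of your proposal are not proofs but programs, and (iii) in particular would need either the explicit diagonalization the paper performs or a separately proven index-additivity theory for the composition formula.
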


\footnotetext{The \textbf{index} $\ind(Q)$ of a quadratic form $Q$ is the number of negative eigenvalues of the coefficient matrix.}

\begin{proof}
This proposition and its proof are based on Lemma~4.4 of~\cite{The98}, augmented with an additional computation for (iii).\medskip

We first show that the quadratic form $$Q_t(z):= -\tan(\pi t) ||z||^2$$ on $\C^n\simeq\R^{2n}$ generates $\lift a_t$ for $t\in[0,1/2)$. The graph of $\lift a_t(z)=e^{-2\pi i t}z$ is given by $\{(z,e^{-2\pi i t}z)\;|\; z\in\C^n\}$, which under the identification $\tau$ gets mapped to$$
\tau(\Gamma_{\lift a_t}) = \left\{ \left((1+e^{-2\pi i t})\frac{z}{2}, iz(1-e^{-2\pi i t})\right) \;\huge|\; z\in\C^n\right\}.
$$
Reparametrizing this in terms of the first variable via $z\mapsto 2iz(1-e^{-2\pi i t})^{-1}$ (this is a diffeomorphism since $t<1/2$) yields
$$
\tau(\Gamma_{\lift a_t}) = \{ (z, -2\tan(\pi t)z) \;|\; z\in\C^n\},
$$
i.e. the graph of the 1-form $-2\tan(\pi t)z = d_z Q_t.$ Thus $Q_t$ is even a \textit{simple} generating function of $\lift a_t$ for $t<1/2$. As it also is smooth everywhere, all conditions for the restriction to the sphere being a generating function of $a_t$ are met automatically.\medskip

Regarding construction of $A_t$ and (i): We now set $m=8n$ and
$$
\hat A_t:=Q_{t/3}\#(Q_{t/3}\# Q_{t/3}).
$$
This is a smooth family of quadratic forms because each $Q_t$ is. For the same reason, it satisfies $\hat A_t(\lambda x)=\lambda^2 A_t(x)$ for $\lambda\in\R_{\geq 0}$ and thereby is the extension of the restriction $A_t:=\hat A_t|_{S^{2n+m-1}},$ as our notation suggests. Per Proposition~\ref{comp-form-sphere}, $A_t$ is a generating function of $a_t$.\smallskip

Regarding (ii): By definition of $\hat A_t$ and the composition formula (Proposition~\ref{theret-comp}),
\begin{align}
\hat A_t(q,\zeta_1,\zeta_2,\zeta_a,\zeta_b) &= Q_{t/3}(\zeta_1) + Q_{t/3}(\zeta_a) + Q_{t/3}(\zeta_b)\notag\\
&\quad - 2\omega_\text{std}(\zeta_a-\zeta_2,\zeta_b-\zeta_2) - 2\omega_\text{std}(\zeta_1-q,\zeta_2-q).\label{eq:ausgeschrieben}
\end{align}

The time derivative of $\hat A_t$ fails to be strictly negative exactly at points where $\zeta_1=\zeta_a=\zeta_b=0.$ So we are done if $(q,\zeta_1,\zeta_2,\zeta_a,\zeta_b)\in\Sigma_{\hat A_t}$ and $\zeta_1=\zeta_a=\zeta_b=0$ already imply that $\zeta_2$ and $q$ vanish, too. By Lemma~\ref{euclidean-formulae},
$$\Sigma_{\hat A_t}=\big\{(q,\zeta_1,\zeta_2,\zeta_a,\zeta_b)\in\R^{10n} \;\big|\; \partial \hat A_t/\partial(\zeta_1,\zeta_2,\zeta_a,\zeta_b)=0\big\}.$$
After evaluating the derivative, $\partial \hat A_t/\partial\zeta_a=0$ and $\zeta_a=\zeta_b=0$ indeed yield $\zeta_2=0,$ and $\partial \hat A_t/\partial\zeta_1=0$ and $\zeta_1=\zeta_2=0$ yield $q=0.$
\medskip

Regarding (iii): In a lengthy but mindless computation, we can evaluate Eq.~\eqref{eq:ausgeschrieben} at $t=0,1$ and calculate the eigenvalues of the coefficient matrices to verify that
$$
\ind(\hat A_0) = 5n \qquad\text{and}\qquad \ind(\hat A_1)= 7n.
$$
\end{proof}

\chapter{Proof of the Main Result}\label{chap:sandon}

In this chapter, we follow Sandon's proof~\cite{San13} of the special case of the contact Arnol'd conjecture~\ref{carnold} where the underlying manifold is a sphere: \textit{Every generic contactomorphism $\phi$ of $S^{2n-1}$ which is contact isotopic to the identity has at least two translated points.} However, due to a gap in Sandon's argument, we need to replace the condition that $\phi$ is contact isotopic. Instead, we assume that $\phi$ has a generating function $F:S^{2n+k-1}\to\R$ such that the sublevel set $\{F\#0\leq0\}$ is either empty or an embedded submanifold with non-trivial homology.\medskip

In the first section of this chapter, we will introduce cell attachments and a parametric Morse theory that will relate discriminant points of contactomorphisms to the homology of a sublevel set of their generating function.  In the second section, we investigate the homology of the sublevel sets $\{A_t\#F\leq 0\}$, where $A_t$ generates the Reeb flow. We can then combine these findings to prove the main result in the last section. Here, we also discuss the impact of the gap in Sandon's argument.

\section{Cell Attachments and Parametric Morse Theory}\label{sec:morse}

Our goal is to detect the presence of translated points, which the generating function approach relates to critical points. In turn, \textit{Morse theory} provides a connection to topology, allowing us to detect translated points using topological invariants. This essentially builds on the insight that the sublevel sets $f^{-1}((-\infty,a])$ of a function $f:M\to\R$ change by attaching \textit{cells} when passing non-degenerate critical points.\medskip

In this chapter, we will first discuss cell attachments and how we can detect them by their effect on the \textit{Betti numbers} of the space. We then fill out the details of a \textit{parametric} Morse theory that Sandon outlines in~\cite{San13, GKPS}. In this part, we will crucially draw from Milnor's \cite{milnor-morse}.

\begin{definition}[Cell attachment and CW complexes]\ 

\begin{enumerate}
	\item An \textbf{$n$-cell} is the topological space $D^n:=\{x\in\R^n\;|\; ||x||\leq 1\}$.

	\item A topological space arises from a topological space $Y$ by \textbf{attachment of an $n$-cell along an attachment map $\phi:\partial D^n\to Y$} if it is given by the quotient space $$Y\cup_\phi D^n:=(Y\cup D^n)/\sim,$$ where $\sim$ is the equivalence relation that identifies every $x\in \partial D^n$ with $\phi(x)\in Y.$ Note that for $n=0$, this is a disjoint union with a point since $\partial D^n=\emptyset$.

	\item We say a space $X$ is a \textbf{CW complex} if it arises by the following process: We start with a discrete set $X^0,$ construct $X^n$ from every $X^{n-1}$ by attaching an arbitrary number of $n$-cells and finally take the union $X=\bigcup_n X^n$ equipped with the weak topology.
	We regard the decomposition of $X$ into cells as part of the CW structure. For more details, see e.g. Chapter~0 of~\cite{Hat02}.
\end{enumerate}
\end{definition}

It is a well-established fact that every manifold is a CW complex:

\begin{theorem}[Triangulation]\label{triang}
	Every smooth manifold $M$ admits a \textit{triangulation,} i.e. a homeomorphism to a \textit{simplicial complex}. In particular, this induces a CW complex structure on the manifold $M$.
\end{theorem}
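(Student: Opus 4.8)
The plan is to reduce the problem to triangulating a smooth submanifold of Euclidean space and then to produce the triangulation by a general-position and straightening argument. Since a triangulation can be built one connected component at a time and, if $M$ is non-compact, along an exhaustion by compact pieces, I would first assume $M$ is connected and invoke Whitney's embedding theorem to realize $M$ as a closed smooth $n$-dimensional submanifold of some $\R^N$. By the tubular neighbourhood theorem it then carries a neighbourhood $\nu(M)\subseteq\R^N$ with a smooth nearest-point retraction $r\colon \nu(M)\to M$, and on compact pieces there is a positive lower bound (the normal injectivity radius, or ``reach'') for how far this retraction stays well defined and smooth.

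Next I would fix an auxiliary triangulation of $\R^N$ — say a standard simplicial subdivision of the integer cubical lattice — and pass to a $k$-fold iterated barycentric subdivision with $k$ so large that the mesh is much smaller than the reach of $M$. After a small generic displacement of the vertices, the skeleta of this triangulation can be taken transverse to $M$; general position then forces every open simplex meeting $M$ to have dimension at least $N-n$ and to meet $M$ in a smooth open disc of complementary dimension. Retracting onto $M$ those vertices of the ambient triangulation that lie within half the reach produces a (locally finite) set of points of $M$ in ``linear general position,'' in the sense that small affine simplices spanned by them, pushed back onto $M$ by $r$, are embedded and their images cover $M$.

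The heart of the argument is to check that these curvilinear simplices fit together consistently, i.e. that the whole collection is the image of an abstract simplicial complex under a homeomorphism onto $M$; after one further subdivision the identification maps can be made simplicial. This is the Cairns–Whitehead straightening step, carried out by induction over the skeleta of the ambient triangulation, and it is where essentially all the work lies: one needs quantitative estimates (mesh versus reach, a bound on the second fundamental form of $M$) ensuring that $r$ distorts affine simplices only mildly, so that faces of adjacent simplices are retracted to genuinely common faces rather than to overlapping or merely nearby pieces. I expect this compatibility-on-overlaps step to be the main obstacle; for the purposes of the thesis it is reasonable to set up the Whitney embedding and the transversal general-position picture in detail and then cite Whitehead's theorem (or Munkres' \emph{Elementary Differential Topology}) for the straightening. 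Finally, the CW-complex claim needs no extra work: any simplicial complex is a CW complex with its closed simplices as cells and the inclusions of faces as attaching maps.
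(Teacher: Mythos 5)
The paper does not prove this statement at all: it is quoted as a classical result and the proof is a one-line citation of Whitehead's triangulation theorem (\cite{whitehead-triangulation}), which is the customary treatment in a thesis whose subject lies elsewhere. Your proposal instead reconstructs the architecture of that classical proof --- Whitney embedding, tubular neighbourhood with its reach, a fine ambient triangulation of $\R^N$ put in general position with respect to $M$, and the Cairns--Whitehead straightening of the resulting curvilinear cells --- and you correctly identify the straightening/compatibility step as the place where all the real work (mesh-versus-reach and curvature estimates) is concentrated, ultimately citing Whitehead or Munkres for it. So the two routes differ only in granularity: the paper buys brevity by outsourcing the entire theorem, while your sketch makes visible why the result is true and exactly which step is deep, at the cost of not being a self-contained proof either, since the key lemma is still quoted. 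Two small caveats if you were to flesh this out: the reduction of the non-compact case to compact pieces is itself not free --- triangulations built on an exhaustion must be made to agree on overlaps, which is again handled by the cited sources rather than by a routine patching argument --- and the cells cut out of $M$ by the ambient simplices need not literally be simplices, so one must subdivide before speaking of a simplicial complex. The final observation, that a simplicial complex carries an evident CW structure, is correct and is all the paper needs from this theorem.
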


\begin{proof}
	See e.g.~\cite{whitehead-triangulation}.
\end{proof}

We will use the following proposition in the proof of the main theorem to detect cell attachments through homology:

\begin{proposition}\label{prop:betti}
Let $X$ be a CW complex and $\phi:\partial D^n\to X$ an attachment map of an $n$-cell into $X$. Then exactly one Betti number of $X$ changes when attaching the cell.
\end{proposition}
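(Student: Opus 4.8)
The plan is to run the long exact sequence of the pair $(Z,Y)$, where $Y:=X$ denotes the original complex and $Z:=Y\cup_\phi D^n$ the result of attaching the cell, working throughout with singular homology over a fixed field $\mathbb{F}$ so that the Betti numbers $\beta_k$ are just dimensions of $\mathbb{F}$-vector spaces. This is the homological shadow of the Morse-theoretic fact that passing a non-degenerate critical point of index $n$ attaches an $n$-cell, so the computation should be short.

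First I would identify the relative homology. Since $D^n$ carries a collar neighbourhood of $\partial D^n$, the subspace $Y$ is a deformation retract of a neighbourhood of itself in $Z$, so $(Z,Y)$ is a good pair and
$$H_k(Z,Y)\;\cong\;\widetilde{H}_k(Z/Y)\;=\;\widetilde{H}_k(D^n/\partial D^n)\;=\;\widetilde{H}_k(S^n).$$
Hence $H_k(Z,Y)=\mathbb{F}$ for $k=n$ and $H_k(Z,Y)=0$ otherwise. (Alternatively one may homotope $\phi$ to a cellular map using cellular approximation, which changes $Z$ only up to homotopy equivalence, and read this off the cellular chain complex of $Z$, which is that of $Y$ with one additional generator in degree $n$.)

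Next I would feed this into the long exact sequence $\cdots\to H_{k+1}(Z,Y)\to H_k(Y)\to H_k(Z)\to H_k(Z,Y)\to\cdots$. For every $k\notin\{n-1,n\}$ both $H_{k+1}(Z,Y)$ and $H_k(Z,Y)$ vanish, so $H_k(Y)\cong H_k(Z)$ and $\beta_k$ is unchanged. The only nontrivial stretch is
$$0\longrightarrow H_n(Y)\longrightarrow H_n(Z)\longrightarrow \mathbb{F}\;\overset{\partial}{\longrightarrow}\;H_{n-1}(Y)\longrightarrow H_{n-1}(Z)\longrightarrow 0,$$
where exactness at the two ends uses $H_{n+1}(Z,Y)=0=H_{n-1}(Z,Y)$ (this also covers the edge case $n=0$, where $H_{-1}(Y)=0$).

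Finally I would analyse this six-term sequence. As $\mathbb{F}$ is one-dimensional, the connecting homomorphism $\partial$ is either zero or injective. If $\partial=0$, the sequence splits into $0\to H_n(Y)\to H_n(Z)\to\mathbb{F}\to 0$ and $0\to H_{n-1}(Y)\to H_{n-1}(Z)\to 0$, so $\beta_n$ increases by $1$ and $\beta_{n-1}$ is unchanged. If $\partial$ is injective, then $H_n(Y)\to H_n(Z)$ is an isomorphism and $H_{n-1}(Z)\cong H_{n-1}(Y)/\operatorname{im}\partial$, so $\beta_n$ is unchanged and $\beta_{n-1}$ decreases by $1$. In either case exactly one Betti number changes, by exactly $\pm 1$, which proves the proposition. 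I do not expect a genuine obstacle here; the one point needing a little care is that the attachment map need not be cellular, which is why I would justify the computation of $H_*(Z,Y)$ via the good-pair/collar argument (or cellular approximation) rather than directly from a cellular chain complex.
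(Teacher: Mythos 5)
Your argument is correct, but it takes a genuinely different route from the paper. You compute $H_*(Z,Y)$ for the pair $Z=X\cup_\phi D^n$, $Y=X$ via the collar/good-pair argument (so $H_k(Z,Y)\cong\widetilde H_k(S^n)$) and then read the change in Betti numbers off the six-term stretch of the long exact sequence, using that the connecting map out of a one-dimensional space is either zero or injective. The paper instead stays inside cellular homology: following Milnor, it uses cellular approximation to replace $Z$ by a homotopy-equivalent CW complex $Y'$ carrying the cells of $X$ plus one extra $n$-cell $\sigma$, and then compares ranks of kernels and images of the cellular boundary maps in the two cases $d_n\sigma\in\operatorname{im}d_n^X$ or not --- which is exactly the shadow of your dichotomy $\partial=0$ versus $\partial$ injective. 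Your approach buys generality and economy: it never needs $X$ to be a CW complex (the good pair comes for free from the collar of $\partial D^n$), whereas the paper's approach has the virtue of being self-contained within the cellular framework it already set up and of exhibiting the change concretely on chain level. One small point to fix: the paper defines $\beta_j$ as the rank of the \emph{integral} homology group, while you work over an arbitrary field $\mathbb{F}$; over a field of positive characteristic your $\beta_k$ are the mod-$p$ Betti numbers, which need not agree with the integral ones. Taking $\mathbb{F}=\mathbb{Q}$ (or running the same sequence over $\Z$ and tracking ranks, where a torsion attaching class still leaves all ranks in degree $n-1$ unchanged and raises $\beta_n$ by one) gives precisely the statement as the paper means it.
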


\begin{proof}
This argument is adapted from~\cite{RV06}, where it is given in the context of simplicial homology.\medskip

Write $Y:=X\cup_\phi D^n.$ Recall that the $j$-th Betti number is the rank of the $j$-th integral singular homology group. As such, they are invariant under homotopy equivalences. Following the arguments in the second paragraph of the proof of Theorem~3.5 in~\cite{milnor-morse}, we can find a CW complex $Y'$ that is homotopy equivalent to $Y$ and carries the same CW structure as $X$, except for one additional $n$-cell\footnote{It is not true that we can just take the CW structure of $X$ and add in the additional $n$-cell to obtain a CW structure of $Y$: It is not clear that the cell gets attached only to cells of lower dimension, which the definition of a CW structure requires. However, Milnor uses \textit{cellular approximation} of the attachment map to find a homotopic map $\phi'$ that attaches to sufficiently low dimensional cells. His Lemma~3.6, a compatibility result about cell attachments, homotopies and homotopy equivalences, then gives the desired homotopy equivalence between $Y$ and $Y':=X\cup_{\phi'} D^n.$}. We will denote this additional cell by $\sigma.$\medskip

We can now calculate the Betti numbers of the CW complexes $X$ and $Y'$ with \textit{cellular homology}. For a precise definition and a proof that it coincides with singular homology, see e.g. Chapter~2.2 of~\cite{Hat02}. The $j$-th element $C_j$ in the cellular chain complex can be seen as the group of integral linear combinations of $j$-cells, linked together by the natural boundary homomorphism $d_j:C_j\to C_{j-1}.$ By rank-nullity we have 
\begin{equation}\label{eq:sum_ker_im}
\operatorname{rk} C_j = \operatorname{rk} \ker d_j + \operatorname{rk} \operatorname{im} d_j.
\end{equation}
As usual, the homology groups and the Betti numbers can be calculated as
\begin{equation}\label{eq:bettidef}
H_j:=\frac{\ker d_j}{\operatorname{im} d_{j+1}}, \qquad\text{and}\qquad
\beta_j :=\operatorname{rk} H_j = \operatorname{rk} \ker d_j - \operatorname{rk} \operatorname{im} d_{j+1}.
\end{equation}\smallskip

Turning back to our specific cellular complexes $Y'$ and $X,$ it follows by their construction that
\begin{equation}\label{eq:bettichain}
C_j(Y') = \begin{cases}
C_j(X)\oplus \mathbb{Z}[\sigma] & \text{for } j=n,\\
C_j(X) & \text{otherwise.}
\end{cases}
\end{equation}
Since only $C_n(X)$ and $C_n(Y')$ differ, $H_j(X)$ and $H_j(Y')$ as well as the $j$-th Betti numbers must coincide for all $j\not\in\{n-1,n\}.$  We now distinguish two cases based on whether $d_n \sigma\in \operatorname{im} d^{Y'}_n$ also lies in $\operatorname{im} d^{X}_n$:

If $d_n \sigma \not\in \operatorname{im} d^{X}_n$, we intuitively have that the attachment of $\sigma$ closes up an $n$-dimensional 'hole' in $X$. Clearly$$\operatorname{rk} \operatorname{im} d^{Y'}_j = \begin{cases}
\operatorname{rk} \operatorname{im} d^{X}_j+1 & \text{for } j=n,\\
\operatorname{rk} \operatorname{im} d^{X}_j & \text{otherwise,}
\end{cases}
$$
such that$$
\operatorname{rk} \ker d^{Y'}_j \stackrel{\eqref{eq:sum_ker_im},\eqref{eq:bettichain}}{=}\left.
\begin{cases}
C_j(X) +1 - \operatorname{rk} \operatorname{im} d^{X}_j-1 & \text{for } j=n,\\
C_j(X) - \operatorname{rk} \operatorname{im} d^{X}_j & \text{otherwise,}
\end{cases}
\right\}
\stackrel{\eqref{eq:sum_ker_im}}{=}
\operatorname{rk} \ker d^{X}_j.$$
It follows with Eq.~\eqref{eq:bettidef} that$$\beta_j(Y') = \begin{cases}
\beta_j(X)-1 & \text{for } j=n-1,\\
\beta_j(X) & \text{otherwise.}
\end{cases}
$$

If $d_n \sigma \in d_n(C_n(X))$, the attachment of $\sigma$ instead creates a new $(n+1)$-dimensional 'hole'. In this situation, we immediately have $\operatorname{rk} \operatorname{im} d^{Y'}_j =
\operatorname{rk} \operatorname{im} d^{X}_j$
such that$$
\operatorname{rk} \ker d^{Y'}_j \stackrel{\eqref{eq:sum_ker_im},\eqref{eq:bettichain}}{=}\left.
\begin{cases}
C_j(X) +1 - \operatorname{rk} \operatorname{im} d^{X}_j & \text{for } j=n,\\
C_j(X) - \operatorname{rk} \operatorname{im} d^{X}_j & \text{otherwise,}
\end{cases}
\right\}
\stackrel{\eqref{eq:sum_ker_im}}{=}
\begin{cases}
\operatorname{rk} \ker d^{X}_j+1 & \text{for } j=n,\\
\operatorname{rk} \ker d^{X}_j & \text{otherwise.}
\end{cases}
$$

It follows with Eq.~\eqref{eq:bettidef} that$$\beta_j(Y') = \begin{cases}
\beta_j(X)+1 & \text{for } j=n,\\
\beta_j(X) & \text{otherwise.}
\end{cases}
$$

In both cases we see that exactly one Betti number changed.
\end{proof}

We recap some basic definitions of Morse theory:

\begin{definition}[Critical points]
Let $M$ be a manifold and $f:M\to\R$ differentiable.

\begin{enumerate}
	\item A \textbf{critical point} of $f$ is a point $p\in M$ such that $d_pf = 0.$ A \textbf{critical value} is a number $c\in\R$ such that $f^{-1}(c)$  contains at least one critical point.

	\item A \textbf{non-degenerate} critical point of \textbf{index} $\lambda$ is a critical point $p\in M$ such that
	\begin{itemize}
		\item $f$ is smooth around $p$,
		\item there is a coordinate chart around $p$ in which the Hessian matrix $H_pf$ of $f$ is non-singular and
		\item the dimension of the largest subspace on which $H_pf$ is negative definite equals $\lambda.$
	\end{itemize}
	
\end{enumerate}
\end{definition}

Note that the notion of non-degeneracy of critical points and their index is independent of the choice of coordinate system by Sylvester's law of inertia. While it would be sufficient for the results of this section to require $f$ to be only $C^2$ around critical points, we assume smoothness here and in later statements for simplicity.\medskip

The following normal form Lemma is the corner stone of Morse theory:

\begin{lemma}[Morse]\label{morse-lemma}
Let $M$ be a manifold and $f:M\to\R$ differentiable. Around any non-degenerate critical point $p$ of index $\lambda$, there exists a chart $(u_i)_{i=1...n}$ in which the function $f:M\to\R$ can be written as $$f(u)=f(p) -u_1^2-...-u_\lambda^2+u_{\lambda+1}^2+...+u_n^2.$$ In particular, non-degenerate critical points are isolated.
\end{lemma}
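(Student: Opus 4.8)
The plan is to prove the Morse Lemma by an iterative change of coordinates, following the standard argument (essentially the one in Milnor's \emph{Morse Theory}, \S2). Since the statement is local and the hypothesis includes smoothness of $f$ near $p$, I would work entirely in a fixed chart centred at $p$, so that without loss of generality $M$ is an open neighbourhood of $0$ in $\R^n$, $p = 0$, and $f(0) = f(p)$.

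First I would establish the integral representation of $f$ near the origin. Using the fundamental theorem of calculus twice (first along the segment from $0$ to $u$, then on the resulting integrand), one writes
\begin{equation*}
f(u) = f(0) + \sum_{i,j=1}^n u_i u_j\, h_{ij}(u),
\end{equation*}
where the $h_{ij}$ are smooth functions near $0$, we may take $h_{ij} = h_{ji}$, and $h_{ij}(0) = \frac{1}{2}\frac{\partial^2 f}{\partial u_i \partial u_j}(0)$; here the first-order term drops out because $0$ is a critical point. Thus the matrix $(h_{ij}(0))$ is, up to the factor $\frac12$, the Hessian at $p$, which is non-singular of index $\lambda$ by hypothesis.

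Next comes the inductive diagonalisation. I would prove by induction on $r$ that, after a smooth change of coordinates on a possibly smaller neighbourhood of $0$, one can arrange
\begin{equation*}
f = f(0) \pm u_1^2 \pm \cdots \pm u_{r-1}^2 + \sum_{i,j \ge r} u_i u_j\, H_{ij}(u)
\end{equation*}
with $(H_{ij})$ smooth and symmetric. For the inductive step: after a linear change among the coordinates $u_r,\dots,u_n$ one may assume $H_{rr}(0) \neq 0$, hence $H_{rr}$ is nonzero and of constant sign on a neighbourhood; then the substitution
\begin{equation*}
v_r = \sqrt{|H_{rr}(u)|}\left( u_r + \sum_{i>r} u_i\, H_{ir}(u)/H_{rr}(u) \right),
\end{equation*}
keeping the other coordinates fixed, completes the square and is a valid change of coordinates near $0$ by the inverse function theorem (its Jacobian at $0$ is triangular with nonzero diagonal). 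This replaces the $r$-th term by $\pm v_r^2$ and absorbs the cross terms, leaving a remainder of the same shape in the coordinates $v_r, u_{r+1}, \dots, u_n$. Iterating down to $r = n+1$ yields $f = f(0) \pm u_1^2 \pm \cdots \pm u_n^2$ in suitable coordinates; by Sylvester's law of inertia the number of minus signs is a coordinate-independent invariant equal to the index $\lambda$, so after permuting coordinates we get exactly the claimed normal form. Finally, isolatedness of $p$ is immediate: in these coordinates $df = \pm 2u_1\, du_1 \pm \cdots \pm 2u_n\, du_n$ vanishes only at the origin.

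The main obstacle — really the only subtle point — is verifying at each inductive step that the proposed substitution is genuinely a local diffeomorphism and that the square root $\sqrt{|H_{rr}|}$ is smooth, which requires $H_{rr}$ to be nonvanishing near $0$; this is exactly why one first performs the linear change to make $H_{rr}(0) \neq 0$, and why the whole argument is confined to a small enough neighbourhood that shrinks finitely many times. Everything else is the routine double application of the fundamental theorem of calculus and bookkeeping of indices.
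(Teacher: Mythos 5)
Your proposal is correct, and it reproduces essentially verbatim the argument in Milnor's \emph{Morse Theory} (Lemma 2.2), which is precisely the reference the paper cites in lieu of giving a proof. The two steps — the double application of the fundamental theorem of calculus to obtain $f(u)=f(0)+\sum u_iu_jh_{ij}(u)$ with symmetric smooth $h_{ij}$, followed by the iterative completion of the square with the Jacobian check justifying each substitution — are exactly Milnor's, so there is nothing to compare.
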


\begin{proof}
See e.g. Lemma~2.2 in~\cite{milnor-morse}.
\end{proof}

The fundamental theorems of Morse theory establish that sublevel sets of a function on a compact manifold with only non-degenerate critical points change up to homotopy equivalence only by attaching cells when passing critical values. We will mimic this approach for \textit{parametric} sublevel sets: For some fixed value $a$ and family $f$ of functions, we investigate how the sublevel sets $\{f_t\leq a\}$ change when varying $t$.
\medskip

If $a$ is a regular value of $f_t$ for all times $t$ we pass, nothing happens up to homotopy:

\begin{lemma}[Relation of parametric sublevel sets without passing a critical point]\label{lem-morse-simple}
Let $M$ be a compact manifold and $f:M\times[0,1]\to\R, (x,t)\mapsto f_t(x)$ a $C^1$-map such that each $f_t$ has a Lipschitz differential. Assume that $a\in\R$ is a regular value of $f_t$ for every $t\in[0,1].$ Then there exists an isotopy $\theta_t$ of $M$ such that $\theta_t(\{f_0\leq a\})=\{f_t\leq a\}.$
\end{lemma}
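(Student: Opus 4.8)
The plan is to build the isotopy as the flow of a time-dependent vector field on $M$ that ``drags the level set $\{f_t=a\}$ along with $t$'', exactly in the spirit of the first fundamental theorem of Morse theory (Milnor, Theorem~3.1 of~\cite{milnor-morse}), but carried out in the parameter $t$. First, since $M$ is compact the set $K:=\{(x,t)\in M\times[0,1]\mid f_t(x)=a\}$ is compact, and by hypothesis $a$ is a regular value of each $f_t$, so $d_xf_t(x)\neq 0$ on $K$. As $f\in C^1(M\times[0,1])$, the map $(x,t)\mapsto d_xf_t(x)$ is continuous, so there is an open neighbourhood $V\supseteq K$ in $M\times[0,1]$ on which $d_xf_t(x)\neq 0$. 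Fix a smooth cut-off $\rho:M\times[0,1]\to[0,1]$ with $\rho\equiv 1$ on an open neighbourhood $V_0$ of $K$ and $\operatorname{supp}\rho\subseteq V$, fix any Riemannian metric on $M$ with spatial gradient $\nabla_x f_t$, and set
\[
X_t(x):=-\,\rho(x,t)\,\frac{\partial_t f_t(x)}{\|\nabla_x f_t(x)\|^2}\,\nabla_x f_t(x),
\]
extended by $0$ where $\rho$ vanishes. On $\operatorname{supp}\rho$ one has $\nabla_xf_t\neq 0$, so $X_t$ is well defined; it is continuous in $(x,t)$ and, for each fixed $t$, Lipschitz in $x$ — this is precisely where the hypothesis that $f_t$ has a Lipschitz differential is used, since $\nabla_xf_t$ need only be Lipschitz, not $C^1$. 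Because $M$ is compact, the flow $(\theta_t)_{t\in[0,1]}$ of $X_t$ with $\theta_0=\Id$ exists for all $t$ and is our candidate isotopy (a continuous family of bi-Lipschitz homeomorphisms), and by construction $df_t(X_t)=-\partial_t f_t$ on $V_0$.

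It remains to verify $\theta_t(\{f_0\leq a\})=\{f_t\leq a\}$. Fix $x\in M$ and set $g(t):=f_t(\theta_t(x))$; since $f$ is $C^1$ and $t\mapsto\theta_t(x)$ is $C^1$, $g$ is $C^1$ with $g'(t)=(\partial_t f_t)(\theta_t(x))+df_t(X_t)(\theta_t(x))$. The key claim is that $Z:=\{t\in[0,1]\mid g(t)=a\}$ is clopen. It is obviously closed; for openness, if $g(t_0)=a$ then $(\theta_{t_0}(x),t_0)\in K\subseteq V_0$, so by continuity $(\theta_t(x),t)\in V_0$ for $t$ near $t_0$, and there $g'(t)=\partial_t f_t(\theta_t(x))-\partial_t f_t(\theta_t(x))=0$, whence $g\equiv a$ near $t_0$. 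By connectedness of $[0,1]$, either $g\equiv a$, or $g(t)\neq a$ for all $t$ and then $g-a$ has constant sign. In both cases $f_0(x)=g(0)\leq a$ if and only if $f_t(\theta_t(x))=g(t)\leq a$ for every $t$, which is exactly the asserted equality of sublevel sets.

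The substantive content is the clopen argument of the second paragraph: it is what replaces the usual ``no critical values in $[a,b]$'' step and turns the level $a$ into an impenetrable barrier for the flow, so that no interior point can be pushed outside and no exterior point inside. The remaining points — joint continuity of $X_t$ (so that $t\mapsto\theta_t(x)$ is $C^1$ and the chain rule for $g$ applies), and existence, uniqueness and completeness of the flow of a Lipschitz-in-space, continuous-in-time vector field with compact support — are routine and I would simply invoke standard ODE theory; the Lipschitz-differential hypothesis together with compactness of $M$ is exactly what makes these available. No smoothness of $f$ beyond $C^1$ with Lipschitz differential is needed, which is important for the later application where the generating functions involved are not $C^2$.
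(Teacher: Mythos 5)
Your proof is correct and follows essentially the same approach as the paper: the same cut-off vector field $X_t=-\rho\,(\partial_t f_t)\,\nabla_x f_t/\|\nabla_x f_t\|^2$, the same flow $\theta_t$, and the same computation that $\tfrac{d}{dt}f_t(\theta_t(x))=0$ where $\rho\equiv 1$. The only difference is cosmetic: your clopen argument for $Z=\{t:f_t(\theta_t(x))=a\}$ is a somewhat more explicit way to say what the paper compresses into ``$\theta_t(\{f_0=a\})=\{f_t=a\}$, and the claim follows by continuity of the isotopy.''
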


\begin{proof}
We mostly reproduce the proof of Lemma 4.14 from~\cite{GKPS} for the reader's convenience.\medskip

Pick a Riemannian metric on $M$. Note that the open subset $$
\mathcal{U}:=\{(x,t)\;|\; df_t|_x\neq 0\}\subseteq M\times[0,1]
$$
is exactly the set of $(x,t)$ such that the gradient $\nabla f_t|_x$ is non-zero. Define a vector field $$
u_t:= \nabla f_t/||\nabla f_t||^2 \qquad\text{on}\qquad \mathcal{U}_t:=\{x\in M\;|\; df_t|_x\neq 0\}
$$
for each $t\in[0,1]$ and note that $df_t(u_t)=1$ by construction.

Pick $\epsilon>0$ small enough such that the closed neighbourhood$$
\mathcal{W}:=\{(x,t)\in M\times [0,1] \;|\; |f_t(x)-a|\leq\epsilon\}
$$
of $\{(x,t)\in M\times[0,1]\;|\; f_t(x)=a\}$ is contained in $\mathcal{U}.$ Pick a smooth function$$
\rho:M\times [0,1]\to \R
$$
with $\operatorname{supp}\rho\subseteq\mathcal{U}$ and $\rho(x)=1$ for $x\in \mathcal{W}.$ Define a time-dependent vector field $X_{t\in[0,1]}$ by $$
X_t(x)=-\rho(x,t)\;(\partial_t f_t(x))\; u_t(x)\qquad\text{for }(x,t)\in\mathcal{U}
$$
and $X_t(x)=0$ otherwise. It is Lipschitz by assumption so that its flow $\theta_t$ is well-defined on the compact $M$.\medskip

We now claim that $\theta_t$ is the isotopy that we are looking for. We can check$$
\frac{d}{dt}f_t(\theta_t(x)) = (\partial_t f_t)(\theta_t(x)) + df_t(X_t)(\theta_t(x)) = (1-\rho(\theta_t(x),t))\;(\partial_t f_t)(\theta_t(x)).
$$
In particular, $\frac{d}{dt}f_t(\theta_t(x)) =0$ for $(\theta_t(x),t)\in\mathcal{W},$ so $\theta_t(\{f_0=a\})=\{f_t=a\}.$ As $\theta_0=\Id_M,$ $$
\theta_t(\{f_0\leq a\})=\{f_t\leq a\}
$$
follows by continuity of the isotopy.
\end{proof}

If we do pass a time $t$ where $a$ is a critical value, sublevel sets essentially change by cell attachment:

\begin{lemma}[Relation of parametric sublevel sets passing a single critical point]\label{lem-morse-hard}
Let $M$ be a compact manifold and $f:M\times[0,1]\to\R, (x,t)\mapsto f_t(x)$ a $C^1$-map such that each $f_t$ has a Lipschitz differential and $\partial_t f\leq0$. Assume there are $(x_0,t_0)\in M\times(0,1)$ and $\epsilon>0$ such that $x_0$ is a non-degenerate critical point of $f_{t_0}$ with value $a$ and index $\lambda,$ that $\partial_t f<0$ and $f$ smooth in a neighourhood of $(x_0,t_0)$ and that for all $t\in[t_0-\epsilon,t_0+\epsilon]$ there are no other critical points of $f_t$ with value $a$. Then $\{f_{t_0+\epsilon}\leq a\}$ is homotopy equivalent to $\{f_{t_0-\epsilon}\leq a\}$ with a $\lambda-$cell attached.
\end{lemma}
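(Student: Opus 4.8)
The strategy is to reduce this to the classical non-parametric Morse theory result (Theorem 3.2 in Milnor, on the change of sublevel sets passing a critical value) by building an auxiliary one-parameter family whose sublevel set interpolates between $\{f_{t_0-\epsilon}\le a\}$ and $\{f_{t_0+\epsilon}\le a\}$ and exhibits exactly one non-degenerate critical point at level $a$. The key point exploited throughout is the monotonicity hypothesis $\partial_t f\le 0$: decreasing $t$ enlarges the sublevel set $\{f_t\le a\}$, and the strict inequality $\partial_t f<0$ near $(x_0,t_0)$ guarantees that the lone critical point is "genuinely crossed" rather than sitting tangentially on the level set.

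First I would shrink $\epsilon$ if necessary so that, on $[t_0-\epsilon,t_0+\epsilon]$, the only critical point of any $f_t$ with value exactly $a$ is the point $x_0$ at time $t_0$, and so that $f$ is smooth with $\partial_t f<0$ on a whole neighbourhood $U\times[t_0-\epsilon,t_0+\epsilon]$ of $(x_0,t_0)$; compactness of $M$ and the fact that $x_0$ is isolated among such critical points (Morse Lemma~\ref{morse-lemma}) make this possible. Next, by Lemma~\ref{lem-morse-simple} applied on $[t_0-\epsilon,t_0]$ and separately on $[t_0,t_0+\epsilon]$ minus the instant $t_0$, the homotopy type of $\{f_t\le a\}$ is constant for $t\in[t_0-\epsilon,t_0)$ and constant for $t\in(t_0,t_0+\epsilon]$ — provided $a$ stays a regular value, which it does on those half-open intervals by our choice of $\epsilon$. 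So it suffices to compare $\{f_{t_0-\delta}\le a\}$ with $\{f_{t_0+\delta}\le a\}$ for $\delta\to 0$, i.e. to understand what happens in an arbitrarily short window around $t_0$.

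Now I would localize: choose a Morse chart $(u_1,\dots,u_n)$ for $f_{t_0}$ around $x_0$ as in Lemma~\ref{morse-lemma}, so $f_{t_0}=a-u_1^2-\dots-u_\lambda^2+u_{\lambda+1}^2+\dots+u_n^2$, and write $f_t = f_{t_0} + (t-t_0)c(u,t)$ with $c$ smooth and $c(x_0,t_0)=-\partial_t f_{t_0}(x_0)$... more precisely $f_t - f_{t_0} = \int_{t_0}^{t}\partial_s f_s\,ds$, a smooth function that is negative on $U\times([t_0-\epsilon,t_0+\epsilon]\setminus\{t_0\})$-type regions and has derivative in $t$ equal to $\partial_t f_t<0$. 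Passing from $t_0-\delta$ to $t_0+\delta$, the value of $f$ at points near $x_0$ strictly decreases, and on the level set picture this is exactly the standard handle attachment: the region $\{f_{t_0+\delta}\le a\}\cap U$ acquires, relative to $\{f_{t_0-\delta}\le a\}\cap U$, a neighbourhood of the "descending disk" $\{u_{\lambda+1}=\dots=u_n=0\}$, i.e. a $\lambda$-cell. Outside a small neighbourhood of $x_0$ nothing changes up to homotopy, again by Lemma~\ref{lem-morse-simple} (the relevant region of $M$ has no critical points at level $a$ throughout the window). Assembling: $\{f_{t_0+\epsilon}\le a\}$ deformation-retracts onto $\{f_{t_0-\epsilon}\le a\}$ with a $\lambda$-cell attached along the boundary sphere of the descending disk, which is the assertion. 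The cleanest way to run the local step rigorously is to cite Milnor's Theorem~3.2 applied to the function $f_{t_0}$ itself at its critical value $a$ (comparing $\{f_{t_0}\le a-\eta\}$ and $\{f_{t_0}\le a+\eta\}$) and then relate $\{f_{t_0}\le a\pm\eta\}$ to $\{f_{t_0\pm\delta}\le a\}$ using the monotonicity: for $\delta$ small the sub-level set $\{f_{t_0+\delta}\le a\}$ is squeezed between $\{f_{t_0}\le a\}$ and $\{f_{t_0}\le a-\eta\}$ near $x_0$ and agrees with both away from $x_0$, giving the homotopy equivalence.

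**Main obstacle.** The delicate part is not the topology — once we are in a Morse chart the handle attachment is classical — but the bookkeeping that justifies replacing the \emph{parametric} variation at fixed level $a$ by the \emph{single-function} variation of the level through the critical value, under only $C^1$-with-Lipschitz-differential regularity away from $x_0$ and smoothness only near $x_0$. Concretely, one must make sure that the isotopies produced by Lemma~\ref{lem-morse-simple} on the pieces of the time interval can be glued with the local handle picture coherently (same subset of $M$ identified, compatible on the overlap), and that the estimate "$\{f_{t_0+\delta}\le a\}$ lies between $\{f_{t_0}\le a\}$ and $\{f_{t_0}\le a-\eta\}$ near $x_0$ for $\delta$ small" really follows from $\partial_t f<0$ being bounded away from $0$ on the compact neighbourhood $\overline U\times[t_0-\epsilon,t_0+\epsilon]$. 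This is exactly the kind of gap-filling the thesis advertises, so I would write this step out in full rather than by analogy, but it requires no new idea beyond uniform continuity and the intermediate value theorem.
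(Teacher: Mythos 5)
Your overall strategy---reduce to a Morse-theoretic handle attachment near $x_0$ and use Lemma~\ref{lem-morse-simple} for the regular parts---is in the right spirit, and you correctly identify the delicate gluing as the main obstacle. But the two specific devices you propose for the local step do not quite work under the stated hypotheses, and the paper avoids them with a different construction. First, you suggest applying Milnor's Theorem~3.2 directly to the single function $f_{t_0}$ across the critical value $a$. That theorem requires $f_{t_0}$ to be smooth on the whole slab $f_{t_0}^{-1}([a-\eta,a+\eta])$, but the lemma only grants smoothness of $f$ in a neighbourhood of the \emph{point} $(x_0,t_0)$; elsewhere $f_{t_0}$ is merely $C^1$ with Lipschitz differential. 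So Milnor's theorem is not directly applicable (it can probably be generalized to this regularity, but that is additional work you would have to supply, not something you can ``cite''). Second, the squeezing step ``$\{f_{t_0+\delta}\le a\}$ lies between $\{f_{t_0}\le a\}$ and $\{f_{t_0}\le a\mp\eta\}$ near $x_0$ \emph{and agrees with both away from $x_0$}'' is incorrect: away from $x_0$ you only know $\partial_t f\le 0$ with no lower bound, so $f_{t_0}-f_{t_0+\delta}$ can be large there, and none of the three sets ``agree'' outside a neighbourhood of $x_0$. The comparison between parametric sublevel sets (fixed level, moving time) and fixed-time sublevel sets (moving level) is only uniform where $\partial_t f$ is bounded away from $0$, which is precisely the local region.

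The paper's proof resolves this by never leaving the parametric picture. It uses the implicit function theorem near $x_0$ (where $\partial_t f<0$) to define a smooth ``time-of-crossing'' function $h(x)$ with $f_{h(x)}(x)=a$, transfers the Morse normal form to $h$, and then builds a globally defined perturbed family $\tilde f$ that coincides with $f$ outside the Morse chart and whose sublevel sets at level $a$ have no critical points on $[t_0-\delta, t_0+\delta]$. This lets it invoke the parametric Lemma~\ref{lem-morse-simple} once for the \emph{entire} modified family (no gluing of separate isotopies on $U$ and $M\setminus U$), and the discrepancy between $\{\tilde f_{t_0-\delta}\le a\}$ and $\{f_{t_0-\delta}\le a\}$ is shown to be a handle attachment via Milnor's Assertion~4. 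If you wanted to salvage your route, you would need to (a) prove a $C^{1,1}$ version of Milnor's Theorem~3.2, and (b) replace the false ``agrees away from $x_0$'' by a compatible gluing of the local handle picture with a version of Lemma~\ref{lem-morse-simple} on $M\setminus U$; at that point you would essentially be reconstructing the paper's perturbed-family argument.
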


\begin{figure}
\centering
\centerline{\
\includegraphics[width=0.38\linewidth]{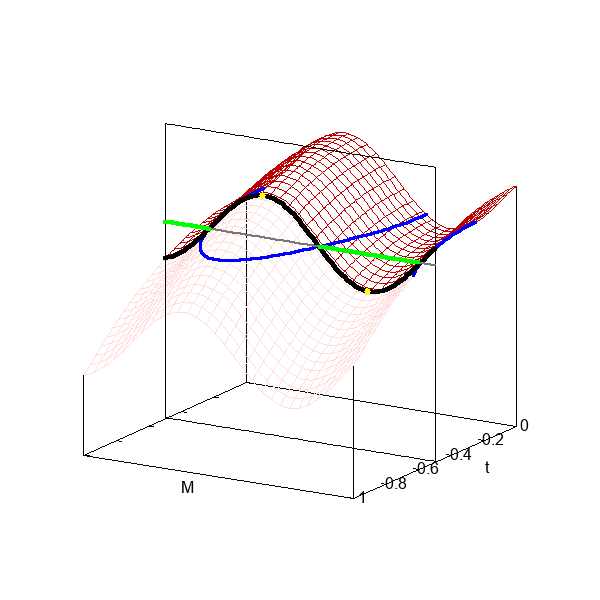}\
\includegraphics[width=0.38\linewidth]{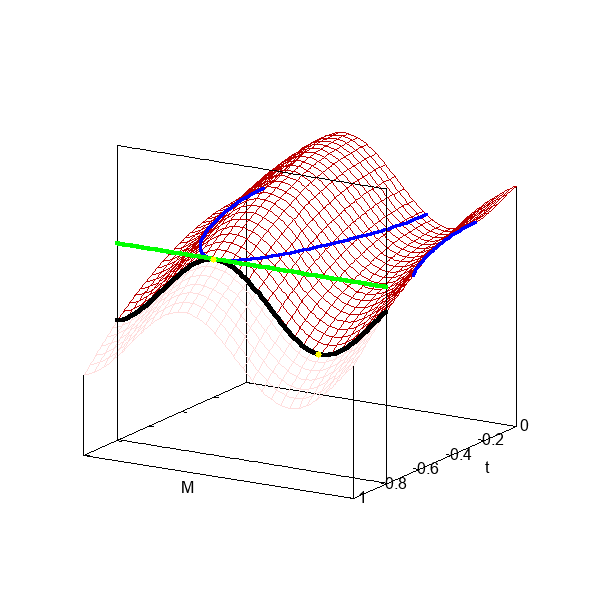}\
\includegraphics[width=0.38\linewidth]{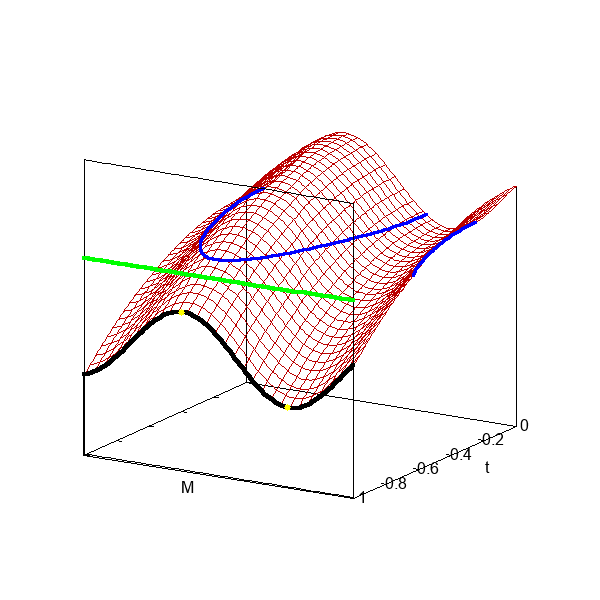}}
\caption{Example of parametric Morse theory. The red mesh is the graph of a family of functions $f$, plotted between time 0 and three different $t$. The thick black lines are the slices $f_t$ at those times $t$. The level set $\{f=a\}$ for a fixed value $a$ is colored blue and the sublevel sets $\{f_t\leq a\}$ are green. As the critical point of $f_t$ passes the value $a$, a 1-cell is attached to the green sublevel set.}
\end{figure}

\begin{proof}
This proof closely mirrors the proof of Theorem~3.2 in~\cite{milnor-morse}. The crucial difference is that we use the implicit function theorem to locally construct a map $h$ whose sublevel sets $\{h\leq t\}$ relate to the parametric sublevel sets $\{f_t\leq a\}$. The proof then proceeds by bringing the problem into a normal form with the Morse Lemma and constructing perturbed functions $\tilde f_t$ and $\tilde h$ around the critical point.
\medskip

Choose a neighbourhood $U$ of $x_0$ and $\delta\in(0,\epsilon)$ small enough such that $\partial_t f$ is bounded away from zero on the closure of $U\times(t_0-\delta,t_0+\delta).$ It follows that $f_{t_0-\delta}(x_0)>a$ and $f_{t_0+\delta}(x_0)<a.$ By shrinking $U$ we may assume that $f_{t_0-\delta}>a$ and $f_{t_0+\delta}<a$ holds on all of $U$. By monotonicity and the intermediate value theorem, we get that for all $x\in U$ there is exactly one $t\in(t_0-\delta,t_0+\delta)$ such that $f_t(x) = a.$ In other words, we can define a unique map$$
h:U\to\R \qquad\text{such that}\qquad f_{h(x)}(x)=a \quad\forall x\in U.
$$

Further shrink $U$ and $\delta$ until $f$ is smooth on the closure of $U\times(t_0-\delta,t_0+\delta).$ Since $\partial_t f$ is bounded away from zero, the implicit function theorem implies that $h$ is smooth with
\begin{equation}\label{eq:diff-relation-h}
D_x h = -(\partial_t f_{h(x)}(x))^{-1} \partial_x f_{h(x)}(x),
\end{equation}
so that critical points of $h$ with value $t$ correspond to critical points of $f_t$ with value $a$. Taking the second derivative at such a critical point shows that
\begin{equation}
D^2_x h = -(\partial_t f_{h(x)}(x))^{-1} \partial^2_x f_{h(x)}(x).
\end{equation}
This means for any choice of metric that the Hessian matrices of $h$ and $f_{t_0}$ in $x_0$ are a positive multiple of each other,
so degeneracy and index are preserved under the correspondence of critical points. In particular, $x_0$ is a non-degenerate critical point of $h$ with index $\lambda.$\medskip

By the Morse Lemma~\ref{morse-lemma}, we can further restrict $U$ to find coordinates $u=(u_1, ..., u_n)$ on it such that $$
h = t_0 -u_1^2 - ... - u_\lambda^2 + u_{\lambda+1}^2 + ... + u_n^2
$$
and the critical point has coordinates $u(x_0)=0.$ For later convenience, define
\begin{align*}
\xi(x)  &= u_1^2 - ... + u_\lambda^2\\
\eta(x) &= u_{\lambda+1}^2 + ... + u_n^2
\end{align*}
on $U$ such that $h = t_0-\xi + \eta.$\medskip

Further shrink $\delta$ until the closed ball with radius $\sqrt{2\delta}$ around the origin is contained in the image of the chart $u:U\to\R^n$ and define the $\lambda$-cell
\begin{align*}
D:&=\{p\in U \;|\; u_1^2(p)+...+u_\lambda^2(p)\leq \delta\text{ and } u_{\lambda+1}(p)+...+u_n(p)=0\}\\
&=\{p\in U \;|\; \xi(p)\leq \delta\text{ and } \eta(p)=0\}.
\end{align*}

We find ourselves in the situation sketched by the following figure if we collapse each of $(u_1,...,u_\lambda)$ and $(u_{\lambda+1},...,u_n)$ into a line:

\centerline{\includegraphics[width=0.5\linewidth]{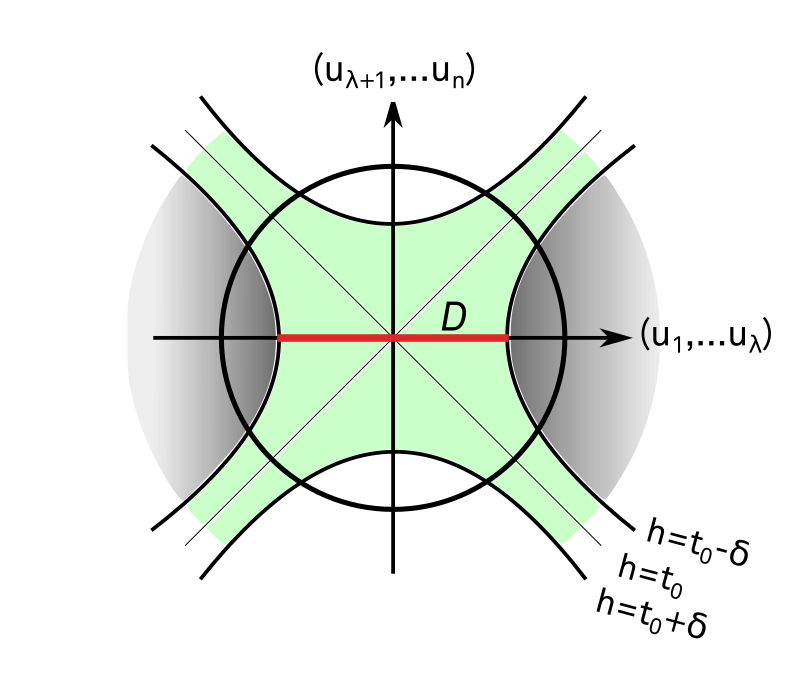}}
The circle represents the boundary of the ball with radius $\sqrt{2\delta}.$ The heavily shaded region is the region where $h\leq t_0-\delta,$ or equivalently $f_{t_0-\delta}\leq a.$ In the green region we have $t_0-\delta< h<t_0+\delta.$ The red line is the $\lambda$-cell $D$ and the level sets of $h$ are labeled.\medskip

Pick a smooth function $\mu:\R\to\R$ such that
\begin{align*}
\mu(0)&>\delta,\\
\mu(r) &= 0\qquad\qquad \forall r\geq 2\delta,\\
\mu'(r)&\in(-1,0]\qquad \forall r\in\R.
\end{align*}
Now define the map $\tilde f:M\times[0,1]\to\R$ to coincide with $f$ outside of $U\times [0,1],$ and set $$
\tilde f_t(x) = f_{t+\mu(\xi(x)+2\eta(x))}(x)
$$ for $(x,t)\in U\times [0,1].$

Just like $f$ induced $h$, the map $\tilde f$ now corresponds to a smooth $\tilde h:U\to\R$ given by $$
\tilde h := t_0-\xi + \eta -\mu(\xi+2\eta)
$$
such that $\tilde f_{\tilde h(x)}(x)=a \quad\forall x\in U.$\medskip

By definition of $\mu,$ the functions $f_{t+\delta}$ and $\tilde f_{t+\delta}$ coincide outside of the ellipsoid $\{\xi + 2\eta\leq 2\delta\}\subseteq U$. For any $x\in U$ contained in this ellipsoid,$$
\tilde h(x)\leq h(x) = t_0 -\xi(x)+\eta(x)\leq t_0 +\frac12(\xi(x)+2\eta(x)) = t_0 + \delta,
$$
so that $\tilde f_{t_0+\delta}(x)\leq a$ and $f_{t_0+\delta}(x)\leq a$ both hold. It follows that
\begin{equation}\label{eq:htpy-above}
\{f_{t_0+\delta}\leq a\} = \{\tilde f_{t_0+\delta}\leq a\}.
\end{equation}

We have constructed $\mu$ in a way such that $h'$ has the same unique critical point within $U:$ We have$$
\frac{\partial \tilde h}{\partial\xi} = -1-\mu'(\xi+2\eta)<0
\quad\text{ and }\qquad
\frac{\partial \tilde h}{\partial\eta} = 1-2\mu'(\xi+2\eta)\geq1,
$$
so that$$
d\tilde h = \frac{\partial \tilde h}{\partial\xi} d\xi + \frac{\partial \tilde h}{\partial\eta} d\eta
$$
vanishes only where $d\xi$ and $d\eta$ do, i.e. at $x_0=u(0).$ Evaluating $\tilde h$ at the critical point gives
$\tilde h(x_0) = t_0 - \mu(0)<t_0-\delta.$ Due to the correspondence of critical points of $\tilde h$ with those of $\tilde f_{\tilde h},$ we can apply Lemma~\ref{lem-morse-simple} to conclude the homotopy equivalence
\begin{equation}\label{eq:tilde-along-time}
\{\tilde f_{t_0+\delta}\leq a\} \simeq \{\tilde f_{t_0-\delta}\leq a\}.
\end{equation}

Define $H$ to be the closure of $\{\tilde f_{t_0-\delta}\leq a\} \setminus \{f_{t_0-\delta}\leq a\}$ such that
\begin{equation}\label{eq:handle-topy}
\{\tilde f_{t_0-\delta}\leq a\} = \{f_{t_0-\delta}\leq a\} \cup H.
\end{equation}
The $\lambda$-cell $D$ is contained in $H$: For $x\in D,$ $\partial \tilde h/\partial \xi<0$ implies $$
\tilde h(x)\leq \tilde h(x_0) < t_0-\delta \qquad\Rightarrow\qquad
\tilde f_{t_0-\delta}(x)< a,
$$
while also $f_{t_0-\delta}(x)\geq a.$

The set $H$ is now given by the region covered with arrows in the following sketch, while the region with $t_0-\delta<\tilde h< t_0+\delta$ is colored green:

\centerline{\includegraphics[width=0.5\linewidth]{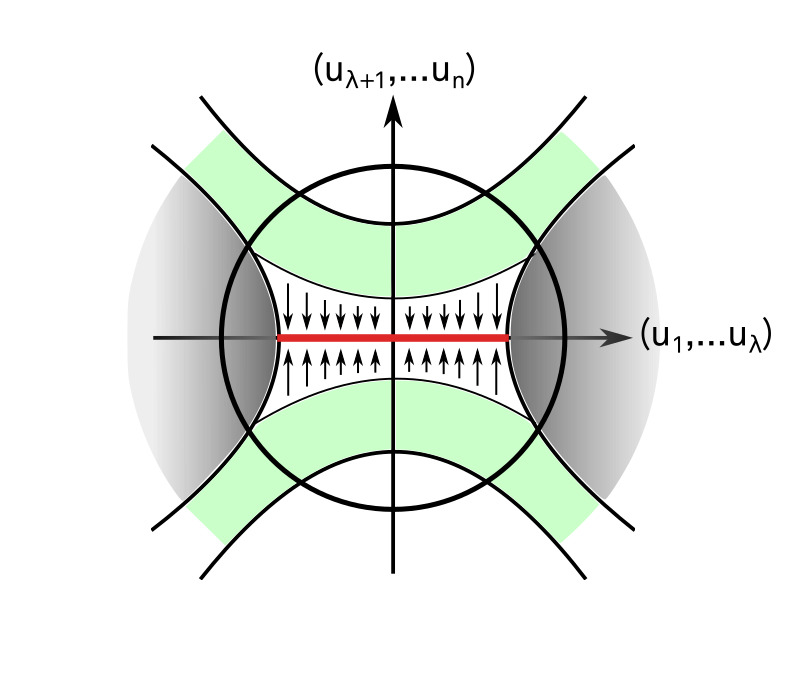}}

Note that the space $\{f_{t_0-\delta}\leq a\} \cup D$ is a cell attachment of $D$ onto $\{f_{t_0-\delta}\leq a\}.$ As the figure suggests, we can find a homotopy equivalence that is trivial outside of $U$ such that
\begin{equation}\label{eq:attach-topy}
\{f_{t_0-\delta}\leq a\} \cup H \simeq \{f_{t_0-\delta}\leq a\} \cup D.
\end{equation}
An explicit construction of such a homotopy equivalence is given in Assertion~4 in the proof of Theorem~3.2 of~\cite{milnor-morse}.\medskip

We are now ready to put everything together:
\begin{align*}
\{f_{t_0+\epsilon}\leq a\}
&\simeq
\{f_{t_0+\delta}\leq a\}
\\ &\stackrel{\eqref{eq:htpy-above}}{=}
\{\tilde f_{t_0+\delta}\leq a\}
\\ &\stackrel{\eqref{eq:tilde-along-time}}{\simeq}
\{\tilde f_{t_0-\delta}\leq a\}
\\ &\stackrel{\eqref{eq:handle-topy}}{=}
\{f_{t_0-\delta}\leq a\} \cup H
\\ &\stackrel{\eqref{eq:attach-topy}}{\simeq}
\{f_{t_0-\delta}\leq a\} \cup D
\\ &\simeq
\{f_{t_0-\epsilon}\leq a\} \cup_\psi D
\end{align*}
Apart from the previously established equivalences, we have used Lemma~\ref{lem-morse-simple} in the first and last step to cover the remaining distance from $t_0-\epsilon$ to $t_0-\delta$ and $t_0+\delta$ to $t_0+\epsilon.$ In the last step, we additionally used that a homotopy equivalence between topological spaces induces a homotopy equivalence between the spaces with an attached cell for a suitable attachment map $\psi$ (see Lemma~3.7 of~\cite{milnor-morse} for a proof due to P.~Hilton). This completes our proof.
\end{proof}

\begin{remark}[Generalization of Lemma~\ref{lem-morse-hard} to multiple critical points]\label{rem-gen-morse-hard}
Note that the proof of Lemma~\ref{lem-morse-hard} can easily be modified to allow for multiple non-degenerate critical points with value $a$: Since the Morse Lemma guarantees that these lie isolated, we can construct a perturbed $\tilde f$ and $\tilde h$ with minimal modifications. This results in a cell attachment for each of the critical points.
\end{remark}

Putting the last two lemmata together, we get:

\begin{proposition}[Parametric Morse Theorem]\label{morse-main-stuff}
Let $M$ be a compact manifold and $f:M\times[0,1]\to\R, (x,t)\mapsto f_t(x)$ a $C^1$-map such that each $f_t$ has a Lipschitz differential and $\partial_t f\leq0$.

Let $a\in\R$ and define
$$\operatorname{Crit}_{f,a} := \{(x,t)\in M\times [0,1] \;|\; x \text{ is a critical point of $f_t$ with value $a$}\}.$$ Assume that for each $(x,t)\in \operatorname{Crit}_{f,a}$, $x$ is a non-degenerate critical point of $f_t$ with index $\lambda_{x,t}$ and  that around $(x,t),$ $f$ is smooth with $\partial_t f<0$.

Then $\{f_{1}\leq a\}$ is homotopy equivalent to $\{f_{0}\leq a\}$ with a cell of dimension $\lambda_{x,t}$ attached for each $(x,t)\in\operatorname{Crit}_{f,a}$.
\end{proposition}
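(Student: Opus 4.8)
The plan is to reduce the statement to the two preceding lemmas. First I would show that only finitely many pairs $(x,t)$ witness $a$ as a critical value; then I would traverse $[0,1]$, invoking Lemma~\ref{lem-morse-simple} on the intervals where $a$ is everywhere a regular value and Lemma~\ref{lem-morse-hard}, in the multi-point form of Remark~\ref{rem-gen-morse-hard}, across the finitely many times at which critical points of value $a$ occur; finally I would concatenate the resulting equivalences.

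First I would establish that $\operatorname{Crit}_{f,a}$ is finite. Each point $(x_0,t_0)\in\operatorname{Crit}_{f,a}$ has, by hypothesis, a neighbourhood on which $f$ is smooth with $\partial_t f<0$; on a small enough such neighbourhood $U\times I$ the implicit function theorem produces a smooth $h\colon U\to\R$ with $f_{h(y)}(y)=a$, exactly as in the proof of Lemma~\ref{lem-morse-hard}, and the identity $D_y h=-(\partial_t f_{h(y)}(y))^{-1}\partial_y f_{h(y)}(y)$ together with its derivative at a critical point shows that $(y,h(y))\in\operatorname{Crit}_{f,a}$ if and only if $y$ is a critical point of $h$, with Hessian a positive multiple of that of $f_{t_0}$. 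Hence $x_0$ is a non-degenerate critical point of $h$, so by the Morse Lemma~\ref{morse-lemma} it is isolated; thus $(x_0,t_0)$ is isolated in $\operatorname{Crit}_{f,a}$. Moreover $\operatorname{Crit}_{f,a}$ is closed: if $(x_j,t_j)\to(x_\infty,t_\infty)$ with $(x_j,t_j)\in\operatorname{Crit}_{f,a}$, continuity of $f$ and of $df$ forces $f_{t_\infty}(x_\infty)=a$ and $d_{x_\infty}f_{t_\infty}=0$, so the limit again lies in $\operatorname{Crit}_{f,a}$. A closed discrete subset of the compact space $M\times[0,1]$ is finite.

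If $\operatorname{Crit}_{f,a}=\emptyset$, the conclusion is immediate from Lemma~\ref{lem-morse-simple}. Otherwise, let $0<t_1<\dots<t_N<1$ be the distinct times occurring in $\operatorname{Crit}_{f,a}$ (a critical point at $t=0$ or $t=1$ can be brought into the open interval by extending the family slightly past that endpoint using the local smoothness and strict monotonicity hypothesized there, which costs nothing in the intended application), and pick $\epsilon>0$ with $\epsilon<\tfrac12\min(t_1,\,1-t_N,\,t_2-t_1,\dots,t_N-t_{N-1})$. On each interval $[t_{i-1}+\epsilon,\,t_i-\epsilon]$, with the conventions $t_0+\epsilon:=0$ and $t_{N+1}-\epsilon:=1$, the value $a$ is regular for every $f_t$, so Lemma~\ref{lem-morse-simple} provides a homeomorphism, hence a homotopy equivalence, $\{f_{t_{i-1}+\epsilon}\le a\}\simeq\{f_{t_i-\epsilon}\le a\}$. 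Across each $t_i$, the hypotheses at the points of $\operatorname{Crit}_{f,a}$ over $t_i$ are precisely those of Lemma~\ref{lem-morse-hard}, and by the choice of $\epsilon$ no other critical points of value $a$ appear for $t\in[t_i-\epsilon,t_i+\epsilon]$; so in the form of Remark~\ref{rem-gen-morse-hard} we get that $\{f_{t_i+\epsilon}\le a\}$ is homotopy equivalent to $\{f_{t_i-\epsilon}\le a\}$ with a cell of dimension $\lambda_{x,t_i}$ attached for each $(x,t_i)\in\operatorname{Crit}_{f,a}$. Concatenating these equivalences along $[0,1]$, using transitivity of homotopy equivalence together with the fact — already invoked inside Lemma~\ref{lem-morse-hard} through Milnor's Lemma~3.7 — that a homotopy equivalence $Y\simeq Y'$ induces, for a suitable attaching map, a homotopy equivalence $Y\cup_\psi D\simeq Y'\cup_{\psi'} D$, we conclude that $\{f_1\le a\}$ is homotopy equivalent to $\{f_0\le a\}$ with one cell of dimension $\lambda_{x,t}$ attached for each $(x,t)\in\operatorname{Crit}_{f,a}$.

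The main obstacle is the finiteness claim: everything else is bookkeeping with Lemmas~\ref{lem-morse-simple} and~\ref{lem-morse-hard}, but finiteness genuinely requires the implicit-function-theorem reduction of each parametric critical point to an honest critical point of a locally defined smooth function, the isolation coming from the Morse Lemma, and a compactness argument — and one must be slightly careful that $\operatorname{Crit}_{f,a}$ is closed despite $f$ being only $C^1$ globally, which works exactly because every point of $\operatorname{Crit}_{f,a}$ is by hypothesis surrounded by a region where $f$ is smooth with $\partial_t f<0$.
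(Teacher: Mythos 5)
Your proposal is correct and follows essentially the same route as the paper's proof: isolate the points of $\operatorname{Crit}_{f,a}$ via the Morse Lemma (so there are finitely many critical times), cross each critical time with Lemma~\ref{lem-morse-hard} together with Remark~\ref{rem-gen-morse-hard}, bridge the regular intervals with Lemma~\ref{lem-morse-simple}, and concatenate using Milnor's Lemma~3.7. You simply spell out the finiteness/closedness argument and the endpoint caveat more explicitly than the paper does, which is fine.
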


\begin{proof}
By the Morse Lemma and the fact that $\partial_t f_t<0$ around each $(x,t)\in\operatorname{Crit}_{f,a}$, we have that the points in $\operatorname{Crit}_{f,a}$ are isolated. It follows that there is a finite number $N$ of times $t_1<...<t_N$ such that $a$ is not a regular value of $f_{t_j}.$ Pick $\epsilon>0$ smaller than any $|t_j-t_k|$ for $j\neq k.$ Lemma~\ref{lem-morse-hard} and Remark~\ref{rem-gen-morse-hard} then give homotopy equivalences between each $\{f_{t_j+\epsilon}\leq a\}$ and $\{f_{t_j-\epsilon}\leq a\}$ with cells attached. Lemma~\ref{lem-morse-simple} provides homotopy equivalences between each $\{f_{t_j+\epsilon}\leq a\}$ and $\{f_{t_{j+1}-\epsilon}\leq a\}$. These also induce homotopy equivalences between the spaces with attached cells by Lemma~3.7 of~\cite{milnor-morse}.
\end{proof}

\section{Sublevel Sets of Composed Generating Functions}

This section is largely based on \cite{San13}, but fixes various omissions and inaccuracies. We will provide a number of lemmata concerning sublevel sets of generating functions of contactomorphisms on the sphere. All of these culminate in Proposition~\ref{homology-sublevelsets}, which will essentially allow us to detect differences in the reduced homology groups $$\tilde H_k(\{A_t\#F\leq0\})$$ of sublevel sets for $t=0$ and $t=1.$
This will be based on an argument that, for sufficiently nice $G$, the sublevel sets of composed generating functions at least philosophically are given by the \textit{join}
$$
\{G\#F\leq 0\}\simeq \{G\leq 0\} * \{F\#0\leq 0\}
$$
of their individual sublevel sets.\medskip

The join can be viewed as the union of all line segments connecting $X$ and $Y$ when these are placed in general position relative to each other:

\begin{definition}
Let $X$ and $Y$ be topological spaces. We define the \textbf{join} $X*Y$ as the quotient space obtained from $X\cup  X\times Y\times[0,1] \cup Y$ by identifying for all $x\in X, y\in Y$ the points $(x,y,0)$ with $x$ as well as the points $(x,y,1)$ with $y.$
\end{definition}

\begin{remark}[Alternative definition of the join]\label{join-remark}
We follow the definition of the join from \cite{Whi56}. Note that it is frequently defined as an analogous quotient space of only $X\times Y\times[0,1]$ instead. These definitions only differ if exactly one of $X$ and $Y$ is the empty set: For us, $M*\emptyset = M$ holds, which would otherwise be the empty set as well.

Sandon does not use our definition in Section 4 of~\cite{San13}. Strictly speaking, this leads to an error since she does not specifically exclude the case that $F$ is positive in the last part of her argument.
\end{remark}

As a step towards sublevel sets of composed generating functions, we can now consider those of the direct sum:

\begin{lemma}[Sublevel set of direct sum]\label{lemma-oplus-join}
Let $F_1:S^{n_1-1}\to\R$ and $F_2:S^{n_2-1}\to\R$ be smooth functions on the sphere such that $\{F_j\leq0\}$ are deformation retractions of neighbourhoods in $S^{n_j-1}$. Write $\hat F_1:\R^{n_1}\to\R, \hat F_2:\R^{n_2}\to\R$ for the extensions defined as in Definition~\ref{def-gen-on-sphere} and $F_1\oplus F_2$ for the restriction of the direct sum $\hat F_1\oplus \hat F_2$ to the unit sphere $S^{n_1+n_2-1}$. Then there is a homotopy equivalence $$\{F_1\leq 0\} * \{F_2\leq 0\}  \simeq \{F_1 \oplus F_2\leq 0\} .$$
\end{lemma}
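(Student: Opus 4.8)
The plan is to pass to join coordinates on $S^{n_1+n_2-1}$ and to build a deformation retraction — equivalently, a homotopy equivalence — of $\{F_1\oplus F_2\le 0\}$ onto $\{F_1\le 0\}*\{F_2\le 0\}$.

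First I would identify $S^{n_1+n_2-1}$ with $S^{n_1-1}*S^{n_2-1}$ via $(\theta,u,v)\mapsto(\cos\theta\,u,\,\sin\theta\,v)$ for $\theta\in[0,\tfrac\pi2]$, where $\theta=0$ represents the point $u\in S^{n_1-1}$ and $\theta=\tfrac\pi2$ the point $v\in S^{n_2-1}$. Using that $\hat F_1,\hat F_2$ are homogeneous of degree two, a point of the unit sphere satisfies $(F_1\oplus F_2)(\theta,u,v)=\cos^2\theta\,F_1(u)+\sin^2\theta\,F_2(v)$, with the evident conventions at $\theta=0,\tfrac\pi2$. Hence, writing $E:=\{F_1\oplus F_2\le 0\}$, we have $E=\{(\theta,u,v):\cos^2\theta\,F_1(u)+\sin^2\theta\,F_2(v)\le 0\}$, and $A*B:=\{F_1\le0\}*\{F_2\le0\}$ sits inside $E$ as the subset cut out (away from the two poles) by $F_1(u)\le0$ and $F_2(v)\le0$; the inclusion $A*B\hookrightarrow E$ is immediate, and the task is to show it is a homotopy equivalence.

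For the core of the argument I would use the fiberwise picture: for fixed $(u,v)$ the map $\theta\mapsto\cos^2\theta F_1(u)+\sin^2\theta F_2(v)$ is monotone, so the $\theta$-fiber of $E$ over $(u,v)$ is a subinterval of $[0,\tfrac\pi2]$. This decomposes $E$ as the disjoint union of $A*B$ (where $F_1(u)\le0$ and $F_2(v)\le0$, and the whole fiber lies in $E$), a relatively open set $E_1$ where $F_1(u)>0$ — which forces $F_2(v)<0$ and confines $\theta$ to an interval running up to the pole $\theta=\tfrac\pi2$ — and a symmetric relatively open set $E_2$ where $F_2(v)>0$. Because the defining value is monotone in $\theta$, one may push $\theta$ towards $\tfrac\pi2$ on $E_1$ (towards $0$ on $E_2$) without leaving $E$, which on $E_1$ collapses it onto the copy of $\{F_2\le0\}$ sitting at $\theta=\tfrac\pi2$, and similarly for $E_2$; on $A*B$ itself one does nothing.

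The main obstacle is to assemble these three partial motions into one continuous deformation retraction of $E$ onto $A*B$: pushing $\theta$ all the way to a pole on $E_i$ while leaving $A*B$ fixed is discontinuous along the loci $\{F_1=0\}$ and $\{F_2=0\}$, where the full fibers of $A*B$ abut the shorter fibers of the $E_i$, and also at the poles of the join (where one of $u,v$ becomes meaningless). This is exactly where the hypothesis is needed. Let $N_j\supseteq\{F_j\le0\}$ be an open neighbourhood with a deformation retraction $d_j^{\,t}$ onto $\{F_j\le0\}$; since $\{F_j\le0\}\subseteq\operatorname{int} N_j$, one has $F_j\ge\varepsilon_j>0$ on $S^{n_j-1}\setminus N_j$, so on the part of $E$ with $u\notin N_1$ (resp.\ $v\notin N_2$) the coordinate $\theta$ is bounded away from the first (resp.\ second) pole by a definite amount. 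I would therefore build the homotopy in interlocking stages governed by cutoff functions on the two sphere factors: where $u\in N_1$ flow $u$ into $\{F_1\le0\}$ by $d_1$, and likewise $v$ by $d_2$ where $v\in N_2$; on the complementary regions rescale $\theta$ towards the appropriate pole; and choose the cutoffs so that, approaching $\{F_j=0\}$, the $\theta$-rescaling tapers off precisely as the $d_j$-flow takes over. The bulk of the work — and the genuinely delicate point — is then the verification that this combined map is well defined on the join and continuous (in particular across $\{F_j=0\}$ and at the join poles), has image in $A*B$ at time $1$, and fixes $A*B$ for all times. One can also organise the conclusion through the gluing lemma for cofibrations, writing $E=A*B\cup_{C_1}\overline{E_1}\cup_{C_2}\overline{E_2}$ (closures taken in $E$) with $C_i:=\overline{E_i}\cap A*B$ and checking that each $C_i\hookrightarrow\overline{E_i}$ is a cofibration and a homotopy equivalence, for which the neighbourhood-retract hypothesis again supplies the needed collars.
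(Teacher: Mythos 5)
Your approach matches the paper's in substance: identify the ambient sphere with the join, exhibit the explicit inclusion of $\{F_1\le0\}*\{F_2\le0\}$ into $\{F_1\oplus F_2\le0\}$, and construct a deformation retraction onto its image using the neighbourhood-retract hypothesis to handle continuity near $\{F_j=0\}$ and the poles. The organizational point worth flagging is that the paper sidesteps the interlocking you identify as the ``genuinely delicate point'' by running the two motions \emph{sequentially} rather than simultaneously: a first homotopy $r'$ does \emph{only} the $\theta$-rescaling (controlled by the values $F_j/\delta$ via an explicit five-case formula, with no use of the retractions $d_j$) and pushes all of $\{F_1\oplus F_2\le0\}$ into a small neighbourhood $B$ of the embedded join while fixing $B$ pointwise; only then is $B$ retracted onto the join by a second homotopy $r$ assembled from the $d_j$. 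Decoupling the $\theta$-motion from the $d_j$-flows removes the need for coordinated cutoffs, and continuity of $r'$ reduces to checking that an elementary piecewise-defined interpolation agrees on overlaps -- you may want to reorganize your argument along those lines rather than pursuing the cofibration-gluing alternative, which would need the cofibration property of $C_i\hookrightarrow\overline{E_i}$ to be verified separately.
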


\begin{proof}
We follow, in part, the proofs of Proposition~3.12 in~\cite{GKPS} and Proposition B.1 in~\cite{Giv90} and incorporate part of an argument provided by A. Givental in correspondence regarding the latter.\medskip

Note first that there is an inclusion $$
\iota:\{F_1\leq 0\} * \{F_2\leq 0\}  \hookrightarrow \{F_1 \oplus F_2\leq 0\} \subseteq S^{n_1+n_2-1}
$$
that can be defined on the join by sending equivalence classes of $x_1\in\{F_1\leq 0\}$ to $(x_1,0)$, $x_2\in\{F_2\leq 0\}$ to $(0,x_2)$, and $(x_1,x_2,s)\in \{F_1\leq 0\} \times \{F_2\leq 0\}\times[0,1]$ to $(\sqrt{s}x_1,\sqrt{1-s}x_2)$. For brevity, we will write
\begin{align*}
X &:= \{F_1 \oplus F_2\leq 0\} \subseteq S^{n_1+n_2-1},\\
A &:= \iota\left(\{F_1\leq 0\} * \{F_2\leq 0\} \right) \\
&\ = \{(x_1,x_2)\in X\;|\; F_1(x_1)\leq 0 \text{ and } F_2(x_2)\leq 0 \} \subseteq X.
\end{align*}
Our strategy will be to find a deformation retraction from $X$ to $A$.\medskip

First combine the deformation retractions of neighbourhoods $U_j\subseteq S^{n_j-1}$ to $\{F_j\leq 0\}$ to obtain a deformation retraction $$r:B\times [0,1]\to B$$ from a neighbourhood $B$ of $A$ in $X.$\medskip

We now want to find a homotopy $$r':X\times [0,1]\to X,$$ such that $r'_0=\Id_X$, $r'_t|_B = \Id_B$ and $r'_1(X)\subseteq B.$ To do this, we need to continuously interpolate between the identity on $B$ and homotopies that move points outside $B$ into it. For a $\delta>0$ that we will fix later, define
$$\hspace{-1.5cm}
r'_t(\sqrt{s}\,x_1,\sqrt{1-s}\,x_2) = 
\begin{cases}
\left(  \sqrt{(1-t)s}\,x_1,\;\sqrt{(1-t)(1-s)+t}\,x_2  \right)
&\text{for } F_1(x_1)\geq \delta, \\

\left(   \sqrt{\left(1-t q_j(x_j)\right)s}\,x_1,\;\sqrt{\left(1-t q_j(x_j)\right)(1-s)+t q_j(x_j)}\,x_2   \right)
&\text{for } 0\leq F_1(x_1)\leq \delta, \\

\left( \sqrt{s}\,x_1,\sqrt{1-s}\,x_2 \right)
&\text{for } F_1(x_1) \leq0 \text{ and } F_2(x_2)\leq0, \\

\left(   \sqrt{\left(1-t q_j(x_j)\right)s+t q_j(x_j)}\,x_1,\;\sqrt{\left(1-t q_j(x_j)\right)(1-s)}\,x_2    \right)
&\text{for } 0\leq F_2(x_2)\leq \delta, \\

\left(  \sqrt{(1-t)s+t}\,x_1,\;\sqrt{(1-t)(1-s)}\,x_2   \right)
&\text{for } F_2(x_2)\geq \delta, \\
\end{cases}
$$
where $q_j(x_j)= F_j(x_j)/\delta$, $s\in[0,1], x_1\in S^{n_1-1}$ and $x_2\in S^{n_2-1}$. We can check that the wanted properties hold if we choose any $\delta$ small enough so that the domains of the second and fourth case of the definition are contained within the neighbourhood $B$ of $A$. \medskip

We can combine $r$ and $r'$ to define the homotopy $$R_t:=
\begin{cases}
r'_{2t} &\quad \text{for } t\in[0,1/2], \\
r_{2t-1}\circ r'_{1} &\quad \text{for } t\in(1/2,1]
\end{cases}
$$
on $X.$ By the properties of $r$ and $r',$ this is continuous, $R_0=\Id_X$, $R_1(X)=A$ and $R_1|_A=\Id_A$. In other words, this is a deformation retraction of $X$ onto $A$.
\end{proof}

We will be considering sublevel sets of compositions with quadratic generating forms in particular, which the next three lemmata will allow us to simplify:

\begin{lemma}\label{lemma-subsets-quad}
Consider a quadratic form $A:\R^{n}\to\R$ and the sublevel set $$X:=\{x\in S^{n-1}\;|\; A(x) \leq 0\}$$ of its restriction to the unit sphere.

\begin{enumerate}
\item The space $S^{\ind(A)-1}$ is a strong deformation retract of $X$.

\item $X$ is a deformation retract of a neighbourhood of itself in $S^{n-1}.$ 
\end{enumerate}
\end{lemma}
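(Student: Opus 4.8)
The plan is to put $A$ into a normal form and then exhibit explicit deformation retractions in both parts. By the spectral theorem together with Sylvester's law of inertia, I would pick orthonormal coordinates $x=(x_-,x_0,x_+)\in\R^{\lambda}\times\R^{k}\times\R^{m}$, where $\lambda=\ind(A)$, $k=\dim\ker A$ and $m=n-\lambda-k$, in which $A(x)=-|x_-|^2+|x_+|^2$. Then
$$X=\{x\in S^{n-1}\;:\;|x_+|^2\le|x_-|^2\},$$
and three elementary observations do all the work: (a) scaling the $x_+$-block towards $0$ only decreases $A$; (b) $A$ is positively homogeneous of degree $2$, so renormalising a vector to return it to $S^{n-1}$ never changes the sign of $A$; and (c) on $X$, $x_+\neq0$ forces $|x_+|^2>0$ and hence $|x_-|^2>0$.

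For part (i) I would define $r\colon X\times[0,1]\to S^{n-1}$ by
$$r_s(x_-,x_0,x_+)=\frac{(x_-,\,x_0,\,(1-s)x_+)}{\bigl|(x_-,\,x_0,\,(1-s)x_+)\bigr|}.$$
The denominator vanishes only if $x_-=x_0=0$, which on $S^{n-1}$ forces $|x_+|=1$ and thus $A(x)=1>0$, impossible on $X$; so $r$ is well defined and continuous. By (b) and $A\bigl((x_-,x_0,(1-s)x_+)\bigr)=-|x_-|^2+(1-s)^2|x_+|^2\le A(x)\le 0$ we get $r_s(X)\subseteq X$. Moreover $r_0=\mathrm{id}_X$, each $r_s$ fixes $\{x_+=0\}\cap S^{n-1}$ pointwise, and $r_1$ maps $X$ into $\{x_+=0\}\cap S^{n-1}=S^{\lambda+k-1}$, the unit sphere of the $(x_-,x_0)$-subspace. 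Hence $S^{\ind(A)+\dim\ker A-1}$ is a strong deformation retract of $X$; when $A$ is non-degenerate, $k=0$ and this is exactly $S^{\ind(A)-1}$, as claimed (with the convention $S^{-1}=\emptyset$ taking care of $\ind A=0$, where $X=\emptyset$).

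For part (ii) I would work with $f:=A|_{S^{n-1}}$, which is smooth; by Lagrange multipliers its critical points are the unit eigenvectors of the symmetric matrix of $A$, with critical values the eigenvalues of $A$. Since there are finitely many eigenvalues, for $\epsilon>0$ below the least positive one $f$ has no critical value in $(0,\epsilon]$ and $0$ is itself a regular value, so $X=f^{-1}((-\infty,0])$ is a compact smooth submanifold-with-boundary of $S^{n-1}$; by the collar neighbourhood theorem — equivalently, by flowing along $-\nabla f$ on $f^{-1}((0,\epsilon])$ exactly as in Lemma~\ref{lem-morse-simple} — it is a strong deformation retract of the open neighbourhood $f^{-1}((-\infty,\epsilon))$. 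The degenerate cases $X=S^{n-1}$ and $X=\emptyset$ are trivial. A route that also covers a degenerate $A$ directly is to note that $X$ is a compact semialgebraic subset of $S^{n-1}$, hence a subcomplex of some triangulation of $S^{n-1}$ and therefore a strong deformation retract of its open star; or simply to retract $\{|x_+|^2\le\tfrac{1+\epsilon}{2}\}\cap S^{n-1}$ onto $\{|x_+|^2\le\tfrac12\}\cap S^{n-1}=X$ by pushing $x_+$ radially inwards and renormalising, in the spirit of part (i).

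The one point needing genuine care is the bookkeeping of $\ker A$ in part (i): the retraction above naturally lands on $S^{\ind(A)+\dim\ker A-1}$, which is not homotopy equivalent to $S^{\ind(A)-1}$ once $\ker A\neq0$. So the statement of (i) is really the non-degenerate case (equivalently, the case where $0$ is a regular value of $A|_{S^{n-1}}$), and when the lemma is later applied to compositions $A_t\#F$ one must either check non-degeneracy or carry the term $\dim\ker A$ along. Everything else is routine once the normal form is fixed.
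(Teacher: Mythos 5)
Your proof of (i) is the same normal-form-plus-explicit-retraction argument as the paper's, and in two respects it is more careful. First, you renormalise, so your homotopy actually stays in $X\subseteq S^{n-1}$; the paper's formula $(x_-,x_+,x_0,t)\mapsto\bigl(t\,x_-/|x_-|+(1-t)x_-,\,(1-t)x_+,\,(1-t)x_0\bigr)$ leaves the unit sphere at intermediate times and needs exactly the normalisation you supply. Second, your bookkeeping of $\ker A$ is not pedantry but a genuine correction: as stated the lemma is false for degenerate $A$ (for $A\equiv 0$ one has $X=S^{n-1}$ while $S^{\ind(A)-1}=S^{-1}=\emptyset$), and the correct conclusion is, exactly as you say, that $X$ strongly deformation retracts onto $S^{\ind(A)+\dim\ker A-1}$. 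The paper's own retraction exhibits the problem: it divides by $|x_-|$, and points of $X$ with $x_-=0$ (namely $x_+=0$, $|x_0|=1$) exist precisely when $\ker A\neq 0$; moreover no retraction onto $S^{\ind(A)-1}$ can exist then, since $X$ has the homotopy type of a higher-dimensional sphere. Your closing caution is also pertinent downstream: in Proposition~\ref{homology-sublevelsets} the lemma is invoked for $A'_t$, whose extension depends only on the fiber variables and whose kernel therefore contains the whole base $\R^{2n}$, so the $\dim\ker$ term genuinely has to be carried along (or the statement reinterpreted) at that point of the paper.

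For (ii) you take a different route from the paper, which restricts $A$ to its non-null directions (where $0$ is a regular value of the restriction to the corresponding sphere) and then extends the resulting neighbourhood retraction "trivially" over the null directions. Your primary argument (critical values of $A|_{S^{n-1}}$ are the eigenvalues, hence $X$ is a compact manifold with boundary and has a collar) is cleaner but, as you half-acknowledge, itself requires $\ker A=0$: zero is an eigenvalue, hence a critical value, exactly when $A$ is degenerate. Your semialgebraic-triangulation fallback does cover the general case and is sound. One small slip in the final aside: $\{|x_+|^2\le\tfrac12\}\cap S^{n-1}$ equals $X$ only when $\ker A=0$; in general $X=\{|x_+|^2\le\tfrac12(1-|x_0|^2)\}\cap S^{n-1}$, though the same push-$x_+$-inwards-and-renormalise idea still works with the corrected inequality.
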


\begin{proof}
Set $\lambda := \ind(A)$ for the (negative) index, i.e. the number of negative eigenvalues of the coefficient matrix of $A$. By Sylvester's law of inertia for quadratic forms, we can assume without loss of generality that there exists a $k\leq n$ such that $A$ takes the form $$A(x) = -x_1^2 - ... - x_\lambda^2 + x_{\lambda+1}^2+...+x_k^2.$$

Regarding (i): For convenience, we write $x_-:=(x_1,...,x_\lambda)$, $x_+:=(x_{\lambda+1}, ..., x_k)$ and $x_0:=(x_{k+1}, ..., x_n)$ such that $A(x)=-x_-^2+x_+^2.$ It follows that we can write the sublevel set as $$X=\{x\in S^{n-1}\;|\; x_+^2\leq x_-^2\}.$$ Consider the subset $$S:=\{x\in S^{n-1}\;|\; x_+=0=x_0,  \text{ and } |x_-|=1\} \simeq S^{\lambda-1}.$$
We define a strong deformation retract $F:X\times[0,1]\to X$ by $$(x_-,x_+,x_0,t)\mapsto \left( t\frac{x_-}{|x_-|}+(1-t)x_-, (1-t)x_+, (1-t)x_0 \right).$$
This map is indeed well-defined and continuous, equals the identity for $t=0$ and on $S$ for all $t$, and $F(x,1)\in S$ for all $x\in X.$\medskip

Regarding (ii): Consider the restriction $B$ of $A$ to its non-null directions. Zero is a regular value of this restriction so that $\{B\leq0\}$ is a submanifold and thereby a deformation retract of a neighbourhood. Extending the deformation retraction by acting trivially on the null directions then also gives that $\{A\leq0\}$ is a deformation retract of a neighbourhood.

\end{proof}

\begin{lemma}\label{prop:fiber-only-form}
Let $Q:\R^{2n}\times\R^k\to\R$ be a quadratic form that generates the identity on $\R^{2n}$.
Then there is an isotopy $(\Psi_s)_{s\in[0,1]}$ of fibre preserving\footnotemark\ linear diffeomorphisms of $\R^{2n}\times\R^k$ such that $\Psi_0 = \Id_{\R^{2n}\times\R^k}$ and $Q\circ\Psi_1$ is a quadratic form that only depends on the fiber variable.
\end{lemma}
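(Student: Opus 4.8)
The plan is to reduce the whole statement to linear algebra on the coefficient matrices of $Q$ and then write down the isotopy explicitly. Write a point of $\R^{2n}\times\R^{k}$ as $(\zeta,\nu)$ and expand the quadratic form as $Q(\zeta,\nu)=\tfrac12\zeta^{T}A\zeta+\zeta^{T}B\nu+\tfrac12\nu^{T}C\nu$ with $A=A^{T}$ and $C=C^{T}$. By Lemma~\ref{euclidean-formulae} one has $\Sigma_{Q}=\{(\zeta,\nu)\mid B^{T}\zeta+C\nu=0\}$ and $i_{Q}(\zeta,\nu)=(\zeta,\,A\zeta+B\nu)$. Since $\tau(\Gamma_{\Id_{\R^{2n}}})$ (for $\tau$ as in Remark~\ref{rem:tau}) is exactly the zero section of $T^{*}\R^{2n}$, the hypothesis that $Q$ generates $\Id_{\R^{2n}}$ says precisely that $i_{Q}(\Sigma_{Q})$ equals the zero section, i.e.\ (a) for every $\zeta$ there is $\nu$ with $(\zeta,\nu)\in\Sigma_{Q}$, which is $\operatorname{im}B^{T}\subseteq\operatorname{im}C$, and (b) $A\zeta+B\nu=0$ on all of $\Sigma_{Q}$.

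Next I would normalize $C$. Split $\R^{k}=\operatorname{im}C\oplus\ker C$; by Sylvester's law and symmetry of $C$ this writes $C$ as the invertible part $C_{1}$ on $\operatorname{im}C$ and $0$ on $\ker C$. Condition (a) forces the columns of $B$ along $\ker C$ to vanish, so $B$ factors through $C_{1}$; writing $\nu=(\nu_{1},\nu_{2})$ accordingly, $\Sigma_{Q}$ becomes the graph $\nu_{1}=-C_{1}^{-1}B_{1}^{T}\zeta$ with $\nu_{2}$ free. Substituting this into (b) gives the key identity $A=B_{1}C_{1}^{-1}B_{1}^{T}$. Equivalently, there is a $k\times 2n$ matrix $L$ (supported on $\operatorname{im}C$ and solved for using $C_{1}$) with $CL=-B^{T}$ and $BL=-A$.

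With such an $L$ fixed, set $\Psi_{s}(\zeta,\nu):=(\zeta,\nu+sL\zeta)$ for $s\in[0,1]$. Each $\Psi_{s}$ is a linear diffeomorphism (inverse $(\zeta,\nu)\mapsto(\zeta,\nu-sL\zeta)$) that is the identity on the base, hence fibre preserving, with $\Psi_{0}=\Id$, and $s\mapsto\Psi_{s}$ is smooth, so this is the required isotopy; the composition $Q\circ\Psi_{s}$ is automatically again a quadratic form. A direct substitution gives
\[
Q\circ\Psi_{1}(\zeta,\nu)=\tfrac12\zeta^{T}\!\bigl(A+BL+L^{T}B^{T}+L^{T}CL\bigr)\zeta+\zeta^{T}\!\bigl(B+L^{T}C\bigr)\nu+\tfrac12\nu^{T}C\nu .
\]
Using $L^{T}C=(CL)^{T}=-B$ one gets $B+L^{T}C=0$, and using $BL=-A$ one gets $L^{T}B^{T}=(BL)^{T}=-A$ and $L^{T}CL=-BL=A$, so the $\zeta\zeta$-block is $A-A-A+A=0$. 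Hence $Q\circ\Psi_{1}(\zeta,\nu)=\tfrac12\nu^{T}C\nu$, which depends only on the fibre variable, as claimed.

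The only genuinely delicate point I anticipate is deriving the identity $A=B_{1}C_{1}^{-1}B_{1}^{T}$ cleanly when $C$ is degenerate: one must carefully bookkeep the $\ker C$ directions, check that condition (a) really does annihilate the corresponding columns of $B$, and verify that $\nu_{2}$ is genuinely free in $\Sigma_{Q}$ so that (b) can be evaluated along the unique solution $\nu_{1}=-C_{1}^{-1}B_{1}^{T}\zeta$. Once this is in place the construction of $\Psi_{s}$ is essentially forced, and the remaining verification is a mechanical matrix computation that uses nothing beyond symmetry of $A$, $C$ and the two relations $CL=-B^{T}$, $BL=-A$; homogeneity of degree $2$ of $Q$ ensures every $Q\circ\Psi_{s}$ stays a quadratic form throughout.
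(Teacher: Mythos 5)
Your proof is correct and follows essentially the same route as the paper: block-decompose $Q$, use Lemma~\ref{euclidean-formulae} to read off $\Sigma_{Q}$ and $i_{Q}$, extract from the zero-section hypothesis a linear map $L$ (the paper's $D$) with $CL=-B^{T}$ and $BL=-A$, and apply the shear isotopy $\Psi_{s}(\zeta,\nu)=(\zeta,\nu+sL\zeta)$. The one place you diverge is in how $L$ is produced: you decompose $\R^{k}=\operatorname{im}C\oplus\ker C$, check that condition~(a) kills the $\ker C$-columns of $B$, and derive the explicit identity $A=B_{1}C_{1}^{-1}B_{1}^{T}$, whereas the paper picks $D$ by simply choosing, for each basis vector $\zeta_{i}$ of $\R^{2n}$, some $\nu_{i}$ with $C\nu_{i}+B^{T}\zeta_{i}=0$ (possible by surjectivity of $\zeta\mapsto i_Q$ on $\Sigma_Q$), and then obtains $A+BD=0$ directly by evaluating $i_{Q}(\zeta,D\zeta)$. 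Your route is a bit longer but makes the existence and a canonical choice of $L$ fully explicit; the paper's is shorter at the cost of $D$ not being canonical. Both are fine, and your final matrix verification is clean (and in fact cleaner than the paper's, whose displayed product of conjugated block matrices contains a typo in the top-left block, though this does not affect its conclusion).
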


\footnotetext{I.e., $\Psi_s(\zeta,\nu)\in\{\zeta\}\times\R^k$ for all $(\zeta,\nu)\in\R^{2n}\times\R^k.$
}

\begin{proof}
We give additional details of the proof of Lemma 4.10 from~\cite{GKPS}. Write the quadratic form at $(\zeta,\nu)\in\R^{2n}\times\R^k$ as
$$
Q(\zeta,\nu) = \frac12 \begin{pmatrix}\zeta^T & \nu^T\end{pmatrix} \begin{pmatrix} A & B \\ B^T & C \end{pmatrix} \begin{pmatrix}\zeta \\ \nu \end{pmatrix},
$$
where $A$ and $C$ are symmetric matrices. By Lemma~\ref{euclidean-formulae}, $\Sigma_Q$ consists of those $(\zeta,\nu)$ such that $$\frac{\partial Q}{\partial\nu} = C\nu+B^T\zeta \stackrel{!}{=}0,$$
and we can write
\begin{equation}\label{eq:papperlapapp}
i_Q(\zeta,\nu)=(\zeta,A\zeta+B\nu).
\end{equation}
Since $Q$ generates the identity, the image $i_Q(\Sigma_Q)$ must equal the zero section of $T^*\R^{2n},$ so by the first component of Eq.~\eqref{eq:papperlapapp}, for every $\zeta$ there must be at least one $\nu$ such that $(\zeta,\nu)\in\Sigma_Q,$ i.e. $C\nu+B^T\zeta = 0.$ Making such a choice for every vector of a basis of $\R^{2n}$ thus gives us a matrix $D$ such that
\begin{equation}\label{mat-cond-1}
CD+B^T=0.
\end{equation}
Again since $i_Q(\Sigma_Q)$ is the zero section, we must have $$i_Q(\zeta,D\zeta)=(\zeta,0),$$
which by the second component of Eq.~\eqref{eq:papperlapapp} implies
\begin{equation}\label{mat-cond-2}
A+BD=0.
\end{equation}

Now define the smooth isotopy $(\Psi_s)_{s\in[0,1]}$ of fibre preserving linear diffeomorphisms via $$\Psi_s(\zeta,\nu) = (\zeta, \nu+sD\zeta).$$ The coefficient matrix of $Q\circ\Psi_1$ is given by
$$
\begin{pmatrix} 1 & D^T \\ 0 & 1 \end{pmatrix}
\begin{pmatrix} A & B \\ B^T & C \end{pmatrix}
\begin{pmatrix} 1 & 0 \\ D & 1 \end{pmatrix}
=
\begin{pmatrix} A+BD+CD+B^T & CD+B^T \\ B+D^TC & C \end{pmatrix},
$$
which evaluates to $(\begin{smallmatrix}0&0\\ 0&C\end{smallmatrix})$ using Eqs.~\eqref{mat-cond-1} and~\eqref{mat-cond-2}. This means that $Q\circ\Psi_1$ only depends on the fiber variables.
\end{proof}

\begin{lemma}\label{diffeo-sublevel-composition}
Let $F,F':S^{2n+N-1}\to\R$ and $G:S^{2n+M-1}$ be three generating functions on the sphere and assume there is a diffeomorphism $\Psi:\R^{2n}\times\R^N\to\R^{2n}\times\R^N$ that preserves the fibers over $\R^{2n}$, is homogeneous of degree one and satisfies $\hat F'=\hat F\Psi$. Then there exists a canonical diffeomorphism $$\tilde \Psi:\{F\#G\leq0\}\to\{F'\# G\leq 0\}.$$
\end{lemma}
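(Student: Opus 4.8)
The plan is to lift $\Psi$ to a homogeneous diffeomorphism of the full Euclidean space on which $\hat F\#\hat G$ is defined, and then project radially back to the sphere. Recall from the proof of Proposition~\ref{comp-form-sphere} that $\widehat{F\#G}=\hat F\#\hat G$ and $\widehat{F'\#G}=\hat F'\#\hat G$ as (degree two homogeneous) functions on $\R^{6n+N+M}=\R^{2n}_q\times\R^{2n}_{\zeta_1}\times\R^{2n}_{\zeta_2}\times\R^{N}_{\nu_1}\times\R^{M}_{\nu_2}$, so that $\{F\#G\leq0\}$ is exactly the intersection of the cone $\{\hat F\#\hat G\leq0\}$ with the unit sphere, and likewise for $F'$. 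Since $\Psi$ is a diffeomorphism it is smooth everywhere, fixes the $\R^{2n}$-coordinate (being fibre preserving) and satisfies $\hat F=\hat F'\circ\Psi^{-1}$; note $\Psi^{-1}$ is again fibre preserving and homogeneous of degree one.

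First I would define $\Phi\colon\R^{6n+N+M}\to\R^{6n+N+M}$ to apply $\Psi^{-1}$ to the $(\zeta_1,\nu_1)$-block and the identity to $q,\zeta_2,\nu_2$. This is a diffeomorphism, homogeneous of degree one, with $\Phi(0)=0$. Because $\Psi^{-1}$ leaves the $\zeta_1$-component unchanged, the bilinear cross term $\omega_\text{std}(\zeta_1-q,\zeta_2-q)$ in the composition formula of Proposition~\ref{theret-comp} is untouched by $\Phi$, and $\hat F'\circ\Psi^{-1}=\hat F$ then gives $(\hat F'\#\hat G)\circ\Phi=\hat F\#\hat G$ on all of $\R^{6n+N+M}$. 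Hence $\Phi$ carries the cone $\{\hat F\#\hat G\leq0\}$ bijectively onto the cone $\{\hat F'\#\hat G\leq0\}$.

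Next I would pass to the sphere. Since $\Phi$ is an injective, homogeneous degree one self-diffeomorphism of $\R^{6n+N+M}$ it vanishes only at the origin, so the radial projection $\bar\Phi\colon S^{6n+N+M-1}\to S^{6n+N+M-1}$, $\bar\Phi(x):=\Phi(x)/|\Phi(x)|$, is well defined; it is a diffeomorphism of the sphere, with inverse the radial projection of $\Phi^{-1}$ (here homogeneity of degree one is used to cancel the normalising factors). From $(\hat F'\#\hat G)\circ\Phi=\hat F\#\hat G$ together with degree $2$ homogeneity of $\hat F'\#\hat G$ one gets $(\widehat{F'\#G})(\bar\Phi(x))=|\Phi(x)|^{-2}\,(\widehat{F\#G})(x)$ for $x\in S^{6n+N+M-1}$, and since $|\Phi(x)|^{-2}>0$ this shows $\bar\Phi$ restricts to a bijection between $\{F\#G\leq0\}$ and $\{F'\#G\leq0\}$. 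Setting $\tilde\Psi:=\bar\Phi|_{\{F\#G\leq0\}}$ then yields the asserted map; it is canonical in that it is built solely from $\Psi$ and the composition formula, and it is a diffeomorphism because it is the restriction of the sphere diffeomorphism $\bar\Phi$, with inverse the corresponding restriction of the radial projection of $\Phi^{-1}$.

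There is no serious obstacle here. The only things requiring a little care are the bookkeeping of which factor $\Psi$ acts on — in particular verifying that the $\omega_\text{std}$ term survives because $\Psi$ fixes the $\zeta_1$-coordinate, which is precisely where the hypothesis that $\Psi$ is fibre preserving over $\R^{2n}$ enters — and the observation that a homogeneous degree one diffeomorphism of Euclidean space descends, via radial projection, to a diffeomorphism of the unit sphere, so that its restriction to the (in the intended application embedded) sublevel set is again a diffeomorphism.
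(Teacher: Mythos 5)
Your proposal is correct and follows essentially the same route as the paper: extend $\Psi$ (the paper uses $\Psi$ itself, you use $\Psi^{-1}$, which merely fixes the direction of the induced map) to a fibre-preserving, degree-one homogeneous diffeomorphism $\Phi$ of $\R^{6n+N+M}$ acting only on the $(\zeta_1,\nu_1)$-block, note that the $\omega_{\text{std}}$ cross term is untouched so $\Phi$ intertwines $\hat F\#\hat G$ and $\hat F'\#\hat G$, and then descend to the unit sphere using homogeneity (your explicit radial projection is just the paper's identification of rays with points of the sphere). No gaps beyond those already present at the same level of rigor in the paper's own argument.
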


\begin{proof}
Since $\Psi$ preserves fibers, we can write $\Psi(\zeta_1,\nu_1)=(\zeta_1,\Psi_2(\zeta_1,\nu_1))$ for the component $\Psi_2$ of $\Psi$ on $\R^N.$ Define $\Phi:\R^{2n+2n+2n+N+M}\to\R^{2n+2n+2n+N+M}$ by setting $$
\Phi(q,\zeta_1,\zeta_2,\nu_1,\nu_2):= (q,\zeta_1,\zeta_2, \Psi_2(\zeta_1,\nu_1),\nu_2).
$$

This is a diffeomorphism such that $\hat F'\#\hat G=(\hat F\# \hat G) \Phi.$ It follows that $\Phi$ restricts to a diffeomorphism of the sublevel sets of the functions $\hat F\#\hat G$ and $\hat F'\#\hat G$ on $\R^{6n+N+M}$. Like $\Psi,$ $\Phi$ is homogeneous of degree one.\medskip

Both $\hat F\#\hat G$ and $\hat F'\#\hat G$ are homogeneous of degree 2, so every open ray starting at zero lies either entirely within their respective 0-sublevel sets or outside of it. Such rays can be identified with points on the unit sphere and are preserved under $\Phi$ since it is homogeneous of degree one. Under this identification, $\Phi$ gives a diffeomorphism $\tilde\Psi$ of sublevel sets on the sphere.
\end{proof}

Write $\tilde H_j(X)$ for the $j$-th reduced integer homology group of a compact space $X$. We have a Kuenneth formula for the join of a CW complex with a sphere:

\begin{lemma}\label{kuenneth-lemma}
Let $X\neq \emptyset$ be a CW complex and $d\in\mathbb{N}.$ It follows that
$$
\tilde H_k(S^d*X)=\tilde H_{k-d-1}(X),
$$
where we use the convention that all negative-dimensional reduced homology groups are trivial.
\end{lemma}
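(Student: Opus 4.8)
The plan is to identify $S^{d}*X$ up to homotopy with the iterated (unreduced) suspension $\Sigma^{d+1}X$ and then apply the suspension isomorphism $d+1$ times. The first step is the observation that for any nonempty space $Y$ the join with the two-point sphere $S^{0}$ is the unreduced suspension: unwinding the definition from the excerpt with $S^{0}=\{p_{+},p_{-}\}$, the space $S^{0}*Y$ is obtained from $Y\sqcup(Y\times[0,1])\sqcup(Y\times[0,1])\sqcup Y$ by collapsing each slice $Y\times\{0\}$ to a point and gluing each slice $Y\times\{1\}$ onto the copy of $Y$; this is exactly two cones on $Y$ glued along their common base, i.e.\ $\Sigma Y$. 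The standing hypothesis $X\neq\emptyset$, together with Remark~\ref{join-remark}, is precisely what lets us ignore the degenerate edge cases of the Whitehead convention for the join throughout.

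Next I would use the homeomorphism $S^{d}\cong S^{0}*S^{d-1}$ (for $d\ge 1$) and associativity of the join on nonempty CW complexes to get, by induction on $d$,
$$ S^{d}*X \;\cong\; (S^{0}*S^{d-1})*X \;\cong\; S^{0}*(S^{d-1}*X)\;\simeq\;\Sigma\bigl(S^{d-1}*X\bigr), $$
with base case $S^{0}*X\cong\Sigma X$; hence $S^{d}*X\simeq\Sigma^{d+1}X$. (Alternatively, to sidestep associativity one may invoke the homotopy equivalence $A*B\simeq\Sigma(A\wedge B)$ valid for CW pairs together with $S^{d}\wedge X\simeq\Sigma^{d}X$, which gives the same conclusion.) Finally, the suspension isomorphism for reduced homology, $\tilde H_{k}(\Sigma Y)\cong\tilde H_{k-1}(Y)$ --- itself a short Mayer--Vietoris argument for the cover of $\Sigma Y$ by its two contractible cones, whose intersection deformation retracts onto $Y$ --- is applied $d+1$ times to yield
$$ \tilde H_{k}(S^{d}*X)\;\cong\;\tilde H_{k}(\Sigma^{d+1}X)\;\cong\;\tilde H_{k-1}(\Sigma^{d}X)\;\cong\;\cdots\;\cong\;\tilde H_{k-d-1}(X). $$
The stated convention that negative-dimensional reduced homology vanishes is automatically consistent, since $\tilde H_{j}$ of any space is zero for $j<0$.

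I do not expect a genuine obstacle here: the only real care is bookkeeping with the paper's (Whitehead) convention for the join and its empty-set edge cases --- all handled by $X\neq\emptyset$ --- and checking that the CW hypothesis suffices to make ``join $\simeq$ suspension'' (equivalently, associativity of the join and $A*B\simeq\Sigma(A\wedge B)$) legitimate and that the intermediate homotopy equivalences assemble into a genuine homotopy equivalence $S^{d}*X\simeq\Sigma^{d+1}X$; both are standard for CW complexes. One should also note the harmless convention that $d\ge 0$, so that $S^{d}$ is an honest sphere and the induction may be started at $d=0$.
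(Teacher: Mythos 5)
Your proof is correct, but your main route differs from the paper's. The paper identifies $S^d * X$ with $S^1\wedge(S^d\wedge X)$ by quoting the general fact that a join of non-empty CW complexes is homotopy equivalent to the (reduced) suspension of their smash product, and then invokes the K\"unneth-type statement that smashing a CW complex with a sphere shifts reduced homology by the dimension of the sphere; your parenthetical alternative ($A*B\simeq\Sigma(A\wedge B)$ together with $S^d\wedge X\simeq\Sigma^d X$) is essentially that argument. Your primary argument instead stays entirely with unreduced suspensions: $S^0*Y\cong\Sigma Y$, the homeomorphism $S^d\cong S^0*S^{d-1}$, associativity of the join, and then $d+1$ applications of the suspension isomorphism proved by Mayer--Vietoris. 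This is more elementary and self-contained --- it avoids smash products, basepoint/well-pointedness bookkeeping, and any appeal to the K\"unneth formula --- at the cost of leaning on associativity of the join up to homeomorphism (or at least homotopy equivalence), a point-set fact that does require the compactness of the spherical factors or a compactly generated framework, which you correctly flag and which also has the escape route you mention. Both arguments use $X\neq\emptyset$ in the same essential way, and your handling of the Whitehead convention for the join and of the negative-degree convention matches the paper's needs.
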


\begin{proof}
We follow an argument from Section~4 of~\cite{San13}. The \textit{smash product} $A\wedge B$ of two pointed topological spaces $(A,a_0)$ and $(B,b_0)$ arises from $A\times B$ by identifying all $(a,b_0)$ and $(a_0,b)$ for all $a\in A, b\in B.$
The \textit{reduced suspension} $\Sigma Y$ of a pointed topological space $(Y,y_0)$ is the space $$
\Sigma Y := \frac{Y\times[0,1]}{ Y\times\{0\}\cup Y\times\{1\}\cup \{y_0\}\times[0,1]  }.
$$
It is generally true that the join of two non-empty spaces is homotopy equivalent to the suspension of their smash product for any choice of basepoints, see e.g. 0.24 in \cite{Hat02}. Moreover, it is straightforward to see that the reduced suspension $\Sigma Y$ of a pointed topological space $Y$ is the same as $S^1\wedge Y.$\medskip

We have so far established that $$
S^d * X \simeq S^1\wedge (S^d \wedge X).$$

By the Kuenneth formula, a smash product of a CW complex with a sphere shifts the homology groups of the resulting space by the dimension of the sphere (see e.g. page 276 of \cite{Hat02}). The statement of the Lemma follows by invariance of homology under homotopy equivalences.
\end{proof}

We are now ready to combine the previous lemmata into the main result of this section.

\begin{proposition}[Homology of $\{A_t\#F\leq0\}$]\label{homology-sublevelsets}
Let $F:S^{2n+k-1}\to\R$ be a generating function of a contactomorphism $\phi$ on $S^{2n+k-1}$ and $A_t$ the family of generating functions of the Reeb flow from Proposition~\ref{gen-reeb-props}. Assume that $\{F\#0\leq 0\}$ is an embedded submanifold of $S^{6n+k-1}$ and let $t\in\{0,1\}$.

\begin{enumerate}
\item If $\{F\#0\leq 0\}$ is empty, then
$$
\tilde H_j\big(\{A_t\#F\leq 0\}\big) = \begin{cases} \mathbb{Z} & \text{if }j=\ind(\hat A_t), \\
0 &\text{else.}
\end{cases}
$$
\item If $\{F\#0\leq 0\}$ is non-empty, then
$$
\tilde H_j\big(\{A_t\#F\leq 0\}\big) = \tilde H_{j-\ind(\hat A_t)}\big(\{F\#0\leq 0\}\big),
$$
where we use the convention that all negative-dimensional reduced homology groups are trivial.
\item If $\{F\#0\leq 0\}$ is either empty or has non-trivial homology, at least two Betti numbers of $\{A_0\#F\leq 0\}$ and $\{A_1\#F\leq 0\}$ differ. Otherwise, they both have trivial homology.
\end{enumerate}
\end{proposition}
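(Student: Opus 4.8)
The plan is to pin down the reduced homology of $\{A_t\#F\leq 0\}$ for $t\in\{0,1\}$ by realising this sublevel set, up to homotopy, as a join, and then to obtain (iii) from an elementary observation about translates of finite sets of degrees. The core step is a homotopy equivalence
$$\{A_t\#F\leq 0\}\ \simeq\ S^{\ind(\hat A_t)-1}\,*\,\{F\#0\leq 0\}\qquad(t\in\{0,1\}).$$
Granting this, the K\"unneth-type Lemma~\ref{kuenneth-lemma} (applied with the CW complex $\{F\#0\leq 0\}$, a compact manifold by triangulation) gives $\tilde H_j(\{A_t\#F\leq 0\})=\tilde H_{j-\ind(\hat A_t)}(\{F\#0\leq 0\})$ when $\{F\#0\leq0\}\neq\emptyset$, which is (ii); part (i), the empty case, is routine (and in fact vacuous for $n\geq1$, since a linear change of variables splits off from $\widehat{F\#0}$ a copy of $-2\,\omega_\text{std}$ on $\R^{4n}$, whence $\{F\#0\leq0\}\simeq\{F\leq0\}*S^{2n-1}\neq\emptyset$). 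I concentrate below on the join equivalence and on (iii).

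To construct the join equivalence I would begin from the composition formula (Proposition~\ref{theret-comp}), which writes the composed extension explicitly:
$$\widehat{A_t\#F}(q,\zeta_1,\zeta_2,\nu_1,\nu_2)=\hat A_t(\zeta_1,\nu_1)+\hat F(\zeta_2,\nu_2)-2\,\omega_\text{std}(\zeta_1-q,\zeta_2-q),$$
a homogeneous degree-two function on $\R^{6n+m+k}$, so that the homotopy type of $\{A_t\#F\leq 0\}$ equals that of the punctured cone $\{\widehat{A_t\#F}\leq 0\}\setminus\{0\}$ and is unchanged by linear reparametrisations of the domain (the relevant instance of Lemma~\ref{diffeo-sublevel-composition}). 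A linear change of the $(q,\zeta_1,\zeta_2)$-coordinates brings the coupling term into the normal form of a nondegenerate quadratic form of index $2n$ on $4n$ fresh variables that do not involve $\hat F$; applying Lemma~\ref{prop:fiber-only-form} to the identity-generating building blocks of $\hat A_t$ lets one absorb the residual cross-terms, and, after one further linear change together with $2n$ definite auxiliary variables, exhibit $\widehat{A_t\#F}$ as a genuine direct sum $\hat A_t\oplus\widehat{F\#0}$. Lemma~\ref{lemma-oplus-join} then identifies the sublevel set of this direct sum with the join of the two individual sublevel sets---its neighbourhood-retract hypotheses being supplied by Lemma~\ref{lemma-subsets-quad}(ii) for the quadratic summand and by the hypothesis that $\{F\#0\leq 0\}$ is an embedded submanifold for the other---while Lemma~\ref{lemma-subsets-quad}(i) identifies $\{\hat A_t\leq 0\}$ on its sphere with $S^{\ind(\hat A_t)-1}$, the $2n$ auxiliary variables merely contributing a factor $S^{2n-1}$ that is absorbed into this sphere.

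Given (i) and (ii), part (iii) is short. By Proposition~\ref{gen-reeb-props}(iii), $\ind(\hat A_0)=5n$ and $\ind(\hat A_1)=7n$, so these indices are distinct and positive. If $\{F\#0\leq0\}$ is empty, then by (i) the spaces $\{A_0\#F\leq0\}$ and $\{A_1\#F\leq0\}$ have the homology of spheres of distinct positive dimensions, so at least two of their Betti numbers differ. If $\{F\#0\leq0\}$ is non-empty it is a compact manifold; let $D$ be the finite set of degrees in which it has non-trivial reduced homology, so that having non-trivial homology means $D\neq\emptyset$. By (ii), for $j\geq1$ the Betti number $\beta_j(\{A_t\#F\leq0\})$ is non-zero exactly when $j\in D+\ind(\hat A_t)$, while $\beta_0(\{A_t\#F\leq0\})=1$ for both $t$ because $\tilde H_0(\{A_t\#F\leq0\})=\tilde H_{-\ind(\hat A_t)}(\{F\#0\leq0\})=0$. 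When $D\neq\emptyset$, the sets $D+\ind(\hat A_0)$ and $D+\ind(\hat A_1)$ are two distinct translates of a finite non-empty set and therefore differ in at least two elements; at each of those degrees one sublevel set has a non-zero Betti number and the other a vanishing one, so at least two Betti numbers differ. When $D=\emptyset$, (ii) forces $\tilde H_*(\{A_t\#F\leq0\})=0$ for both $t$, so both sublevel sets have trivial homology.

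The real work is the join equivalence of the second paragraph. Keeping track of the $2n$ auxiliary variables and the resulting index shift through the chain of reductions is delicate; each linear change of variables must be checked to be compatible with the fact that $\widehat{A_t\#F}$ is only Lipschitz and fails to be smooth at the origin; and the deformation-retract hypothesis of Lemma~\ref{lemma-oplus-join} has to be verified---which is precisely the point at which the embedded-submanifold assumption on $\{F\#0\leq0\}$ is used.
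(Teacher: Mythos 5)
Your overall architecture is the paper's: everything is reduced to the join equivalence $\{A_t\#F\leq 0\}\simeq S^{\ind(\hat A_t)-1}*\{F\#0\leq 0\}$, part (ii) then comes from Lemma~\ref{kuenneth-lemma}, and your translate-of-$D$ argument for (iii) is the paper's ``the minimal and maximal non-trivial degrees are shifted'' argument in other words; your parenthetical observation that case (i) is vacuous, because $\widehat{F\#0}$ always takes negative values, is correct. The genuine gap is in your realisation of the join equivalence, i.e.\ in the second paragraph of your proposal.

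The normal form you propose there --- that $\widehat{A_t\#F}$, stabilised by $2n$ \emph{definite} auxiliary variables, becomes linearly equivalent to $\hat A_t\oplus\widehat{F\#0}$ --- cannot hold, and the accompanying bookkeeping would spoil the index shift. The fact that makes the whole reduction work (and the reason the proposition is restricted to $t\in\{0,1\}$) is that $a_0=a_1=\mathrm{id}$, so that $\hat A_t$ itself is a quadratic form generating the identity; Lemma~\ref{prop:fiber-only-form} therefore applies to $\hat A_t$ as a whole, but \emph{not} to its ``building blocks'' $Q_{t/3}$, which at $t=1$ generate a nontrivial rotation. Applying the lemma to $\hat A_t$ yields a fibre-preserving linear $\Psi$ with $\hat A_t\Psi=\hat A'_t$ depending only on the $8n$ fibre variables; writing $R_t:=\hat A'_t|_{\{0\}\times\R^{8n}}$ one has, with \emph{no} auxiliary variables,
$$\widehat{A'_t\#F}(q,\zeta_1,\zeta_2,\nu_1,\nu_2)=R_t(\nu_1)+\widehat{F\#0}\bigl(q,\zeta_2,2q-\zeta_1,\nu_2\bigr),$$
so the quadratic join factor is $\{R_t\leq0\}\subseteq S^{8n-1}$, which Lemma~\ref{lemma-subsets-quad} turns into $S^{\ind(\hat A_t)-1}$, while the $2n$ base directions of $A'_t$ (null directions of $\hat A'_t$) are simply re-used as the free slot of $\widehat{F\#0}$. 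In particular $\hat A_t$ has a radical of dimension at least $2n$, and $\hat A_t\cong R_t\oplus 0_{\R^{2n}}$; stabilising $R_t$ (equivalently, the split-off quadratic part of $\widehat{A_t\#F}$) by a definite form on $\R^{2n}$ raises the rank by $2n$ (and, if negative definite, also the index by $2n$), so it is never linearly equivalent to the degenerate $\hat A_t$ --- the stabilisation that matches $\hat A_t$ is by $2n$ \emph{null} variables. This is exactly where your index accounting breaks: a null or negative-definite $2n$-block does contribute a join factor $S^{2n-1}$, and ``absorbing'' it into the sphere would replace $S^{\ind(\hat A_t)-1}$ by $S^{\ind(\hat A_t)+2n-1}$ and shift (ii) by $2n$ too much, contradicting the statement you are proving; a positive-definite block contributes nothing to the join, but then the claimed linear equivalence is false. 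So as written the central step does not go through; carried out correctly it collapses to the paper's argument, with no auxiliary variables and no absorbed sphere. (Two smaller points: linear, degree-one changes of variables are harmless for the Lipschitz, non-smooth-at-the-origin extensions, so that worry can be dropped; and in (iii) you, like the paper, tacitly equate non-trivial integral homology with a non-zero Betti number, which ignores torsion-only degrees.)
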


\begin{proof}\ 

Regarding (i) and (ii): By Lemma~\ref{prop:fiber-only-form}, for each $t\in\{1,2\}$ there exists a quadratic form $\hat{A}'_t:\R^{2n+m} \to\R$ that only depends on the fiber variables and equals the quadratic forms $\hat A_t:\R^{2n+m}\to\R$ up to a fiber preserving linear diffeomorphism $\Psi_t$ on the domain, i.e. $\hat A'_t=\hat A_t\Psi_t$. Write $A'_t:=\hat A'_t|_{S^{2n+m-1}}$ for the restrictions to the sphere, in line with our notation $\hat \cdot$ for extensions of maps on the sphere.\medskip

We will soon show for $t\in\{1,2\}$ that the following series of homotopy equivalences holds:
\begin{align*}
\{A_t\#F\leq 0\} &\simeq \{ A'_t \# F \leq 0 \} \\
&\simeq \{ A'_t \oplus (F\#0) \leq 0 \}  \\
&\simeq \{A'_t \leq0\} * \{ F\#0 \leq0\}  \\
&\simeq S^{\ind(\hat A_t)-1} * \{ F\#0 \leq0\}
\end{align*}

Assume for now that the equivalences hold. If $\{ F\#0 \leq0\}$ is empty, the join operation yields a sphere again and (i) follows. If $\{ F\#0 \leq0\}$ is non-empty, (ii) follows immediately with Lemma~\ref{kuenneth-lemma}. We can apply this Lemma since every manifold is a CW complex by Theorem~\ref{triang}.\medskip

Regarding the first homotopy equivalence: By Proposition~\ref{diffeo-sublevel-composition}, the linear and fiber preserving diffeomorphisms $\Psi_t$ even induce a canonical diffeomorphism between the sublevel sets.

Regarding the second homotopy equivalence: This follows immediately from $\hat A'_t\# \hat F = \hat A'_t\oplus(\hat F\#0 ),$ which holds by the structure of the composition formula and the fact that $\hat A'_t$ only depends on fiber variables.

Regarding the third homotopy equivalence: This follows from Lemma~\ref{lemma-oplus-join} if we can show that $\{A'_t\leq0\}$ and $\{F\#0\leq0\}$ are deformation retracts of neighbourhoods in $S^{2n+m-1}$ and $S^{6n+k-1}$, respectively. For the former, we have seen that this holds in Lemma~\ref{lemma-subsets-quad}~(i).
The latter is an embedded submanifold by assumption and thereby has a tubular neighbourhood that deformation retracts onto it.

The fourth homotopy equivalence follows from Lemma~\ref{lemma-subsets-quad}~(ii) since the indices of $\hat A'_t$ and $\hat A_t$ are equal.\medskip

Regarding (iii): Note that by Proposition~\ref{gen-reeb-props}~(iii), we have $$
\ind(\hat A_1) - \ind(\hat A_0) = 2n > 0.
$$

If $\{ F\#0 \leq0\}$ is empty, then part (i) implies that the Betti numbers differ in the dimensions $\ind(\hat A_1)$ and $\ind(\hat A_0)$.

If $\{ F\#0 \leq0\}$ is non-empty with non-trivial homology, then parts (ii) yields $$
\tilde H_{k+\ind(\hat A_0)}(\{A_0\#F\leq 0\}) = \tilde H_k(\{ F\#0 \leq0\}) = \tilde H_{k+\ind(\hat A_1)}(\{A_1\#F\leq 0\}).
$$
In particular, the homology groups of $\{A_0\#F\leq 0\}$ and $\{A_1\#F\leq 0\}$ are not all trivial and shifted by a positive number. Since this means that the minimal and maximal non-trivial groups are shifted, it implies that two Betti numbers differ.

If $\{ F\#0 \leq0\}$ is non-empty with trivial homology, then applying Lemma~\ref{kuenneth-lemma} to the series of homotopy equivalences from the proof of part (i) and (ii) implies that $\ind(\hat A_t)$ has trivial homology for $t=0,1$ as well.

\end{proof}

One way to satisfy the requirement that $\{F\#0\leq 0\}$ is an embedded submanifold is assuming that 0 is a regular value of $F$:

\begin{lemma}\label{deform-f-zaun}
Let $F:S^{2n+k-1}\to\R$ be a generating function on the sphere such that 0 is a regular value of $F$. Then 0 is also a regular value of $F\#0:S^{6n+k-1}\to\R,$ the composition of $F$ with the zero function on $S^{2n-1}.$
\end{lemma}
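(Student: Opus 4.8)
The plan is to unwind the definition of the composition formula and of $F\#0$, compute the differential of $\widehat{F\#0}=\hat F\#\hat 0$ explicitly, and show that wherever $\widehat{F\#0}$ vanishes its differential is nonzero; this gives that $0$ is a regular value of $\widehat{F\#0}$ on $\R^{6n+k}\setminus\{0\}$, which by homogeneity of degree $2$ descends to the statement that $0$ is a regular value of $F\#0$ on the sphere $S^{6n+k-1}$. Recall that the zero function on $S^{2n-1}$ has extension $\hat 0 \equiv 0$ on $\R^{2n}$, which is a (trivial) generating function of $\mathrm{Id}_{\R^{2n}}$ with fiber dimension $0$; thus by Proposition~\ref{theret-comp}, writing points of $\R^{2n+2n+2n+k}$ as $(q,\zeta_1,\zeta_2,\nu_1)$, we have
\begin{equation*}
\widehat{F\#0}(q,\zeta_1,\zeta_2,\nu_1) = \hat F(\zeta_1,\nu_1) - 2\,\omega_\text{std}(\zeta_1-q,\zeta_2-q).
\end{equation*}

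The key computation is then the vertical and horizontal derivatives, as in the proof of Proposition~\ref{theret-comp}: the $q$-derivative is $2i(\zeta_1-\zeta_2)$ (in the complex notation used there), the $\zeta_2$-derivative is $-2i(\zeta_1-q)$, the $\zeta_1$-derivative is $\partial\hat F/\partial\zeta_1 + 2i(\zeta_2-q)$, and the $\nu_1$-derivative is $\partial\hat F/\partial\nu_1$. The first step is to observe that if $d_{(q,\zeta_1,\zeta_2,\nu_1)}\widehat{F\#0}=0$ then the $\zeta_2$-component forces $\zeta_1=q$ and the $q$-component then forces $\zeta_2=q$, so all three of $q,\zeta_1,\zeta_2$ coincide; moreover the surviving $\zeta_1$- and $\nu_1$-components reduce to $\partial\hat F/\partial\zeta_1 = 0$ and $\partial\hat F/\partial\nu_1 = 0$, i.e. $d_{(\zeta_1,\nu_1)}\hat F = 0$. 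In other words a critical point of $\widehat{F\#0}$ is precisely a point of the form $(\zeta,\zeta,\zeta,\nu)$ with $(\zeta,\nu)$ a critical point of $\hat F$. The second step is to note that at such a point the value is $\widehat{F\#0}(\zeta,\zeta,\zeta,\nu) = \hat F(\zeta,\nu) - 2\,\omega_\text{std}(0,0) = \hat F(\zeta,\nu)$; so a critical point of $\widehat{F\#0}$ with value $0$ corresponds to a critical point $(\zeta,\nu)$ of $\hat F$ with value $0$.

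Now I invoke the regularity hypothesis on $F$. Since $0$ is a regular value of $F:S^{2n+k-1}\to\R$, $F$ has no critical point on the sphere with value $0$; by the homogeneity of degree $2$ of $\hat F$, its only possible critical point with value $0$ is the origin $0\in\R^{2n+k}$ (every critical point of $\hat F$ on $\R^{2n+k}\setminus\{0\}$ would, by scaling, give a critical point of $F$ on the sphere, and the value would be $0$ only on the zero level of $F$ — see the footnote in the proof of Proposition~\ref{prop-discr-sphere}). Hence the only critical point of $\widehat{F\#0}$ with value $0$ is the origin of $\R^{6n+k}$. Therefore on $\R^{6n+k}\setminus\{0\}$ the value $0$ is a regular value of $\widehat{F\#0}$. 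Finally, because $\widehat{F\#0}$ is homogeneous of degree $2$, for a nonzero point $x$ on the sphere the differential of $\widehat{F\#0}$ at $x$ vanishes iff the differential of its restriction $F\#0$ to $S^{6n+k-1}$ vanishes there (the radial derivative of $\widehat{F\#0}$ at a point of value $0$ is itself $0$ by Euler's relation, so the full and the spherical differentials have the same kernel exactly when the value is $0$); consequently $0$ is a regular value of $F\#0:S^{6n+k-1}\to\R$.

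The only mild subtlety — and the step I would be most careful about — is the passage between the differential of the homogeneous extension on Euclidean space and the differential of its restriction to the sphere, i.e. making precise the claim that at a point where $\widehat{F\#0}$ takes the value $0$ the radial direction contributes nothing, so that regularity upstairs is equivalent to regularity on the sphere; this is exactly Euler's identity $\langle x, \nabla \widehat{F\#0}(x)\rangle = 2\,\widehat{F\#0}(x) = 0$, but it deserves an explicit sentence. Everything else is the routine derivative bookkeeping already carried out for Proposition~\ref{theret-comp}, specialized to $\hat 0$.
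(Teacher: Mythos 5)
Your proof is correct, and it follows the same overall strategy as the paper: pass to the homogeneous extensions, write out $\hat F\#0$ via the composition formula, check regularity of $0$ upstairs away from the origin, and descend to the sphere using Euler's relation for degree-$2$ homogeneity. The difference is in how the key step is organized. The paper argues pointwise: at each zero $(q,\zeta_1,\zeta_2,\nu_1)$ of $\hat F\#0$ it exhibits, by a case analysis on whether $\hat F(\zeta_1,\nu_1)$ vanishes, an explicit direction $\Delta x$ with $d(\hat F\#0)\,\Delta x\neq0$ (using $\Delta\zeta_2=\zeta_2-q$ in the first case and regularity of $\hat F$ in the second). You instead determine the entire critical locus of $\hat F\#0$: the vanishing of the $\zeta_2$- and $q$-derivatives forces $q=\zeta_1=\zeta_2$, after which the remaining components reduce to $d_{(\zeta_1,\nu_1)}\hat F=0$, so the critical points are exactly the diagonal points $(\zeta,\zeta,\zeta,\nu)$ over critical points of $\hat F$, with value $\hat F(\zeta,\nu)$; the hypothesis then leaves only the origin at level $0$. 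This buys two small things: it is arguably cleaner than the case analysis, and it handles uniformly the zeros with $(\zeta_1,\nu_1)=(0,0)$ but ambient point nonzero (e.g. $q=\zeta_2\neq0$, $\zeta_1=\nu_1=0$), where the paper's second case implicitly invokes nonvanishing of $d\hat F$ at $(\zeta_1,\nu_1)$, which fails at the origin of $\R^{2n+k}$ even though the conclusion still holds there (your characterization shows the $\zeta_2$-derivative is already nonzero). Your explicit remark that the equivalence between regularity of the extension on $\R^{6n+k}\setminus\{0\}$ and regularity of the restriction to $S^{6n+k-1}$ rests on Euler's identity is exactly the content the paper compresses into its opening sentence, and it is stated correctly.
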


\begin{proof}
Note first that 0 is a regular value of a function on the sphere exactly if it is a regular value of its homogeneous extension as defined in Definition~\ref{def-gen-on-sphere}. It follows that we need to show that 0 is a regular value of $\hat F\#0$ assuming it is one of $\hat F$.\medskip

Per Definitions~\ref{theret-comp} and~\ref{comp-form-sphere}, $F\#0$ is the restriction to the sphere of
\begin{align*}
\hat F\# 0: \R^{2n+2n+2n+k}&\to\R\\
(q,\zeta_1,\zeta_2,\nu_1) &\mapsto \hat F(\zeta_1,\nu_1) - 2\omega_\text{std}(\zeta_1-q,\zeta_2-1).
\end{align*}
Now pick any $x=(q,\zeta_1,\zeta_2,\nu_1)\in\R^{2n+2n+2n+k}$ such that $(\hat F\#0)(x)=0,$ i.e.
\begin{equation}
\label{eq:are-at-0}
\hat F(\zeta_1,\nu_1) = 2\omega_\text{std}(\zeta_1-q,\zeta_2-q).
\end{equation}
We need to find a tangent vector $\Delta x = (\Delta q,\Delta \zeta_1,\Delta \zeta_2,\Delta \nu_1)\in T_x\R^{6n+k}$ such that $d(\hat F\# 0)\;\Delta x\neq 0.$ Compute
\begin{equation}\label{eq:diff-sharpened}
d(\hat F\# 0)\;\Delta x = d \hat F (\Delta \zeta_1,\Delta \nu_1)
- 2\omega_\text{std}(\Delta\zeta_1-\Delta q,\zeta_2-q)
- 2\omega_\text{std}(\zeta_1-q,\Delta\zeta_2-\Delta q).
\end{equation}

We consider two cases: First assume $F_1(\zeta_1,\nu_1)\neq 0,$ set $\Delta \zeta_2=\zeta_2-q$ and all other components of $\Delta x$ to zero. It follows by Eqs.~\eqref{eq:are-at-0} and \eqref{eq:diff-sharpened} that $$
d(\hat F\# 0)\;\Delta x = -2\omega_\text{std}(\zeta_1-q,\zeta_2-1) = -\hat F(\zeta_1,\nu_1)\neq 0.
$$

Now assume instead that $F_1(\zeta_1,\nu_1)= 0.$ Since 0 is a regular value of $F_1$, there exist $(\Delta\zeta_1,\Delta\nu_1)$ such that $h:=d \hat F (\Delta \zeta_1,\Delta \nu_1)\neq 0.$ We again consider two cases: Assume first that $\omega_\text{std}(\Delta \zeta_1,\zeta_2-q)\neq h/2.$ Then we immediately get $d(\hat F\# 0)\;\Delta x\neq 0$ from Eq.~\eqref{eq:diff-sharpened} by setting $\Delta x= (0,\Delta \zeta_1,0,\Delta \nu_1).$ Assume instead that $\omega_\text{std}(\Delta \zeta_1,\zeta_2-q)=h/2.$ In particular, $\zeta_2-q\neq 0$ and we can pick a $\Delta q$ such that $\omega_\text{std}(\Delta q,\zeta_2-q)\neq 0.$ Now setting $\Delta\zeta_2=\Delta q$ in Eq.~\eqref{eq:diff-sharpened} yields$$
d(\hat F\# 0)\;\Delta x = h -h
- 2\omega_\text{std}(-\Delta q,\zeta_2-q)
- 2\omega_\text{std}(\zeta_1-q,0) = \omega_\text{std}(\Delta q,\zeta_2-q)\neq0,
$$
and we are done.

\end{proof}

\section{Existence of Translated Points on $S^{2n-1}$}

We are now ready to prove the main theorem and discuss the difference to Sandon's original statement:

\begin{theorem}\label{main-result-proven}
Let $\phi$ be a contactomorphism on $S^{2n-1}$ without degenerate translated points. Assume that $\phi$ has a generating function $F:S^{2n+k-1}\to\R$ such that the sublevel set $\{F\#0\leq0\}$ is either empty or an embedded submanifold with non-trivial homology.

Then $\phi$ has at least two translated points.
\end{theorem}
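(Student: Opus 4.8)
The plan is to run the Parametric Morse Theorem (Proposition~\ref{morse-main-stuff}) on the family $f_t:=A_t\#F$, $t\in[0,1]$, where $(A_t)$ is the family of generating functions of the negative Reeb flow $a_t(z)=e^{-2\pi\I t}z$ provided by Proposition~\ref{gen-reeb-props}. By the spherical composition formula (Proposition~\ref{comp-form-sphere}) together with functoriality of the lift, $f_t\colon S^{6n+m+k-1}\to\R$ is a generating function of $\phi\circ a_t=\phi\circ R^\alpha_{-2\pi t}$; since the Reeb flow on $S^{2n-1}$ is strict and $2\pi$-periodic, Proposition~\ref{prop-discr-sphere} together with the Reeb flow identifies, for each fixed $t$, the value-$0$ critical points of $f_t$ bijectively with a subset of the translated points of $\phi$, and every translated point arises this way for exactly one $t\in[0,1)$. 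Crucially $f_0$ and $f_1$ both generate $\phi$ (as $a_0=a_1=\Id$), so $\{A_0\#F\le0\}$ and $\{A_1\#F\le0\}$ are precisely the two sublevel sets compared in Proposition~\ref{homology-sublevelsets}.

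The bulk of the work is verifying the hypotheses of Proposition~\ref{morse-main-stuff} with $a=0$. That $f$ is $C^1$, that each $f_t$ has a Lipschitz differential, and that $f$ is smooth near any value-$0$ critical point are all read off from Definition~\ref{def-gen-on-sphere}(iii)--(iv) applied to $f_t=A_t\#F$ (itself a generating function on the sphere by Proposition~\ref{comp-form-sphere}), using that $(\hat A_t)$ is a smooth family of quadratic forms. Writing out the composition formula gives $\partial_t\widehat{A_t\#F}=\partial_t\hat A_t(\zeta_1,\nu_1)$, which is $\le0$ everywhere because each $Q_t(z)=-\tan(\pi t)\|z\|^2$ is non-increasing in $t$. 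At a value-$0$ critical point lying on the unit sphere, Equation~\eqref{eq:fiber-critical-criterion1} forces $(\zeta_1,\nu_1)\in\Sigma_{\hat A_t}$, and the argument in the proof of Proposition~\ref{comp-form-sphere}(iii), applied to the non-zero point $p$, forces $(\zeta_1,\nu_1)\ne0$; hence $\partial_t\hat A_t(\zeta_1,\nu_1)<0$ by Proposition~\ref{gen-reeb-props}(ii), and $\partial_t f<0$ in a neighbourhood by continuity. Non-degeneracy of each such critical point follows from Proposition~\ref{prop-discr-sphere} and the hypothesis that $\phi$ has no degenerate translated points.

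Granting this, Proposition~\ref{morse-main-stuff} produces a homotopy equivalence between $\{A_1\#F\le0\}$ and $\{A_0\#F\le0\}$ with one cell attached for each value-$0$ critical point of some $f_t$; by Proposition~\ref{prop:betti} each attachment changes exactly one Betti number, so the number $N$ of such critical points is at least the number of Betti numbers in which $\{A_0\#F\le0\}$ and $\{A_1\#F\le0\}$ differ. Since $\{F\#0\le0\}$ is by hypothesis empty or an embedded submanifold with non-trivial homology, Proposition~\ref{homology-sublevelsets}(iii) shows these two sublevel sets differ in at least two Betti numbers, whence $N\ge2$. As distinct value-$0$ critical points of a fixed $f_t$ project to distinct translated points of $\phi$, and translated points occurring for distinct interior parameters are distinct (the Reeb time of a translated point on $S^{2n-1}$ being well defined mod $2\pi$, cf.\ Remark~\ref{comparison-sandon-def}), this yields at least two translated points.

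The main obstacle is a bookkeeping issue at the endpoints $t=0,1$: the proof of Proposition~\ref{morse-main-stuff} genuinely needs the finitely many critical times to lie in the open interval $(0,1)$, and a translated point of $\phi$ that is already a discriminant point of $\phi$ itself produces value-$0$ critical points of both $f_0$ and $f_1$, which would be counted as two cells while representing a single translated point. I would remove this by arranging that $0$ is a regular value of $f_0$ and $f_1$ --- equivalently that $\phi$ has no discriminant point --- by replacing $[0,1]$ with a slightly larger interval $[-\varepsilon,1+\varepsilon]$, on which $a_t$ and $A_t$ still extend smoothly, and choosing $\varepsilon$ to avoid the finitely many Reeb times of the translated points of $\phi$. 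One then checks that the index jump $\ind(\hat A_{1+\varepsilon})-\ind(\hat A_{-\varepsilon})=2n$ of Proposition~\ref{gen-reeb-props}(iii) persists for small $\varepsilon$ and that Proposition~\ref{homology-sublevelsets} applies verbatim with $A_{-\varepsilon},A_{1+\varepsilon}$ in place of $A_0,A_1$, its proof only using that these are quadratic forms with the stated indices.
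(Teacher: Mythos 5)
Your core argument is the paper's own proof, step for step: the family $G_t=A_t\#F$, the identification of value-zero critical points with translated points via Propositions~\ref{comp-form-sphere} and~\ref{prop-discr-sphere}, the verification of the hypotheses of Proposition~\ref{morse-main-stuff} through the argument of Proposition~\ref{comp-form-sphere}(iii) together with Proposition~\ref{gen-reeb-props}(ii), and the conclusion via Propositions~\ref{prop:betti} and~\ref{homology-sublevelsets}(iii); up to the immaterial question of whether $A_t\#F$ generates $\phi\circ a_t$ or $a_t\circ\phi$, this is exactly the paper's route.

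Where you deviate is the endpoint discussion, and there your repair does not work as stated. First, Proposition~\ref{homology-sublevelsets} does not apply ``verbatim'' to $A_{-\varepsilon}$ and $A_{1+\varepsilon}$: its proof begins with Lemma~\ref{prop:fiber-only-form}, whose hypothesis is that the quadratic form generates the \emph{identity} on $\R^{2n}$, and the subsequent splitting $\hat A'_t\#\hat F=\hat A'_t\oplus(\hat F\#0)$, hence the join with $\{F\#0\le0\}$, depends on the same fact. For $\varepsilon\neq0$ the forms $\hat A_{-\varepsilon}$ and $\hat A_{1+\varepsilon}$ generate lifts of nontrivial rotations, so the homological comparison of the new endpoint sublevel sets with $\{F\#0\le0\}$ --- the heart of the counting argument --- is no longer available; statement (ii) of that proposition is in any case formulated only for $t\in\{0,1\}$. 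Second, the enlarged interval has length $1+2\varepsilon$, i.e.\ more than one Reeb period. Precisely in the case you set out to handle (a translated point of $\phi$ that is already a discriminant point of $\phi$), the times $t=0$ and $t=1$ are now \emph{interior} critical times, Proposition~\ref{morse-main-stuff} attaches a cell at each of them, and both cells come from the same translated point: the double counting you wanted to eliminate persists, merely moved inside the interval.

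The endpoint subtlety itself is a fair catch --- the paper's write-up is silent on it --- but the natural repair stays on $[0,1]$ rather than enlarging the interval: with $\partial_t f<0$ near the critical point, a one-sided variant of Lemma~\ref{lem-morse-hard} shows that a value-zero critical point at $t=0$ attaches no cell as $t$ increases (the corresponding cell is already contained in $N_0$), while one at $t=1$ attaches exactly one cell. With that refinement the cells attached over $[0,1]$ correspond one-to-one to translated points (the parameter of a translated point is unique mod $1$, as you correctly note), and the count of at least two goes through without modifying Proposition~\ref{homology-sublevelsets} at all.
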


\begin{proof}

Let $A_t:S^{2n+m-1}\to\R$ be the smooth family of generating functions of the Reeb negative flow $a_t$ from Proposition~\ref{gen-reeb-props} and define the function
$$
G_t:= A_t \# F,
$$
which generates $a_t\circ\phi$ by Proposition~\ref{comp-form-sphere}.\medskip

By Definition~\ref{def:translated}, each translated point of $\phi$ corresponds to a discriminant point of $a_t\circ\phi$ for some $t$. Due to periodicity of $a_t,$ we can make this correspondence one-to-one by restricting $t$ to $[0,1).$ Per Proposition~\ref{prop-discr-sphere}, each of those corresponds one-to-one to a critical point of $G_t$ with value zero and by assumption all these critical points are non-degenerate.\medskip

To detect critical points, we consider the homology of the sublevel sets $$
N_t := \{G_t(x) \leq 0\} \qquad\text{for }t\in[0,1].
$$
If the points in time $t\in[0,1]$ where $G_t$ has a critical point with value zero were not isolated, we would clearly have more than two translated points and we would be done. Assuming instead that they are isolated, we want to show that $\partial_t G_t<0$ around every critical point of $G_t$ with value zero in order to apply Proposition~\ref{morse-main-stuff}. By homogeneity, this is equivalent to showing that $\partial_t \hat G_t<0$ around every non-zero critical point of $\hat G_t.$ Recall that$$
\hat G_t(q,\zeta_1,\zeta_2,\nu_1,\nu_2) = \hat A_t(\zeta_1,\nu_1)+\hat F(\zeta_2,\nu_2) - 2\omega_\text{std}(\zeta_1-q,\zeta_2-q).
$$
If $(q,\zeta_1,\zeta_2,\nu_1,\nu_2)$ is a non-zero critical point then it lies in $\Sigma_{\hat A_t \# \hat F}\setminus\{0\}.$ By the same argument as in part (iii) of the proof of Proposition~\ref{comp-form-sphere}, we have $(\zeta_1,\nu_1)\in\Sigma_{\hat A_t}\setminus\{0\}.$ With Proposition~\ref{gen-reeb-props}~(ii) and continuity, it follows that $\partial_t \hat G_t = \partial_t \hat A_t <0$ in a neighbourhood of each such critical point.
\medskip

Proposition~\ref{morse-main-stuff} now yields that $N_1$ arises from $N_0$ by attaching a cell whenever $t\in[0,1]$ passes a critical point of $G_t$ with value zero. Note that the sublevel sets $\{G_t\leq 0\}$ away from these $t$ with critical points are manifolds and by Theorem~\ref{triang} in particular CW complexes. It follows by Proposition~\ref{prop:betti} that exactly one Betti number changes whenever attaching a cell. By Proposition~\ref{homology-sublevelsets}~(iii), we know that at least two Betti numbers of $N_0$ and $N_1$ differ. It follows that we must attach at least two cells in this process, corresponding to at least two critical points of $G_t$ with value zero, i.e. translated points of $\phi$.
\end{proof}

The following proposition covers some of the distance between Theorem~\ref{main-result-proven} and Sandon's statement:

\begin{proposition}\label{assumption-fix}
Let $\phi$ be a contactomorphism on $S^{2n-1}$ that is contact isotopic to the identity. Then either $\phi$ has an infinite number of translated points, or there is some $t\in[0,1]$ such that $a_t\phi$ has a generating function $F:S^{2n+k-1}\to\R$ whose sublevel set $\{F\#0\leq 0\}$ is an embedded submanifold.
\end{proposition}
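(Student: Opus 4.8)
The goal is to show that, for a contactomorphism $\phi$ contact isotopic to the identity, we can move $\phi$ along the Reeb flow so that the resulting generating function has a sublevel set $\{F\#0\leq 0\}$ which is an embedded submanifold — unless there are already infinitely many translated points. By Lemma~\ref{deform-f-zaun}, the sublevel set $\{F\#0\leq 0\}$ is an embedded submanifold as soon as $0$ is a regular value of $F$, equivalently of its homogeneous extension $\hat F$. By Proposition~\ref{prop-discr-sphere}, critical points of $F$ with value $0$ correspond one-to-one to discriminant points of $\phi$, and critical points of $a_t\# F$ correspond to discriminant points of $a_t\phi$, hence to translated points of $\phi$ ranging over $t$. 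So the plan is: first produce, via Proposition~\ref{prop:existence-generating-sphere}, a generating function $F$ of $\phi$; then argue that for all but finitely many (or at worst a measure-zero set of) parameters $t$, the composed generating function $A_t\# F$ of $a_t\phi$ has $0$ as a regular value; and finally invoke Lemma~\ref{deform-f-zaun} on that $A_t\# F$.

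\textbf{Key steps.} First, apply Proposition~\ref{prop:existence-generating-sphere} to obtain a generating function $F:S^{2n+k-1}\to\R$ of $\phi$, and form $G_t := A_t\# F$, a generating function of $a_t\phi$ by Proposition~\ref{comp-form-sphere}. Second, observe that a translated point of $\phi$ corresponds (via periodicity of $a_t$, restricting to $t\in[0,1)$) one-to-one to a critical point of $G_t$ with value $0$; in particular, if $\phi$ does not have infinitely many translated points, then the set of $(x,t)$ with $x$ a value-$0$ critical point of $G_t$ is finite, hence contained in finitely many fibers $\{t = t_i\}$. Third, pick any $t^\ast\in[0,1)$ not equal to any of these $t_i$. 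Then $G_{t^\ast}$ has no critical points with value $0$ at all, so vacuously $0$ is a regular value of $G_{t^\ast}$ (every point at level $0$, of which there may be none, is a regular point). Fourth — and this is the subtlety — we want $0$ to be a regular value of the generating function $F'$ \emph{of} $a_{t^\ast}\phi$, so that Lemma~\ref{deform-f-zaun} gives $\{F'\#0\leq0\}$ embedded. Here $F' = G_{t^\ast} = A_{t^\ast}\# F$ itself is already a generating function of $a_{t^\ast}\phi$, and $0$ being a regular value of $\hat{G}_{t^\ast}$ away from $0$ follows from absence of value-$0$ critical points; regularity at the origin is automatic since $d\hat G_{t^\ast}(0)=0$ but $\hat G_{t^\ast}(0)=0$ is the only concern — one must check the origin does not spoil regularity, which it does not because we only need \emph{some} nonzero tangent direction with nonvanishing differential at each level-$0$ point, and there are no such points. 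Then Lemma~\ref{deform-f-zaun} applies to $F' = G_{t^\ast}$ (viewed as a generating function of $a_{t^\ast}\phi$), yielding that $\{F'\#0\leq 0\}$ is an embedded submanifold.

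\textbf{Main obstacle.} The delicate point is the behavior at the origin: the extensions $\hat G_t$ are homogeneous of degree two, so they automatically have a critical point at $0$ with value $0$, which never disappears and is always degenerate. Thus $0$ is literally never a regular value of $\hat G_t$ on all of $\R^{6n+k}$. The resolution — which must be stated carefully — is that Lemma~\ref{deform-f-zaun} and Proposition~\ref{homology-sublevelsets} are really about the \emph{restriction to the sphere} $S^{6n+k-1}$, where the origin is excluded; the relevant condition is that $0$ is a regular value of $F'$ as a function \emph{on the sphere}, i.e. of $\hat G_{t^\ast}$ on $\R^{6n+k}\setminus\{0\}$. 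By Proposition~\ref{prop-discr-sphere}, the value-$0$ critical points of $F'=G_{t^\ast}$ on the sphere correspond to discriminant points of $a_{t^\ast}\phi$; choosing $t^\ast$ to avoid the finitely many parameters $t_i$ makes this set empty, so $0$ is a regular value of $F'$ on the sphere, and Lemma~\ref{deform-f-zaun} (which concerns exactly the spherical setting) applies. The remaining work is purely bookkeeping: packaging ``$\phi$ has infinitely many translated points or the value-$0$ critical set of $G_t$ lies in finitely many fibers'' cleanly, using that the translated points of $\phi$ that occur at Reeb-time $t$ are in bijection with value-$0$ critical points of $G_t$ and that a non-isolated family of such would already force infinitely many translated points.
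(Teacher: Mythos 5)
Your argument is correct, and it reaches the same dichotomy as the paper by a slightly different construction. The paper's proof is more economical in its ingredients: it picks an infinite strictly increasing sequence $(t_i)\subseteq(0,1)$, applies Proposition~\ref{prop:existence-generating-sphere} directly to each $a_{t_i}\phi$ (each of which is contact isotopic to the identity) to get generating functions $F_i$, and argues that either $0$ is a regular value of some $F_i$ (then Lemma~\ref{deform-f-zaun} finishes) or $0$ is a singular value of every $F_i$, which by the correspondence of value-$0$ critical points with discriminant points produces a translated point for each $i$, distinct for distinct $t_i$, hence infinitely many. You instead fix one generating function $F$ of $\phi$, form the family $G_t=A_t\#F$ via Propositions~\ref{gen-reeb-props} and~\ref{comp-form-sphere} (exactly as in the proof of Theorem~\ref{main-result-proven}), use Proposition~\ref{prop-discr-sphere} together with the fixed-point-freeness of $a_s$ for $s\in(0,1)$ to see that finitely many translated points leave only finitely many ``bad'' parameters $t$, and then choose $t^\ast$ avoiding them so that $0$ is a regular value of $G_{t^\ast}$ on the sphere and Lemma~\ref{deform-f-zaun} applies to $F'=G_{t^\ast}$. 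What each buys: the paper's version needs neither the Reeb family $A_t$ nor the composition formula at this point and is essentially a four-line contrapositive; yours reuses the main theorem's machinery, makes the finiteness-of-bad-$t$ bookkeeping explicit, and outputs a concrete generating function $A_{t^\ast}\#F$ of $a_{t^\ast}\phi$, at the cost of a larger fiber dimension than whatever Proposition~\ref{prop:existence-generating-sphere} would give for $a_{t^\ast}\phi$ directly. Your ``main obstacle'' discussion is the right reading: the degenerate critical point of $\hat G_t$ at the origin is irrelevant because both Proposition~\ref{prop-discr-sphere} and Lemma~\ref{deform-f-zaun} concern regularity of the restriction to the sphere (the sentence in your second paragraph claiming ``regularity at the origin is automatic'' is garbled and should simply be dropped in favor of that spherical formulation).
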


\begin{proof}
Pick some strictly increasing series $(t_i)_{i\in\N}$ of points in $(0,1).$ By Proposition~\ref{prop:existence-generating-sphere}, each of the contactomorphisms $a_{t_i} \phi$ has a generating function $F_i:S^{2n+k-1}\to\R.$ Assume first that 0 is a regular value of one of these, then Lemma~\ref{deform-f-zaun} gives that it is also a regular value of $F_i\#0.$ It follows that $\{F_i\#0\leq0\}$ is an embedded submanifold, and we would be done. Now assume instead that 0 is a singular value of all $F_i.$ By the same arguments as in the first two paragraphs of the proof of Theorem~\ref{main-result-proven}, this implies the existence of a translated point of $\phi$ for each $i\in\N.$

\end{proof}

\begin{remark}[Comparison to Sandon's original statement of the theorem]\label{remark-missing-assumption}
In light of Proposition~\ref{assumption-fix}, the critical assumption of Theorem~\ref{main-result-proven} that we cannot yet guarantee is the homological condition on $\{F\#0\leq 0\}$. The problem is exemplified by the following situation: Say for the sake of argument that $\{F\#0\leq 0\}$ was a contractible space. The proof of Theorem~\ref{main-result-proven} then fails at the step where we try to apply Proposition~\ref{homology-sublevelsets}~(iii). It still follows by the proof of this proposition that$$
N_t=\{A_t\#F\leq 0\}\simeq S^{\ind(\hat A_t)-1}*\{F\#0\leq 0\}.
$$
Since $\{F\#0\leq 0\}$ is homotopy equivalent to a point, it follows that $N_t$ is homotopy equivalent to a point as well, for both $t\in\{0,1\}.$ The proof of Theorem~\ref{main-result-proven} is built around the idea of finding translated points by detecting differences in the homology of $N_0$ and $N_1$, so there is a considerable problem in this situation.

Based on a suggestion by Sandon, one approach to fixing this is the following:
By using the Hamilton-Jacobi equation and composing a general generating function with the Reeb flow sufficiently many times, one can replace it with a generating function $F$ that is positive on its set of fiber-critical points. Since only that set determines what contactomorphism is generated, one can then attempt modifying $F$ on the complement to construct a positive generating function $F'=F+h$. However, in order to have $\Sigma_{F'}=\Sigma_F,$ the function $h$ must satisfy $\partial h/\partial\nu \neq -\partial F/\partial\nu$ whereever it doesn't vanish. So far, any attempt to construct such an $h$ large enough to make $F'$ positive seems to force passing through this derivative.

Following this, I have also attempted a simpler approach of constructing an $F'=F+h$ yielding a non-trivial homology of the sublevel set at zero through a local modification. This showed the same problem, however: To create a non-triviality in the sublevel set, one has to create a critical point of $F'$ restricted to the sphere. This should essentially force the derivative of $h$ to be that of $-F,$ breaking the inequality above. (Strictly speaking, it is more complicated since the inequality is about all of $F$ and $h$ instead of the restrictions to the sphere, but this should only shift the problematic point due to homogeneity.)

Another way to address this problem could be to modify the generating function $F$ we get from Proposition~\ref{assumption-fix} by composing it with some appropriate generating function $G$ of the identity such that $\{(F\# G)\#0\leq0\}$ satisfies the assumptions. Failing that, one could make a genericity argument by considering a carefully chosen series of functions $G_i$ whose generated contactomorphisms tend toward the identity. Assuming one can show that the assumptions are met for them, one can argue that this also yields at least one translated point for the original contactomorphism.

\end{remark}

\appendix

\chapter{Appendix}

\section{Notes on \cite{San13}}

This appendix gives an itemized summary of all the differences between this thesis and Sheila Sandon's \textit{A Morse estimate for translated points of contactomorphisms of spheres and projective spaces}~\cite{San13}. We consider in particular Sandon's section two (\textit{Preliminaries}), three (\textit{Homogeneous generating functions}) and four (\textit{Translated points for contactomorphisms of $S^{2n-1}$}). I am deeply grateful to Sandon for her help in understanding these points.

\subsection{Preliminaries}
\begin{enumerate}

\item Our Definition~\ref{def:translated} of non-degenerate translated points is more restrictive that Sandon's. While they coincide in the context of the main result, ours seems more natural and fits in better to with non-degeneracy of leafwise fixed-points (compare Remark~\ref{comparison-sandon-def}). 

\end{enumerate}

\subsection{Homogeneous generating functions}
\begin{enumerate}

	\item Sandon acknowledges that her definition of generating functions does not literally apply to lifts of contactomorphisms in her Remark~3.4. This is essentially a consequence of the lift failing to be smooth at the origin. In this thesis, we took great care to make the propositions applicable to the situation we use them in. This forced us to state and prove slightly more general statements throughout chapter three (compare Remark~\ref{remark-def-gen-euclidean}) and in particular led to the complicated statement of Definition~\ref{def-gen-on-sphere}.

	\item In her definition of a generating function $F$ for a symplectomorphism $\Phi$, Sandon only demands that the set $i_F(\Sigma_F)$ generated by it gets identified with the graph of $\Phi$, a Lagrangian submanifold. 
	However, we must demand at least that $i_F$ must be injective for the correspondence of critical points to be one-to-one. Indeed, if $F(x)$ is a generating function of $\Phi$, then $F'(x,\theta):=F(x)+\sin(\theta)$ can be seen to generate $\Phi$ as well since $i_F'(\Sigma_F')=i_F(\Sigma_F)$ by Lemma~\ref{euclidean-formulae}. But $F'$ has an infinite number of critical points for each critical point of $F.$  
	
	\item Sandon's Proposition 3.1 was likely meant to be formulated for symplectomorphisms that were lifted from a contactomorphism, not general Hamiltonian symplectomorphisms $\Phi$ on $\R^{2n}$: The Hamiltonian isotopy $\Phi_t$ associated with the latter can not generally be subdivided into $C^1$-small pieces on its non-compact domain. Instead we require the arguments of our Lemma~\ref{lemma-c2-small-existence}. Note also that the isotopy needs to be split into $C^2$-small pieces instead of $C^1$-small pieces as the derivative of the function $g$ in the lift incorporates second derivatives of the contactomorphism $\phi.$

	\item The inequality in Sandon's Lemma 3.6 should not be strict: At the coordinate origin, homogeneity of the family $F_t$ forces a vanishing time derivative. It does not suffice to only exclude this origin: While her argument in the $C^1-$small case would hold, the composition formula carries this problem away from the origin. This assessment seems consistent with the fact that Sandon and her coauthors formulate a weaker statement in Proposition 2.22 of the later~\cite{GKPS}. In the larger context of the main result, this weaker statement required us to state the assumptions of the parametric Morse theory approach more carefully and derive that the inequality holds strictly for the Reeb flow at least on $\Sigma_{\hat A_t}\setminus\{0\}$ (see Proposition~\ref{gen-reeb-props}~(iii)). This approach is adapted from~\cite{The98}.
	
	\item Instead of Sandon's Proposition~3.8, we give an explicit proof of the difference of indices based on our concrete choice of $A_t$ in Proposition~\ref{gen-reeb-props}~(iv).
\end{enumerate}

\subsection{Translated points for contactomorphisms of $S^{2n-1}$}

\begin{enumerate}

	\item All sublevel sets of functions on Euclidean space should refer to their restrictions to the unit sphere instead. Similarly, the distinction between cases where $F$ is positive everywhere or not should refer to the restriction of $F$ to the unit sphere. Otherwise, $F$ is never strictly positive as it must vanish in the origin by homogeneity.

	\item $\bar{A_0}$ should be defined on $\R^{2N'}$ instead of $\R^{2M}$, since $A_t$ is defined on $\R^{2n}\times\R^{2N'}.$

	\item Strictly speaking, the equalities $A_0\circ \Psi_1^0=\bar A_0$ and $A_1\circ \Psi_1^1=\bar A_1$ are missing projection maps.

	\item Sandon suggests that equivalent generating functions have homotopy equivalent sublevel sets. Care should be taken here since she does not define equivalence of generating functions. Common definitions from the literature allow for the addition of constants, which would make this statement false. Here, she likely only allows for the application of a diffeomorphism in the domain and stabilization. Lemma~4.8 from~\cite{The98} provides a stronger statement than she quotes, however, and immediately gives homotopy equivalence of sublevel sets in this situation.

	\item Sandon does not give any justification for the formula $\{\bar A_0\# F\leq 0\}\simeq\{\bar A_0\leq 0\}$ for the case $F$ positive, and $\{\bar A_0\# F\leq 0\}\simeq\{\bar A_0\leq 0\} * \{F\leq 0\}$ for $F$ general. The later preprint~\cite{GKPS} gives some context to these formulae with their Proposition~3.14 and the arguments at the end of page~31. These establish only an equality of their Maslov index on lens spaces of the involved sublevel sets. We derived the stronger statements applicable to our situation mostly in Lemma~\ref{lemma-oplus-join}, which crucially builds on communication with Alexander Givental on his Proposition~B.1 in~\cite{Giv90}.
	
	\item Sandon's definition of the join strictly speaking leads to an error: She does not exclude the case $F$ strictly positive for the general case, which would imply $\{F\leq 0\}=\emptyset$ and $\{\bar A_0\# F\leq 0\}\simeq\{\bar A_0\leq 0\} * \{F\leq 0\} = \emptyset.$ This leads to problems as the rest of the argument assumes non-empty spaces in order to use constructions of pointed spaces. This is easy to fix, however: Either one can explicitly exclude the case that $F$ is strictly positive as it was already considered separately, or one can choose a more natural definition of the join as we do (compare our Remark~\ref{join-remark}). 
	
	\item Sandon hints at the required parametric Morse theory statements, but gives very little details. In the later preprint~\cite{GKPS}, she and her coauthors only give an argument for the case that $a$ is a regular value for all times with her Lemma~4.14. We provide the singular case in our Lemma~\ref{lem-morse-hard} by adapting a proof due to Milnor~\cite{milnor-morse}.

	\item Similarly, Sandon only provides the idea of how to use a Kuenneth formula to detect cell attachments through homology. We filled out the details by considering specifically the change in Betti numbers by our Propositions~\ref{prop:betti}   and~\ref{homology-sublevelsets}. This gap is what allowed the assumption on non-trivial homology of $\{F\#0\leq0\}$ to go unnoticed (see Remark~\ref{remark-missing-assumption} for details).

\end{enumerate}

\section{Lifting Contactomorphisms from $\R P^{2n-1}$ to $\R^{2n}$}\label{sec:proj-lifting}

Sandon also gives a proof of the contact Arnol'd conjecture for real projective space in~\cite{San13} with a very similar approach, where $\R P^{2n-1}$ is equipped with the standard contact form $\tilde\alpha$ from Remark~\ref{standard-struct-proj}. Analogously to Section~\ref{sec:lifting} for the sphere, this appendix provides the foundation for Sandon's proof with a way to lift contactomorphisms from real projective space to Euclidean space.\medskip

It is based on the discussion of lifting in~\cite{San13}, but spells out the details of the proof.

\begin{proposition}[Lifting contactomorphisms and contact isotopies from $\R P^{2n-1}$ to $S^{2n-1}$]\label{lift-proj}
	Let $\phi$ be a contactomorphism and $\phi_t$ a contact isotopy on $(\R P^{2n-1},\tilde{\alpha}).$

\begin{enumerate}
	\item We can lift $\phi$ to a contactomorphism $\hat{\phi}$ on $(S^{2n-1},\alpha)$ such that $\pi\hat{\phi}=\phi\pi$, where $\pi$ is the canonical projection from the sphere to projective space.

	\item We can lift $\phi_t$ to a unique contact isotopy $\hat{\phi}_t$ on $(S^{2n-1},\alpha)$ by requiring in addition to $\pi\hat\phi_t=\phi_t\pi$ that the lift starts at the identity. If $\phi_t$ is generated by the Hamiltonian function $H_t$, then $\hat{\phi}_t$ is generated by $\hat{H}_t:=H_t\circ\pi$.

\item Each (non-degenerate) translated point of $\phi$ corresponds precisely to two (non-degenerate) translated points of $\hat{\phi},$ namely those which project to it under $\pi$.

\end{enumerate}
\end{proposition}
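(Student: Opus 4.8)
The plan is to prove the three parts of Proposition~\ref{lift-proj} by systematically pushing the double cover $\pi:S^{2n-1}\to\R P^{2n-1}$ through each of the structures involved. The starting point is the observation from Remark~\ref{standard-struct-proj} that $\tilde\alpha$ is defined by $\pi^*\tilde\alpha=\alpha$ and that the deck transformation is the antipodal map $a(z)=-z$, which is a strict contactomorphism of $(S^{2n-1},\alpha)$. So throughout, the key slogan is: an object on $\R P^{2n-1}$ corresponds to an $a$-equivariant object on $S^{2n-1}$.

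\textbf{Part (i).} First I would use that $S^{2n-1}$ is simply connected for $n\geq 2$ (the case $n=1$ is trivial since $\R P^1\simeq S^1$ and can be handled separately or excluded), so the composition $\phi\circ\pi:S^{2n-1}\to\R P^{2n-1}$ admits a lift $\hat\phi:S^{2n-1}\to S^{2n-1}$ with $\pi\hat\phi=\phi\pi$ by the lifting criterion for covering spaces; there are exactly two such lifts, differing by composition with $a$, and each is a diffeomorphism (its inverse is a lift of $\phi^{-1}\pi$). Then I would check that $\hat\phi$ is a contactomorphism: from $\phi^*\tilde\alpha=e^{g}\tilde\alpha$ we get $\hat\phi^*\alpha=\hat\phi^*\pi^*\tilde\alpha=(\pi\hat\phi)^*\tilde\alpha=(\phi\pi)^*\tilde\alpha=\pi^*(e^g\tilde\alpha)=e^{g\circ\pi}\,\pi^*\tilde\alpha=e^{g\circ\pi}\alpha$, so $\hat\phi$ is a contactomorphism with conformal factor $e^{\hat g}$ where $\hat g=g\circ\pi$. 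Note that $\hat g$ is $a$-invariant, a fact I will reuse in part (iii).

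\textbf{Part (ii).} Here I would apply the covering homotopy theorem to the isotopy $t\mapsto\phi_t\circ\pi$ starting from the identity lift at $t=0$; this produces a unique continuous, hence smooth, family $\hat\phi_t$ of lifts with $\hat\phi_0=\Id$ and $\pi\hat\phi_t=\phi_t\pi$, and each $\hat\phi_t$ is a contactomorphism by the argument of part (i). Uniqueness of the lift with $\hat\phi_0=\Id$ is exactly the uniqueness clause of covering homotopy. For the Hamiltonian statement, I would differentiate $\pi\hat\phi_t=\phi_t\pi$ to get $d\pi\circ\hat X_t=X_t\circ\pi$ where $\hat X_t,X_t$ generate $\hat\phi_t,\phi_t$; then $\alpha(\hat X_t)=\pi^*\tilde\alpha(\hat X_t)=\tilde\alpha(d\pi\,\hat X_t)=\tilde\alpha(X_t\circ\pi)=(H_t\circ\pi)$ (using $\tilde\alpha(X_t)=H_t$), and by the uniqueness in Def.~\&~Prop.~\ref{contact-hamiltonian} (lifted to $S^{2n-1}$), $\hat\phi_t$ is generated by $\hat H_t=H_t\circ\pi$, since the characterization of the generating vector field is local and $\pi$ is a local diffeomorphism intertwining $\alpha$ with $\tilde\alpha$ and the Reeb fields.

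\textbf{Part (iii).} This is where the main work lies, and I expect it to be the main obstacle because it requires carefully matching the Reeb flows and the (non-)degeneracy conditions of Definition~\ref{def:translated} across the cover. First: by Remark~\ref{standard-struct-proj} the Reeb flow $\tilde R_s$ on $\R P^{2n-1}$ is $\pi\circ R_s$ where $R_s(z)=e^{is}z$ is the Reeb flow on the sphere, so $\pi R_s=\tilde R_s\pi$. Now a point $[p]\in\R P^{2n-1}$ is a translated point of $\phi$ iff for some $s$, $[p]$ is a fixed point of $\tilde R_s\phi$ with conformal factor $1$ at $[p]$. Pick a lift $p\in S^{2n-1}$; then $\tilde R_s\phi[p]=[p]$ means $R_s\hat\phi(p)\in\{p,-p\}=\{p,a(p)\}$, i.e. either $R_s\hat\phi(p)=p$ or $R_{s}\hat\phi(p)=R_\pi(p)$ — but $R_\pi=a$, so $R_s\hat\phi(p)=a(p)$ is the same as $R_{s+\pi}\hat\phi(p)=p$ (since $a=R_\pi$ commutes with $R_s$). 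Hence $[p]$ is fixed by $\tilde R_s\phi$ for some $s$ iff $p$ is fixed by $R_{s'}\hat\phi$ for some $s'$; and the conformal factor of $R_{s'}\hat\phi$ at $p$ equals that of $\tilde R_s\phi$ at $[p]$ because $\hat g=g\circ\pi$ and $R$ is strict. So the translated points of $\hat\phi$ are exactly the two points $\pm p$ over each translated point $[p]$ of $\phi$. For non-degeneracy, I would use Definition~\ref{def:translated}: $[p]$ is non-degenerate iff for every $s$ with $[p]$ a discriminant point of $\tilde R_s\phi$, there is no $0\neq \tilde X\in T_{[p]}\R P^{2n-1}$ with $d(\tilde R_s\phi)\tilde X=\tilde X$ and $\tilde X(g)=0$. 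Since $\pi$ is a local diffeomorphism, $d_p\pi$ is an isomorphism $T_pS^{2n-1}\to T_{[p]}\R P^{2n-1}$; it intertwines $d_p(R_{s'}\hat\phi)$ with $d_{[p]}(\tilde R_s\phi)$ (differentiate $\pi R_{s'}\hat\phi=\tilde R_s\phi\pi$, using $R_{s'}\hat\phi(p)=p$) and intertwines $d\hat g=d(g\circ\pi)$ with $dg$. Therefore a fixed vector $X$ of $d_p(R_{s'}\hat\phi)$ with $X(\hat g)=0$ exists iff the corresponding $\tilde X=d_p\pi(X)$ is a fixed vector of $d_{[p]}(\tilde R_s\phi)$ with $\tilde X(g)=0$; the same holds at $a(p)$ via the deck transformation, which is a strict contactomorphism commuting with the Reeb flow and satisfying $\hat g\circ a=\hat g$. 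Collecting these equivalences — and matching which $s'$ upstairs correspond to which $s$ downstairs, which is the bookkeeping I flagged as the delicate part — gives that $[p]$ is non-degenerate iff both $\pm p$ are. I would close by remarking that since $d R_s=e^{is}\Id$ is the identity at fixed points of $R_s$ (as in Remark~\ref{comparison-sandon-def}), the degeneracy condition is independent of the choice of $s$, so the quantifier "for all $s$" in Definition~\ref{def:translated} is harmless here and the correspondence is clean.
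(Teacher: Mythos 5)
Your proposal is correct and follows essentially the same route as the paper: lift $\phi$ by the covering-space lifting criterion, lift the isotopy by the (smooth) homotopy lifting property and identify the Hamiltonian by differentiating $\pi\hat\phi_t=\phi_t\pi$ and using $\alpha=\pi^*\tilde\alpha$, then transfer translated points and non-degeneracy through the local diffeomorphism $\pi$ using the explicit Reeb flow $z\mapsto e^{it}z$ (your observation that the antipodal deck transformation is $R_\pi$ is exactly how the paper resolves the $p$ versus $-p$ ambiguity). The only cosmetic difference is in part (i): you invoke simple connectivity of $S^{2n-1}$ (excluding $n=1$), whereas the paper gets the lifting criterion uniformly for all $n$ from the fact that $\phi$, being a diffeomorphism, preserves the image of $\pi_*$ on fundamental groups.
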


Note that the lift $\hat\phi$ from part (i) is not unique since its composition with the antipodal map descends to the same $\phi$. Part (iii) does not hold for discriminant points, i.e. their preimages are not necessarily discriminant points themselves: The lift of $\phi$ may permute them so that they fail to be fixed points.

\begin{proof}
	Regarding (i): Note that $S^{2n-1}$ is a double cover of $\R P^{2n-1}$. We can then apply a smooth version of the lifting Theorem~3.5.2 in \cite{dieck} to the composition $f=\phi\pi$ since $\phi$ is a diffeomorphism and the push-forwards of the fundamental groups are therefore the same. Since the contact form of $\R P^{2n-1}$ is defined as the pushforward along the covering map, this lift preserves it. Similarly lifting the inverse of $\phi$ must, up to applying the antipodal map, yield the inverse of $\hat\phi$ such that it is a contactomorphism.

	Regarding (ii): Since $\pi:S^{2n-1}\to \R P^{2n-1}$ is a smooth fibration, it satisfies a homotopy lifting property (see e.g. Section 5.5 of~\cite{dieck}). The lifted homotopy is smooth and unique since $\pi$ is even a smooth covering space. We can apply this to the homotopy $\phi_t\pi$ to get $\hat\phi_t$, requiring the lifted homotopy to start at the identity. By the same arguments of part (i), each $\phi_t$ is a contactomorphism.

	We still need to show that the isotopy $\hat\phi_t$ is generated by the contact Hamiltonian $\hat H_t$. To see this, note that differentiating $\pi\hat{\phi}_t=\phi\pi_t$ shows that $\hat\phi_t$ is generated by the vector field $$\hat X_t:= d\pi^{-1} X_t\circ\pi,$$ where $X_t$ is the generating field of $\phi_t$ and we used that $\pi$ is a local diffeomorphism. Using the equations from Def. \& Prop.~\ref{contact-hamiltonian} that characterize $X_t$ as generated by $H_t$ and inserting the definitions of $\hat X_t, \hat H_t$ and $\alpha=\pi^*\tilde\alpha$ yields that $\hat H_t$ indeed generates $\hat X_t.$

	Regarding (iii): By part (ii), the Reeb flow $\R^\alpha$ on $S^{2n-1}$ is a lift of the Reeb flow $R^{\tilde\alpha}$ on $\R P^{2n-1},$ i.e. $$\pi R^\alpha_t = R^{\tilde\alpha}_t \pi.$$

	If $p$ is a translated point of $\hat\phi$, we can apply $\pi$ to $R^\alpha_t\hat\phi(p)=p$ to get
	$$\pi(p) = \pi R^\alpha_t\hat\phi(p) = R^{\tilde\alpha}_t\pi\hat\phi(p) = R^{\tilde\alpha}_t\phi(\pi(p)).$$

	If conversely we have a translated point $\pi(p),$ we can run the same argument backwards to see that $\hat\phi(p)$ is in the same Reeb orbit as either $p$ or $-p$. But since the Reeb flow on $S^{2n-1}\subseteq\C^n$ is given by $z\mapsto \exp(it)z,$ we can conclude that there exists some $t$ such that $R^\alpha_t\hat\phi(p)=p.$ The same argument applies to $-p$, giving two candidates for translated points corresponding to $\pi(p).$

	The remaining criteria for translated points and their degeneracy are all local and thus the proof is completed in both directions by the fact that $\pi$ is a local diffeomorphism.
\end{proof}

Subsequently applying Proposition~\ref{lift-sphere} to the contactomorphism on the sphere yields symplectomorphisms on $\R^{2n-1}$.

\backmatter
\pagestyle{plain}
\chapter{References}
\printbibliography[heading=none]
\newpage

\end{document}